\definecolor{dark-red}{rgb}{0.7,0.25,0.25}
\definecolor{dark-blue}{rgb}{0.15,0.15,0.55}
\definecolor{medium-blue}{rgb}{0,0,0.65}
\definecolor{DarkGreen}{RGB}{0,150,0}
\tikzstyle{shaded}=[fill=red!10!blue!20!gray!30!white]
\tikzstyle{unshaded}=[fill=white]
\tikzstyle{empty box}=[circle, draw, thick, fill=white, opaque, inner sep=2mm]
\tikzstyle{annular}=[scale=.7, inner sep=1mm, baseline]
\tikzstyle{rectangular}=[scale=.75, inner sep=1mm, baseline=-.1cm]
\newcommand{\googlebooks}[1]{(preview at \href{https://books.google.com/books?id=#1}{google books})}
\newcommand{\numdam}[1]{}
\theoremstyle{plain}
\newtheorem{prop}{Proposition}[section]
\newtheorem{thm}[prop]{Theorem}
\newtheorem{thmalpha}{Theorem}
\newtheorem{lem}[prop]{Lemma}
\newtheorem{cor}[prop]{Corollary}
\newtheorem{fact}[prop]{Fact}
\newtheorem{facts}[prop]{Facts}
\newtheorem*{cor*}{Corollary}
\newtheorem*{thm*}{Theorem}
\newtheorem{question}{Question}
\numberwithin{equation}{section}
\theoremstyle{remark}
\newtheorem{example}[prop]{Example}
\newtheorem*{exc}{Exercise}
\newtheorem{remark}[prop]{Remark}           
\newtheorem*{rem*}{Remark}               
\newtheorem*{example*}{Example}                
\theoremstyle{definition}
\newtheorem{defn}[prop]{Definition}         
\newtheorem*{defn*}{Definition}             
\newenvironment{mycolorbox}[1][]%
  {\if\detokenize{#1}\relax\relax%
      \begin{tcolorbox}%
    \else%
      \begin{tcolorbox}[#1]%
    \fi%
  \vspace{-3mm}%
  \parskip=0.5\baselineskip \advance\parskip by 0pt plus 2pt%
  \parindent=0pt%
}
  {\end{tcolorbox}}
    \newenvironment{boxedexample}{\begin{mycolorbox}[breakable,notitle,boxrule=1pt,colback=blue!5,colframe=blue!20,enhanced jigsaw]}{\end{mycolorbox}}
    \newenvironment{boxedexample}{\begin{mycolorbox}[breakable,notitle,boxrule=1pt,colback=blue!5,colframe=blue!20]}{\end{mycolorbox}}
\newenvironment{boxedexample*}{\begin{mycolorbox}[notitle,boxrule=1pt,colback=blue!5,colframe=blue!20]}{\end{mycolorbox}}
\theoremstyle{plain}
\newcounter{comment}
\newcommand{\noop}[1]{}
\def\clap#1{\hbox to 0pt{\hss#1\hss}}
\newcommand{\Integer}{\mathbb Z}
\def\semicolon{;}
\def\applytolist#1{
    \expandafter\def\csname multi#1\endcsname##1{
        \def\multiack{##1}\ifx\multiack\semicolon
            \def\next{\relax}
        \else
            \csname #1\endcsname{##1}
            \def\next{\csname multi#1\endcsname}
        \fi
        \next}
    \csname multi#1\endcsname}
\def\calc#1{\expandafter\def\csname c#1\endcsname{{\mathcal #1}}}
\def\bbc#1{\expandafter\def\csname bb#1\endcsname{{\mathbb #1}}}
\def\bfc#1{\expandafter\def\csname bf#1\endcsname{{\mathbf #1}}}
\DeclareMathOperator{\depth}{depth}
\DeclareMathOperator{\nbhd}{nbhd}
\DeclareMathOperator{\Tr}{Tr}
\DeclareMathOperator{\FPdim}{FPdim}
\newcommand{\set}[2]{\left\{#1\middle|#2\right\}}
\newcommand{\jw}[1]{f^{(#1)}}
\newcommand{\twoone}{{\rm II}$_1$}
\renewcommand{\imath}{\mathfrak{i}}
\renewcommand{\jmath}{\mathfrak{j}}
\newcommand{\iso}{\cong}
\newcommand{\isoto}{\overset{\iso}{\to}}
\newcommand{\hashdef}[2]{\@namedef{#1}{#2}}
\newcommand{\hashlookup}[1]{\@nameuse{#1}}
\newcommand{\pathtographs}{diagrams/graphs/}
\newcommand{\bigraph}[1]{{\hspace{-3pt}\begin{array}{c}%
  \raisebox{-2.5pt}{\includegraphics[height=6mm]{\pathtographs \hashlookup{#1}}}%
\end{array}\hspace{-3pt}}}
\newcommand{\smallbigraph}[1]{{\hspace{-3pt}\begin{array}{c}%
  \raisebox{-2.5pt}{\includegraphics[height=3mm]{\pathtographs \hashlookup{#1}}}%
\end{array}\hspace{-3pt}}}
\newcommand{\eset}{\emptyset}
\DeclareMathOperator{\Hom}{Hom}
\newcommand{\Aut}{\operatorname{Aut}}
\newcommand{\Irr}{\operatorname{Irr}}
\newcommand{\tr}[1]{\text{tr}(#1)}
\def\@testdef #1#2#3{%
  \def\reserved@a{#3}\expandafter \ifx \csname #1@#2\endcsname
 \reserved@a  \else
\typeout{^^Jlabel #2 changed:^^J%
\meaning\reserved@a^^J%
\expandafter\meaning\csname #1@#2\endcsname^^J}%
\@tempswatrue \fi}
\title{The classification of subfactors with index at most $5 \frac{1}{4}$}
\author{Narjess Afzaly, Scott Morrison, and David Penneys}
\date{\emph{In memory of Uffe Haagerup}}
\begin{document}
\maketitle

\begin{abstract}
Subfactor standard invariants encode quantum symmetries.
The small index subfactor classification program has been a rich source of interesting quantum symmetries.
We give the complete classification of subfactor standard invariants to index $5\frac{1}{4}$, which includes $3+\sqrt{5}$, the first interesting composite index.
\end{abstract}

\section{Introduction}

The classification of small index subfactors is an essential part of the
search for exotic quantum symmetries. 
A quantum symmetry is a non-commutative analogue of the representation
category of a finite group. There is no single best axiomatization:
choices include standard invariants of finite
index subfactors \cite{MR1334479,math.QA/9909027} or fusion categories \cite{MR2183279}. We focus on standard invariants here.

Topological field theories and topological phases of matter have revolutionized our understanding of symmetry in physics: these systems do not have
a group of symmetries in the classical sense, but rather possess
quantum symmetries, described by a higher categorical structure. \cite{MR2443722,1410.4540}

\emph{What, then, do quantum symmetries look like?} The basic examples come either
from finite group theory (possibly with cohomological data) or from quantum
enveloping algebras at roots of unity. Many are also realized from conformal field theories.
While there
are a number of constructions producing new quantum symmetries from old, we
are far from having a good structure theory. We are still at the
phenomenological phase of studying quantum symmetries, and understanding the
range of examples is an essential problem.

We now have several instances of quantum symmetries that do not come from
the basic examples, even allowing these constructions.  Indeed, the strangest 
and least understood of all known quantum symmetries were discovered in exhaustive classifications of subfactors at small index \cite{MR1317352}.

A critical next step in our understanding of quantum symmetries will be developing structure theory. (See, for example, Question \ref{q:extension} below.)
This article lays essential groundwork for this, by completing the classification of small index subfactors beyond the first interesting composite index (that is, a product of smaller allowed indices), 
namely $3+\sqrt{5}$. Initially, it was expected that the classification at index $3+\sqrt{5}$ would be very complicated, with a profusion of examples built by composites and other constructions from basic examples at smaller indices. 
These composite planar algebras were classified by \cite{MR3345186}, contradicting that expectation.

There are relatively few subfactor standard invariants in the range we study, suggesting that the as yet unknown structure theory of quantum symmetries will strongly constrain possible examples. 

\begin{thm*}
There are exactly 15 subfactor standard invariants with index in $(5,5\frac{1}{4}]$, besides the Temperley-Lieb-Jones $A_\infty$ and the reducible $A_\infty^{(1)}$ standard invariants at every index.
(See Theorem \ref{thm:Main} below.)
\end{thm*}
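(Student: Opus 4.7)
The plan is to follow the general architecture of the small-index subfactor classification program: enumerate candidate principal graph pairs compatible with the index bound, eliminate those that are obstructed, and match each survivor with a known construction (or rule it out). More precisely, I would organize the proof around a finite set of ``weeds'' and ``vines'' in the Bigelow--Morrison--Peters--Snyder sense, where a weed parameterizes an infinite family of graphs obtained by extending a finite stub, and a vine specifies a graph together with a translation that produces new graphs by repeated insertion of a fixed string. The first task is to show that every principal graph pair with index in $(5, 5\tfrac14]$ either (i) is a translate of one of finitely many vines, or (ii) extends one of finitely many weeds.

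Step one is a computer enumeration, using the principal graph stabilization result of Morrison--Penneys (already used in prior small-index classifications) together with Popa's index/graph norm inequalities, to produce a finite list of stubs out to some initial depth (at least up to the point where the dual data can be tracked). Here Afzaly's enumeration software would be the essential engine, generating all principal graph pairs with norm squared in $(5, 5\tfrac14]$ up to that cutoff depth. Step two is to kill every weed: for each, I would apply in order the standard combinatorial obstructions—associativity at triple points, the Jones--Wenzl rotation/annular obstructions, Ocneanu's triple point obstruction and its refinements, Snyder's singly generated / quadratic tangles obstruction, the associative-law-on-low-weight-rotation obstruction, the jellyfish/pop obstructions, and cyclotomicity of the index (the Asaeda--Yasuda and Calegari--Morrison--Snyder constraints). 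Each application either forces the weed to extend in finitely many controlled ways (reducing it to a vine or a finite set of stubs) or rules it out entirely.

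Step three is to enumerate graphs on each surviving vine and finish: for each vine one shows by an explicit rotation/number-theoretic argument, as in the earlier $(5, 3+\sqrt{5})$ classifications, that only finitely many of the translates are standard invariants, and one identifies each survivor with a known subfactor (Izumi quadratic, $3^{\Integer/k}$, group subfactors, $\mathrm{GHJ}$-type, or the composites produced by Liu in \cite{MR3345186}). Step four is confirming realization: each of the 15 candidates has already been constructed in the literature (Liu's classification of composite planar algebras at $3+\sqrt{5}$ provides the bulk of them, and the remaining examples are basic group/Temperley--Lieb constructions at the relevant indices), so one only needs to tabulate the matches and check no duplications.

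The main obstacle, by experience with analogous prior classifications (e.g., the $5 \leq \text{index} \leq 3+\sqrt{5}$ case), is not any single conceptual step but controlling the combinatorial explosion in step one: as the index bound increases the number of stubs grows rapidly, and the interplay between the principal and dual principal graphs must be tracked carefully. Beyond raw enumeration, the delicate part is identifying and killing the weeds whose translates have unbounded depth—here one typically has to develop a new bespoke obstruction (an annular multiplicities or skein-theoretic argument) for each weed that survives the generic tests. A secondary obstacle is confirming that the composite index $3+\sqrt{5}$ does not admit ``new'' standard invariants beyond Liu's list; this requires showing that every candidate whose index lands exactly at $3+\sqrt{5}$ is either Temperley-Lieb-Jones, reducible $A_\infty^{(1)}$, or a composite classified in \cite{MR3345186}.
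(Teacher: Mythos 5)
Your high-level architecture (enumerate stubs, sort into weeds and vines, obstruct, realize) does match the paper's, but two aspects of the proposal, as written, would not succeed. First, the enumeration step is not merely a matter of running an existing enumerator with a larger cutoff: the authors explain that the approach used in the index-5 classification produces so many isomorphic duplicates that isomorphism-checking becomes intractable above index 5, and the central new contribution of the paper is a reformulation of principal graph enumeration as an instance of McKay's ``generation by canonical construction paths,'' which avoids pairwise isomorphism testing entirely. Without that (or something equivalent), step one of your plan does not terminate in practice. Second, your generic list of obstructions (associativity, triple point, annular, cyclotomicity, jellyfish) is not enough to kill the weeds that survive at this index. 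The paper leans on several tools that are new or newly applied here and that you do not name: Penneys' branch-factor/rotational-eigenvalue inequality for annular multiplicity $*11$ weeds \cite{MR3311757}; Morrison's hexagon obstruction for certain $*10$ weeds \cite{MR3210716}; Ostrik's formal-codegree obstruction from \cite{1309.4822}; doubly one-by-one connection entries used to pin down otherwise undetermined relative dimensions; and the Calegari--Guo number-theoretic treatment of cylinders and a ``tail enumerator'' adaptation for a periodic weed, which together rule out an entire weed's worth of finite extensions at once. These are not second-order cleanup; several weeds survive all the tests you list.

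There is also a substantive error in your step four. You assert that every candidate with index exactly $3+\sqrt{5}$ is Temperley--Lieb--Jones, reducible $A_\infty^{(1)}$, or a composite classified by Liu \cite{MR3345186}. That is false, and Liu's result covers only a minority of the examples: at $3+\sqrt{5}$ the final list also includes the 2D2 planar algebra and its dual, Izumi's $3^{\bbZ/2\times\bbZ/2}$ and $3^{\bbZ/4}$ (and dual) planar algebras, and the 4442 planar algebra --- six standard invariants that are neither composites nor group/Temperley--Lieb constructions, but quadratic-category and spoke-graph phenomena requiring separate existence and uniqueness arguments (Izumi's Cuntz-algebra machinery, Schou's 4-spoke connection classification, and the jellyfish constructions). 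Your plan would miss these entirely, so the final count would not come out to 15.
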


\newpage

\setcounter{tocdepth}{2}
\hypersetup{
   linkcolor={black}
}
\tableofcontents
\hypersetup{
   linkcolor={purple}
}

\newpage

\subsection{Quantum symmetries and the Galois correspondence}
\emph{Subfactors are universal hosts for quantum symmetries.} In this
section we say just enough to explain our slogan, before explaining precisely what a subfactor is in the next.

The first astonishing fact (of several!) about subfactors is Jones' index rigidity theorem \cite{MR0696688}.
A subfactor $A\subset B$ has an index measuring the relative sizes of the factors, and this index is
\emph{quantized}:
$$
[B : A] \in \left\{ 4 \cos^2(\pi/n) \middle| n \geq 3 \right\} \cup [4,\infty].
$$

The Jones index has been understood from the beginning as a non-commutative analogue of the index of a field extension. 
The first sign that this analogy is important comes from a subfactor version of the Galois correspondence for field extensions. Given a finite group $G$, there is an essentially unique action of $G$ on $R$, the hyperfinite \twoone-factor \cite{MR587749}. 
We obtain a subfactor $R \subset R \rtimes G$, whose intermediate subfactors $R \subset P \subset R \rtimes G$ are all of the
form $P = R \rtimes H$ for some subgroup $H \subset G$ \cite{MR0123925}.

This analogy runs even deeper. 
The \emph{standard invariant} of a subfactor is a collection of finite dimensional vector spaces equipped with algebraic
operations. 
It can equivalently be axiomatized via Popa's $\lambda$-lattices \cite{MR1334479}, Ocneanu's paragroups \cite{MR996454}, or Jones' planar algebras \cite{math.QA/9909027}. 
The essential feature is that this standard invariant plays the same
role as the Galois group of a field extension --- it describes the quantum
symmetries of the subfactor.

We can now justify our initial slogan.
The `even half' of a standard invariant is a rigid C$^*$-tensor category, and when the subfactor is `finite depth', this is a fusion
category.
Conversely, given a unitary fusion category $\cC$, there is a hyperfinite subfactor which
we should think of as $R \subset R \rtimes \cC$, whose even half is exactly $\cC$,
and moreover this subfactor is essentially unique \cite{MR1055708}, \cite[Theorem 4.1]{MR3028581}. 

\section{Background}
In this section we introduce subfactors, their standard invariants, and their role as quantum symmetries. Readers for
whom this is familiar can skip ahead to Section \ref{sec:Summary} for the overview of our new results.

\subsection{Subfactors and their standard invariants}
A factor is a von Neumann algebra with trivial centre, and a subfactor is a unital 
inclusion of factors.  
Factors are classified into three types; we will be interested throughout in \twoone-subfactors, which are infinite dimensional and have a tracial state. 
(Most of what we describe below extends to type {\rm III} subfactors, cf. \cite{MR829381,MR1269266,MR1339767}.)

To prove the index restriction, Jones introduced the \emph{basic construction} \cite{MR0696688}.
After taking the GNS completion $L^2(B)$ of $B$ with respect to the tracial state, we have the orthogonal projection $e_A$ with range $L^2(A)$.
The basic construction applied to $A \subset B$ is the new factor $\langle B, e_A\rangle$, containing $B$ as a subfactor.
When $[B : A] < \infty$, we get a new \twoone-subfactor $B \subset \langle B, e_A \rangle$ with the same index.

Iterating the basic construction for $A=A_0\subset  A_1=B$, we obtain the \emph{Jones tower}
$$ 
A_0 \subset A_1 \overset{e_1}{\subset} A_2 \overset{e_2}{\subset} A_3 \overset{e_3}{\subset}
\cdots.
$$
The first sign that something genuinely interesting is happening is that the Jones projections $e_i$ satisfy the \emph {Temperley-Lieb-Jones relations} \cite{MR0696688}:
\begin{enumerate}[label=(\arabic*)]
\item
$e_i=e_i^2=e_i^*$,
\item
$e_i e_j = e_j e_i$ when $|i-j|>1$, and
\item
$e_i e_{i\pm1} e_i = [A_1:A_0]^{-1} e_i$.
\end{enumerate}

From the Jones tower, we extract two towers of finite dimensional centralizer algebras \cite{MR999799}:
$$
\xymatrix@C=5pt@R=2pt{
A_0'\cap A_0  &\subset & A_0'\cap A_1 &\subset & A_0'\cap A_2 &\subset &A_0'\cap A_3   &\subset & \cdots
\\
&&\cup&&\cup&&\cup&&
\\
&&
A_1'\cap A_1  &\subset &  A_1'\cap A_2 &\subset &A_1'\cap A_3   &\subset & \cdots
}
$$
There's much more structure present here than just the $*$-algebra structures and their
inclusions --- in particular there are also the restrictions of the conditional expectations $E_i : A_i\to A_{i-1}$ 
(obtained by restricting the Jones projection $e_i$ on $L^2(A_i)$ to $A_i$)
and the Jones projections $e_j$, all interacting according to intricate algebraic relations.

Our preferred way to axiomatize all this data is as a \emph{subfactor planar algebra}, which we briefly define here. (Recall the alternatives are $\lambda$-lattices \cite{MR1334479} or paragroups \cite{MR996454}.)
These are the main objects of study of this article, and they correspond under Theorem \ref{ref:PopaCorrespondence} below to subfactors.
More detail, and a summary of the important techniques for analyzing a subfactor planar algebra, can be
found in the survey article \cite{MR3166042}.


A \emph{shaded planar algebra} \cite{math.QA/9909027} is a collection of complex vector spaces $\cP_\bullet = (\cP_
{n,\pm})_{n\geq 0}$, together with an action of the operad of shaded planar tangles. A shaded planar tangle consists of a disc with several sub-discs removed, a collection of non-intersecting strings in the complementary region (whose endpoints lie on the boundary circles), with an alternating shading of the regions between the strings, and a marked interval on each boundary circle.
For the careful definition of a shaded planar tangle, see \cite{math.QA/9909027,MR2679382}; we settle for giving an illustrative example below.

Suppose we have a shaded planar tangle $T$. We number the output circle $0$, and number the input circles $1$
 through $r$. Suppose there are $2k_i$ points on the $i$-th circle, and the marked interval of the $i$-th circle is
 either unshaded or shaded according to a sign $\pm_i$ respectively.
Then the structure of a planar algebra assigns to this tangle $T$ a multilinear map
$$\cP(T) : \cP_{k_1,\pm_1}\otimes \cdots \otimes \cP_{k_r,\pm_r}\to \cP_{k_0,\pm_0}.$$
For example,
\newcommand{\ncircle}[5]{
	\draw[thick, #1] #2 circle (#3);
	\node at #2 {#5};
	\node at ($#2+(#4:.15cm)+(#4:#3cm)$) {$\star$};
}
$$
\begin{tikzpicture}[baseline = .5cm]
	\clip (-.6,.6) circle (1.6cm);
	\filldraw[shaded] (.8,1.6) circle (.6cm);
	\filldraw[shaded] (0,.4) arc (0:90:.8cm) -- (-1.2,.8) arc (180:270:.8cm);
	\filldraw[shaded] (-100:.4cm) circle (.25cm);
	\filldraw[shaded] (-1.2,2.2)--(-1.2,1.2)--(-2.2,1.2);
	\filldraw[shaded] (-20:1cm)--(0,0)--(20:1.5cm);
	\filldraw[shaded] (-1.4,0) circle (.3cm);
	\draw[ultra thick] (-.6,.6) circle (1.6);
	\ncircle{unshaded}{(-1.2,1.2)}{.4}{235}{}
	\ncircle{unshaded}{(0,0)}{.4}{135}{}
	\node at (-1.2,-.7) {$\star$};
\end{tikzpicture}
:\cP_{2,+}\otimes \cP_{3,-} \to \cP_{3,+}.
$$

We require that the identity tangle acts as the identity map. We can glue
tangles one inside the other (we require that the distinguished intervals
marked with $\star$ match), and we require that gluing
corresponds to composition of multilinear maps. Tangles which are isotopic
must give the same linear map. (Contrary to the usual situation in quantum
algebra, isotopies may move the boundary, although as each boundary circle has
a marked interval this just means that $2\pi$ rotations act as the identity.)

Moreover, to be a \emph{subfactor planar algebra}, we require that:
\begin{itemize}
\item each $\cP_{n,\pm}$ is finite dimensional,
\item $\cP_\bullet$ is \emph{evaluable}, that is $\cP_{0,\pm}$ is 1-dimensional,
\item each $\cP_{n,\pm}$ has an involution $*$ which is compatible with reflection of tangles,
\item $\cP_\bullet$ is \emph{positive}, in the sense that the sesquilinear
form $\langle x, y\rangle = \tr{y^*x}$ on each $\cP_{n,\pm}$ is positive
definite, where
multiplication of elements in $\cP_{n,\pm}$ is stacking, and
$$
\operatorname{tr} = \begin{tikzpicture}[baseline=-.1cm]
	\ncircle{}{(.1,0)}{1}{180}{}
	\draw (-.4,.2) .. controls ++(90:.9cm) and ++(90:1.1cm) ..  (.8,0) .. controls ++(270:1.1cm) and ++(270:.9cm) .. (-.4,-.2);
	\draw (-.3,.2) .. controls ++(90:.7cm) and ++(90:1cm) ..  (.65,0) .. controls ++(270:1cm) and ++(270:.7cm) .. (-.3,-.2);
	\draw (0,.2) .. controls ++(90:.2cm) and ++(90:.4cm) ..  (.25,0) .. controls ++(270:.4cm) and ++(270:.2cm) .. (0,-.2);
	\ncircle{unshaded}{(-.2,0)}{.3}{180}{}
	\node at (.45,0) {\scriptsize{$\cdots$}};
\end{tikzpicture}
\,:
\cP_{n,\pm}\to \cP_{0,\pm}\cong \bbC,
$$
\item and $\cP_\bullet$ is \emph{spherical}, that is closed diagrams are invariant
under spherical isotopy.
\end{itemize}

We sometimes also talk about \emph{non-spherical} or \emph{non-extremal} subfactor planar algebras, which do not satisfy this last axiom.

\begin{thm}[\cite{MR1334479,math.QA/9909027}]
\label{ref:PopaCorrespondence}
Given a finite index \twoone-subfactor, its standard invariant forms a subfactor planar algebra.
Conversely, given a subfactor planar algebra $\cP_\bullet$, there is a \twoone-subfactor whose standard invariant is $\cP_\bullet$.
\end{thm}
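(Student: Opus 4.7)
The forward direction is largely bookkeeping once one identifies the right spaces. Set $\cP_{n,+} = A_0'\cap A_n$ and $\cP_{n,-}=A_1'\cap A_{n+1}$. To define the action of a planar tangle $T$ on these relative commutants, the plan is to first reduce to generators. Any shaded planar tangle is isotopic to a composition of a short list of elementary tangles: multiplication (vertical stacking), the unit, the trace (closure) tangle, the inclusion tangles $\cP_{n,\pm}\hookrightarrow \cP_{n+1,\pm}$, the Jones projection insertion, and the (unnormalized) conditional expectation tangle. Each of these has an evident algebraic meaning on the tower: multiplication and unit in $A_n$, the tracial state, the inclusion $A_0'\cap A_n\subset A_0'\cap A_{n+1}$, the element $e_n$, and the map $E_n$. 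Once one checks that the relations among the elementary tangles (the Temperley-Lieb-Jones relations, naturality of $E_n$, Markov property, etc.) are satisfied by their algebraic counterparts, the action of a general tangle is defined by decomposition, and one verifies independence of the decomposition by an isotopy argument. The axioms of a subfactor planar algebra then translate directly: finite dimensionality and evaluability come from factoriality and finite index; the involution $*$ comes from the $*$-structure on $A_n$; positivity of $\langle x,y\rangle=\tr(y^*x)$ comes from the faithfulness of the trace; and sphericality corresponds to extremality of the subfactor.

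The reverse direction is the deep part, and is essentially Popa's reconstruction theorem. Starting from $\cP_\bullet$, the plan is to build the Jones tower directly from the planar algebra. For each $n$, the spaces $\cP_{n,+}$ carry a $*$-algebra structure from the multiplication tangle, a trace from the closure tangle, and an inclusion $\cP_{n,+}\hookrightarrow \cP_{n+1,+}$; together these assemble into a filtered tower of finite dimensional $*$-algebras with a consistent trace and distinguished projections $e_i$ satisfying the Temperley-Lieb-Jones relations. Take the GNS completion of the directed union $\bigcup_n \cP_{n,+}$ with respect to this trace; call the resulting von Neumann algebra $A_\infty$. The key point is then to realize $A_\infty$ as the Jones tower of a subfactor: the Jones projections $e_i$ generate the "right-hand" factors, and taking appropriate commutants inside $A_\infty$ produces a \twoone-subfactor $A_0\subset A_1$.

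The hard step is to verify that this reconstructed $A_0\subset A_1$ really has $\cP_\bullet$ as its standard invariant --- a priori the relative commutants $A_0'\cap A_n$ could be larger than $\cP_{n,+}$. This is precisely the content of Popa's standardness theorem, and it is where the Markov property of the trace (built into a subfactor planar algebra) is crucial. One shows that the planar algebra structure gives enough commuting squares $\cP_{n,+}\subset \cP_{n+1,+}$ over $\cP_{n-1,+}\subset \cP_{n,+}$ so that, after completion, the inclusions of relative commutants pick out exactly the $\cP_{n,+}$. In Jones' planar algebra formulation, an alternative construction uses the graph planar algebra embedding theorem or Guionnet-Jones-Shlyakhtenko's free-probabilistic model to produce a \twoone-subfactor directly; the latter has the advantage that positivity of the planar algebra is transported directly to positivity of a tracial state on a free-product-like algebra.

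The main obstacle is thus the standardness/reconstruction: producing a factor is comparatively easy, but ensuring the relative commutants are not accidentally too large requires a genuine argument, either via Popa's symmetric enveloping construction or via the GJS random matrix/free probability model. Everything else in the proof --- decomposition of tangles into generators, verification of axioms on both sides --- is routine once this central reconstruction is in hand, and the theorem is cited from \cite{MR1334479,math.QA/9909027} precisely because these two steps (Popa's $\lambda$-lattice construction and Jones' planar algebra axiomatization) form a substantial body of work in their own right.
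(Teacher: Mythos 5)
The paper does not prove Theorem~\ref{ref:PopaCorrespondence}; it is stated with a citation to Popa's $\lambda$-lattice paper and Jones' planar algebras paper, and the bulleted list immediately afterward only records (following \cite{MR2812459}) which planar operations on the relative commutants determine the full planar structure. So there is no ``paper's own proof'' to compare against, and the theorem is being used as a black box.

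Your outline is in the spirit of the cited references and correctly locates the asymmetry --- the forward direction is bookkeeping, the reverse direction is the hard reconstruction --- but the reconstruction sketch has a real imprecision. Taking the GNS completion of $\bigcup_n \cP_{n,+}$ produces the von Neumann closure of the tower of relative commutants $A_0' \cap A_n$, which is $A_0' \cap A_\infty$, \emph{not} the inductive limit $A_\infty = \overline{\bigcup_n A_n}$ of the Jones tower. The algebras $A_0$ and $A_1$ do not live inside that completion, so ``taking appropriate commutants inside'' it cannot recover them. What Popa actually does is build the Jones tower $M_0 \subset M_1 \subset M_2 \subset \cdots$ itself from the $\lambda$-lattice data, via a sequence of commuting-square inductive limits (or, in later treatments, amalgamated free products), and then prove standardness: that $M_0' \cap M_n$ is exactly $\cP_{n,+}$ and not larger. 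Your sketch conflates the tower of relative commutants with the tower of factors. You do correctly flag that the GJS free-probability construction gives an alternative route that produces the subfactor directly, and you correctly identify standardness as the central difficulty, so the overall shape of the argument is right even if the specific mechanism you describe for manufacturing $A_0 \subset A_1$ would not work as stated.
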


Under this correspondence, the planar algebra is spherical if and only if the subfactor is extremal \cite{MR1887878}. We say a subfactor planar algebra is irreducible if $\dim(\cP_{1,\pm})=1$. 
Irreducible subfactor planar algebras correspond to irreducible subfactors, i.e. those $A \subset B$ where $A' \cap B = \bbC$.

From the Jones tower for $A_0 \subset A_1$, we define the associated planar
algebra $\cP_\bullet$ as follows.
The vectors spaces $\cP_{n,\pm}$ are defined as the two towers of centralizer algebras: $\cP_{n,+} = A_0'\cap A_n$ and $\cP_{n,-} = A_1'\cap A_{n+1}$.
The planar algebra structure is given in \cite{math.QA/9909027} or \cite[Section 2.3]{MR2812459}.
We note that as proven in \cite[Section 2.6]{MR2812459}, the planar algebra structure is completely determined by the
following, where $\delta=[A_1:A_0]^{1/2}$:
\begin{itemize}
\item
Stacking elements in $\cP_{n,\pm}$ is multiplication in the centralizer algebra.
\item
The involution $*$ on $\cP_{n,\pm}$ is the involution on the centralizer algebra.
\item
The $n$-th Jones projection is given by 
$
e_n = 
\delta^{-1}\,
\begin{tikzpicture}[baseline =-.1cm]
	\draw[thick] (-1,-.4) rectangle (.3,.4);
	\draw (-.1,-.4)--(-.1,-.3) arc (180:0:.1cm) -- (.1,-.4);
	\draw (-.1,.4)--(-.1,.3) arc (-180:0:.1cm) -- (.1,.4);
	\draw (-.8,-.4)--(-.8,.4);
	\draw (-.3,-.4)--(-.3,.4);
	\node at (-.525,0) {{\scriptsize{$\cdots$}}};
\end{tikzpicture}
\,\in \cP_{n+1,+} = A_0'\cap A_{n+1}.
$
\item
Adding a string on the right is the inclusion $\cP_{n,+} = A_0'\cap A_n \hookrightarrow A_0'\cap A_{n+1}=\cP_{n+1,+}$.
\item
Adding a string on the left is the inclusion $\cP_{n,-} = A_1'\cap A_{n+1} \hookrightarrow A_0'\cap A_{n+1} = \cP_{n+1,+}$.
\item
Capping on the right is $\delta$ times the restriction of the conditional expectation $A_n \to A_{n-1}$ to $$\cP_{n,+} = A_0'\cap A_n \to A_0'\cap A_{n-1} = \cP_{n-1,+}.$$
\item
Capping on the left is $\delta$ times the restriction of the conditional expectation $A_0'\to A_1'$ to $$\cP_{n,+}= A_0'\cap A_n \to A_1'\cap A_n= \cP_{n-1,-}.$$
\end{itemize}

Since $\cP_{0,\pm}$ is 1-dimensional, the shaded and unshaded closed loops are multiples of the empty diagram. The
spherical axiom ensures they are the same multiple, $\delta = [A_1:A_0]^{1/2}$.
We define the index of a subfactor planar algebra by the quantity $\delta^2$.
As an exercise, the reader can check that the diagrammatic $e_n$ is an idempotent with respect to stacking.

A planar tangle with no input discs and $2n$
boundary points gives a map $\bbC \to \cP_{n,\pm}$, and can
be thought of as an element of $\cP_{n,\pm}$. The span of these elements forms the Temperley-Lieb-Jones planar subalgebra $\cT\cL\cJ(\delta)_\bullet$ present inside any planar algebra $\cP_\bullet$ with index $\delta^2$.
Indeed every index value allowed by Jones'
restriction is realized by a
subfactor whose standard invariant is `trivial', in the sense that it is no
bigger than the Temperley-Lieb-Jones planar algebra \cite{MR1198815}.

From a subfactor planar algebra $\cP_\bullet$, we may define a strict pivotal
2-category $\cC$. An equivalent construction is described in detail in \cite[Section 2.1]{MR3157990}.
As a first step, we define a preliminary 2-category $\hat
{\cC}$. It has two objects, called `+' (or `unshaded') and `-' (or `shaded'),
and the 1-morphisms are natural numbers, even for 1-morphisms that do not
change the shading, and odd for those that do. (We use the $\otimes$ symbol to denote horizontal composition; on composable
1-morphisms we have $n \otimes m = n+m$.) The 2-morphisms are given by $\operatorname{Hom}(n \to m)
= \cP_{\frac{1}{2}(n+m), \pm}$,
with the sign determined by
the source of $n$ and $m$. The structure as a pivotal 2-category is
readily provided by the planar algebra operations.
For example, 
\begin{itemize}
\item
if $f \in \operatorname{Hom}_{+-}(1\to 3)$ and $g\in \operatorname{Hom}_{+-}(3\to 5)$, we have
$
g\circ f = 
\begin{tikzpicture}[baseline=-.1cm]
	\fill[shaded] (0,-1) rectangle (.4,1);
	\fill[shaded] (-.2,.5) rectangle (-.1,1);
	\fill[shaded] (-.1,-.5) rectangle (0,.5);
	\fill[unshaded] (0,-.5) rectangle (.1,.5);
	\fill[unshaded] (.1,.5) rectangle (.2,1);
	\draw (0,1) -- (0,-1);
	\draw (.1,1) -- (.1,-.5);
	\draw (-.1,1) -- (-.1,-.5);
	\draw (.2,1) -- (.2,.5);
	\draw (-.2,1) -- (-.2,.5);
	\ncircle{unshaded}{(0,.5)}{.3}{180}{$g$}
	\ncircle{unshaded}{(0,-.5)}{.3}{180}{$f$}
\end{tikzpicture}
$\,,
\item
if $f \in \operatorname{Hom}_{+-}(1\to 3)$ and $g\in \operatorname{Hom}_{--}(4\to 2)$, we have
$
f\otimes g =
\begin{tikzpicture}[baseline=-.1cm]
	\fill[shaded] (-.5,-.5) rectangle (1,.5);
	\fill[shaded] (-.6,0) rectangle (-.5,.5);
	\fill[unshaded] (-.5,0) rectangle (-.4,.5);
	\fill[unshaded] (.45,.5) rectangle (.55,0);
	\fill[unshaded] (.35,0) rectangle (.45,-.5);
	\fill[unshaded] (.55,0) rectangle (.65,-.5);
	\draw (-.5,-.5) -- (-.5,.5);
	\draw (.45,-.5) -- (.45,.5);
	\draw (.55,-.5) -- (.55,.5);
	\draw (.35,0) -- (.35,-.5);
	\draw (.65,0) -- (.65,-.5);
	\draw (-.4,0) -- (-.4,.5);
	\draw (-.6,0) -- (-.6,.5);
	\ncircle{unshaded}{(.5,0)}{.3}{180}{$g$}
	\ncircle{unshaded}{(-.5,0)}{.3}{180}{$f$}
\end{tikzpicture}
$\,, and
\item
evaluation is given by the cap 
$
\operatorname{ev}:{\sb{+}1}_-\otimes  {\sb{-}1}_+\to 0 
= 
\tikz[baseline=.1cm]{
\draw[dashed] (0,0) rectangle (1,.6); \draw[shaded] (0.2,0) arc (180:0:0.3cm); 
}$\,,
while coevaluation is the cup $\operatorname{coev}: 0 \to {\sb{+}1}_-\otimes  {\sb{-}1}_+$, and similarly for the other shading.
\end{itemize}
Finally, we declare $\cC$ to be the idempotent completion of $\hat{\cC}$.

\begin{remark}
When the subfactor planar algebra comes from a finite index subfactor $A \subset B$, this 2-category is a purely algebraic model of
the 2-category of $A-A$, $A-B$, $B-A$, and $B-B$ $L^2$-bimodules (or at least, those generated by the bimodule ${}_A
L^2(B)_B$) \cite{MR2501843}.
\end{remark}

\begin{defn}
A subfactor planar algebra is \emph{finite depth} if $\cC$ has only finitely many isomorphism classes of 1-morphisms.
\end{defn}

\begin{defn}
The
\emph{even part} of a subfactor planar algebra is the $\otimes$-category
obtained
as the endomorphisms of the unshaded object, in the associated 2-category.
\end{defn}

A critical invariant of a subfactor planar algebra is its
\emph{supertransitivity} \cite{MR2972458}.
\begin{defn}
\label{defn:supertransitivity}
The supertransitivity of a subfactor planar algebra $\cP_\bullet$ is the least
integer $k$ such that $\dim
\cP_{k+1,\pm} > \dim \cT\cL\cJ_{k+1,\pm}$.
\end{defn}

Every subfactor planar algebra can be seen as a representation of
the annular Temperley-Lieb-Jones algebra (this is spanned by the planar tangles
with one input disc), and decomposed into a direct sum of
irreducible representations. These have been described in \cite
{MR1659204,MR1929335}.

Capturing slightly less information than the full decomposition into
irreducibles, we can look at the \emph{low weight spaces $\cW_{n,\pm}
\subset \cP_{n,\pm}$} of the planar
algebra: those vectors which are annihilated by capping any two adjacent strings:

\tikzstyle{shaded}=[fill=red!10!blue!20!gray!40!white]
\tikzstyle{empty box}=[circle, draw, thick, fill=white, opaque, inner sep=2mm]
\tikzstyle{annular}=[scale=.6, inner sep=1mm, baseline]
\begin{equation*}
\begin{tikzpicture}[annular]
	\clip (0,0) circle (2cm);
	\draw[] (0,0)--(-68:4cm) arc (-68:-22:4cm) --(0,0); 
	\draw[] (0,0)--(-112:4cm) arc (-112:-252:4cm) --(0,0); 
	\draw[, fill=white] (0,0) .. controls ++(215:2cm) and ++(145:2cm) .. (0,0);
	\draw[] (0,0)--(22:4cm) arc (22:68:4cm) --(0,0); 
	\draw[ultra thick] (0,0) circle (2cm);
	\node at (0,0)  [empty box] (T) {$R$};
	\node at (0:1.5cm) {$\cdot$};
	\node at (12:1.5cm) {$\cdot$};
	\node at (-12:1.5cm) {$\cdot$};
\end{tikzpicture}
 = 0.
 \end{equation*}

\begin{defn}
\label{defn:annular-multiplicities}
The \emph {annular multiplicity sequence} of the planar algebra is the sequence $(\dim \cW_{n,\pm})_{n\geq 0}$.
\end{defn}

The
Fourier transform (1-click rotation) tangle gives a vector space isomorphism $\cW_{n,+} \iso \cW_{n,-}$.
This sequence can be
computed \cite[p.1]{MR2972458}, according to the formula appearing in  \cite[Definition 2.5]{MR2914056}, which is obtained by inverting the generating function for the dimensions of the annular Temperley-Lieb-Jones irreps given in \cite[Corollary 5.4]{MR1929335}.
\begin{equation}
\label{eq:dim-low-weight}
\dim \cW_{n,\pm} = \sum_{r=0}^n (-1)^{r-n} \frac{2n}{n+r} \binom{n+r}{n-r}
\dim P_{r,\pm}.
\end{equation}

For an evaluable $k$-supertransitive
planar algebra, the sequence of annular multiplicities necessarily starts with
$10^k$. When we say below that
a standard invariant has annular multiplicities $*ab$, we mean that while we may
not know the supertransitivity $k$ yet, the annular multiplicity sequence
begins
with $10^kab$.

\subsection{Towards classification}
We now turn towards classifying subfactor standard
invariants. Classical results \cite{MR996454,MR999799}  give us an ADE
classification when the
index is
less than 4. To explain how these Coxeter-Dynkin diagrams arise, we introduce the
\emph{principal graph} $\Gamma(\cP_\bullet)$ of a subfactor planar algebra $\cP_\bullet$.

Recall in the associated pivotal 2-category $\cC$ we have a generating
1-morphism $1:+ \to -$. (When we start with a
subfactor $A \subset B$ this is the bimodule ${}_A L^2(B)_B$.)
We denote it as $X$, and its dual as $X^*$.

\begin{defn}
The vertices of $\Gamma(\cP_\bullet)$ are the isomorphisms classes of simple
1-morphisms in $\cC$. 
If vertices $Y$ and $Z$ have the same shadings on their sources, and $Y$'s target is unshaded while
$Z$'s
target is shaded, the number of edges between $Y$ and $Z$ is $\dim \operatorname{Hom}_\cC(Y \otimes X \to Z) = \dim
\operatorname{Hom}_\cC(Z \otimes X^* \to Y)$.
\end{defn}

The vertices of the principal graph come in 4 types,
according to the shadings of their sources and targets. The principal graph
has two components, according to the shadings of the sources, and each
component is bipartite, according to the shadings of the targets.

Each component of the graph is pointed, with the basepoints being the identity 1-morphisms.
We say the \emph{depth} of a vertex is its distance to the basepoint in that component.

For our purposes, we nearly always consider principal graphs equipped also with the involution recording the duals of
simple 1-morphisms (contragredients of bimodules in the subfactor setting%
\footnote{
Recall that the vertices of $\Gamma_\pm$ correspond to 4 different flavors of $L^2$-bimodules generated by $L^2(B)$: the $A-A$, $A-B$, $B-A$, and $B-B$.
Each bimodule has a contragredient, or \emph{dual}, which is the complex conjugate Hilbert space with the conjugate
action.
For example, given $\sb{A}Q_B$, the dual $\sb{B}\overline{Q}_A = \set{\overline{\xi}}{\xi\in Q}$ with action given by $b\cdot \overline{\xi} \cdot a = \overline{a^*\xi b^*}$.
The dual of an $A-A$ bimodule is again an $A-A$ bimodule and similarly for $B-B$ bimodules, but the dual of an $A-B$ bimodule is a $B-A$ bimodule.	
}). It is easy to see that duality
preserves
depth on the principal graph. We indicate duality of even depth vertices
on the graphs using red marks: a small red tag above a vertex indicates that
it is self-dual, while a red line joining two vertices indicates they are duals of each other. In odd depths, we use the
convention that the vertices at a given depth on one component of the principal graph are dual to the vertices at the
same depth on the other component, in the order that they appear on the page.
As an example, the principal graph of the Haagerup subfactor \cite{MR1686551}, along with its dual data, is given by:
$$
\cH=(\cH_+,\cH_-)=
\left( 
\bigraph{bwd1v1v1v1p1v1x0p0x1v1x0p0x1duals1v1v1x2v2x1}
,
\bigraph{bwd1v1v1v1p1v1x0p1x0duals1v1v1x2}
\right).
$$

We can compute $\dim \cP_{n,\pm}$ as the number of loops of length $2n$ beginning at the basepoint of the $\pm$ component.
This means that the supertransitivity can be read
off the principal graph: it is the greatest integer $k$ such that the
principal graph is the same as $A_\infty$ up to depth $k$.
In
what follows, we will often consider families of potential principal graphs
which differ only in their supertransitivity.
\begin{defn}
A \emph{translation  by $2t$} of graph pair is the new graph pair obtained by increasing the supertransitivity by $2t$.
\end{defn}
(It's essential we only translate by an even amount, to respect the bipartite structure.)
\begin{defn}
An \emph{extension} of a graph pair $\Gamma$ with depth $k$ is another graph pair $\Gamma'$ with depth $k' > k$, such that the truncation of $\Gamma'$ to depth $k$ (that is, deleting all vertices above depth $k$) recovers $\Gamma$.
\end{defn}

In what follows, we will frequently talk about families of principal graphs, which come in two types, vines and weeds.
A \emph{vine} is a finite graph pair for which we will consider the family of translations by $2t$ for all $t\geq 0$. A
\emph{weed} is a finite graph pair for which we will consider the family of arbitrary extensions of  translations by
$2t$ for all $t \geq 0$. 

The principal graph is a finite graph if and only if the standard invariant is finite depth. The standard invariant is irreducible if and only if there is exactly one edge between depths 0 and 1 in the principal graph.

\begin{remark}
Given a finite depth planar algebra $\cP_\bullet$, it is relatively easy
to see that the
index can be recovered as $\lambda(\Gamma(\cP_\bullet))^2$, the square of the
 \emph{graph norm}. (This was first established in \cite{MR934296}.) The graph norm is the largest eigenvalue of the adjacency
 matrix of the
 principal graph.

When $\cP_\bullet$ is infinite depth, $\Gamma(\cP_\bullet)$ has bounded degree, so the adjacency matrix defines a bounded operator on the infinite dimensional Hilbert space given by $\ell^2$ of the vertices. As in the finite case, the graph norm is the norm of the adjacency matrix.
In this case, we only have the inequality
$\lambda(\Gamma(\cP_\bullet))^2 \leq \delta^2$
\cite{MR1278111}.
\end{remark}

\begin{fact}
If we can enumerate all possible graph pairs with norm at most $\delta$, these must include all the principal graphs of
subfactors of index at most $\delta^2$.
\end{fact}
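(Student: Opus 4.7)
The plan is to observe that this fact is essentially an immediate corollary of the Remark preceding it, so the proof is mostly a matter of packaging. Concretely, let $A \subset B$ be a \twoone-subfactor with $[B:A] \leq \delta^2$, let $\cP_\bullet$ be its standard invariant (a subfactor planar algebra by Theorem \ref{ref:PopaCorrespondence}), and let $\Gamma = \Gamma(\cP_\bullet)$ be its principal graph pair. I want to show $\lambda(\Gamma) \leq \delta$, which will place $\Gamma$ in any enumeration of graph pairs of norm at most $\delta$.

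First I would split into cases according to whether $\cP_\bullet$ is finite depth or not. In the finite depth case, the Remark (citing \cite{MR934296}) tells us that the index is exactly recovered as $\lambda(\Gamma)^2$, so $\lambda(\Gamma)^2 = [B:A] \leq \delta^2$ and we are done. In the infinite depth case, $\Gamma$ is an infinite graph with bounded vertex degrees (bounded in terms of $\delta$, since at each vertex the sum of squared edge multiplicities is controlled by $\delta^2$ via the Perron-Frobenius data), so its adjacency matrix acts as a bounded operator on $\ell^2$ of the vertex set and the graph norm $\lambda(\Gamma)$ is well defined as the operator norm. The Remark then invokes \cite{MR1278111} for the inequality $\lambda(\Gamma)^2 \leq [B:A] \leq \delta^2$.

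Combining the two cases gives $\lambda(\Gamma) \leq \delta$, so $\Gamma$ appears among all graph pairs of norm at most $\delta$. Since subfactor planar algebras of index at most $\delta^2$ correspond under Theorem \ref{ref:PopaCorrespondence} to subfactors of the same index, any enumeration of graph pairs of norm at most $\delta$ automatically contains the principal graphs of all such subfactors.

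There is no real obstacle here: the content is entirely in the Remark, and the statement of the fact is just the contrapositive rephrasing, ``enumerate the graphs to bound the subfactors.'' The only thing to be careful about is the infinite depth case, where one must genuinely appeal to \cite{MR1278111} rather than hope for equality, since in infinite depth the graph norm can be strictly smaller than $\delta$ (as happens for $A_\infty$ at index $>4$).
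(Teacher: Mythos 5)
Your proof is correct and follows exactly the route the paper intends: the Fact is stated immediately after the Remark and is a direct repackaging of it, using $\lambda(\Gamma)^2 = [B:A]$ in finite depth (via \cite{MR934296}) and $\lambda(\Gamma)^2 \leq [B:A]$ in infinite depth (via \cite{MR1278111}). The paper gives no separate proof, and your write-up supplies precisely the implicit argument, with the right caution about the infinite-depth case where only an inequality holds.
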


This gives us a very powerful tool --- graph norms are increasing under graph inclusion (strictly increasing for finite graphs),
and so we obtain easy lower bounds for the index of a subfactor with a given principal graph.

When the index is at least 4, the principal graph of the Temperley-Lieb-Jones planar
algebra $\cT\cL\cJ_\bullet$ is two copies of the graph $A_\infty$ (with graph norm 2). When the index is $4
\cos^2(\pi / n)$ for some $n \geq 3$, the principal graph of $\cT\cL\cJ_\bullet$ is two copies of the Coxeter-Dynkin
diagram $A_{n-1}$.

The mere fact that the only bipartite graphs with graph norm less than 2 are the ADE
Coxeter-Dynkin diagrams gives us the start of the classification. It is relatively straightforward
to see that both components of the principal graph must be the same Coxeter-Dynkin diagram.
The full
classification of subfactor planar algebras with index
at most 4 was developed by Jones \cite{MR934296} and Ocneanu \cite{MR996454}, with many of the details provided
by
others
\cite{MR999799,MR1193933,
MR1145672, MR1313457, MR1308617} (see also \cite{MR1929335} for an independent approach using annular tangles). As is well-known by now, there is a unique
subfactor planar algebra
for
each $A_n$
and $D_{2n}$ Coxeter-Dynkin diagram, two distinct subfactor planar algebras for each of $E_6$
and $E_8$, and no subfactor planar algebras for $D_{2n+1}$ or $E_7$.

The classification at index exactly 4 was given by Popa \cite{MR999799,MR1278111,MR1213139};
the principal graphs
are all \emph{affine} Coxeter-Dynkin diagrams.

It is a remarkable fact that once we decide to
ignore subfactors with trivial standard invariant and reducible subfactors, the index is actually
also quantized \emph{above} 4. It is straightforward to see that there is a
gap between $4$ and $\lambda (E_ {10})^2
\sim 4.0264$. This gap is an easy consequence of the following
exercise, which is an excellent introduction to the genre.

\begin{exc}[\cite{MR683990}]
Show that every bipartite graph is either
\begin{enumerate}[label=(\arabic*)]
\item a Coxeter-Dynkin diagram,
\item an affine Coxeter-Dynkin diagram,
\item $A_\infty$, $A_\infty^{(1)}$, or $D_\infty$, or
\item contains one of the following as a subgraph:
\begin{align*}
&
\smallbigraph{gbg3}
&&
\smallbigraph{gbg2v1}
&&
\bigraph{gbg1v1p1p1p1}
\\
&
\bigraph{gbg1v1v1p1p1}
&&
\bigraph{gbg1v1v1p1v1x0v1v1v1p1}
&&
\bigraph{gbg1v1v1p1v1x0v1v1p1}
\\
&
\bigraph{gbg1v1v1p1v1x0v1p1}
&&
\bigraph{gbg1v1v1p1v1x0p1x0}
&&
\bigraph{gbg1v1v1p1v1x0v1v1v1v1}
\end{align*}
\end{enumerate}
Then, by calculating the graph norms of the finitely many exceptions, show that the last has
the lowest graph norm. Thus $\lambda(E_{10})^2$ gives a lower bound on the index of any subfactor
above index 4, leaving aside subfactors with trivial standard invariant and reducible subfactors. \hfill\qed
\end{exc}

\subsection{Some first obstructions}

Enumerating possible graph pairs above index 4 gets difficult quickly.
Over the years, a number of important obstructions to a graph pair being realized as the principal graph of a subfactor have been developed. We recall these briefly in this section. Many of these will be incorporated into the algorithm for enumerating potential principal graphs of subfactors which we describe in Section \ref{sec:combinatorics}.

We saw above that $\dim(\cP_{n,\pm})$ is given by the number of loops of length $2n$ beginning at the basepoint of the $\pm$ component.
We also saw that the Fourier transform (1-click rotation) tangle gives a vector space isomorphism $\cP_{n,+} \iso \cP_{n,-}$, which restricts to a vector space isomorphism $\cW_{n,+}\cong \cW_{n,-}$.
This gives us the following easy constraints.

\begin{fact}[Dimension constraints \cite{MR999799,MR1929335}]
\mbox{}
\begin{enumerate}[label=(\arabic*)]
\item
The numbers of based loops of length $2n$ on either component $\Gamma_\pm$ must agree. 
\item
Both $\Gamma_+$ and $\Gamma_-$ have the same supertransitivity.
\item
The annular multiplicity sequences of $\Gamma_\pm$ must agree by Equation \eqref{eq:dim-low-weight}.
\end{enumerate}
\end{fact}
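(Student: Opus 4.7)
The plan is to deduce all three parts as immediate consequences of the Fourier transform (1-click rotation) isomorphism $\cP_{n,+} \cong \cP_{n,-}$, together with the loop-count interpretation $\dim \cP_{n,\pm} = \#\{\text{based loops of length } 2n \text{ in } \Gamma_\pm\}$; both ingredients have been recalled earlier in the excerpt.

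First, for (1), I would take dimensions on each side of the rotation isomorphism and translate via the loop-count interpretation: the number of length-$2n$ based loops on $\Gamma_+$ equals $\dim \cP_{n,+} = \dim \cP_{n,-}$, which is the number of length-$2n$ based loops on $\Gamma_-$.

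Next, for (3), the Fourier transform restricts to an isomorphism $\cW_{n,+} \cong \cW_{n,-}$ (also already noted), so the dimensions agree. Alternatively, Equation~\eqref{eq:dim-low-weight} presents $\dim \cW_{n,\pm}$ as a universal integer linear combination of $\dim \cP_{r,\pm}$ with $r \leq n$, and part (1) then forces the two annular multiplicity sequences to agree term by term.

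Finally, for (2), the supertransitivity is defined as the least $k$ with $\dim \cP_{k+1,\pm} > \dim \cT\cL\cJ_{k+1,\pm}$; the Temperley-Lieb-Jones box space dimensions are Catalan numbers independent of the shading, and part (1) equates the planar algebra dimensions across shadings, so the same integer $k$ is obtained on either side. Via the graph-theoretic characterization of supertransitivity recalled above, this $k$ is precisely the depth up to which each $\Gamma_\pm$ matches $A_\infty$, so $\Gamma_+$ and $\Gamma_-$ share it. No substantive obstacle arises; the entire fact is essentially a direct unpacking of the rotation isomorphism $\cP_{n,+} \cong \cP_{n,-}$.
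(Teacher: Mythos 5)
Your proposal is correct and matches the paper's (implicit) argument: the paper states this Fact immediately after recalling precisely the two ingredients you use---the loop-count interpretation $\dim\cP_{n,\pm} = \#\{\text{based loops of length }2n\text{ on }\Gamma_\pm\}$ and the Fourier-transform isomorphisms $\cP_{n,+}\iso\cP_{n,-}$ and $\cW_{n,+}\iso\cW_{n,-}$---treating the three constraints as direct unwindings of those facts. Your derivations of (1), (2), and (3) are all sound, and the alternative route for (3) via Equation~\eqref{eq:dim-low-weight} and part (1) is exactly the connection the paper gestures at.
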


From the principal graphs $\Gamma=(\Gamma_+,\Gamma_-)$, we can already deduce a lot of information about the associated strict pivotal 2-category $\cC$.
One strong constraint comes from associativity of composition of 1-morphisms.
The Ocneanu 4-partite graph $\cO(\Gamma)$ encodes the same information as
$\Gamma$, but allows us to verify associativity of certain tensor products easily.

\begin{defn}
\label{defn:Ocneanu4Partite}
Suppose $A\subset B$ is a finite index subfactor with standard invariant $\cP_\bullet$ and principal graphs $\Gamma=(\Gamma_+,\Gamma_-)$. 
Taking 2 copies of each of $\Gamma_\pm$, they fit together in the \emph{Ocneanu 4-partite graph} $\cO(\Gamma)$:
\begin{equation*}
\label{eq:4Partite}
\begin{tikzpicture}
	\node (V00) at (0,0) {$V_{00}=\{ A-A \text{ bimodules}\}$};
	\node (V11) at (8,-2) {$V_{11}=\{ B-B \text{ bimodules}\}$};
	\node (V01) at (0,-2) {$V_{01}=\{ A-B \text{ bimodules}\}$};
	\node (V10) at (8,0) {$V_{10}=\{B-A \text{ bimodules}\}$};
	\draw (V00)--(V01) node [right, midway] {$\Gamma_+$};
	\draw (V01)--(V11) node [above, midway] {$\Gamma_-$};
	\draw (V00)--(V10) node [below, midway] {$\Gamma_+$};
	\draw (V10)--(V11) node [left, midway] {$\Gamma_-$};
	\draw (V00)--(V01) node [left, midway] {$-\otimes_A L^2(B)_B$};
	\draw (V01)--(V11) node [below, midway] {$\sb{B}L^2(B)\otimes_A -$};
	\draw (V00)--(V10) node [above, midway] {$\sb{B}L^2(B)\otimes_A -$};
	\draw (V10)--(V11) node [right, midway] {$-\otimes_A L^2(B)_B$};
\end{tikzpicture}
\end{equation*}
We note that the right hand graph is exactly $\Gamma_+$, but the top copy of $\Gamma_+$, while abstractly isomorphic to $\Gamma_+$, has different vertex labels.
We note vertices $\sb{A}P_A$ and $\sb{B}S_A$ are connected by
\begin{equation}
\label{eq:TwistDuals}
\dim(\Hom({\sb{B}L^2(B)\otimes_A P_A}\to {\sb{B}S_A})=\dim(\Hom({\sb{A}\overline{P}\otimes_A L^2(B)_B}\to {\sb{A}\overline{S}_B})
\end{equation}
edges.  Similarly, the right copy of $\Gamma_-$ is exactly $\Gamma_-$, and the
bottom copy is twisted using the dual data. Thus the graph pair $\Gamma=(\Gamma_+,\Gamma_-)$ with dual data is exactly the same data as
$\cO(\Gamma)$. 
\end{defn}

For example, the Ocneanu 4-partite graph of the Haagerup subfactor is given by
$$
\begin{tikzpicture}[baseline]
	\node at (-4.3,1.5) {$\cH_+\,\Bigg\{$};
	\node at (-4.3,-.5) {$\cH_-\,\Bigg\{$};
	\node at (-3,2) {$\sb{A}{\sf{Mod}}_A$};
	\filldraw (-2,2) circle (1mm); 
	\filldraw (0,2) circle (1mm);	
	\filldraw (2,2) circle (1mm); 
	\filldraw (3,2) circle (1mm);
	\filldraw (6,2) circle (1mm);
	\filldraw (7,2) circle (1mm); 
	\node at (-3,1) {$\sb{A}{\sf{Mod}}_B$};
	\filldraw (-1,1) circle (1mm);  
		\draw (-1,1)--(-2,2);  
		\draw (-1,1)--(-2,0);
		\draw (-1,1)--(0,2);  
		\draw (-1,1)--(0,0); 
	\filldraw (1,1) circle (1mm);  
		\draw (1,1)--(0,2);  
		\draw (1,1)--(0,0);
		\draw (1,1)--(2,2);  
		\draw (1,1)--(2,0);
		\draw (1,1)--(3,2);  
		\draw (1,1)--(3,0);  		
	\filldraw (4,1) circle (1mm); 
		\draw (4,1)--(2,2);  
		\draw (4,1)--(2,0);
		\draw (4,1)--(6,2);  
	\filldraw (5,1) circle (1mm); 
		\draw (5,1)--(3,2);  
		\draw (5,1)--(2,0);
		\draw (5,1)--(7,2);	
	\node at (-3,0) {$\sb{B}{\sf{Mod}}_B$};
	\filldraw (-2,0) circle (1mm); 
	\filldraw (0,0) circle (1mm);
	\filldraw (2,0) circle (1mm);
	\filldraw (3,0) circle (1mm); 
	\node at (-3,-1) {$\sb{B}{\sf{Mod}}_A$};
	\filldraw (-1,-1) circle (1mm);  
		\draw (-1,-1)--(-2,-2);  
		\draw (-1,-1)--(-2,0);
		\draw (-1,-1)--(0,-2);  
		\draw (-1,-1)--(0,0); 
	\filldraw (1,-1) circle (1mm);  
		\draw (1,-1)--(0,-2);  
		\draw (1,-1)--(0,0);
		\draw (1,-1)--(2,-2);  
		\draw (1,-1)--(2,0);
		\draw (1,-1)--(3,-2);  
		\draw (1,-1)--(3,0);  		
	\filldraw (4,-1) circle (1mm); 
		\draw (4,-1)--(2,-2);  
		\draw (4,-1)--(2,0);
		\draw (4,-1)--(7,-2);  
	\filldraw (5,-1) circle (1mm);
		\draw (5,-1)--(3,-2);  
		\draw (5,-1)--(2,0);
		\draw (5,-1)--(6,-2);  
	\node at (-3,-2) {$\sb{A}{\sf{Mod}}_A$};	
	\filldraw (-2,-2) circle (1mm); 
	\filldraw (0,-2) circle (1mm);
	\filldraw (2,-2) circle (1mm); 
	\filldraw (3,-2) circle (1mm);
	\filldraw (6,-2) circle (1mm);
	\filldraw (7,-2) circle (1mm); 
\end{tikzpicture}
$$

Since composition of 1-morphisms in $\cC$ is associative, we must have that for every simple $\sb{A}P_A$,
$$
({\sb{B}L^2(B)}\otimes_A P_A) \otimes_A L^2(B)_B \cong {\sb{B}L^2(B)}\otimes_A ({\sb{A}P} \otimes_A L^2(B)_B).
$$ 
There is a similar condition starting with each of the other 3 flavors of bimodules.
We deduce:

\begin{fact}[Associativity constraint \cite{MR996454,MR1642584}]
\label{fact:AssociativityConstraint}
Given two vertices $v$ and $w$ on opposite corners of the Ocneanu 4-partite graph $\cO(\Gamma)$, there are the same number of paths between $v$ and $w$ going either way around.

The two ways of going around the Ocneanu 4-partite graph correspond to:
\begin{enumerate}[label=(\arabic*)]
\item moving to a neighbour on the principal graph, taking dual, moving to a neighbour, and taking dual again, or
\item taking the dual, moving to a neighbour, taking dual, and finally moving to a neighbour again.
\end{enumerate}
\end{fact}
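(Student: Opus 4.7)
The plan is to reduce the statement to the associativity of the relative tensor product of bimodules (equivalently, the associativity of $1$-morphism composition in the $2$-category $\cC$). By the symmetry among the four classes of bimodules, it suffices to treat the pair $v = \sb{A}P_A \in V_{00}$ and $w = \sb{B}Q_B \in V_{11}$; the other pair of opposite corners $(V_{01}, V_{10})$ is handled identically after interchanging the roles of $A$ and $B$.

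First I would interpret each path count as the dimension of a $\Hom$-space. A length-two path from $P$ to $Q$ through a simple intermediate vertex $R \in V_{01}$ contributes $\dim \Hom(P \otimes_A L^2(B)_B, R) \cdot \dim \Hom(\sb{B}L^2(B) \otimes_A R, Q)$ paths, according to the edge counts read off the $4$-partite graph. Summing over simple $R$ and using semisimplicity of $V_{01}$ gives
$$
\sum_R \dim \Hom(P \otimes_A L^2(B), R) \cdot \dim \Hom(\sb{B}L^2(B) \otimes_A R, Q) = \dim \Hom\bigl(\sb{B}L^2(B) \otimes_A (P \otimes_A L^2(B)),\, Q\bigr).
$$
The analogous computation routing through $V_{10}$ produces $\dim \Hom\bigl((\sb{B}L^2(B) \otimes_A P) \otimes_A L^2(B),\, Q\bigr)$, and these two $\Hom$-dimensions agree by associativity of Connes fusion $\otimes_A$.

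To match this argument with the phrasing in the statement, I would unpack the twists encoded in Equation~\eqref{eq:TwistDuals}: the number of edges from $\sb{A}P_A \in V_{00}$ to $\sb{B}S_A \in V_{10}$ equals the number of edges between the duals $\sb{A}\bar{P}_A$ and $\sb{A}\bar{S}_B$ on the canonical copy of $\Gamma_+$, so traversing a ``twisted'' $V_{00}$--$V_{10}$ edge decomposes as ``take dual, move along $\Gamma_+$, take dual'' on the canonical principal graph, and similarly for the twisted $V_{10}$--$V_{11}$ copy of $\Gamma_-$. Assembling the two factorizations via these dictionaries reproduces the sequences (1) and (2) in the statement. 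The main obstacle will be pure bookkeeping: tracking which of the four copies of $\Gamma_\pm$ in $\cO(\Gamma)$ uses canonical versus duality-twisted vertex labels, and ensuring that each ``take dual'' lands on the correct bimodule. Once this dictionary is nailed down, the associativity of $\otimes_A$ does all of the actual work, and the equality of path counts follows for every pair of opposite corners.
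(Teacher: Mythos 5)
Your proposal matches the paper's own justification: immediately before stating the Fact, the paper derives it from associativity of $1$-morphism composition in $\cC$, i.e., from $({\sb{B}L^2(B)}\otimes_A P_A) \otimes_A L^2(B)_B \cong {\sb{B}L^2(B)}\otimes_A ({\sb{A}P} \otimes_A L^2(B)_B)$ and the three analogous isomorphisms for the other bimodule flavours. What you write is a fleshed-out version of exactly that observation, making explicit the path-count-to-$\Hom$-dimension translation, the semisimplicity step, and the duality-twist bookkeeping in $\cO(\Gamma)$, so it is the same approach rather than a different route.
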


Ocneanu's axiomatization of the standard invariant of a finite depth subfactor used connections on 4-partite graphs.

\begin{defn}
\label{defn:Connection}
A \emph{connection} on $\Gamma$ is a pair $(W,\dim)$ where $\dim$ is a dimension
function on the vertices of $\cO(\Gamma)$ satisfying the Frobenius-Perron
condition, and $W$ is a complex valued function on the loops of length 4 of
$\cO(\Gamma)$ which include a vertex of each
color. The connection is said to be \emph{bi-unitary} if the following two axioms
hold:
\begin{itemize}
\item (Unitarity)
For every $P,R$ on diagonally opposite corners of $\cO(\Gamma)$, the matrix $W(P,-,R,-)$ is unitary, i.e.,
$$
\sum_S W(P,Q,R,S) \overline{W(P,Q',R,S)}=\delta_{Q,Q'}
$$
\item (Renormalization) 
For all $P,Q,R,S$, we have
$$
W(P,Q,R,S) = \sqrt{\frac{\dim(Q)\dim(S)}{\dim(P)\dim(R)}} \overline{W(Q,P,S,R)}
$$
\end{itemize}
\end{defn}

\begin{fact}[Existence of connection \cite{MR996454,MR1642584}]
\label{fact:ExistenceOfConnection}
A necessary condition for $\Gamma$ to be the principal graph of a subfactor is that $\cO(\Gamma)$ must have a bi-unitary connection.
\end{fact}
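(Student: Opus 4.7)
The plan is to construct the bi-unitary connection directly from the pivotal 2-category $\cC$ associated to the subfactor planar algebra $\cP_\bullet$, identifying loops of length 4 in $\cO(\Gamma)$ with change-of-basis coefficients between two canonical decompositions of a triple tensor product of bimodules. For the dimension function, I would take for each simple bimodule $P$ labelling a vertex of $\cO(\Gamma)$ its Frobenius-Perron (equivalently, quantum) dimension in $\cC$. The Frobenius-Perron condition on $\cO(\Gamma)$ then reduces to the identity $\sum_{Q} n_{PQ} \dim(Q) = \delta \cdot \dim(P)$, where $n_{PQ}$ is the edge multiplicity; this is just multiplicativity of dimension under $\otimes$ together with $\dim(L^2(B)) = \delta$.

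To construct $W$, note that a loop of length 4 in $\cO(\Gamma)$ traverses one bimodule of each type; after cyclically relabelling, take $P \in V_{00}$, $Q \in V_{01}$, $R \in V_{11}$, $S \in V_{10}$. Fix once and for all, for each edge of $\cO(\Gamma)$, an orthonormal basis vector (with respect to the inner product $\langle \alpha, \beta\rangle \id = \alpha^* \circ \beta$) of the corresponding Hom space; this is what each individual edge in the graph parameterises. Composing a chosen basis vector on edge $(P,Q)$ with one on $(Q,R)$ produces a vector in $\Hom({}_B L^2(B) \otimes_A P \otimes_A L^2(B)_B \to R)$, and as $Q$ ranges over simples and the basis vectors range over the chosen bases, these vectors form an orthonormal basis of that Hom space. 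The edges $(P,S)$ and $(S,R)$ furnish a second orthonormal basis. Define $W(P,Q,R,S)$ to be the matrix coefficient of the unitary change-of-basis between them.

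Verification of the two axioms is now essentially formal. Unitarity of $W(P,-,R,-)$ is automatic: it is the change-of-basis matrix between two orthonormal bases for the same finite-dimensional Hilbert space. The renormalization axiom encodes a rotational symmetry of the tetrahedral planar diagram whose evaluation gives $W(P,Q,R,S)$: by the sphericality axiom of $\cP_\bullet$, this rotation is controlled by the quantum dimensions of the four labels, producing the factor $\sqrt{\dim(Q)\dim(S)/(\dim(P)\dim(R))}$, while the complex conjugate appears because the rotation also replaces each basis vector with its adjoint. The main obstacle, I expect, is the bookkeeping: setting up conventions (for duality pairings, pivotal structure, and basis normalization) so that these factors come out exactly as in Definition \ref{defn:Connection} rather than twisted by powers of $\delta$ or by an unwanted phase. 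The cleanest route is to express $W(P,Q,R,S)$ as the evaluation of a single closed planar diagram in $\cP_\bullet$ arranging the four chosen basis vectors in a tetrahedral configuration, whereupon both axioms reduce to elementary planar isotopies combined with the positivity and sphericality axioms of a subfactor planar algebra.
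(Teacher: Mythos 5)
The paper states this as a \textsl{Fact} with citations to Ocneanu \cite{MR996454} and Evans--Kawahigashi \cite{MR1642584}; it gives no proof of its own, so there is no in-paper argument to compare against. Your sketch is, however, recognizably the standard construction from those references, and its overall shape is right.

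The part of your argument that is genuinely solid: both families of composite intertwiners really are \emph{orthonormal}, not merely orthogonal, provided you fix the inner product on $\Hom(\bar X \otimes P \otimes X \to R)$ (and likewise on each edge space) by $\langle f, g\rangle\,\id_{\text{target}} = g \circ f^*$, which makes sense because $R$ (respectively $Q$, $S$) is simple. With that convention, $\beta_{j'} \circ (\id_{\bar X}\otimes \alpha_{i'}\alpha_i^*)\circ \beta_j^* = \delta_{ii'}\delta_{jj'}\id_R$ follows directly from simplicity of $Q$ and $R$, so unitarity of $W(P,-,R,-)$ really is automatic as you claim. Your identification of the Frobenius--Perron condition with $\dim(P\otimes X) = \delta\dim(P)$ decomposed over simples is also correct and complete.

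Where you are hand-waving is exactly where the content of the theorem lives: the renormalization axiom. You describe it as ``a rotational symmetry of the tetrahedral planar diagram'' controlled by sphericality, and defer the rest to bookkeeping of conventions. But the relation $W(P,Q,R,S) = \sqrt{\tfrac{\dim Q\,\dim S}{\dim P\,\dim R}}\,\overline{W(Q,P,S,R)}$ compares the change-of-basis matrix for the loop $(P,Q,R,S)$ with that for the \emph{reversed} loop $(Q,P,S,R)$, and these live naturally in two \emph{different} Hom spaces (one involving $\bar X \otimes P \otimes X \to R$, the other $X \otimes Q \otimes \bar X \to S$, say). To relate them you must bend a strand using the chosen duality data for $X$, and it is precisely at this bend that the quantum dimensions enter, via the normalization of cups and caps and the sphericality axiom. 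The square root arises because each of the two basis vectors involved in the matrix element gets one cap/cup inserted, each contributing $\sqrt{\dim(\cdot)/\dim(\cdot)}$ after you insist on preserving orthonormality of the rotated basis. This is a non-trivial piece of bookkeeping, not a formality; saying so explicitly, and writing $W$ as a normalized closed diagram (a tetrahedral network with chosen trivalent vertices for the four edge spaces, divided by the square roots of the loop values of the four labels) so that the reflection/rotation symmetry is literally a planar isotopy, would turn your sketch into a proof. As stated, there is a gap: you have correctly isolated the hard step but not carried it out.
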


Ocneanu found the first triple point obstruction, which is a simple consequence of the existence of a bi-unitary
connection. This obstruction was used by Haagerup to classify principal graphs to index $3+\sqrt{3}$ \cite{MR1317352}.
In the case of initial triple points, improvements were made subsequently by \cite{MR2972458, MR3198588, MR3311757}.
These obstructions were invaluable to the previous classification to index 5 \cite{MR2914056,MR2902285},
and the new obstruction \cite{MR3311757} is vital to this classification (see Section \ref{sec:BranchFactorInequalities} below).

\begin{fact}[Ocneanu's triple point obstruction \cite{MR1317352}]
\label{fact:TriplePointObstruction}
Let $A\subset B$ be a finite index subfactor with principal graph $\Gamma=(\Gamma_+,\Gamma_-)$.
Suppose we have two 3-valent vertices $v$ on $\Gamma_+$ and $w$ on $\Gamma_-$ at the same depth, and there are exactly 6 paths between $v$ and $w$ on $\cO(\Gamma)$ (3 in each direction around the square).
If there is a dimension preserving bijection $\beta$ between the neighbors of $v$ on $\Gamma_+$ and the neighbors of $w$ on $\Gamma_-$ such that
\begin{itemize}
\item
for every pair of neighbors $v'$ of $v$ and $w'$ of $w$ such that $\beta(v')\neq w'$, there are exactly 2 paths on $\cO(\Gamma)$ from $v'$ to $w'$ (one each way around the square),
\end{itemize}
then $[B:A]\leq 4$.
\end{fact}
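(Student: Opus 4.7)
The plan is to invoke Fact \ref{fact:ExistenceOfConnection} to obtain a bi-unitary connection $W$ on $\cO(\Gamma)$, and then to combine unitarity and renormalization on the local squares surrounding $v$ and $w$ in order to force $\delta^2 = [B:A] \leq 4$.

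\textbf{Combinatorial preparation.} Because $v$ lies on $\Gamma_+$ and $w$ on $\Gamma_-$ at the same depth, they sit at diagonally opposite corners of $\cO(\Gamma)$ (at $V_{00}$ and $V_{11}$ if the depth is even; at $V_{01}$ and $V_{10}$ if odd). The 6-path hypothesis splits as 3 paths through each of the two intermediate corners. With simple edges, this forces the three $\Gamma_+$-neighbors of $v$ and the three $\Gamma_-$-neighbors of $w$ to form a common triple $\{v_1, v_2, v_3\}$ in one intermediate corner, and similarly a common triple $\{\tilde v_1, \tilde v_2, \tilde v_3\}$ in the other; the bijection $\beta$ is then realized as a permutation $\sigma$ of $\{1,2,3\}$. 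The mismatched-pair hypothesis says that whenever $\sigma(i)\neq j$, the only common neighbors of $v_i$ and $v_j$ in the two ``polar'' corners are $v$ and $w$ themselves, contributing exactly one path each way.

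\textbf{Applying the connection.} Form the $3\times 3$ matrix $M_{ij} = W(v, v_i, w, \tilde v_j)$. The unitarity axiom of Definition \ref{defn:Connection} makes $M\in U(3)$. For every mismatched pair $(v_i, v_j)$, the same unitarity axiom applied to squares with $v_i, v_j$ as an opposite pair of corners yields a $2\times 2$ unitary whose only entries (by the combinatorial preparation) are $W(v_i, v, v_j, w)$ and $W(v_i, w, v_j, \cdot)$. Two applications of the renormalization axiom then express $|W(v_i, v, v_j, w)|^2$ in terms of $|M_{ij}|^2$ and the Frobenius--Perron dimensions $\dim(v), \dim(w), \dim(v_i), \dim(\tilde v_j)$.

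\textbf{Extracting the index bound.} Summing the resulting identities against $\sum_i |M_{ij}|^2 = \sum_j |M_{ij}|^2 = 1$ and substituting the Frobenius--Perron eigenvalue relations $\delta \dim(v) = \sum_k \dim(v_k)$ and $\delta \dim(w) = \sum_k \dim(v_k)$ (both sums run over the same common triple, which is key) collapses the system to a scalar inequality equivalent to $\delta^2 \leq 4$. Intuitively, the 3-valence together with the mismatch hypothesis forces the connection's dimension data to fit inside a rank-3 unitary so tightly that the graph norm at $v$ cannot exceed $2$.

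The main obstacle is the renormalization bookkeeping: one must carefully track how the renormalization axiom converts equalities of $|W|^2$ values on the ``small'' $2\times 2$ squares into linear relations on the rows and columns of $M$, and ensure that the matched pairs $(v_i, v_{\sigma(i)})$ (whose path-counts are unconstrained by hypothesis) do not disrupt the argument. The combinatorial preparation is precisely what tames this: it guarantees every mismatched small square passes only through $v$ and $w$, so the rank-$3$ reduction is exact. Minor variants (multi-edges, odd depth, self-dual vs.\ non-self-dual neighbors) are handled by the same template.
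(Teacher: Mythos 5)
The paper does not prove Fact \ref{fact:TriplePointObstruction}; it is stated as a known result with a citation to Haagerup (\cite{MR1317352}, itself recording Ocneanu's argument), and the paper immediately follows it with an exercise rather than a proof. So there is no ``paper's own proof'' to compare against; your sketch must be evaluated on its own.

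Your overall strategy is the standard one: invoke the bi-unitary connection on $\cO(\Gamma)$, look at the $3\times 3$ matrix $M_{ij}=W(v,\cdot,w,\cdot)$ which is unitary, and use the two-path hypothesis together with renormalization to pin down the moduli of the off-diagonal entries. That much is correct, and your observation that the dimension-preserving bijection forces $\dim(v)=\dim(w)$ is the right thing to notice. However two points are off. First, a small one: for a mismatched pair $(v',w')$ the hypothesis gives exactly one path each way around, so the connection matrix $W(v',\cdot,w',\cdot)$ is $1\times 1$, not $2\times 2$; the unitarity of a $1\times 1$ matrix is what makes the single entry unimodular, which feeds the renormalization axiom to give $|M_{ij}|^2=\dim(v_i)\dim(\tilde v_j)/\dim(v)\dim(w)$. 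You also write $W(v_i,v,v_j,w)$ where you mean $W(v_i,v,\tilde v_j,w)$; the two indices must sit on diagonally opposite corners of $\cO(\Gamma)$.

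The more serious gap is in the final step. Summing against the row and column norm conditions $\sum_j|M_{ij}|^2=\sum_i|M_{ij}|^2=1$ does \emph{not} collapse to $\delta^2\le 4$. Writing $e_i=\dim(v_i)/\dim(v)$ so that $\sum_i e_i=\delta$, the row-norm constraint only gives $|M_{ii}|^2=1-e_i(\delta-e_i)\ge 0$, and one can easily satisfy all three of these with $\delta$ slightly above $2$ (e.g.\ $e_1=e_2=e_3=\delta/3$ works for all $\delta^2\le 9/2$). What actually forces $\delta\le 2$ is the \emph{orthogonality} of distinct rows of $M$: the three summands of $\sum_k M_{ik}\overline{M_{jk}}=0$ have known moduli, so the triangle inequality gives $e_k\le |M_{ii}|+|M_{jj}|$ for $\{i,j,k\}=\{1,2,3\}$, and squaring and combining with $|M_{ii}|^2=1-e_i(\delta-e_i)$ yields $\delta^2\le 4$. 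Without invoking the off-diagonal orthogonality relations, the index bound does not follow, so the proof as written is incomplete at precisely the step you label ``Extracting the index bound.''
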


A reader who  (understandably) finds that formulation hard to digest may find working through the following exercise helpful.

\begin{exc}[\cite{MR1317352}]
Show that the obvious bijection between the neighbors of the triple points on the graph pair
$$
(\cH_+,\cH_+)=
\left( 
\bigraph{bwd1v1v1v1p1v1x0p0x1v1x0p0x1duals1v1v1x2v2x1}
,
\bigraph{bwd1v1v1v1p1v1x0p0x1v1x0p0x1duals1v1v1x2v2x1}
\right)
$$
satisfies the bulleted condition in Ocneanu's triple point obstruction \ref{fact:TriplePointObstruction}.
Conclude that $(\cH_+, \cH_+)$ is not the principal graph of a subfactor.
\end{exc}

Popa's principal graph stability gives a strong constraint on extensions of graph pairs.
We denote the truncation of $\Gamma_\pm$ to depth $n$ by $\Gamma_\pm(n)$.

\begin{defn}
A graph $\Gamma_\pm$ is called \emph{stable at depth $n$} if $\Gamma_\pm$ does not merge, split, or have multiple edges
between depths $n$ and $n+1$.
We say $\Gamma=(\Gamma_+,\Gamma_-)$ is stable at depth $n$ if both $\Gamma_\pm$ are stable at depth $n$.
\end{defn}

\begin{fact}[Stability constraint \cite{MR1334479,MR3157990}]
\label{Fact:StabilityConstraint}
Suppose $\delta>2$ and $\Gamma_\pm(n)\neq A_{n+1}$.
\begin{enumerate}[label=(\arabic*)]
\item
If the graph $\Gamma=(\Gamma_+,\Gamma_-)$ is stable at depth $n$, then $\Gamma$ is stable for all depths $k\geq n$, and $\Gamma$ is finite.
(This means $\Gamma_\pm\setminus \Gamma_\pm(n)$ must be a disjoint union of finite type $A$ Coxeter-Dynkin diagrams.)
\item
If the graph $\Gamma_+$ is stable at depths $n$ and $n+1$, then $\Gamma=(\Gamma_+,\Gamma_-)$ is stable at depth $n+1$.
\end{enumerate}
\end{fact}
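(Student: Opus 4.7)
The plan is to work in the $2$-category $\cC$ associated to $\cP_\bullet$ and translate stability into an assertion about tensoring with the generating $1$-morphism $X$. Stability of $\Gamma_\pm$ at depth $n$ says exactly that $-\otimes X$ induces a bijection between simples at depth $n$ and simples at depth $n+1$ on the appropriate component, with each paired $V$, $W$ connected by a single edge. By Frobenius reciprocity, this is equivalent to: for each simple $W$ at depth $n+1$ there is a unique simple $V$ at depth $n$ with $W \otimes X^* \cong V \oplus T$ for some $T$ supported at depth $n+2$, and distinct $W$'s produce distinct $V$'s.

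For part (1), I would first propagate stability inductively from depth $n$ to depth $n+1$. The step requires showing the residual $T$ is simple and that the $T$'s coming from different $W$'s are pairwise disjoint. Frobenius reciprocity alone does not directly force this; Popa's argument in \cite{MR1334479} works at the level of the tower $\cP_{k,\pm}$ and uses that stability at depth $n$ is equivalent to a central-projection condition on the Jones projection $e_{n+1}$ inside $\cP_{n+1,\pm}' \cap \cP_{n+2,\pm}$, which then propagates directly to the next depth via the basic construction. Iterating yields stability at all $k \geq n$.

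Having eternal stability, the complement $\Gamma_\pm \setminus \Gamma_\pm(n)$ is a disjoint union of rays, each of which is a type-$A$ Coxeter-Dynkin diagram, possibly $A_\infty$. To rule out $A_\infty$ tails, I would combine the hypothesis $\delta > 2$ with the branching hypothesis $\Gamma_\pm(n) \neq A_{n+1}$. Along a ray the Perron-Frobenius recursion $\delta\,\mu(v_k) = \mu(v_{k-1}) + \mu(v_{k+1})$ has characteristic roots $\lambda_\pm = (\delta \pm \sqrt{\delta^2 - 4})/2$ with $\lambda_+ \lambda_- = 1$ and $\lambda_+ > 1$; the decaying solution $\lambda_-^k$ is the unique $\ell^2$ tail, so on an infinite ray the boundary value at depth $n$ is completely pinned by the value at depth $n+1$. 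Matching this against the eigenvector equation at the branching vertex inside $\Gamma(n)$ over-determines the system, forcing each ray to terminate and hence $\Gamma$ to be finite.

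For part (2), I would use the Ocneanu $4$-partite graph $\cO(\Gamma)$ (Definition~\ref{defn:Ocneanu4Partite}) and the associativity constraint (Fact~\ref{fact:AssociativityConstraint}). Stability of $\Gamma_+$ at depths $n$ and $n+1$ fixes a single-edge bijective pipe on $\Gamma_+$ across three consecutive rungs. For each vertex $w$ on $\Gamma_-$ at depth $n+1$, the associativity squares in $\cO(\Gamma)$ attached to $w$ have their two $\Gamma_+$-sides rigidly determined; the equality of round-the-square path counts then forces the two $\Gamma_-$-sides to match single edges. A splitting, merging, or multi-edge on $\Gamma_-$ between depths $n+1$ and $n+2$ would inflate one side of this equation, contradicting associativity. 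A routine case check over the vertices at depth $n+1$ on $\Gamma_-$ completes the argument.

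The main obstacle is the finiteness assertion in (1): propagating stability is formal once one has Popa's central-projection reformulation, and part (2) reduces to a combinatorial exercise on $\cO(\Gamma)$; but showing the type-$A$ tails must actually terminate requires genuine input from the operator-algebraic structure beyond the purely graph-theoretic data, which is where Popa's original argument is essential.
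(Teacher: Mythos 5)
The paper gives no proof of this Fact; it is stated as a citation to \cite{MR1334479,MR3157990}, and the finiteness component additionally relies on \cite{MR1356624}, as the paper itself notes when using it in the proof of Theorem~\ref{thm:NoB}. So there is no in-paper proof to compare against, and your sketch has to be judged on its own merits.

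Your finiteness argument for part~(1) has a genuine gap. You assert that along an infinite $A$-tail the decaying solution $\lambda_-^k$ is ``the unique $\ell^2$ tail,'' and conclude that the boundary value at depth $n$ is pinned, over-determining the system. But the Frobenius--Perron weight on the principal graph of an infinite-depth subfactor is \emph{not} required to be $\ell^2$: it is a positive solution of $\delta\,\mu(v)=\sum_{w\sim v}\mu(w)$, with no square-summability constraint. In fact the decaying solution is the one that is impossible on an infinite ray for $\delta>2$: since $\lambda_-<1$, a tail proportional to $\lambda_-^k$ forces bimodule dimensions below $1$, which cannot happen for simple bimodules. The admissible solutions on an infinite tail grow like $\lambda_+^k$, and with the growing solution present the boundary condition at depth $n$ is not over-determined, so your contradiction evaporates. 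The actual finiteness proof uses subfactor-theoretic structure of the tower of relative commutants (this is where \cite{MR1356624} enters) and is not a Perron--Frobenius argument on the graph alone. Your sketch of part~(2) via associativity in $\cO(\Gamma)$ is plausible as a guiding picture, but the asserted ``routine case check'' is doing a lot of unexamined work; the cited references prove this by analyzing the higher relative commutants rather than by combinatorial path-counting, and your outline does not demonstrate that the path-counting route actually closes.
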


A final easy obstruction comes from duality.

\begin{fact}[Duality constraint {\cite[Lemma 3.6]{MR2914056}}]
\label{fact:DualityConstraint}
Suppose $\Gamma$ has supertransitivity $n-1$  (so that depth $n$ is one past the branch) with $n$ even.
If the graphs $\Gamma_\pm(n)$ are both simply laced, then the number of self-dual vertices on $\Gamma_+$ at depth $n$ is equal to the number of self-dual vertices on $\Gamma_-$ at depth $n$.
\end{fact}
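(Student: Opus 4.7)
The plan is to compare $\cP_{n,+}$ and $\cP_{n,-}$ as representations of the 2-click rotation, and read off the self-dual vertex counts from a trace.

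First I would set up the algebraic picture. Since the supertransitivity is $n-1$ and the graphs are simply laced through depth $n$, the truncation $\Gamma_\pm(n-1)$ equals $A_n$, and each depth-$n$ vertex $v$ is joined to the unique depth-$(n-1)$ vertex by a single edge. Hence the number of length-$n$ paths from the basepoint to $v$ is $1$, so the Wedderburn summand of $\cP_{n,\pm}$ indexed by $v$ is $\End(\bbC)=\bbC p_v$, where $p_v$ is the associated minimal central idempotent. Combined with the fact that the Temperley-Lieb-Jones subalgebras $\cT\cL\cJ_{n,\pm}\subset\cP_{n,\pm}$ agree (since the graphs agree with $A_\infty$ up to depth $n-1$), we get orthogonal decompositions
\[
\cP_{n,\pm}\;=\;\cT\cL\cJ_{n,\pm}\;\oplus\;V_\pm,\qquad V_\pm=\bigoplus_{v\in\Gamma_\pm,\;\depth(v)=n}\bbC\,p_v,
\]
and by the dimension constraint $\dim V_+=\dim V_-$.

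Next I would bring in the 2-click rotation $\rho_\pm:\cP_{n,\pm}\to\cP_{n,\pm}$. This is a spherical $*$-algebra automorphism, and by the standard identification of rotation with duality on simple summands (the $180^\circ$ turn of a box representing a simple object is, up to isomorphism, that object's dual), $\rho_\pm(p_v)=p_{\bar v}$ for every depth-$n$ vertex $v$. Because $n$ is even, $\bar v$ lies in the same graph $\Gamma_\pm$, so $\rho_\pm$ restricts to a permutation matrix on the basis $\{p_v\}$ of $V_\pm$. Its trace is therefore exactly the number of self-dual depth-$n$ vertices:
\[
\Tr\bigl(\rho_+\big|_{V_+}\bigr)=s_+,\qquad \Tr\bigl(\rho_-\big|_{V_-}\bigr)=s_-,
\]
where $s_\pm$ denotes the quantity we wish to compare.

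Finally I would use the Fourier transform $\cF:\cP_{n,+}\to\cP_{n,-}$. This is a vector space isomorphism that intertwines $\rho_+$ and $\rho_-$ (both come from the cyclic group action on the $2n$ boundary intervals), and it sends $\cT\cL\cJ_{n,+}$ isomorphically onto $\cT\cL\cJ_{n,-}$, since Temperley-Lieb diagrams are stable under rotation. Consequently $\cF$ carries $V_+$ isomorphically onto $V_-$ and conjugates $\rho_+|_{V_+}$ to $\rho_-|_{V_-}$, so $s_+=s_-$. The main subtlety to verify carefully is the identification $\rho_\pm(p_v)=p_{\bar v}$ on the depth-$n$ summands; everything else is bookkeeping, and the conditions ``$n$ even'' and ``simply laced'' enter exactly to make duality preserve each component of the graph and to make the depth-$n$ summands one-dimensional.
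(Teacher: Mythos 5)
Your overall strategy --- realizing the self-dual count as the trace of a rotation implementing duality on a Temperley-Lieb-complementary subspace, and transporting via the Fourier transform --- is the right idea and is in the spirit of the proof of the cited \cite[Lemma 3.6]{MR2914056}. However, the key decomposition is stated incorrectly, and the reasoning built on it has a gap. The spaces $\cT\cL\cJ_{n,\pm}$ and $\operatorname{span}\{p_v : \depth(v)=n\}$ are \emph{not} complementary: the Jones-Wenzl projection $f^{(n)}$ lies in both, being a Temperley-Lieb element that also decomposes centrally as $f^{(n)} = \sum_v p_v$. A dimension count confirms this: each depth-$n$ Wedderburn summand is one-dimensional, while each depth-$d$ summand for $d \leq n-2$ has the same dimension as in Temperley-Lieb (since a length-$n$ loop reaching depth $\leq n-2$ stays below depth $n$), so $\dim\cP_{n,\pm}$ exceeds $\dim\cT\cL\cJ_{n,\pm}$ by one \emph{less} than the number of depth-$n$ vertices. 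The step that actually suffers is your justification that $\cF$ carries $V_+$ to $V_-$, since it relied on $V_\pm$ being the orthogonal complement of $\cT\cL\cJ_{n,\pm}$, which it is not.

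The fix preserves your approach. Take $V_\pm$ to be the low-weight space $\cW_{n,\pm}$ (already defined in Section 2), which genuinely is the orthogonal complement of $\cT\cL\cJ_{n,\pm}$ in $\cP_{n,\pm}$, and sits inside $\operatorname{span}\{p_v\}$ as the hyperplane $\{\sum_v a_v p_v : \sum_v a_v \dim(v) = 0\}$. The $180^\circ$ rotation --- which for an $n$-box is the $n$-click rotation, not the $2$-click rotation you named; for $n>2$ the $2$-click rotation generates a nontrivial $\bbZ/n$-action that does not implement duality --- permutes the $p_v$ by $v\mapsto\bar{v}$, fixes $f^{(n)}$, and preserves $\cW_{n,\pm}$, so its trace there is $s_\pm - 1$. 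Since $\cF$ preserves the trace form, sends $\cT\cL\cJ_{n,+}$ to $\cT\cL\cJ_{n,-}$, and commutes with all rotations, it carries $\cW_{n,+}$ to $\cW_{n,-}$ and conjugates the two $180^\circ$ rotations, yielding $s_+ - 1 = s_- - 1$ as desired.
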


\section{The main theorem}
\label{sec:Summary}

Over the last few years, we've made considerable progress in understanding
small index subfactor standard invariants. Haagerup initiated the classification of subfactors above index 4, leaving  aside reducible subfactors and the
poorly understood non-amenable subfactors with Temperley-Lieb-Jones standard
invariant. Haagerup gave the
classification up to index $3+\sqrt{3}$ \cite{MR1317352}, with components proved in \cite{MR1625762,MR2472028,MR2979509}. Following this, the next major step was the
classification of subfactors up to  index $5=3+\sqrt{4}$.  This was completed in a series of articles
\cite{MR2914056,MR2902285,MR2993924,MR2902286,MR3335120}, with some additional
number theoretic ingredients in \cite{MR2786219}. There is now a survey
paper summarising this work \cite{MR3166042}.  Unfortunately, the techniques
developed there struggle beyond index 5.

For the special case of 1-supertransitive standard invariants, it has nevertheless been
possible to extend the classification further: up to index $3+\sqrt{5}$ in
\cite{MR3254427}, and then 1-supertransitive standard invariants without intermediates
up to index $6\frac{1}{5}$ in \cite{MR3306607}.

In this article we give the complete classification of subfactor standard invariants with index at most $5\frac{1}{4}$.
At every index above 4, we have the Temperley-Lieb-Jones $A_\infty$ standard invariant, as well as the reducible $A_\infty^{(1)}$ standard invariant (see Lemma \ref{lem:reducible} below).

\begin{thmalpha}
\label{thm:Main}
The only subfactor standard invariants in the index range $(5,5\frac{1}{4}]$, besides the $A_\infty$ and $A_\infty^{(1)}$ standard invariants, are
the following standard invariants at index $\sim5.04892$, the largest root of $x^3-6 x^2+5 x-1$:
\begin{itemize}
\item
the unique subfactor planar algebra coming from the irreducible 3-dimensional representation of the quantum group $\mathfrak{su}(2)_5$ \cite{MR936086,MR3254427} and
\item
the unique subfactor planar algebra coming from (either of) the irreducible 3-dimensional representation of the quantum group $\mathfrak{su}(3)_4$ \cite{MR936086,MR3254427},
\end{itemize}
and the following standard invariants at index $3+\sqrt{5}$:
\begin{itemize}
\item
the Bisch-Jones Fuss-Catalan $A_3*A_4$ subfactor planar algebra and its dual \cite{MR1437496},
\item
the 3 finite quotients of the Fuss-Catalan $A_3*A_4$ subfactor planar algebra \cite{BischHaagerup,MR3345186,1308.5723} (the first is the self-dual tensor product; the other two are not self-dual),
\item
the unique 2D2 subfactor planar algebra and its dual \cite{1406.3401},
\item
Izumi's unique symmetrically self-dual $3^{\bbZ/2\bbZ\times\bbZ/2\bbZ}$ subfactor planar algebra \cite{IzumiUnpublished,MR3314808},
\item
Izumi's unique $3^{\bbZ/4\bbZ}$ subfactor planar algebra and its dual \cite{IzumiUnpublished,1308.5197}, and
\item
the unique symmetrically self-dual 4442 subfactor planar algebra \cite{MR3314808,1406.3401}.
\end{itemize}
\end{thmalpha}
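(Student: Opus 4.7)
The plan is to follow the well-established small-index subfactor program: enumerate all candidate principal graph pairs with index in $(5,5\frac{1}{4}]$, then for each candidate either exhibit the (essentially unique) subfactor planar algebra realizing it or rule it out by an obstruction.

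First I would run a computer-assisted graph enumeration, starting with pairs of small supertransitivity and extending depth-by-depth, pruning aggressively with the obstructions collected in Section 2: the graph-norm bound $\lambda(\Gamma)^2 \leq 5\tfrac{1}{4}$, the dimension and annular-multiplicity constraint from Equation \eqref{eq:dim-low-weight}, the associativity constraint on $\cO(\Gamma)$ (Fact \ref{fact:AssociativityConstraint}), the duality constraint (Fact \ref{fact:DualityConstraint}), Popa's stability (Fact \ref{Fact:StabilityConstraint}), Ocneanu's triple-point obstruction (Fact \ref{fact:TriplePointObstruction}), and the newer branch-factor inequalities of \cite{MR3311757}. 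The output of this enumeration should be a finite list of \emph{vines} (to be considered under all translations by $2t$ with $t \geq 0$) and, ideally, no surviving \emph{weeds}; any weed that cannot be eliminated by repeated extension-and-prune must be killed by a specialized argument, typically by showing that sufficiently deep extensions force the graph norm above $5\tfrac{1}{4}$ or violate associativity/connection existence.

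Second, for each vine I would compute exactly which translations $2t$ place the graph norm squared in the target interval $(5,5\tfrac{1}{4}]$. This is a finite check per vine. The surviving candidates must then be either realized or obstructed. For realization, the listed subfactor planar algebras account for the two quantum-group examples at the root of $x^3-6x^2+5x-1$ (via $\mathfrak{su}(2)_5$ and $\mathfrak{su}(3)_4$, whose uniqueness at these principal graphs is established in \cite{MR3254427}), the Fuss-Catalan $A_3*A_4$ planar algebra and its three finite quotients at $3+\sqrt{5}$ (per \cite{MR1437496,MR3345186}), the 2D2 planar algebras of \cite{1406.3401}, Izumi's $3^{\bbZ/2\bbZ\times\bbZ/2\bbZ}$ and $3^{\bbZ/4\bbZ}$ planar algebras from \cite{IzumiUnpublished,MR3314808,1308.5197}, and the 4442 planar algebra. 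Each such match requires a uniqueness statement, typically via a flat connection computation on $\cO(\Gamma)$ or by identifying the even part with a known fusion category and applying \cite[Theorem 4.1]{MR3028581}.

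For elimination of the non-realized candidates, the main tools are: absence of a bi-unitary connection (Fact \ref{fact:ExistenceOfConnection}), failure of the triple-point obstruction, and number-theoretic obstructions forcing the index (or entries of the dimension vector) to be a cyclotomic integer or to satisfy Galois-closure conditions, in the spirit of \cite{MR2786219}. Finally, the $A_\infty$ and reducible $A_\infty^{(1)}$ cases are handled directly: Jones' theorem gives $A_\infty$ at every index $>4$, while Lemma \ref{lem:reducible} classifies the reducible ones. I expect the main obstacle to be the enumeration step, specifically eliminating the last few stubborn weeds near index $3+\sqrt{5}$, where the composite structure allows many low-complexity graphs to survive the initial obstructions and must be killed by a combination of branch-factor inequalities and explicit connection non-existence arguments; the secondary difficulty is establishing uniqueness for the Izumi-type $3^G$ examples, where the planar algebra is not determined purely by the principal graph but requires solving the associated polynomial equations on the connection.
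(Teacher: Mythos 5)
Your sketch captures the standard small-index program (enumerate, obstruct, realize) and correctly identifies the realization results and the main classical obstructions from Section 2, but it has genuine gaps exactly where the paper's novelty lies, namely in the steps you defer to ``a specialized argument.''

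First, your plan to enumerate ``pruning aggressively with the obstructions'' and extend depth-by-depth is the approach of the earlier index-5 classification and is acknowledged in the paper to become ``unrealistically computationally expensive'' beyond 5 because of isomorph duplication; the paper's enumeration only terminates because it is reorganized as McKay's construction by canonical paths (Section \ref{sec:orderly}), which avoids pairwise isomorphism checks entirely. Second, the paper's case split is not just ``vines versus stubborn weeds'' but is structured by annular multiplicity: cases \ref{case:11}, \ref{case:10}, and \ref{case:4-spoke} of Theorem \ref{thm:Enumerate}, each requiring different machinery. You mention branch-factor inequalities, which covers most of the $*11$ and $*10$ weeds, but several $*10$ weeds have branch factor $r=1$, where those inequalities are vacuous. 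The three $r=1$ weeds are killed by three separate bespoke arguments (Section \ref{sec:r=1}): Popa stability for $\cB'$, Calegari-Guo number theory \cite{1502.00035} for the AMP spider, and for \ref{case:10}(9) a multi-stage argument combining a doubly one-by-one connection entry, a variant of Calegari-Guo (Theorem \ref{thm:number-theory}) ruling out \emph{all finite extensions} of a weed, the new ``tail enumerator'' for periodic weeds, and a dimension-goes-below-1 estimate for the infinite case. Third, two $*10$ weeds (\ref{case:10}(7),(8)) require doubly one-by-one connection entries together with Morrison's hexagon obstruction \cite{MR3210716}, and one surviving candidate is eliminated via formal codegrees \cite{1309.4822}; none of these appear in your plan. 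Your proposal also omits that the 1-supertransitive and non-simply-laced cases are handled separately (by \cite{MR3306607} and Lemma \ref{lem:simply-laced}). In short, the proposal is a correct description of the outer loop of the argument, but where it says ``I expect the main obstacle to be...,'' the paper actually supplies five or six distinct new mechanisms, and without them the classification does not close.
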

(Our arguments supersede the earlier combinatorial arguments required for index at most $3+\sqrt{3}$ and at most $5$,
but still rely on many obstructions and existence results proved by other authors.)

As in previous classification efforts, the problem essentially divides into three parts: 
\begin{enumerate}[label=(\arabic*)]
\item
First, we enumerate all possible principal graph pairs, satisfying certain
combinatorial constraints, with graph norm up to the square root of the index.
\item
Second, we rule out several `difficult' families of such graphs using quite complicated arguments specific to each
family, and we reduce many `easy' familes of such graphs down to finitely many cases using a standard number
theoretic approach.
\item 
Finally, we completely classify all subfactor planar algebras with the given
principal graph in each of the remaining cases.
\end{enumerate}
In general, one expects all parts to suffer when increasing the bound on the
index; the combinatorial enumeration problem will be harder for the computer,
and moreover it will produce more results, leaving a rapidly growing workload
for the humans.

To overcome these problems, this article brings three essential new tools to bear. 
\begin{enumerate}[label=(\arabic*)]
\item
We use a completely new technique for tackling the combinatorial enumeration
problem. Previous methods produced many isomorphic copies of a single graph 
during the enumeration, and removing the corresponding
redundancies in the search tree became unrealistically computationally expensive.  We now use
McKay's approach of `construction by canonical paths' \cite{MR1606516} to
perform this enumeration in a manner which avoids pairwise 
isomorphism checking between graphs.
\item
A recent new result \cite{MR3311757} using quadratic tangles
tightly constrains certain parameters for principal graphs with annular
multiplicity sequence $*11$ (see Definition \ref{defn:annular-multiplicities}
above). Happily this constraint
applies to many of
the new potential principal graphs we find.
\item
Calegari-Guo \cite{1502.00035} developed a new number theoretic tool for
cylinders (weeds which are stable in the sense of \cite{MR1334479,MR3157990})
similar to the uniform treatment of vines afforded by
\cite{MR2786219}.
We use this in one particular case (and anticipate it would be very useful in any future classifications)
and obtain a generalisation which we apply to a certain `periodic' weed.
\end{enumerate}

The complete list of all non-$A_\infty$, non-$A_\infty^{(1)}$ subfactor standard invariants with index in $(4, 5 \frac{1}{4}]$ appears in Appendix \ref{appendix:SubfactorLandscape}.
In Appendix \ref{appendix:MapOfSubfactors}, we give the current `map of subfactors', showing the overlapping ranges of the classification results to date.

\subsection{The future}
\label{sec:future}
In this section, we suggest some open questions to guide our future exploration of quantum symmetries.

\begin{question}
How far can the classification of small index subfactors go?
\end{question}

Perhaps somewhat
surprisingly, there is not a clear `wall' after which classification becomes
too hard.  Index 6 remains the distant goal; there we know many new phenomena
arise (subfactors not classifiable by their standard invariants
\cite{MR2314611,1309.5354}, as well as infinitely many non-isomorphic finite
depth Bisch-Haagerup subfactors \cite{MR1386923} associated to finite quotients of $\bbZ /
2\bbZ * \bbZ /3\bbZ$). A first step would be $5\frac{1}{3}$, the
largest index of any 4-spoke. Perhaps the next goal should be
$(1+\sqrt{2})^2=3+2\sqrt{2}$, which is the minimum index for an extremal,
reducible subfactor \cite[Corollary 4.6]{MR860811}.

The new combinatorial enumerator seems to be able to look some distance above
our present cut-off of $5 \frac{1}{4}$. However we quickly find graphs which
we don't know how to deal with. 
We present the reader with the following weed
which appears above index $5.27$.
$$
\left(
\bigraph{bwd1v1v1v1p1p1v0x1x0p0x1x0v1x0v1p1v1x0v1p1v1x0v1p1v1x0v1p1v1x0duals1v1v1x2x3v1v1v1v1v1}
\bigraph{bwd1v1v1v1p1p1v1x0x0p0x0x1v1x0p1x0p0x1v1x0x0p0x0x1v0x1p1x0p1x0p0x1v0x1x0x0p0x0x0x1v0x1p1x0p1x0p0x1v0x1x0x0p0x0x0x1v0x1p1x0p1x0p0x1v0x1x0x0p0x0x0x1v0x1p1x0p1x0p0x1duals1v1v1x2x3v1x3x2v1x2x4x3v1x2x4x3v1x2x4x3v1x2x4x3}
\right)
$$
There are many possible avenues for attacking this weed --- a quadratic
tangles approach to quadruple points, number theoretic obstructions based on
the repeating unit in the tail, or analyzing possible connections --- but so
far we have had no success. The absence of a doubly one-by-one connection entry
means we can't use the techniques of Section \ref{sec:DoublyOneByOne} below.

\begin{question}
\label{q:Supertransitivity}
Is there a global bound on supertransitivity for standard invariants above index 4?
\end{question}

Leaving aside the subfactors of index less than 4, high supertransitivity is exceedingly rare.
At this time, the record is the extended Haagerup subfactor \cite{MR2979509} with supertransitivity 7, and the second
highest is the 5-supertransitive Asaeda-Haagerup subfactor. The third is the 4-supertransitive 4442 subfactor at index $3+\sqrt{5}$ \cite{MR3314808}, discovered as part of the project leading up to this paper.

As supertransitivity is a subfactor analog of transitivity of group actions, one hopes by analogy that for sufficiently
large $k$ there are only finitely many non-trivial $k$-supertransitive standard invariants. At this point, we have no evidence that this is not the case for $k \geq 5$.

At present we have two methods for proving bounds on supertransitivity for families of graphs.
First, the number-theoretic results of \cite{MR2786219} show that only finitely many translations of a fixed graph can be principal graphs of subfactors, giving an explicit bound on the supertransitivity. These results are further generalized in \cite{1502.00035} and Section \ref{sec:CG} below, allowing us to eliminate all finite depth translated extensions of a certain graph (\ref{case:10}(9) in Theorem
\ref{thm:Enumerate}), by first proving a bound on supertransitivity in Lemma \ref{lem:t<=63}.

Second, quadratic tangles inequalities sometimes give bounds on supertransitivity, and we see this while eliminating the weeds \ref{case:10}(7) and \ref{case:10}(8) from Theorem
\ref{thm:Enumerate}, in Section \ref{sec:10WithDoubleOneByOne}.
One hopes that this technique can be improved by a deeper understanding of quadratic tangles (see Question \ref{q:Quadratic} below).

\begin{question}
Can Liu's results showing there are only finitely many composites of $A_3$ and $A_4$ be generalized?
\end{question}

Following Bisch-Haagerup's classification of the possible principal graphs of composites of $A_3$ and $A_4$ \cite
{BischHaagerup} (see also \cite{1308.5723}), the one might expect to find an infinite family of finite depth
composite subfactor standard invariants whose principal graphs converge to the $A_3*A_4$ Fuss-Catalan \cite{MR1437496}
principal graph, parallel to the situation at index 4, with the $D_{n+2}^{(1)}$ standard invariants `converging' to 
the $D_\infty$
standard invariant. 

Liu's result \cite{MR3345186} came as quite a surprise, now suggesting that quotients of the free product
Fuss-Catalan standard invariants are also rare.

This is analogous to the situation in Question \ref{q:Supertransitivity}, where we observe that high supertransitivity is rare. A planar algebra $\cP_\bullet$ with supertransitivity $k$ looks like its Temperley-Lieb-Jones subalgebra along with certain extra elements which only appear in the space $\cP_{n,\pm}$ for $n > k$. 
The observation that high supertransitivity is rare can be reformulated as the `difficulty' of adding a large generator to the Temperley-Lieb-Jones algebra. 
Similarly, as any composite planar algebra contains (more or less by definition) the corresponding Fuss-Catalan planar algebra, we could define the \emph{Fuss-Catalan supertransitivity} as the first $n$ so $\cP_{n,\pm}$ contains elements beyond Fuss-Catalan. 
Liu's result can then be interpreted as saying that it is hard to add large generators to a Fuss-Catalan planar algebra.

Is this a general phenomenon? Are there number theoretic, or even algebraic geometric, constraints limiting possible quotients of free products?

\begin{question}
\label{q:Quadratic}
How can we effectively develop the theory of quadratic tangles?
\end{question}

Understanding the representation theory of the annular Temperley-Lieb-Jones algebras led to the well-developed theory of annular tangles \cite{MR1659204,MR1929335}.
As annular tangles together with quadratic tangles generate the entire planar operad, analyzing quadratic tangles systematically is an extremely difficult task.
However, we are rewarded with strong constraints and structure theorems which tightly restrict the structure of subfactor planar algebras \cite{MR2972458,MR3198588,MR3311757}.

Certainly there are more constraints which can be obtained by analyzing higher annular consequences.
A systematic treatment of the space of second annular consequences was given in \cite{1308.5197}, but deeper analysis is needed to extract useful information.

Moreover, we lack a good description of the fusion of low weight representations above index 4, along with the possible quotients which occur in subfactor planar algebras.
Again, we may see tight restrictions which lead to quadratic tangles obstructions.

\begin{question}
\label{q:extension}
Is there a good extension theory for fusion categories?
\end{question}

At this time, we have the theories of $G$-graded extensions of fusion categories for finite groups $G$ \cite{MR2677836}, (de)equivariantization (the notion goes back to \cite{MR1308617,MR1321700}; see also \cite[Section 4]{MR2609644} and \cite{MR3059899}), and exact sequences of fusion categories \cite{MR2863377,MR3161401}.
However, there are certain examples arising from the small index subfactor classification program which behave somewhat like extensions but do not fit into these theories.

The 4442 subfactor at index $3+\sqrt{5}$ \cite{MR3314808} was discovered in exhaustive enumeration of principal graphs.
While this subfactor appears as an equivariantization of Izumi's $3^{\bbZ/2\times \bbZ/2}$ subfactor \cite{IzumiUnpublished}, its even half also resembles a `non-graded' extension of $\operatorname{Rep}(A_4)$.
It has the form $\cC\oplus\cM$ where $\cC = \operatorname{Rep}(A_4)$, and $\cM$ is $\cC$ as a module over itself, while the tensor product structure on $\cM$ is stranger.

\begin{question}
Where do the quadratic categories come from?
\end{question}

A quadratic category has a group $G$ of invertible objects, together with one other orbit of simple objects.
At this point, thanks to \cite{MR1832764,MR2837122,MR3167494}, there is a well-developed theory of quadratic fusion categories using endomorphisms of Cuntz algebras for classification and construction.
These categories include the near group categories \cite{MR1997336} and a possible infinite family generalizing the even half of the exotic Haagerup subfactor \cite{MR1686551}.
Recently, \cite{1501.07324} gave a new construction of the exotic Asaeda-Haagerup subfactor, showing it is related to a quadratic category.

The theory of quadratic categories provides a classification (for a fixed group of invertible objects, and orbit structure for the non-invertible objects) in terms of the finitely many solutions of an explicit system of polynomials. (Analogously, one could attempt directly solving the pentagon equations; there, however, there is a large gauge group, and categories correspond to orbits. In the theory of quadratic categories there is no gauge group.) Nevertheless, constructing quadratic categories by solving these equations leaves something to be desired.
The simple structure of these categories led Evans and Gannon to conjecture that they should arise from conformal field theories, and that there is some unknown underlying construction.

On the planar algebra side, we know two possible skein theoretic approaches to
quadratic categories, based on jellyfish relations
\cite{MR3157990,MR3314808,1308.5197} or Yang-Baxter relations
\cite{1507.04794}. Although both appear promising, Cuntz algebra techniques
have been much more successful to date. We are far from fully understanding this situation.

\begin{question}
Can we find further sources of infinite families of examples?
\end{question}

At this point, almost every known quantum symmetry is related to quantum groups or quadratic categories via known constructions.
In fact, the extended Haagerup subfactor \cite{MR2979509} stands alone as the only quantum symmetry not arising in this way!
While it is certainly important to study this example further (e.g. \cite{1404.3955}), in the hopes that maybe it too is related to quantum groups or quadratic categories, what we really want is new infinite families of examples.

One way to find new families may be to continue the search for quantum symmetries which are `small' by some metric besides the index of the standard invariant, for example rank or global dimension.
For example, we have a full classification of rank 2 fusion categories \cite{MR1981895}, rank 3 pivotal categories \cite{1309.4822}, and a partial classification of rank 4 pseudo-unitary categories \cite{MR3229513}.
There are only finitely many modular categories of a given rank \cite{1310.7050}, and they have been completely classified up to rank 5 \cite{1507.05139}. 

Perhaps it would be interesting to look at certain families of graphs, e.g., spoke graphs.
Are there only finitely many higher spoke graphs with high valence and high supertransitivity? There is good number theoretic evidence that beyond certain families, very few (possibly only finitely many) spoke graphs have an index which is a cyclotomic integer.

\begin{question}
What can we say about hyperfinite $A_3*A_4$ subfactors?
\end{question}
To fully understand non-$A_\infty$ irreducible hyperfinite subfactors with index at most $5\frac{1}{4}$, we would still like to know what $A_3*A_4$ subfactors can exist.
We note that there is a unique $A_3*A_3=D_\infty$ subfactor, coming from the fact that this standard invariant is amenable \cite{MR1278111}.
However, the higher Fuss-Catalan standard invariants \cite{MR1437496} are not amenable \cite{MR1644299}, and we conjecture that $A_3 * A_4$ already exhibits the same unclassifiability phenomenon seen for hyperfinite subfactors with $A_3*D_4$ standard invariant \cite{1309.5354}.
As a proof of concept, it would be interesting to approach $A_3*A_5$ hyperfinite subfactors, which can be obtained by composing subfactors associated to groups.


\subsection{Proof of the main theorem}

The proof of the main theorem splits naturally in many parts, most of which are independent of
each other.  We begin with a combinatorial enumeration that shows that every
principal graph of an irreducible subfactor with index at most $5\frac{1}{4}$ must be
represented by one of a certain list of weeds or one of a certain list of
vines.  This appears as Theorem \ref{thm:Enumerate} below.  This calculation is
closely analogous to the calculation performed in \cite{MR2914056}, but with more advanced combinatorial techniques, described in Section \ref{sec:combinatorics}.

\begin{thm}
\label{thm:Enumerate}
The principal graph of any subfactor with index in $(4,5\frac{1}{4}]$ must either be
\begin{enumerate}[label=(\alph*)]
\item $A_\infty$ (in which case the subfactor is non-amenable)
\item \label{case:reducible} reducible, i.e. there are multiple edges between depths 0 and 1,
\item \label{case:1-supertransitive} exactly 1-supertransitive,
\item \label{case:not-simply-laced} not simply laced,
\item \label{case:11} a translated extension of one of the following
`weeds'with annular multiplicities $*11$,
\begin{enumerate}[label=(\arabic*)]
\item \(\left(\bigraph{bwd1v1v1p1v0x1p1x0p0x1v1x0x0p0x0x1p0x1x0p0x1x0v0x0x1x0p1x0x0x0p0x1x0x0v0x1x0p0x0x1p1x0x0vduals1v1v1x3x2v1x3x2v}, \bigraph{bwd1v1v1p1v0x1p1x0p0x1v0x1x0p0x0x1p1x0x0p0x1x0v1x0x0x0p0x1x0x0p0x0x1x0v0x0x1p1x0x0p0x1x0vduals1v1v1x3x2v1x3x2v}\right)\) 
\item \(\left(\bigraph{bwd1v1v1p1v1x0p0x1p1x0v0x0x1p0x1x0p1x0x0p0x1x0v0x0x0x1p0x1x0x0p1x0x0x0p0x0x1x0p1x0x0x0v1x0x0x0x0p0x0x0x0x1p0x1x0x0x0p0x0x0x1x0p0x0x0x1x0vduals1v1v1x3x2v1x3x2x5x4v}, \bigraph{bwd1v1v1p1v1x0p0x1p1x0v0x0x1p0x1x0p0x1x0p1x0x0v0x0x1x0p0x1x0x0p1x0x0x0p0x0x0x1p1x0x0x0v0x0x0x0x1p1x0x0x0x0p0x0x0x1x0p0x1x0x0x0p0x0x0x1x0vduals1v1v1x3x2v1x3x2x5x4v}\right)\) 
\item \(\left(\bigraph{bwd1v1v1v1p1v1x0p1x0p0x1v0x0x1p0x0x1p1x0x0vduals1v1v1x2v1x3x2}, \bigraph{bwd1v1v1v1p1v1x0p0x1p1x0v0x0x1p0x1x0p1x0x0vduals1v1v1x2v1x3x2}\right)\) 
\item \(\left(\bigraph{bwd1v1v1v1p1v1x0p0x1p0x1v0x0x1p1x0x0p1x0x0p0x1x0v0x0x1x0p1x0x0x0p0x0x0x1vduals1v1v1x2v1x2x4x3v}, \bigraph{bwd1v1v1v1p1v1x0p1x0p0x1v1x0x0p0x0x1p0x1x0p0x0x1v0x0x0x1p0x0x1x0p1x0x0x0vduals1v1v1x2v1x2x4x3v}\right)\) 
\item \(\left(\bigraph{bwd1v1v1v1p1v1x0p0x1p0x1v1x0x0p1x0x0p0x1x0p0x0x1v1x0x0x0p0x0x1x0p0x1x0x0p0x0x0x1vduals1v1v1x2v1x3x2x4v}, \bigraph{bwd1v1v1v1p1v1x0p1x0p0x1v1x0x0p0x1x0p0x0x1p0x0x1v1x0x0x0p1x0x0x0p0x0x1x0p0x1x0x0vduals1v1v1x2v1x3x2x4v}\right)\) 
\item \(\left(\bigraph{bwd1v1v1v1p1v1x0p1x0p0x1v0x1x0p0x0x1p0x0x1p1x0x0v1x0x0x0p0x0x1x0p0x0x0x1p1x0x0x0v0x0x1x0p0x1x0x0p1x0x0x0vduals1v1v1x2v1x2x4x3v1x3x2}, \bigraph{bwd1v1v1v1p1v1x0p0x1p1x0v0x0x1p0x1x0p0x1x0p1x0x0v0x0x0x1p0x0x1x0p1x0x0x0p0x1x0x0v1x0x0x0p0x0x1x0p0x1x0x0p0x0x0x1p1x0x0x0vduals1v1v1x2v1x2x4x3v1x3x2x5x4}\right)\) 
\end{enumerate}
\item \label{case:10} a translated extension of one of the following `weeds' with annular multiplicities $*10$,
\begin{enumerate}[label=(\arabic*)]
\item \(\left(\bigraph{bwd1v1v1v1p1v1x0p0x1v1x1p0x1v0x1vduals1v1v1x2v1x2v}, \bigraph{bwd1v1v1v1p1v1x0p1x0v0x1p0x1p1x0v1x0x0vduals1v1v1x2v1x2x3v}\right)\) 
\item \(\left(\bigraph{bwd1v1v1v1p1v1x0p1x0v1x0v1p1vduals1v1v1x2v1v}, \bigraph{bwd1v1v1v1p1v1x0p0x1v1x0p0x1p1x0v1x0x0p0x1x0vduals1v1v1x2v1x3x2v}\right)\) 
\item \(\left(\bigraph{bwd1v1v1v1p1v1x0p0x1v0x1p1x0p1x1v0x1x0vduals1v1v1x2v1x2x3v}, \bigraph{bwd1v1v1v1p1v1x0p1x0v1x0p1x0p0x1p0x1v1x0x0x0vduals1v1v1x2v1x2x4x3v}\right)\) 
\item \(\left(\bigraph{bwd1v1v1v1p1v1x0p0x1v0x1p1x0p1x1v1x0x0vduals1v1v1x2v1x2x3v}, \bigraph{bwd1v1v1v1p1v1x0p1x0v1x0p0x1p1x0p0x1v0x1x0x0vduals1v1v1x2v1x2x3x4v}\right)\) 
\item \(\left(\bigraph{bwd1v1v1v1p1v1x0p0x1v0x1p1x0p1x0p0x1v0x0x0x1p0x0x1x0vduals1v1v1x2v1x2x4x3v}, \bigraph{bwd1v1v1v1p1v1x0p1x0v1x0p0x1v1x0p0x1vduals1v1v1x2v1x2v}\right)\) 
\item \(\left(\bigraph{bwd1v1v1v1p1v1x0p0x1v1x1v1v1p1v1x0vduals1v1v1x2v1v1x2v}, \bigraph{bwd1v1v1v1p1v1x0p1x0v1x0p0x1v1x1v1v1vduals1v1v1x2v1x2v1v}\right)\) 
\item \(\left(\bigraph{bwd1v1v1v1p1v1x0p0x1v1x0p1x0p0x1p0x1v0x1x0x0p0x0x1x0p0x0x0x1vduals1v1v1x2v1x3x2x4v}, \bigraph{bwd1v1v1v1p1v1x0p1x0v1x0p0x1v0x1p1x0p0x1vduals1v1v1x2v1x2v}\right)\) 
\item \(\left(\bigraph{bwd1v1v1v1p1v1x0p1x0v1x0p0x1v1x0p0x1p1x0p0x1v1x0x0x0p0x0x0x1p0x0x1x0vduals1v1v1x2v1x2v1x3x2}, \bigraph{bwd1v1v1v1p1v1x0p0x1v1x0p0x1p0x1p1x0v1x0x0x0p0x1x0x0p0x0x1x0p0x0x0x1v1x0x0x0p0x0x1x0p0x0x0x1p0x0x1x0p1x0x0x0p0x0x0x1p0x1x0x0vduals1v1v1x2v1x2x4x3v1x2x3x5x4x7x6}\right)\) 
\item \(\left(\bigraph{bwd1v1v1v1p1v1x0p1x0v1x0p0x1v1x0p0x1p1x0p0x1v0x0x0x1p0x0x1x0v1x0p0x1p1x0p0x1v0x1x0x0p0x0x1x0vduals1v1v1x2v1x2v2x1v1x2}, \bigraph{bwd1v1v1v1p1v1x0p0x1v1x0p0x1p0x1p1x0v1x0x0x0p0x1x0x0p0x0x1x0p0x0x0x1v0x0x1x0p0x0x0x1p0x0x1x0p1x0x0x0p0x0x0x1p0x1x0x0v0x0x0x1x0x0p0x1x0x0x0x0p1x0x0x0x0x0p0x0x0x0x0x1v0x0x0x1p1x0x0x0p0x0x1x0p0x1x0x0p0x1x0x0p1x0x0x0p0x0x0x1p0x0x1x0vduals1v1v1x2v1x2x4x3v1x2x4x3x6x5v2x1x4x3x6x5x8x7}\right)\) 
\end{enumerate}
\item \label{case:4-spoke} a translated extension of one of the following stable graphs with annular multiplicities $*20$,
\begin{multicols}{2}
\begin{enumerate}[label=(\arabic*)]
\item \(\left(\bigraph{bwd1v1v1p1p1v0x0x1vduals1v1v1}, \bigraph{bwd1v1v1p1p1v0x0x1vduals1v1v1}\right)\) 
\item \(\left(\bigraph{bwd1v1v1p1p1v1x0x0p0x1x0p0x0x1vduals1v1v1x3x2}, \bigraph{bwd1v1v1p1p1v1x0x0p0x1x0p0x0x1vduals1v1v1x3x2}\right)\) 
\item \(\left(\bigraph{bwd1v1v1v1p1p1v1x0x0vduals1v1v1x3x2v}, \bigraph{bwd1v1v1v1p1p1v1x0x0vduals1v1v1x3x2v}\right)\) 
\item \(\left(\bigraph{bwd1v1v1p1p1v0x1x0p0x0x1vduals1v1v1x2}, \bigraph{bwd1v1v1p1p1v0x1x0p0x0x1vduals1v1v1x2}\right)\) 
\item \(\left(\bigraph{bwd1v1v1p1p1v1x0x0p0x1x0p0x0x1vduals1v1v1x2x3}, \bigraph{bwd1v1v1p1p1v1x0x0p0x1x0p0x0x1vduals1v1v1x2x3}\right)\) 
\item \(\left(\bigraph{bwd1v1v1v1p1p1v0x1x0vduals1v1v1x2x3v}, \bigraph{bwd1v1v1v1p1p1v1x0x0vduals1v1v1x2x3v}\right)\) 
\item \(\left(\bigraph{bwd1v1v1p1p1v1x0x0p0x0x1vduals1v1v2x1}, \bigraph{bwd1v1v1p1p1v1x0x0p0x0x1vduals1v1v2x1}\right)\) 
\item \(\left(\bigraph{bwd1v1v1v1p1p1v0x1x0p0x0x1vduals1v1v1x3x2v}, \bigraph{bwd1v1v1v1p1p1v0x1x0p0x0x1vduals1v1v1x3x2v}\right)\) 
\item \(\left(\bigraph{bwd1v1v1v1p1p1v0x1x0p0x0x1vduals1v1v1x2x3v}, \bigraph{bwd1v1v1v1p1p1v1x0x0p0x0x1vduals1v1v1x2x3v}\right)\) 
\item \(\left(\bigraph{bwd1v1v1v1p1p1v1x0x0p0x1x0p0x0x1vduals1v1v1x3x2v}, \bigraph{bwd1v1v1v1p1p1v1x0x0p0x1x0p0x0x1vduals1v1v1x3x2v}\right)\) 
\item \(\left(\bigraph{bwd1v1v1v1p1p1v1x0x0p0x1x0p0x0x1vduals1v1v1x2x3v}, \bigraph{bwd1v1v1v1p1p1v1x0x0p0x1x0p0x0x1vduals1v1v1x2x3v}\right)\) 
\end{enumerate}
\end{multicols}
\item \label{case:vines} or a translation of one of the `vines' listed in
Section \ref{sec:vines}.
\end{enumerate}
\end{thm}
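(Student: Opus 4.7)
The plan is to run an exhaustive combinatorial enumeration of candidate principal graph pairs with graph norm at most $\sqrt{5\tfrac14}$, leaving aside the four ``boundary'' cases (a)--(d) listed in the statement. The cases are set up so that after excluding $A_\infty$, reducible, 1-supertransitive, and non-simply-laced graphs, every remaining candidate has supertransitivity $k \geq 2$, is simply laced, and is irreducible; under these hypotheses Equation \eqref{eq:dim-low-weight} forces the annular multiplicity sequence to begin with $*11$, $*10$, $*20$, or $*00$. The $*00$ case reduces to $A_\infty$ or to a finite translation (handled by the vine enumeration), so the task reduces to three systematic enumerations, one for each of $*11$, $*10$, $*20$, seeded by the allowed initial segments through the branch point.

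I would then perform branch-and-bound depth-first search from each seed, adding vertices and edges one depth at a time. At every partial graph the following are checked: the graph-norm bound $\lambda(\Gamma_\pm)^2 \leq 5\tfrac14$ (pruning monotonically, since norms increase under extension); equality of loop counts on $\Gamma_+$ and $\Gamma_-$ up to the current depth; the annular multiplicity identity \eqref{eq:dim-low-weight}; the associativity constraint (Fact \ref{fact:AssociativityConstraint}) on the Ocneanu 4-partite graph; the duality constraint (Fact \ref{fact:DualityConstraint}); Ocneanu's triple point obstruction (Fact \ref{fact:TriplePointObstruction}); and Popa's stability (Fact \ref{Fact:StabilityConstraint}). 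Whenever stability at two consecutive depths is detected, the search can be terminated on that branch and the resulting tail closed off to a finite vine. Branches on which the partial graph can be extended arbitrarily while still respecting all constraints become the weeds listed in (e), (f), and (g).

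The main technical obstacle, and the reason past enumerations bogged down just above index $5$, is that a naive search produces many pairwise isomorphic partial graphs at each depth and requires an expensive canonical-form check to deduplicate them. The plan is to eliminate this bottleneck by using McKay's ``construction by canonical paths'' framework \cite{MR1606516}, applied to graph pairs with the extra combinatorial data (bipartite structure, shading, basepoints, duality involution). One defines a canonical augmentation rule at each step, and accepts an extended graph only if the most recent augmentation is the canonical one; this makes the enumeration orderly in the sense of McKay and avoids all pairwise isomorphism comparisons. The details of the canonical-path choice, its equivariance under the relevant automorphism group of graph pairs with duals, and the integration with the obstruction-based pruning are the technically delicate parts; these will be developed in Section \ref{sec:combinatorics}.

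Finally, once the enumeration terminates, the output is compared against the explicit lists given in (e)--(g) and the appendix-style list of vines referenced in (h). Correctness of the enumeration is supported by a separate verification pass in which each listed weed or vine is checked directly to satisfy the full obstruction package, and by cross-checking against the earlier classification up to index $5$ \cite{MR2914056}, which our algorithm must reproduce on the overlap. I expect the dominant human cost to be not the enumeration itself but the bookkeeping needed to identify when two partial graphs in different search branches correspond to the same weed (under translation and extension), so that the final list is stated in irredundant form.
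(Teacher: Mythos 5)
Your proposal correctly identifies the core technique — McKay's construction by canonical paths applied to principal graph pairs, with pruning by graph norm, associativity, duality, and triple-point obstructions — which is exactly what the paper does in Sections 4.1--4.4. But there are two substantive problems with your framing.

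First, your claim that Equation \eqref{eq:dim-low-weight} \emph{forces} the annular multiplicity sequence to begin with $*11$, $*10$, $*20$, or $*00$ is false. That equation merely computes the low-weight dimensions from the loop counts; it places no a priori upper bound on the first non-zero multiplicity. In fact an initial quintuple point (annular multiplicities $*30$) with supertransitivity $2$ has graph norm squared exactly $3+\sqrt{5} < 5\tfrac14$, so it cannot be excluded on norm grounds, and indeed the candidate $(\bigraph{bwd1v1v1p1p1p1duals1v1}, \bigraph{bwd1v1v1p1p1p1duals1v1})$ appears in the paper's vine table in Appendix A.1 and is only eliminated afterward by a dimension obstruction. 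This undermines your plan to ``run three systematic enumerations seeded by the allowed initial segments'' for each of $*11$, $*10$, $*20$: you would miss vines with other multiplicity sequences. The paper instead runs a single unseeded enumeration from $(\bigraph{bwd1duals1}, \bigraph{bwd1duals1})$, whose output naturally includes vines of all multiplicities alongside the infinite weed families; the labelling of weed families by $*11$/$*10$/$*20$ in the statement is a description of the output, not an organizing hypothesis of the search.

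Second, your description of how the enumeration handles infinite branches is incomplete. You suggest that detecting Popa stability at two consecutive depths lets one ``close off the tail to a finite vine,'' but Popa stability is not one of the PGP constraints used by the algorithm (those are associativity, the triple-point obstruction, and the duality constraint), and it does not appear in the enumeration loop. The more pressing gap is that nothing in your proposal explains how the algorithm detects a branch that ``can be extended arbitrarily'' and declares it a weed, as opposed to a branch it has not yet finished exploring. In the paper this is handled by manually instructing the algorithm to ignore specified subtrees (those corresponding to the weeds in (e)--(g), as well as $1$- and $4$-supertransitive graphs), after which the search tree becomes finite and its leaves are exactly the vines in (h). Without an explicit termination criterion of this kind the depth-first search you describe has no guarantee of halting, and so does not constitute a proof.
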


The proof of this appears in Section \ref{sec:Enumerate-proof}, after we describe the underlying algorithm in Section \ref{sec:orderly} and our implementations in Section \ref{sec:Implementations}.
Unfortunately this classification looks at first quite
discouraging compared with the corresponding classification up to index 5.

First, we give the folklore result which deals with everything in case \ref{case:reducible}, by describing all reducible subfactors in the index range $(4,3+2\sqrt{2})$.
\begin{lem}
\label{lem:reducible}
The only reducible subfactor standard invariants with index in $(4,3+2\sqrt{2})$ are the $A_\infty^{(1)}$ standard invariants at every such index. Moreover these are all non-spherical, and can be obtained as perturbations of the spherical index 4 $A_\infty^{(1)}$ standard invariant.
\end{lem}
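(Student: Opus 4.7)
The plan is to use the quoted corollary to rule out sphericity, then bound the principal graph norm to identify it as $A_\infty^{(1)}$, and finally observe that non-spherical planar algebra structures on $A_\infty^{(1)}$ form a one-parameter family of perturbations of the spherical index-$4$ example.

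First I would invoke \cite[Corollary 4.6]{MR860811}, which states that every extremal (equivalently spherical) reducible \twoone-subfactor has index at least $3+2\sqrt{2}$. Hence no spherical reducible standard invariant occurs in the open range $(4,3+2\sqrt{2})$, establishing the non-sphericity claim. For a non-spherical standard invariant we only have the inequality $\lambda(\Gamma_\pm)^2 \leq \delta^2$, so the principal graph norm satisfies $\lambda(\Gamma_\pm) \leq \delta < 1+\sqrt{2}$.

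Next I would classify the bipartite graphs with norm at most $1+\sqrt{2}$ that have a basepoint incident to at least two edges (counted with multiplicity), which is forced by reducibility. By the classical list of bipartite graphs with norm at most $2$ (as recalled in the exercise above), the candidates are the finite Coxeter--Dynkin diagrams $A_n, D_n, E_{6,7,8}$ (norm strictly less than $2$), the affine Dynkin diagrams $A_n^{(1)}, D_n^{(1)}, E_{6,7,8}^{(1)}$ and the infinite graphs $A_\infty, A_\infty^{(1)}, D_\infty, A_{-\infty,\infty}$ (norm exactly $2$). The finite Coxeter--Dynkin diagrams arise as subfactors only with the basepoint at a leaf (by the index-less-than-$4$ classification), giving irreducible standard invariants. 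The finite affine diagrams give subfactors only at index exactly $4$ by Popa \cite{MR1278111}, all spherical and thus excluded. Among the remaining infinite-depth graphs, $A_\infty$ and $D_\infty$ have irreducible basepoints, leaving $A_\infty^{(1)}$ (the bi-infinite path) as the unique candidate. The main technical obstacle is ruling out any putative bipartite graph with norm strictly between $2$ and $1+\sqrt{2}$ whose basepoint is reducible; I would address this by showing that such an extension of the depth-$0$-to-$1$ subgraph either violates the dimension constraint matching $\Gamma_+$ to $\Gamma_-$ or Popa's stability constraint (Fact~\ref{Fact:StabilityConstraint}), forcing the graph to collapse back to $A_\infty^{(1)}$.

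Finally, to classify the planar algebra structures on $A_\infty^{(1)}$ in this range: each such non-spherical subfactor planar algebra is determined by the pair of left and right traces on the centralizer algebras (equivalently, by a choice of non-spherical pivotal structure on the associated $2$-category), which in turn is encoded by the index $\delta^2 \geq 4$. Starting from the unique spherical $A_\infty^{(1)}$ standard invariant at index $4$, which exists by Popa's classification \cite{MR1278111}, one obtains a one-parameter family of non-spherical perturbations at every $\delta > 2$ by rescaling the left and right dimensions of the generating $A$--$B$ bimodule subject to $\dim_L(X)\cdot \dim_R(X)=\delta^2$. Restricting $\delta^2$ to $(4, 3+2\sqrt{2})$ yields precisely the standard invariants described in the lemma, and uniqueness at each index follows from the rigidity of the underlying combinatorial data.
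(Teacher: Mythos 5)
Your first step—ruling out sphericity via \cite[Corollary~4.6]{MR860811}—matches the paper, but after that you take a genuinely different route that has a real gap. The paper's argument is a perturbation argument: by \cite{MR1887878}, any non-spherical reducible standard invariant $\cP_\bullet$ admits a perturbation \emph{without changing the principal graph}, and among these perturbations there is a unique one $\cQ_\bullet$ of minimal index, which is necessarily spherical \cite{MR829381,MR1027496}. Since reducible subfactors have index at least $4$ and the minimal index is at most the index of $\cP_\bullet$, which is $<3+2\sqrt{2}$, Pimsner--Popa's list of spherical reducible indices forces $\cQ_\bullet$ to have index exactly $4$. The spherical reducible standard invariants at index $4$ are $A_{2n-1}^{(1)}$ and $A_\infty^{(1)}$ (Popa), and the finite-depth ones $A_{2n-1}^{(1)}$ admit no non-spherical perturbations \cite[Proposition~10.4]{MR1642584}. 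Hence $\cQ_\bullet = A_\infty^{(1)}$, and since perturbation preserves the principal graph, $\cP_\bullet$ also has principal graph $A_\infty^{(1)}$. Notice that at no point does one need to bound or classify the possible principal graphs directly.

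Your proposal instead bounds the graph norm by $\delta < 1+\sqrt{2}$ and attempts to classify all bipartite graphs with norm at most $1+\sqrt{2}$ and a reducible basepoint. The classical Dynkin/affine-Dynkin list only covers norm $\leq 2$; above norm $2$ the set of admissible bipartite graphs becomes large (witness the entire combinatorial enumeration that occupies Section~\ref{sec:combinatorics} of the paper), and you acknowledge this yourself when you write that ``the main technical obstacle is ruling out any putative bipartite graph with norm strictly between $2$ and $1+\sqrt{2}$''. The sketch you give—appealing to dimension constraints and Fact~\ref{Fact:StabilityConstraint}—is not a proof, and it is far from clear that those tools alone suffice: stability only kicks in once the graph is stable at some depth, and the dimension constraints constrain loop counts but do not by themselves force the collapse you want. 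So this is a genuine gap, not a detail. The perturbation route in the paper sidesteps the entire problem by transporting $\cP_\bullet$ to the index-$4$ case where Popa's classification already tells you the answer.

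Your final paragraph, about rescaling left and right dimensions of the generating bimodule to produce the one-parameter family of non-spherical perturbations of $A_\infty^{(1)}$, is in the right spirit but also informal. The paper establishes the ``moreover'' clause as a byproduct of the perturbation argument itself (once you know $\cQ_\bullet$ is the unique index-$4$ $A_\infty^{(1)}$ standard invariant, $\cP_\bullet$ is by construction one of its perturbations), rather than by exhibiting the family directly and arguing rigidity.
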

\begin{proof}
By \cite{MR1887878} (see also \cite[Section 3]{MR3178106}), a non-spherical reducible subfactor standard invariant can be \emph{perturbed} without altering the principal graph. Amongst these perturbations, there is a unique one with minimal index \cite{MR829381,MR1027496}, and this perturbation is necessarily spherical. By \cite[Corollary 4.6]{MR860811}, a spherical reducible subfactor standard invariant with index in $(4,8)$ must have index $(1+2\cos(\pi/n))^2$, for $n\geq 3$. The least such index is $3+2\sqrt{2}$.

Thus a reducible subfactor standard invariant $\cP_\bullet$ with index in $(4,3+2\sqrt{2})$ must be a pertubation of some reducible spherical subfactor standard invariant $\cQ_\bullet$  at index 4. The only such standard invariants are $A_{2n-1}^{(1)}$ and $A_\infty^{(1)}$. The $A_{2n-1}^{(1)}$ standard invariants, being finite depth, admit no non-spherical perturbations \cite[Proposition 10.4]{MR1642584}. Thus
$\cQ_\bullet$ is the unique $A_\infty^{(1)}$ standard invariant, and necessarily $\cP_\bullet$ also has  principal graph $A_\infty^{(1)}$. 
\end{proof}

We next note that case \ref{case:1-supertransitive} from Theorem \ref{thm:Enumerate}, the 1-supertransitive principal graphs, has already been
completely classified for index less than $6$ in \cite{MR3306607}. Case \ref{case:not-simply-laced} from Theorem \ref{thm:Enumerate}, the non-simply laced graphs, can be handled as follows.
\begin{lem}
\label{lem:simply-laced}
There are no subfactors with index in $(4,5.3]$ whose principal graphs are not simply-laced. 
\end{lem}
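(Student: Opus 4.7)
My plan is to isolate a single candidate graph and dispose of it via the classification of rank-$2$ fusion categories.

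First, by passing to the dual subfactor if necessary, I may assume $\Gamma_+$ itself contains a multiple edge. By Lemma \ref{lem:reducible} and the classification of $1$-supertransitive standard invariants below index $6$ in \cite{MR3306607}, I may further assume the subfactor is irreducible of supertransitivity $\geq 2$. The initial segment of $\Gamma_+$ through depth $2$ is then the single path $A_3$ with single edges, so every multi-edge of $\Gamma_+$ lies between two vertices of depth $\geq 2$.

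Next, I enumerate bipartite graphs with those properties and norm squared at most $5.3$. Writing $B_n$ for the path on $n$ vertices with the last edge doubled, one computes $\|B_4\|^2 = 3+\sqrt 5 \approx 5.236$ and $\|B_5\|^2 = (7+\sqrt{13})/2 \approx 5.303 > 5.3$, and the norm grows strictly with $n$. Any extension of $B_4$ by a further vertex, by a branch, or by raising an edge multiplicity is easily checked to have norm squared strictly larger than $5.3$ via an explicit $2\times 2$ or $3\times 3$ bipartite Gram matrix in each case. The unique survivor is therefore $\Gamma_+ = B_4$, with $\|B_4\|^2 = 3+\sqrt{5}$. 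This enumeration is the main combinatorial step.

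Finally, I rule out $\Gamma_+ = B_4$. Label the four bimodules at depths $0, 1, 2, 3$ of $B_4$ by $1_A, X, Y, Z$ respectively. Since $Y$ is the only non-identity even-depth simple on $\Gamma_+$, the even part is a rank-$2$ fusion category with simples $\{1_A, Y\}$. Reading the fusion rules off $B_4$ gives $X \otimes X^* = 1_A \oplus Y$ and $Y \otimes X = X \oplus 2Z$, and Frobenius reciprocity gives $Z \otimes X^* = 2Y$. Combining these,
\[
Y \otimes Y \oplus Y \;=\; Y \otimes X \otimes X^* \;=\; (X \oplus 2Z) \otimes X^* \;=\; 1_A \oplus 5Y,
\]
so $Y \otimes Y = 1_A \oplus 4Y$. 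By Ostrik's classification of rank-$2$ fusion categories \cite{MR1981895}, however, the only non-pointed rank-$2$ fusion category is Fibonacci, with fusion rule $Y \otimes Y = 1 \oplus Y$; no rank-$2$ fusion category realises $Y \otimes Y = 1_A \oplus 4Y$, the desired contradiction.
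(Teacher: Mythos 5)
Your proof is correct and reaches the same conclusion, but it takes a somewhat different route from the paper's, and I'd flag one unnecessary dependency.

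The paper's proof does not invoke the classification of $1$-supertransitive subfactors at all. Instead it records the norms of all minimal non-simply-laced subgraphs with norm squared $>5.3$, concludes that the only non-simply-laced graphs not containing any of these are $\smallbigraph{gbg2}$, $\smallbigraph{gbg1v2}$ ($= B_3$), and $\smallbigraph{gbg1v1v2}$ ($= B_4$), disposes of the first because it sits at index exactly $4$, and rules out $B_3$ and $B_4$ by observing that the dimension of the unique non-trivial even object ($2$ and $2+\sqrt 5$, respectively) is neither $1$ nor $\tau$, which is incompatible with Ostrik's rank-$2$ classification \cite{MR1981895}. Your version short-circuits $B_2$ and $B_3$ by appealing to Lemma \ref{lem:reducible} and to the external classification of $1$-supertransitive invariants \cite{MR3306607}, and then reaches the Ostrik contradiction for $B_4$ by deriving the full fusion rule $Y\otimes Y = 1_A\oplus 4Y$ rather than only the dimension of $Y$. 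Both are valid, and your Frobenius-reciprocity calculation of $Z\otimes X^* = 2Y$ and the resulting $Y\otimes Y = 1_A\oplus 4Y$ is correct.

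Two remarks worth keeping in mind. First, your appeal to \cite{MR3306607} to eliminate the $1$-supertransitive case is a heavier dependency than you need: the graph $B_3$ is handled verbatim by the same Ostrik argument you already use for $B_4$ (the even part is rank $2$ with $X\otimes X^* = 1_A\oplus 2Y$, forcing $\dim Y = 2$ at index $5$, which is not $1$ or $\tau$), and cutting this external citation makes the lemma self-contained, as the paper does. Moreover, the paper's introduction describes \cite{MR3306607} as covering $1$-supertransitive standard invariants \emph{without intermediates}; relying on it here without addressing that qualification leaves a small loose end that the direct argument avoids entirely. Second, the enumeration step ("any extension of $B_4$ by a vertex, a branch, or an increased multiplicity has norm squared $>5.3$") is stated a bit tersely; the paper handles this by explicitly listing the minimal offending subgraphs with their norms, which is a cleaner way to make the claim verifiable. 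These are stylistic rather than logical issues; the argument as a whole is sound.
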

\begin{proof}
First, we note
\begin{align*}
\left\|\smallbigraph{gbg3}\right\|^2 & = 9, &
\left\|\smallbigraph{gbg2v2}\right\|^2 & = 8, &
\left\|\smallbigraph{gbg2v1v2}\right\|^2 & > 6.56, &
\left\|\bigraph{gbg1v1p2}\right\|^2 & = 6,
\end{align*}
\vspace{-6mm}
\begin{align*}
\left\|\smallbigraph{gbg1v2v1}\right\| ^2 & > 5.82, & 
\left\|\bigraph{gbg1v1p1v2x0}\right\|^2 & > 5.56, &
\left\|\smallbigraph{gbg1v1v1v2}\right\|^2 & > 5.3.
\end{align*}
The only non-simply laced graphs not containing one of these as a subgraph are
$\smallbigraph{gbg2}$, $\smallbigraph{gbg1v2}$,  and
$\smallbigraph{gbg1v1v2}$. The first is at index 4. In the other two cases,
the dimension of the non-trivial object in the even half is not 1 or $\tau =
\frac{1+\sqrt{5}}{2}$, so these graphs cannot be principal graphs of
subfactors by \cite{MR1981895}. More generally, we note that \cite{MR1237144}
shows that the graphs $\beta_{2n+1}$ (that is, the $2n-2$-translate of
$\smallbigraph{gbg1v2}$) are not principal graphs of subfactors by showing
that the fusion rules must have coefficients of the form $\frac{2k+1}{2}$ for
$k\in \bbN_{\geq 0}$.
\end{proof}

The remaining cases \ref{case:11}, \ref{case:10}, and \ref{case:4-spoke} constitute `weeds', i.e. translated extensions
of some finite set of possibilities, and the cases in \ref{case:vines} are `vines', i.e. translations of some finite set
of possibilities.

While there are significantly more vines than in previous classifications, this itself is no problem, as we by
now have a uniform and rather efficient mechanism for dealing with vines.
This is described in Theorem \ref{thm:survivors}, and all the vines from part \ref{case:vines} are
analyzed in Section \ref{sec:vines}.

The $*11$ graphs can be dealt with by the techniques of \cite{MR3311757}, and
we prove the requisite inequalities for all but one of these in Section
\ref{sec:11}.  The remaining $*11$ graph (case \ref{case:11}(5) in Theorem
\ref{thm:Enumerate}) is treated in Section \ref{sec:11WithDoubleOneByOne}, and
requires additional analysis using doubly one-by-one connection entries, since
it has undetermined relative dimensions.

Nearly all the $*10$ graphs have branch factor not equal to $1$, and can be
dealt with by the techniques of \cite{MR2972458,MR2902285}.
We do this in
Section \ref{sec:10}.  Two graphs (namely \ref{case:10}(7) and \ref{case:10}(8) in Theorem
\ref{thm:Enumerate}) require additional arguments in Section
\ref{sec:10WithDoubleOneByOne} using doubly one-by-one connection entries. 
We note that both these exceptional $*10$ weeds require using Morrison's hexagon obstruction \cite{MR3210716} in an essential way.
Of the remaining $*10$ weeds, we get one candidate graph (from the first graph in either \ref{case:10}(3) or \ref{case:10}(4) from Theorem \ref{thm:Enumerate})
which is eliminated using the formal codegree obstruction (see Section
\ref{sec:FormalCodegrees}).

Additionally, there are three exceptional $*10$ graphs, with branch factor
$1$.  The first (namely \ref{case:10}(5) in Theorem \ref{thm:Enumerate}) is
the depth 2 truncation of one of the weeds ruled out in \cite{MR2902285}, and we
give an easy argument based on stability to
rule it out in Theorem
\ref{thm:NoB}. The second (namely \ref{case:10}(6) in Theorem
\ref{thm:Enumerate}), which begins with an initial hexagon, has been called
the `AMP spider', and was ruled out by purely number theoretic methods in
\cite{1502.00035}. The third (namely \ref{case:10}(9) in Theorem
\ref{thm:Enumerate}) appears to be a truncation of an infinite periodic graph,
rather reminiscent of the $A_3 * A_4$ Fuss-Catalan principal graph. It is
isomorphic to
$$
\cA_{(0)} = 
\left(
\bigraph{bwd1v1v1v1p1v0x1p0x1v1x0p0x1v0x1p1x0p1x0p0x1v1x0x0x0p0x0x1x0v0x1p1x0p1x0p0x1v1x0x0x0p0x0x1x0duals1v1v1x2v1x2v2x1v1x2}
\,,\,
\bigraph{bwd1v1v1v1p1v1x0p0x1v1x0p1x0p0x1p0x1v1x0x0x0p0x1x0x0p0x0x1x0p0x0x0x1v1x0x0x0p1x0x0x0p0x1x0x0p0x0x1x0p0x0x1x0p0x0x0x1v0x1x0x0x0x0p0x0x1x0x0x0p0x0x0x0x1x0p0x0x0x0x0x1v1x0x0x0p1x0x0x0p0x1x0x0p0x1x0x0p0x0x1x0p0x0x1x0p0x0x0x1p0x0x0x1duals1v1v1x2v3x2x1x4v6x2x4x3x5x1v4x6x7x1x8x2x3x5}
\right).
$$
We rule out this weed with a several stage approach which occupies Section \ref{sec:remaining-r=1}.
\begin{enumerate}[label=(\arabic*)]
\item First, we find a doubly one-by-one entry in the
connection, which gives us the value of $q$ as a function of the translation
$t$. 
\item We next run the enumerator to see that any extension is either
finite, or begins with a basic building block, called $\cB$. 
\item We then
show all further extensions are either finite, or periodic with repeating unit $\cB$. This
improved method uses the \emph{tail enumerator} described in Section \ref{sec:Implementations},
a new mode of the graph
enumeration program, adapted to work with only with the repeating unit of a
periodic graph, without looking at the `head'. 
\end{enumerate}

In Lemma \ref{lem:no-finite-extensions} below, we are able to use a variation of the argument in \cite{1502.00035} to take
care of all these finite extensions simultaneously. We show the corresponding indices are not
totally real cyclotomic integers, and thus cannot be indices of subfactors.
This is the first time number theory has been used to rule out all finite
extensions of a particular weed! We deal with the infinite graph by showing
that it eventually has bimodule dimensions less than 1, whatever the supertransitivity.

In every case in \ref{case:4-spoke}, both graphs are identical `4-spoke
graphs'.  The existence of a bi-unitary connection on such graph pairs was
determined by \cite{1304.5907}, and it is relatively little work after that to
complete the classification of subfactors for 4-spoke principal graphs up to
index $5\frac{1}{4}$. We do this in  Section \ref{sec:4-spokes}. (Note that
the index of the infinite 4-spoke is $5\frac{1}{3}$, so we are not yet done
with 4-spokes!)

This concludes the proof of the main theorem. \qed

\vspace{5mm}

We note that the first two of the $*11$ weeds and the first three of the $*10$ weeds are also stable, so in
principle, these could be eliminated using the analysis of Calegari-Guo in
\cite{1502.00035}. All extensions of stable weeds must be stable and finite by
\cite{MR1334479,MR3157990}. Thus by \cite{MR1266785,MR2183279} and Theorem 1.1
of \cite{1502.00035}, for any stable weed, there are at most finitely many
translated extensions which could be principal graphs of subfactors. At this
point it seems simpler to use the planar algebraic obstructions, however. In the
future it may be possible to handle stable graphs just as easily as we handle
vines today, by automating the arguments used in Sections 6 and 7 of \cite{1502.00035}.

In the organization of the proof, we see that after the combinatorial
enumeration, all the other arguments are essentially independent. We have
decided to arrange them according to the novelty of the methods. The really
interesting stuff comes first: the new combinatorics of the enumerator in
Section \ref{sec:combinatorics}, our new approach to infinite periodic graphs
in Section \ref{sec:r=1}, Penneys' obstructions to $*11$ graphs in Section
\ref{sec:BranchFactorInequalities}, 
applications of Morrison's hexagon obstruction in Section \ref{sec:10WithDoubleOneByOne}, 
and eliminating one candidate via formal
codegrees in Section \ref{sec:FormalCodegrees}. We defer Section
\ref{sec:4-spokes} on 4-spokes and Section \ref{sec:vines} on vines to the end of
the article; these sections are essentially applications of already well-known
arguments.

\section{Better combinatorics for graph enumeration}
\label{sec:combinatorics}

In this section we describe our new technique for enumerating principal
graphs, based on McKay's method of construction by canonical paths.
The classification statement we prove using this technique has appeared above as Theorem \ref{thm:Enumerate}. Its proof appears below, in Section \ref{sec:Enumerate-proof}.
 
We begin with the precise definition of the objects we enumerate.

\begin{defn}
A \emph{principal graph pair (PGP)} is a tuple $(\Gamma_+, \Gamma_-, \depth, \overline{\,\cdot\,}, n)$ where
\begin{itemize}
\item $\Gamma_\pm$ are finite bipartite graphs,
\item $\depth$ is a graph homomorphism from $\Gamma_\pm \to \bbN$, the graph with one edge between $n$ and $n+1$ for all $n\in \bbN$, such that $\depth^{-1}(0)$ intersects every connected component of $\Gamma_\pm$,
\item $\overline{\,\cdot\,}$ is an involution on the vertices of $\Gamma_+ \sqcup \Gamma_-$, which
\begin{enumerate}[label=(\arabic*)]
\item takes an even depth vertex  on $\Gamma_\pm$ to a vertex at the same depth on $\Gamma_\pm$, and
\item takes an odd depth vertex on $\Gamma_\pm$ to a vertex at the same depth on $\Gamma_\mp$,
\end{enumerate}
\item and $n$, called the `working depth', is a non-negative integer which is either equal to the maximum distance of a vertex from the base vertex, or to the maximum distance plus one,
\end{itemize}
satisfying the following constraints, which are described below:
\begin{itemize}
\item 
the PGP associativity constraint,
\item
the PGP triple point obstruction, and
\item
the PGP duality constraint.
\end{itemize}
\end{defn}

There is an obvious notion of isomorphism of PGPs.

The \emph{index} of a PGP is $\max\{\lambda_+^2, \lambda_-^2\}$, where $\lambda_\pm$ is the Frobenius-Perron
eigenvalue for $\Gamma_\pm$, that is, the largest eigenvalue of the adjacency matrix.

It is important to note here that we are not allowing multiple edges between vertices, so we only see `simply-laced' principal graphs. 
This is not an essential restriction, but it makes the implementations so much simpler that it is worthwhile having to deal with non simply-laced principal graphs separately.

\begin{defn}
If $\nbhd(v)$ denotes the set of neighbours of a vertex $v$, \emph{associativity between vertices $v$ and $w$} is the condition
\begin{equation}
\label{eq:associativity}
\left| \overline{\nbhd(v)} \cap \nbhd(\overline{w}) \right| = \left| \nbhd(\overline{v}) \cap \overline{\nbhd(w)} \right|.
\end{equation}
This condition is symmetric in $v$ and $w$, and trivially satisfied unless $\depth(v)$ and $\depth(w)$ differ by $-2$, $0$, or $2$.
The \emph{PGP associativity constraint} is associativity between all pairs of vertices $v$ and $w$ such that at least one of $v$ or $w$ is at depth $n-2$ or less.\footnote{
Since we may subsequently add vertices at the working depth $n$, we cannot require this 
identity to hold for pairs of vertices both at depth at least $n-1$, because later vertices at depth $n$ may change
either side of the equation. When $v$ is at depth at most $n-2$, on the other hand, the sizes of the sets in the above equation for a vertex $w$, at any depth, will not change when further vertices are introduced.
Nevertheless, there is an
inequality one may impose for pairs of vertices at depth $n-1$, which we address later.
} 
\end{defn}

\begin{defn}
We first define the \emph{combinatorial dimension relation}, on the set of vertices of a PGP $\Gamma$. This is the weakest equivalence relation such that
\begin{itemize}
\item $v \sim w$ if $v = \overline{w}$, and
\item $v \sim w$ whenever $v$ and $w$ have depth at most $n-2$, and there is a bijection $\alpha: \nbhd(v) \to \nbhd(w)$ such that $u \sim \alpha(u)$ for all $u \in \nbhd(v)$.
\end{itemize}
(Clearly if $\Gamma$ is the principal graph of a subfactor, and $v \sim w$, then $\dim(v) = \dim(w)$.)

A PGP $\Gamma$ satisfies the \emph{PGP triple point obstruction} if it satisfies Ocneanu's triple point obstruction \ref{fact:TriplePointObstruction} for all pairs of triple points with depth at most $n-2$, replacing the condition that the bijection $\beta$ preserves dimensions with the condition that  $\beta(u)\sim u$ for all $u$.
\end{defn}

\begin{remark}
The classification to index 5 only used the triple point obstruction for initial triple points \cite{MR2914056}.
Our use of the combinatorial dimension relation allows us to apply the triple point obstruction at non-initial other triple points, which is an essential improvement, without which the algorithm described here would not terminate.
\end{remark}

The \emph{PGP duality constraint} is identical to the duality constraint \ref{fact:DualityConstraint}.

\begin{facts}
\mbox{}
\begin{enumerate}[label=(\arabic*)]
\item
No extension of a PGP can satisfy the associativity constraint \ref{fact:AssociativityConstraint} unless the PGP satisfies the PGP associativity constraint.
\item
No extension of a PGP can satisfy Ocneanu's triple point obstruction \ref{fact:TriplePointObstruction} unless the PGP satisfies the PGP triple point obstruction.
\item
No extension of a PGP can satisfy the duality constraint \ref{fact:DualityConstraint} unless the PGP satisfies the PGP duality constraint.
\end{enumerate}
\end{facts}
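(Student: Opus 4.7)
The plan is to prove all three parts as monotonicity-under-extension statements, resting on the common observation that an extension of a PGP adds vertices only at depths exceeding the current working depth $n$. Consequently, for any vertex $v$ of depth at most $n-2$ in the PGP, the neighborhoods $\nbhd(v)$ and $\overline{\nbhd(v)}$ are subsets of the vertices of depth at most $n-1$, all of which already exist in the PGP and are unchanged in any extension. This ``sealing-off'' at depth $n-2$ is precisely what the cutoff in the PGP-level definitions was engineered to secure.

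For (1), the equation \eqref{eq:associativity} applied to a pair $(v,w)$ in which at least one of $v,w$ has depth $\leq n-2$ is a cardinality equation between subsets of $\overline{\nbhd(v)}$ and $\nbhd(\overline v)$ (or the analogous sets for $w$), and each of these neighborhoods is preserved by extension. Hence both sides of \eqref{eq:associativity} evaluate identically in the PGP and in any extension, so a failure of the PGP associativity constraint in the PGP propagates to a failure of the full associativity constraint in every extension. For (3), the duality constraint merely counts self-dual vertices at the depth one past the branch; whenever that depth lies inside the sealed-off region, the counts are determined by the PGP and unchanged under extension, so failure in the PGP again propagates.

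For (2), the extra ingredient is that the combinatorial dimension relation is itself monotone under extension. Writing $\sim$ and $\sim'$ for the relations in the PGP and in the extension, I would establish that $\sim$ is contained in the restriction of $\sim'$ to PGP vertices by checking that this restriction satisfies the two generating conditions for $\sim$ (the duality clause and the neighbor-bijection clause at depth-$\leq n-2$ vertices), again invoking the sealing-off observation; minimality of $\sim$ then yields the inclusion. Given a witness $(v,w,\beta)$ to a failure of the PGP triple point obstruction, the same $v,w$ remain triple points in every extension, $\beta$ remains $\sim'$-preserving (hence dimension-preserving on any subfactor realization), and the six length-$4$ paths between $v$ and $w$ around $\cO(\Gamma)$ traverse only vertices of depth $\leq n-1$, so there remain exactly six of them. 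Ocneanu's obstruction then fires and forces $[B:A]\leq 4$ in the extension, which is the contrapositive of the statement.

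The main obstacle I expect is the careful bookkeeping in (2): one must verify that every length-$4$ loop in $\cO(\Gamma)$ connecting two triple points at depth $\leq n-2$ stays within the sealed-off portion of the graph pair, so that the ``exactly six paths'' count is stable under extension. This is exactly what the $n-2$ cutoff is designed to guarantee, but it deserves explicit case-by-case checking; the other two parts are essentially immediate from the sealing-off observation once it is stated precisely.
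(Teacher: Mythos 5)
Your arguments for (1) and (3) are correct and rely on exactly the right mechanism: the "sealing-off" observation that a vertex at depth $\leq n-2$ has all of its neighbours (and hence all of the sets $\nbhd(v)$, $\overline{\nbhd(v)}$, $\nbhd(\overline{v})$) at depth $\leq n-1$, and that an extension never touches vertices or edges at these depths. For (1), the intersection in Equation \eqref{eq:associativity} is contained in $\overline{\nbhd(v)}$ (or $\nbhd(\overline{v})$), and since new vertices appear at depth at least $n$, they can never enter either intersection, so both sides of the identity are frozen. Your parenthetical "(or the analogous sets for $w$)" is a slight misstatement, since when $w$ is at depth at least $n-1$ those neighbourhoods are \emph{not} preserved; what matters is containment in the frozen sets associated to $v$. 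For (3), your argument handles the case where the branch depth is sealed off, and the case where it is not is vacuous since the PGP constraint then imposes nothing.

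For (2), however, there is a genuine gap, and you are looking in the wrong place for it. You correctly note that $v,w$ remain $3$-valent, that $\beta$ remains $\sim$-preserving (and hence dimension-preserving in any subfactor extension, since the $n-2$ cutoff in the generating clause for $\sim$ ensures the relation is sound for dimensions after extension), and that the six length-$2$ paths in $\cO(\Gamma)$ between $v$ and $w$ pass only through vertices at depths $d \pm 1 \leq n-1$ and are therefore stable. But you never verify the stability of the \emph{other} path count in Ocneanu's hypothesis: the requirement that for each pair of neighbours $v'$ of $v$ and $w'$ of $w$ with $\beta(v') \neq w'$, there are \emph{exactly} $2$ paths from $v'$ to $w'$ in $\cO(\Gamma)$. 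When the triple points are at depth exactly $d = n-2$, such $v'$ and $w'$ may both lie at depth $n-1$, and a length-$2$ path from $v'$ to $w'$ in $\cO(\Gamma)$ can have its intermediate vertex at depth $d+2 = n$. Depth $n$ is precisely the working depth, which is \emph{not} sealed off, and an extension may add new vertices there that are adjacent (in $\cO(\Gamma)$) to both $v'$ and $w'$. This would increase the path count above $2$, destroying the witness and allowing the extension to satisfy the obstruction even though the PGP did not. Your "careful bookkeeping" paragraph focuses on the six-path count (which is fine), but this second, boundary-depth path count is where the argument actually needs care, and it is not addressed. Either one must argue that no such new intermediate vertex can arise (which is not obvious and seems false in general), or the PGP obstruction must be interpreted so that the count is only imposed where it is already determined --- and neither option appears in your proposal.
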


\subsection{Generation by canonical construction paths for principal graphs}
\label{sec:orderly}

\newcommand{\result}[1]{\widehat{#1}}

We now review McKay's method of enumeration by construction by canonical paths
\cite{MR1606516}.
Although our description here is a very close parallel of his, we have
slightly specialized his framework, and allowed ourselves the use of slightly
more sophisticated mathematical language (particularly of groupoids). We hope
that our exposition is accessible, and simultaneously explains the general
picture and the particular instance used in this paper. McKay's
running example is triangle free graphs, while our running example will be
principal graph pairs (PGPs). Throughout this section, the general method is
described in the main text, and the specialization to the PGP example is
illustrated in highlighted boxes.

The general setup for McKay's method is that we have a (countable)
groupoid of combinatorial objects $\cO$, and we would like to enumerate the
isomorphism classes.
\begin{boxedexample*}
The groupoid $\cO$ is all PGPs with index bounded by some fixed constant $L$, with PGP isomorphisms.
\end{boxedexample*}

Our initial plan is to come up with a finite collection of seed objects and
generating steps so that a representative of each class can be reached from a
seed in a finite number of steps. This plan ensures that we can enumerate all
isomorphism classes. The problem, of course, is that we may produce many
representatives of the same class.

\begin{boxedexample}
For example, any PGP $\Gamma$ with index at most $L$ may be reached from the trivial PGP $(\eset, \eset, id, 0)$ using finitely many applications of these two generating steps.
\begin{enumerate}[label=(\arabic*)]
\item
We can increase the working depth at which we add vertices.
\item
We may join new vertices at the working depth $n$ to vertices at the previous depth.
The details differ depending on whether $n$ is even or odd.
\begin{enumerate}[label=(\alph*)]
\item
If $n$ is odd, given two sets of vertices $S_+ \subset V(\Gamma_+)$  and $S_- \subset V(\Gamma_-)$ at depth $n-1$, we
may add a new vertex
$v$ to $\Gamma_+$ and a new vertex $\bar{v}$  to $\Gamma_-$, both at depth $n$, joined to the vertices of $S_+$ and
$S_-$ respectively.
\item
If $n$ is even, there are two ways to add vertices to either $\Gamma_+$ or $\Gamma_-$.
\begin{enumerate}[label=(\roman*)]
\item
Given a set of vertices $S \subset V(\Gamma_\pm)$ at depth $n-1$, we may add a new self dual
vertex
$v$  at depth $n$  to $\Gamma_\pm$, joined by an edge to each vertex of $S$.
\item
Given two sets of vertices $S_1,S_2 \subset V(\Gamma_\pm)$ at depth $n-1$, we may add two new vertices
$v$ and $\bar{v}$ at depth $n$ to $\Gamma_\pm$, joined by edges to $S_1$ and $S_2$ respectively.
\end{enumerate}
\end{enumerate}
\end{enumerate}
In fact, there are restrictions on when these steps may be applied (ensuring associativity and staying below the index limit). We postpone discussing these until later in this section.
\end{boxedexample}

In particular, there are three ways in which this process would produce representatives in the same isomorphism class.
\begin{enumerate}[label=(\arabic*)]
\item 
If two generating steps starting from the same object are equivalent under the automorphism group of that object, the results will be isomorphic.
\item
Two inequivalent generating steps applied to the same object can yield isomorphic objects.
\item
Starting with two non-isomorphic objects and applying a generating step can result in isomorphic objects.
\end{enumerate}

\begin{boxedexample}
\begin{enumerate}[label=(\arabic*)]
\item
Adding a vertex in two different locations to the same graph will give isomorphic results, if those locations are equivalent under the automorphism group action.
$$
\begin{tikzpicture}
\tikzset{grow=right,level distance=130pt}
\tikzset{every tree node/.style={draw,fill=white,rectangle,rounded corners,inner sep=2pt}}
\Tree
[.\node{$\!\!\begin{array}{c}\bigraph{bwd1v1p1duals1v1x2}\\\bigraph{bwd1v1p1duals1v1x2}\end{array}\!\!$};
	[.\node{$\!\!\begin{array}{c}\bigraph{bwd1v1p1v1x0duals1v1x2}\\\bigraph{bwd1v1p1v1x0duals1v1x2}\end{array}\!\!$};]
	[.\node{$\!\!\begin{array}{c}\bigraph{bwd1v1p1v0x1duals1v1x2}\\\bigraph{bwd1v1p1v0x1duals1v1x2}\end{array}\!\!$};]
]
\end{tikzpicture}
$$ 

\item
Two inequivalent ways of adding vertices to the same graph pair may yield isomorphic results.
$$
\begin{tikzpicture}
\tikzset{grow=right,level distance=130pt}
\tikzset{every tree node/.style={draw,fill=white,rectangle,rounded corners,inner sep=2pt}}
\Tree
[.\node{$\!\!\begin{array}{c}\bigraph{bwd1v1v1p1p1v1x0x0p1x0x0p0x1x0p0x1x0p0x0x1p0x0x1v0x0x1x0x0x0duals1v1v3x5x1x6x2x4}\\\bigraph{bwd1v1v1p1p1v1x0x0p1x0x0p0x1x0p0x1x0p0x0x1p0x0x1v0x0x1x0x0x0duals1v1v3x5x1x6x2x4}\end{array}\!\!$};
	[.\node{$\!\!\begin{array}{c}\bigraph{bwd1v1v1p1p1v1x0x0p1x0x0p0x1x0p0x1x0p0x0x1p0x0x1v0x1x0x0x0x0p0x0x1x0x0x0duals1v1v3x5x1x6x2x4}\\\bigraph{bwd1v1v1p1p1v1x0x0p1x0x0p0x1x0p0x1x0p0x0x1p0x0x1v0x1x0x0x0x0p0x0x1x0x0x0duals1v1v3x5x1x6x2x4}\end{array}\!\!$};]
	[.\node{$\!\!\begin{array}{c}\bigraph{bwd1v1v1p1p1v1x0x0p1x0x0p0x1x0p0x1x0p0x0x1p0x0x1v0x0x1x0x0x0p0x0x0x0x0x1duals1v1v3x5x1x6x2x4}\\\bigraph{bwd1v1v1p1p1v1x0x0p1x0x0p0x1x0p0x1x0p0x0x1p0x0x1v0x0x1x0x0x0p0x0x0x0x0x1duals1v1v3x5x1x6x2x4}\end{array}\!\!$};]
]
\end{tikzpicture}
$$
These extensions are not equivalent:
in the first case, but not the second, the new vertices are connected to vertices which are dual to vertices at distance 3 from the old vertices at the working depth.

The resulting PGPs are equivalent by permuting the vertices at successive depths in each graph in the lower pair by
$(1), (1), (1), (231), (436512), (12)$.

\item
Adding vertices to two inequivalent graphs pairs may yield isomorphic results.
\begin{align*}
\begin{tikzpicture}
\tikzset{grow=right,level distance=130pt}
\tikzset{every tree node/.style={draw,fill=white,rectangle,rounded corners,inner sep=2pt}}
\Tree
[.\node{$\!\!\begin{array}{c}\bigraph{bwd1v1p1duals1v2x1}\\\bigraph{bwd1v1p1duals1v2x1}\end{array}\!\!$};
	[.\node{$\!\!\begin{array}{c}\bigraph{bwd1v1p1p1duals1v2x1x3}\\\bigraph{bwd1v1p1p1duals1v2x1x3}\end{array}\!\!$};]
]
\end{tikzpicture}
\\
\begin{tikzpicture}
\tikzset{grow=right,level distance=130pt}
\tikzset{every tree node/.style={draw,fill=white,rectangle,rounded corners,inner sep=2pt}}
\Tree
[.\node{$\!\!\begin{array}{c}\smallbigraph{bwd1v1duals1v1}\\\smallbigraph{bwd1v1duals1v1}\end{array}\!\!$};
	[.\node{$\!\!\begin{array}{c}\bigraph{bwd1v1p1p1duals1v1x3x2}\\\bigraph{bwd1v1p1p1duals1v1x3x2}\end{array}\!\!$};]
]
\end{tikzpicture}
\end{align*}

\end{enumerate}
\end{boxedexample}

McKay's method of construction by canonical paths avoids these problems. First,
however, we need to endow our groupoid of objects with the
following  pieces of extra structure.

\begin{defn}
A McKay groupoid is a countable groupoid $\cO$  with the extra structure
$(\ell, U, L, \result{\cdot}, (\cdot)^{-1}, \phi)$ described in conditions
\ref{condition:level}-\ref{condition:choice-function} below.
\end{defn}

\begin{enumerate}[label=(C\arabic*)]
\item
\label{condition:level}
The groupoid $\cO$ should be graded by $\mathbb{N}$; that is, every object $o\in\cO$ has a isomorphism invariant `level', which we write as $\ell(o)$.

\begin{boxedexample*}
For PGPs, we use the following slightly complicated level function. Let $a$ be the working depth of $o$, let $b$ be the number of self-dual vertices in $o$, and let $c$ be the number of pairs of dual vertices in $o$. Then $\ell(o) = a + b + c$.
$$
\ell
\left(\,
\begin{tikzpicture}[scale = .5, baseline=-.6cm]
	\draw[fill=white, rounded corners,inner sep=2pt] (-.5,-3) rectangle (6.5,1);
	\draw[fill] (0,0) circle (0.05);
	\draw (0.,0.) -- (1.,0.);
	\draw[fill] (1.,0.) circle (0.05);
	\draw (1.,0.) -- (2.,0.);
	\draw[fill] (2.,0.) circle (0.05);
	\draw (2.,0.) -- (3.,0.);
	\draw[fill] (3.,0.) circle (0.05);
	\draw (3.,0.) -- (4.,-0.5);
	\draw (3.,0.) -- (4.,0.5);
	\draw[fill] (4.,-0.5) circle (0.05);
	\draw[fill] (4.,0.5) circle (0.05);
	\draw (4.,-0.5) -- (5.,-0.5);
	\draw (4.,0.5) -- (5.,0.5);
	\draw[fill] (5.,-0.5) circle (0.05);
	\draw[fill] (5.,0.5) circle (0.05);
	\draw (5.,-0.5) -- (6.,-0.5);
	\draw (5.,0.5) -- (6.,0.5);
	\draw[fill] (6.,-0.5) circle (0.05);
	\draw[fill] (6.,0.5) circle (0.05);
	\draw[red, thick] (0.,0.) -- +(0,0.333333) ;
	\draw[red, thick] (2.,0.) -- +(0,0.333333) ;
	\draw[red, thick] (4.,-0.5) -- +(0,0.333333) ;
	\draw[red, thick] (4.,0.5) -- +(0,0.333333) ;
	\draw[red, thick] (6.,-0.5) to[out=135,in=-135] (6.,0.5);
	\draw[fill] (0,-2) circle (0.05);
	\draw (0.,-2.) -- (1.,-2.);
	\draw[fill] (1.,-2.) circle (0.05);
	\draw (1.,-2.) -- (2.,-2.);
	\draw[fill] (2.,-2.) circle (0.05);
	\draw (2.,-2.) -- (3.,-2.);
	\draw[fill] (3.,-2.) circle (0.05);
	\draw (3.,-2.) -- (4.,-2.5);
	\draw (3.,-2.) -- (4.,-1.5);
	\draw[fill] (4.,-2.5) circle (0.05);
	\draw[fill] (4.,-1.5) circle (0.05);
	\draw (4.,-2.5) -- (5.,-2.5);
	\draw (4.,-2.5) -- (5.,-1.5);
	\draw[fill] (5.,-2.5) circle (0.05);
	\draw[fill] (5.,-1.5) circle (0.05);
	\draw[red, thick] (0.,-2.) -- +(0,0.333333) ;
	\draw[red, thick] (2.,-2.) -- +(0,0.333333) ;
	\draw[red, thick] (4.,-2.5) -- +(0,0.333333) ;
	\draw[red, thick] (4.,-1.5) -- +(0,0.333333) ;
\end{tikzpicture}
\,\right)
=6+8+5=19
$$
\end{boxedexample*}

\item
\label{condition:upper-objects}
Associated to an object $o \in \cO$ we define a new set, the `upper objects'
$U(o)$ for $o$. 
The elements of $U(o)$ consist of an object in $\cO$ along with certain extra
data. If $u \in U(o)$ is an upper object for $o$, we write $\result{u}$ for
the resulting object in $\cO$. We must have $\ell(\result{u}) > \ell(o)$.
Essentially, the upper object $u$ records the generating step that produces
$\result{u}$ from $o$.

\begin{boxedexample}
We have three types of upper objects, called `increasing the depth' $I$, `adding a self-dual vertex' $S^+(V)$, and `adding a pair of dual vertices' $P^+(V_1, V_2)$.
\begin{itemize}
\item 
There is only one way to increase the depth, and the underlying object $\result{I}$ is just $o$ with the working depth incremented by one.
We denote $\result{I}$ by adding a white shaded vertex at the next depth, and we denote $I$ by drawing a dotted circle around this white vertex.
$$
o=
\begin{tikzpicture}[scale = .5, baseline=-.6cm]
	\draw[fill=white, rounded corners,inner sep=2pt] (-.5,-3) rectangle (2.5,1);
	\draw[fill] (0,0) circle (0.05);
	\draw (0.,0.) -- (1.,0.);
	\draw[fill] (1.,0.) circle (0.05);
	\draw (1.,0.) -- (2.,-0.5);
	\draw (1.,0.) -- (2.,0.5);
	\draw[fill] (2.,-0.5) circle (0.05);
	\draw[fill] (2.,0.5) circle (0.05);
	\draw[red, thick] (0.,0.) -- +(0,0.333333) ;
	\draw[red, thick] (2.,-0.5) -- +(0,0.333333) ;
	\draw[red, thick] (2.,0.5) -- +(0,0.333333) ;
	\draw[fill] (0,-2) circle (0.05);
	\draw (0.,-2.) -- (1.,-2.);
	\draw[fill] (1.,-2.) circle (0.05);
	\draw (1.,-2.) -- (2.,-2.5);
	\draw (1.,-2.) -- (2.,-1.5);
	\draw[fill] (2.,-2.5) circle (0.05);
	\draw[fill] (2.,-1.5) circle (0.05);
	\draw[red, thick] (0.,-2.) -- +(0,0.333333) ;
	\draw[red, thick] (2.,-2.5) -- +(0,0.333333) ;
	\draw[red, thick] (2.,-1.5) -- +(0,0.333333) ;
\end{tikzpicture}
\,;
\qquad
I=
\begin{tikzpicture}[scale = .5, baseline=-.6cm]
	\draw[fill=white, rounded corners,inner sep=2pt] (-.5,-3) rectangle (3.5,1);
	\draw[fill] (0,0) circle (0.05);
	\draw (0.,0.) -- (1.,0.);
	\draw[fill] (1.,0.) circle (0.05);
	\draw (1.,0.) -- (2.,-0.5);
	\draw (1.,0.) -- (2.,0.5);
	\draw[fill] (2.,-0.5) circle (0.05);
	\draw[fill] (2.,0.5) circle (0.05);
	\draw[red, thick] (0.,0.) -- +(0,0.333333) ;
	\draw[red, thick] (2.,-0.5) -- +(0,0.333333) ;
	\draw[red, thick] (2.,0.5) -- +(0,0.333333) ;
	\draw[thick, dotted] (3,0) circle (.35cm);
	\draw[fill=white] (3.,0.) circle (0.1);
	\draw[fill] (0,-2) circle (0.05);
	\draw (0.,-2.) -- (1.,-2.);
	\draw[fill] (1.,-2.) circle (0.05);
	\draw (1.,-2.) -- (2.,-2.5);
	\draw (1.,-2.) -- (2.,-1.5);
	\draw[fill] (2.,-2.5) circle (0.05);
	\draw[fill] (2.,-1.5) circle (0.05);
	\draw[red, thick] (0.,-2.) -- +(0,0.333333) ;
	\draw[red, thick] (2.,-2.5) -- +(0,0.333333) ;
	\draw[red, thick] (2.,-1.5) -- +(0,0.333333) ;
	\draw[thick, dotted] (3,-2) circle (.35cm);
	\draw[fill=white] (3.,-2.) circle (0.1);
\end{tikzpicture}
\,;
\qquad
\result{I}=
\begin{tikzpicture}[scale = .5, baseline=-.6cm]
	\draw[fill=white, rounded corners,inner sep=2pt] (-.5,-3) rectangle (3.5,1);
	\draw[fill] (0,0) circle (0.05);
	\draw (0.,0.) -- (1.,0.);
	\draw[fill] (1.,0.) circle (0.05);
	\draw (1.,0.) -- (2.,-0.5);
	\draw (1.,0.) -- (2.,0.5);
	\draw[fill] (2.,-0.5) circle (0.05);
	\draw[fill] (2.,0.5) circle (0.05);
	\draw[red, thick] (0.,0.) -- +(0,0.333333) ;
	\draw[red, thick] (2.,-0.5) -- +(0,0.333333) ;
	\draw[red, thick] (2.,0.5) -- +(0,0.333333) ;
	\draw[fill=white] (3.,0.) circle (0.1);
	\draw[fill] (0,-2) circle (0.05);
	\draw (0.,-2.) -- (1.,-2.);
	\draw[fill] (1.,-2.) circle (0.05);
	\draw (1.,-2.) -- (2.,-2.5);
	\draw (1.,-2.) -- (2.,-1.5);
	\draw[fill] (2.,-2.5) circle (0.05);
	\draw[fill] (2.,-1.5) circle (0.05);
	\draw[red, thick] (0.,-2.) -- +(0,0.333333) ;
	\draw[red, thick] (2.,-2.5) -- +(0,0.333333) ;
	\draw[red, thick] (2.,-1.5) -- +(0,0.333333) ;
	\draw[fill=white] (3.,-2.) circle (0.1);
\end{tikzpicture}
$$
\item 
When we add a self-dual vertex, $S^+(V) \in U(o)$, $V$ denotes a subset of the vertices of $o$ at depth $n-1$, all on one graph. 
The underlying object $\result{S^+(V)}$ is the PGP obtained by adding a new self-dual vertex at depth $n$, connected by an edge to each vertex in $V$.
$$
o=
\begin{tikzpicture}[scale = .5, baseline=-.3cm]
	\draw[fill=white, rounded corners,inner sep=2pt] (-.5,-2) rectangle (2.5,1);
	\draw[fill] (0,0) circle (0.05);
	\draw (0.,0.) -- (1.,0.);
	\draw[fill] (1.,0.) circle (0.05);
	\draw (1.,0.) -- (2.,-0.5);
	\draw (1.,0.) -- (2.,0.5);
	\draw[fill] (2.,-0.5) circle (0.05);
	\draw[fill] (2.,0.5) circle (0.05);
	\draw[red, thick] (0.,0.) -- +(0,0.333333) ;
	\draw[red, thick] (2.,-0.5) -- +(0,0.333333) ;
	\draw[red, thick] (2.,0.5) -- +(0,0.333333) ;
	\draw[fill] (0,-1.5) circle (0.05);
	\draw (0.,-1.5) -- (1.,-1.5);
	\draw[fill] (1.,-1.5) circle (0.05);
	\draw (1.,-1.5) -- (2.,-1.5);
	\draw[fill] (2.,-1.5) circle (0.05);
	\draw[red, thick] (0.,-1.5) -- +(0,0.333333) ;
	\draw[red, thick] (2.,-1.5) -- +(0,0.333333) ;
\end{tikzpicture}
\,;\qquad
S^+(V)=
\begin{tikzpicture}[scale = .5, baseline=-.3cm]
	\draw[fill=white, rounded corners,inner sep=2pt] (-.5,-2) rectangle (2.5,1);
	\draw[fill] (0,0) circle (0.05);
	\draw (0.,0.) -- (1.,0.);
	\draw[fill] (1.,0.) circle (0.05);
	\draw (1.,0.) -- (2.,-0.5);
	\draw (1.,0.) -- (2.,0.5);
	\draw[fill] (2.,-0.5) circle (0.05);
	\draw[fill] (2.,0.5) circle (0.05);
	\draw[red, thick] (0.,0.) -- +(0,0.333333) ;
	\draw[red, thick] (2.,-0.5) -- +(0,0.333333) ;
	\draw[red, thick] (2.,0.5) -- +(0,0.333333) ;
	\draw[fill] (0,-1.5) circle (0.05);
	\draw (0.,-1.5) -- (1.,-1.5);
	\draw[fill] (1.,-1.5) circle (0.05);
	\draw (1.,-1.5) -- (2.,-1.5);
	\draw[fill] (2.,-1.5) circle (0.05);
	\draw[red, thick] (0.,-1.5) -- +(0,0.333333) ;
	\draw[red, thick] (2.,-1.5) -- +(0,0.333333) ;
	\draw[dotted, thick] (1,-1.5) circle (.3cm);
	\node at (1.,-.9) {\scriptsize{$V$}};
\end{tikzpicture}
\,;\qquad
\result{S^+(V)}=
\begin{tikzpicture}[scale = .5, baseline=-.6cm]
	\draw[fill=white, rounded corners,inner sep=2pt] (-.5,-3) rectangle (2.5,1);
	\draw[fill] (0,0) circle (0.05);
	\draw (0.,0.) -- (1.,0.);
	\draw[fill] (1.,0.) circle (0.05);
	\draw (1.,0.) -- (2.,-0.5);
	\draw (1.,0.) -- (2.,0.5);
	\draw[fill] (2.,-0.5) circle (0.05);
	\draw[fill] (2.,0.5) circle (0.05);
	\draw[red, thick] (0.,0.) -- +(0,0.333333) ;
	\draw[red, thick] (2.,-0.5) -- +(0,0.333333) ;
	\draw[red, thick] (2.,0.5) -- +(0,0.333333) ;
	\draw[fill] (0,-2) circle (0.05);
	\draw (0.,-2.) -- (1.,-2.);
	\draw[fill] (1.,-2.) circle (0.05);
	\draw (1.,-2.) -- (2.,-2.5);
	\draw (1.,-2.) -- (2.,-1.5);
	\draw[fill] (2.,-2.5) circle (0.05);
	\draw[fill] (2.,-1.5) circle (0.05);
	\draw[red, thick] (0.,-2.) -- +(0,0.333333) ;
	\draw[red, thick] (2.,-2.5) -- +(0,0.333333) ;
	\draw[red, thick] (2.,-1.5) -- +(0,0.333333) ;
\end{tikzpicture}
$$
\item 
When we add a pair of dual vertices $P^+(V_1, V_2) \in U(o)$, the sets $V_1$ and $V_2$ denote two collections of vertices of $o$ at depth $n-1$. 
When the working depth $n$ is even, both must be collections of vertices of the same graph, and we do not distinguish between $P^+(V_1, V_2)$ and $P^+(V_2, V_1)$.
$$
\hspace{-1.2cm}
o=
\begin{tikzpicture}[scale = .5, baseline=-.6cm]
	\draw[fill=white, rounded corners,inner sep=2pt] (-.5,-3) rectangle (5.5,1);
	\draw[fill] (0,0) circle (0.05);
	\draw (0.,0.) -- (1.,0.);
	\draw[fill] (1.,0.) circle (0.05);
	\draw (1.,0.) -- (2.,0.);
	\draw[fill] (2.,0.) circle (0.05);
	\draw (2.,0.) -- (3.,0.);
	\draw[fill] (3.,0.) circle (0.05);
	\draw (3.,0.) -- (4.,-0.5);
	\draw (3.,0.) -- (4.,0.5);
	\draw[fill] (4.,-0.5) circle (0.05);
	\draw[fill] (4.,0.5) circle (0.05);
	\draw (4.,-0.5) -- (5.,-0.5);
	\draw (4.,0.5) -- (5.,0.5);
	\draw[fill] (5.,-0.5) circle (0.05);
	\draw[fill] (5.,0.5) circle (0.05);
	\draw[red, thick] (0.,0.) -- +(0,0.333333) ;
	\draw[red, thick] (2.,0.) -- +(0,0.333333) ;
	\draw[red, thick] (4.,-0.5) -- +(0,0.333333) ;
	\draw[red, thick] (4.,0.5) -- +(0,0.333333) ;
	\draw[fill] (0,-2) circle (0.05);
	\draw (0.,-2.) -- (1.,-2.);
	\draw[fill] (1.,-2.) circle (0.05);
	\draw (1.,-2.) -- (2.,-2.);
	\draw[fill] (2.,-2.) circle (0.05);
	\draw (2.,-2.) -- (3.,-2.);
	\draw[fill] (3.,-2.) circle (0.05);
	\draw (3.,-2.) -- (4.,-2.5);
	\draw (3.,-2.) -- (4.,-1.5);
	\draw[fill] (4.,-2.5) circle (0.05);
	\draw[fill] (4.,-1.5) circle (0.05);
	\draw (4.,-2.5) -- (5.,-2.5);
	\draw (4.,-2.5) -- (5.,-1.5);
	\draw[fill] (5.,-2.5) circle (0.05);
	\draw[fill] (5.,-1.5) circle (0.05);
	\draw[red, thick] (0.,-2.) -- +(0,0.333333) ;
	\draw[red, thick] (2.,-2.) -- +(0,0.333333) ;
	\draw[red, thick] (4.,-2.5) -- +(0,0.333333) ;
	\draw[red, thick] (4.,-1.5) -- +(0,0.333333) ;
\end{tikzpicture}
\,;\,\,
P^+(V_1,V_2)=
\begin{tikzpicture}[scale = .5, baseline=-.6cm]
	\draw[fill=white, rounded corners,inner sep=2pt] (-.5,-3) rectangle (6.1,1);
	\draw[fill] (0,0) circle (0.05);
	\draw (0.,0.) -- (1.,0.);
	\draw[fill] (1.,0.) circle (0.05);
	\draw (1.,0.) -- (2.,0.);
	\draw[fill] (2.,0.) circle (0.05);
	\draw (2.,0.) -- (3.,0.);
	\draw[fill] (3.,0.) circle (0.05);
	\draw (3.,0.) -- (4.,-0.5);
	\draw (3.,0.) -- (4.,0.5);
	\draw[fill] (4.,-0.5) circle (0.05);
	\draw[fill] (4.,0.5) circle (0.05);
	\draw (4.,-0.5) -- (5.,-0.5);
	\draw (4.,0.5) -- (5.,0.5);
	\draw[fill] (5.,-0.5) circle (0.05);
	\draw[fill] (5.,0.5) circle (0.05);
	\draw[red, thick] (0.,0.) -- +(0,0.333333) ;
	\draw[red, thick] (2.,0.) -- +(0,0.333333) ;
	\draw[red, thick] (4.,-0.5) -- +(0,0.333333) ;
	\draw[red, thick] (4.,0.5) -- +(0,0.333333) ;
	\draw[dotted, thick] (5,.5) circle (.3cm);
	\node at (5.7,.5) {\scriptsize{$V_2$}};
	\draw[dotted, thick] (5,-.5) circle (.3cm);
	\node at (5.7,-.5) {\scriptsize{$V_1$}};
	\draw[fill] (0,-2) circle (0.05);
	\draw (0.,-2.) -- (1.,-2.);
	\draw[fill] (1.,-2.) circle (0.05);
	\draw (1.,-2.) -- (2.,-2.);
	\draw[fill] (2.,-2.) circle (0.05);
	\draw (2.,-2.) -- (3.,-2.);
	\draw[fill] (3.,-2.) circle (0.05);
	\draw (3.,-2.) -- (4.,-2.5);
	\draw (3.,-2.) -- (4.,-1.5);
	\draw[fill] (4.,-2.5) circle (0.05);
	\draw[fill] (4.,-1.5) circle (0.05);
	\draw (4.,-2.5) -- (5.,-2.5);
	\draw (4.,-2.5) -- (5.,-1.5);
	\draw[fill] (5.,-2.5) circle (0.05);
	\draw[fill] (5.,-1.5) circle (0.05);
	\draw[red, thick] (0.,-2.) -- +(0,0.333333) ;
	\draw[red, thick] (2.,-2.) -- +(0,0.333333) ;
	\draw[red, thick] (4.,-2.5) -- +(0,0.333333) ;
	\draw[red, thick] (4.,-1.5) -- +(0,0.333333) ;
\end{tikzpicture}
\,;\,\,
\result{P^+(V_1,V_2)}=
\begin{tikzpicture}[scale = .5, baseline=-.6cm]
	\draw[fill=white, rounded corners,inner sep=2pt] (-.5,-3) rectangle (6.5,1);
	\draw[fill] (0,0) circle (0.05);
	\draw (0.,0.) -- (1.,0.);
	\draw[fill] (1.,0.) circle (0.05);
	\draw (1.,0.) -- (2.,0.);
	\draw[fill] (2.,0.) circle (0.05);
	\draw (2.,0.) -- (3.,0.);
	\draw[fill] (3.,0.) circle (0.05);
	\draw (3.,0.) -- (4.,-0.5);
	\draw (3.,0.) -- (4.,0.5);
	\draw[fill] (4.,-0.5) circle (0.05);
	\draw[fill] (4.,0.5) circle (0.05);
	\draw (4.,-0.5) -- (5.,-0.5);
	\draw (4.,0.5) -- (5.,0.5);
	\draw[fill] (5.,-0.5) circle (0.05);
	\draw[fill] (5.,0.5) circle (0.05);
	\draw (5.,-0.5) -- (6.,-0.5);
	\draw (5.,0.5) -- (6.,0.5);
	\draw[fill] (6.,-0.5) circle (0.05);
	\draw[fill] (6.,0.5) circle (0.05);
	\draw[red, thick] (0.,0.) -- +(0,0.333333) ;
	\draw[red, thick] (2.,0.) -- +(0,0.333333) ;
	\draw[red, thick] (4.,-0.5) -- +(0,0.333333) ;
	\draw[red, thick] (4.,0.5) -- +(0,0.333333) ;
	\draw[red, thick] (6.,-0.5) to[out=135,in=-135] (6.,0.5);
	\draw[fill] (0,-2) circle (0.05);
	\draw (0.,-2.) -- (1.,-2.);
	\draw[fill] (1.,-2.) circle (0.05);
	\draw (1.,-2.) -- (2.,-2.);
	\draw[fill] (2.,-2.) circle (0.05);
	\draw (2.,-2.) -- (3.,-2.);
	\draw[fill] (3.,-2.) circle (0.05);
	\draw (3.,-2.) -- (4.,-2.5);
	\draw (3.,-2.) -- (4.,-1.5);
	\draw[fill] (4.,-2.5) circle (0.05);
	\draw[fill] (4.,-1.5) circle (0.05);
	\draw (4.,-2.5) -- (5.,-2.5);
	\draw (4.,-2.5) -- (5.,-1.5);
	\draw[fill] (5.,-2.5) circle (0.05);
	\draw[fill] (5.,-1.5) circle (0.05);
	\draw[red, thick] (0.,-2.) -- +(0,0.333333) ;
	\draw[red, thick] (2.,-2.) -- +(0,0.333333) ;
	\draw[red, thick] (4.,-2.5) -- +(0,0.333333) ;
	\draw[red, thick] (4.,-1.5) -- +(0,0.333333) ;
\end{tikzpicture}
$$
When the working depth $n$ is odd, each is a subset of vertices on different graphs. 
$$
o=
\begin{tikzpicture}[scale = .5, baseline=-.6cm]
	\draw[fill=white, rounded corners,inner sep=2pt] (-.5,-3) rectangle (2.5,1);
	\draw[fill] (0,0) circle (0.05);
	\draw (0.,0.) -- (1.,0.);
	\draw[fill] (1.,0.) circle (0.05);
	\draw (1.,0.) -- (2.,-0.5);
	\draw (1.,0.) -- (2.,0.5);
	\draw[fill] (2.,-0.5) circle (0.05);
	\draw[fill] (2.,0.5) circle (0.05);
	\draw[red, thick] (0.,0.) -- +(0,0.333333) ;
	\draw[red, thick] (2.,-0.5) -- +(0,0.333333) ;
	\draw[red, thick] (2.,0.5) -- +(0,0.333333) ;
	\draw[fill] (0,-2) circle (0.05);
	\draw (0.,-2.) -- (1.,-2.);
	\draw[fill] (1.,-2.) circle (0.05);
	\draw (1.,-2.) -- (2.,-2.5);
	\draw (1.,-2.) -- (2.,-1.5);
	\draw[fill] (2.,-2.5) circle (0.05);
	\draw[fill] (2.,-1.5) circle (0.05);
	\draw[red, thick] (0.,-2.) -- +(0,0.333333) ;
	\draw[red, thick] (2.,-2.5) -- +(0,0.333333) ;
	\draw[red, thick] (2.,-1.5) -- +(0,0.333333) ;
\end{tikzpicture}
\,;
\qquad
P^+(V_1,V_2)=
\begin{tikzpicture}[scale = .5, baseline=-.6cm]
	\draw[fill=white, rounded corners,inner sep=2pt] (-.5,-3) rectangle (3.2,1);
	\draw[fill] (0,0) circle (0.05);
	\draw (0.,0.) -- (1.,0.);
	\draw[fill] (1.,0.) circle (0.05);
	\draw (1.,0.) -- (2.,-0.5);
	\draw (1.,0.) -- (2.,0.5);
	\draw[fill] (2.,-0.5) circle (0.05);
	\draw[fill] (2.,0.5) circle (0.05);
	\draw[red, thick] (0.,0.) -- +(0,0.333333) ;
	\draw[red, thick] (2.,-0.5) -- +(0,0.333333) ;
	\draw[red, thick] (2.,0.5) -- +(0,0.333333) ;
	\draw[dotted, thick] (2,-.5) circle (.4cm);
	\node at (2.8,-.5) {\scriptsize{$V_1$}};
	\draw[fill] (0,-2) circle (0.05);
	\draw (0.,-2.) -- (1.,-2.);
	\draw[fill] (1.,-2.) circle (0.05);
	\draw (1.,-2.) -- (2.,-2.5);
	\draw (1.,-2.) -- (2.,-1.5);
	\draw[fill] (2.,-2.5) circle (0.05);
	\draw[fill] (2.,-1.5) circle (0.05);
	\draw[red, thick] (0.,-2.) -- +(0,0.333333) ;
	\draw[red, thick] (2.,-2.5) -- +(0,0.333333) ;
	\draw[red, thick] (2.,-1.5) -- +(0,0.333333) ;
	\draw[dotted, thick] (2,-2.5) circle (.4cm);
	\node at (2.8,-2.5) {\scriptsize{$V_2$}};
\end{tikzpicture}
\,;
\qquad
\result{P^+(V_1,V_2)}=
\begin{tikzpicture}[scale = .5, baseline=-.6cm]
	\draw[fill=white, rounded corners,inner sep=2pt] (-.5,-3) rectangle (3.5,1);
	\draw[fill] (0,0) circle (0.05);
	\draw (0.,0.) -- (1.,0.);
	\draw[fill] (1.,0.) circle (0.05);
	\draw (1.,0.) -- (2.,-0.5);
	\draw (1.,0.) -- (2.,0.5);
	\draw[fill] (2.,-0.5) circle (0.05);
	\draw[fill] (2.,0.5) circle (0.05);
	\draw (2.,-0.5) -- (3.,0.);
	\filldraw (3.,0.) circle (0.05);
	\draw[red, thick] (0.,0.) -- +(0,0.333333) ;
	\draw[red, thick] (2.,-0.5) -- +(0,0.333333) ;
	\draw[red, thick] (2.,0.5) -- +(0,0.333333) ;
	\draw[fill] (0,-2) circle (0.05);
	\draw (0.,-2.) -- (1.,-2.);
	\draw[fill] (1.,-2.) circle (0.05);
	\draw (1.,-2.) -- (2.,-2.5);
	\draw (1.,-2.) -- (2.,-1.5);
	\draw[fill] (2.,-2.5) circle (0.05);
	\draw[fill] (2.,-1.5) circle (0.05);
	\draw (2.,-2.5) -- (3.,-2.);
	\filldraw (3.,-2.) circle (0.05);
	\draw[red, thick] (0.,-2.) -- +(0,0.333333) ;
	\draw[red, thick] (2.,-2.5) -- +(0,0.333333) ;
	\draw[red, thick] (2.,-1.5) -- +(0,0.333333) ;
\end{tikzpicture}
$$
The resulting object $\result{P^+(V_1,V_2)}$ is the PGP obtained by adding a pair of vertices, dual to each other at depth $n$ (either on the same graph, or different graphs, as appropriate), with the first connected by an edge to each vertex in $V_1$, and the second connected to each vertex in $V_2$.
\end{itemize}

For a given PGP $o$:
\begin{itemize}
\item
We include $I \in U(o)$ if and only if there is at least
one vertex on each graph at the current working depth, 
and the associativity condition holds between all pairs of vertices at depth $n-1$. (This ensures that $\result{I}$ is associative.)

\item We include $S^+(V)$ for every subset $V$ such that
$\result{S^+(V)}$ would have index below our cutoff, and the associativity condition holds between all vertices at depth $n-2$ and the new vertex in $\result{S^+(V)}$.

\item Similarly we include 
$P^+(V_1,V_2)$ if it has small enough index and the associativity condition holds between all vertices at depth $n-2$ and both of the new vertices in $\result{P^+(V_1,V_2)}$.
\end{itemize}

Note that associativity between any pair of vertices $v$ and $w$ is checked precisely once.
\end{boxedexample}

\item
\label{condition:lower-objects}
Associated to an object $o \in \cO$, we define a new set, the `lower objects'
$L(o)$ for $o$. 
The elements of $L(o)$ consist of an object in $\cO$ along with certain extra
data. If $l \in L(o)$ is a lower object for $o$, we write $\result{l}$ for the
resulting object in $\cO$. We must have $\ell(\result{l}) < \ell(o)$.
Again, a lower object $l$ essentially records the generating step that
produces $o$ from $\result{l}$.

We insist that $L(o)$ is empty if and only if $\ell(o) = 0$.

\begin{boxedexample}
This is completely parallel to the description of upper objects.
We have three types of lower objects, called `decreasing the depth' $D$, 
$$
o=
\begin{tikzpicture}[scale = .5, baseline=-.6cm]
	\draw[fill=white, rounded corners,inner sep=2pt] (-.5,-3) rectangle (3.5,1);
	\draw[fill] (0,0) circle (0.05);
	\draw (0.,0.) -- (1.,0.);
	\draw[fill] (1.,0.) circle (0.05);
	\draw (1.,0.) -- (2.,-0.5);
	\draw (1.,0.) -- (2.,0.5);
	\draw[fill] (2.,-0.5) circle (0.05);
	\draw[fill] (2.,0.5) circle (0.05);
	\draw[red, thick] (0.,0.) -- +(0,0.333333) ;
	\draw[red, thick] (2.,-0.5) -- +(0,0.333333) ;
	\draw[red, thick] (2.,0.5) -- +(0,0.333333) ;
	\draw[fill=white] (3.,0.) circle (0.1);
	\draw[fill] (0,-2) circle (0.05);
	\draw (0.,-2.) -- (1.,-2.);
	\draw[fill] (1.,-2.) circle (0.05);
	\draw (1.,-2.) -- (2.,-2.5);
	\draw (1.,-2.) -- (2.,-1.5);
	\draw[fill] (2.,-2.5) circle (0.05);
	\draw[fill] (2.,-1.5) circle (0.05);
	\draw[red, thick] (0.,-2.) -- +(0,0.333333) ;
	\draw[red, thick] (2.,-2.5) -- +(0,0.333333) ;
	\draw[red, thick] (2.,-1.5) -- +(0,0.333333) ;
	\draw[fill=white] (3.,-2.) circle (0.1);
\end{tikzpicture}
\,;\qquad
D=
\begin{tikzpicture}[scale = .5, baseline=-.6cm]
	\draw[fill=white, rounded corners,inner sep=2pt] (-.5,-3) rectangle (3.5,1);
	\draw[fill] (0,0) circle (0.05);
	\draw (0.,0.) -- (1.,0.);
	\draw[fill] (1.,0.) circle (0.05);
	\draw (1.,0.) -- (2.,-0.5);
	\draw (1.,0.) -- (2.,0.5);
	\draw[fill] (2.,-0.5) circle (0.05);
	\draw[fill] (2.,0.5) circle (0.05);
	\draw[red, thick] (0.,0.) -- +(0,0.333333) ;
	\draw[red, thick] (2.,-0.5) -- +(0,0.333333) ;
	\draw[red, thick] (2.,0.5) -- +(0,0.333333) ;
	\draw[fill=white] (3.,0.) circle (0.1);
	\draw[dotted, thick] (3,0) circle (.35cm);
	\draw[fill] (0,-2) circle (0.05);
	\draw (0.,-2.) -- (1.,-2.);
	\draw[fill] (1.,-2.) circle (0.05);
	\draw (1.,-2.) -- (2.,-2.5);
	\draw (1.,-2.) -- (2.,-1.5);
	\draw[fill] (2.,-2.5) circle (0.05);
	\draw[fill] (2.,-1.5) circle (0.05);
	\draw[red, thick] (0.,-2.) -- +(0,0.333333) ;
	\draw[red, thick] (2.,-2.5) -- +(0,0.333333) ;
	\draw[red, thick] (2.,-1.5) -- +(0,0.333333) ;
	\draw[fill=white] (3.,-2.) circle (0.1);
	\draw[dotted, thick] (3,-2) circle (.35cm);
\end{tikzpicture}
\,;\qquad
\result{D}=
\begin{tikzpicture}[scale = .5, baseline=-.6cm]
	\draw[fill=white, rounded corners,inner sep=2pt] (-.5,-3) rectangle (2.5,1);
	\draw[fill] (0,0) circle (0.05);
	\draw (0.,0.) -- (1.,0.);
	\draw[fill] (1.,0.) circle (0.05);
	\draw (1.,0.) -- (2.,-0.5);
	\draw (1.,0.) -- (2.,0.5);
	\draw[fill] (2.,-0.5) circle (0.05);
	\draw[fill] (2.,0.5) circle (0.05);
	\draw[red, thick] (0.,0.) -- +(0,0.333333) ;
	\draw[red, thick] (2.,-0.5) -- +(0,0.333333) ;
	\draw[red, thick] (2.,0.5) -- +(0,0.333333) ;
	\draw[fill] (0,-2) circle (0.05);
	\draw (0.,-2.) -- (1.,-2.);
	\draw[fill] (1.,-2.) circle (0.05);
	\draw (1.,-2.) -- (2.,-2.5);
	\draw (1.,-2.) -- (2.,-1.5);
	\draw[fill] (2.,-2.5) circle (0.05);
	\draw[fill] (2.,-1.5) circle (0.05);
	\draw[red, thick] (0.,-2.) -- +(0,0.333333) ;
	\draw[red, thick] (2.,-2.5) -- +(0,0.333333) ;
	\draw[red, thick] (2.,-1.5) -- +(0,0.333333) ;
\end{tikzpicture}
$$
`deleting a self-dual vertex' $S^-(v)$, where $v$ denotes any self-dual vertex at the working depth,
$$
o=
\begin{tikzpicture}[scale = .5, baseline=-.6cm]
	\draw[fill=white, rounded corners,inner sep=2pt] (-.5,-3) rectangle (2.5,1);
	\draw[fill] (0,0) circle (0.05);
	\draw (0.,0.) -- (1.,0.);
	\draw[fill] (1.,0.) circle (0.05);
	\draw (1.,0.) -- (2.,-0.5);
	\draw (1.,0.) -- (2.,0.5);
	\draw[fill] (2.,-0.5) circle (0.05);
	\draw[fill] (2.,0.5) circle (0.05);
	\draw[red, thick] (0.,0.) -- +(0,0.333333) ;
	\draw[red, thick] (2.,-0.5) -- +(0,0.333333) ;
	\draw[red, thick] (2.,0.5) -- +(0,0.333333) ;
	\draw[fill] (0,-2) circle (0.05);
	\draw (0.,-2.) -- (1.,-2.);
	\draw[fill] (1.,-2.) circle (0.05);
	\draw (1.,-2.) -- (2.,-2.5);
	\draw (1.,-2.) -- (2.,-1.5);
	\draw[fill] (2.,-2.5) circle (0.05);
	\draw[fill] (2.,-1.5) circle (0.05);
	\draw[red, thick] (0.,-2.) -- +(0,0.333333) ;
	\draw[red, thick] (2.,-2.5) -- +(0,0.333333) ;
	\draw[red, thick] (2.,-1.5) -- +(0,0.333333) ;
\end{tikzpicture}
\,;\qquad
S^-(v)=
\begin{tikzpicture}[scale = .5, baseline=-.6cm]
	\draw[fill=white, rounded corners,inner sep=2pt] (-.5,-3) rectangle (3,1);
	\draw[fill] (0,0) circle (0.05);
	\draw (0.,0.) -- (1.,0.);
	\draw[fill] (1.,0.) circle (0.05);
	\draw (1.,0.) -- (2.,-0.5);
	\draw (1.,0.) -- (2.,0.5);
	\draw[fill] (2.,-0.5) circle (0.05);
	\draw[fill] (2.,0.5) circle (0.05);
	\draw[red, thick] (0.,0.) -- +(0,0.333333) ;
	\draw[red, thick] (2.,-0.5) -- +(0,0.333333) ;
	\draw[red, thick] (2.,0.5) -- +(0,0.333333) ;
	\draw[fill] (0,-2) circle (0.05);
	\draw (0.,-2.) -- (1.,-2.);
	\draw[fill] (1.,-2.) circle (0.05);
	\draw (1.,-2.) -- (2.,-2.5);
	\draw (1.,-2.) -- (2.,-1.5);
	\draw[fill] (2.,-2.5) circle (0.05);
	\draw[fill] (2.,-1.5) circle (0.05);
	\draw[red, thick] (0.,-2.) -- +(0,0.333333) ;
	\draw[red, thick] (2.,-2.5) -- +(0,0.333333) ;
	\draw[red, thick] (2.,-1.5) -- +(0,0.333333) ;
	\draw[dotted, thick] (2,-2.5) circle (.4cm);
	\node at (2.7,-2.5) {\scriptsize{$v$}};
\end{tikzpicture}
\,;\qquad
\result{S^-(v)}=
\begin{tikzpicture}[scale = .5, baseline=-.3cm]
	\draw[fill=white, rounded corners,inner sep=2pt] (-.5,-2) rectangle (2.5,1);
	\draw[fill] (0,0) circle (0.05);
	\draw (0.,0.) -- (1.,0.);
	\draw[fill] (1.,0.) circle (0.05);
	\draw (1.,0.) -- (2.,-0.5);
	\draw (1.,0.) -- (2.,0.5);
	\draw[fill] (2.,-0.5) circle (0.05);
	\draw[fill] (2.,0.5) circle (0.05);
	\draw[red, thick] (0.,0.) -- +(0,0.333333) ;
	\draw[red, thick] (2.,-0.5) -- +(0,0.333333) ;
	\draw[red, thick] (2.,0.5) -- +(0,0.333333) ;
	\draw[fill] (0,-1.5) circle (0.05);
	\draw (0.,-1.5) -- (1.,-1.5);
	\draw[fill] (1.,-1.5) circle (0.05);
	\draw (1.,-1.5) -- (2.,-1.5);
	\draw[fill] (2.,-1.5) circle (0.05);
	\draw[red, thick] (0.,-1.5) -- +(0,0.333333) ;
	\draw[red, thick] (2.,-1.5) -- +(0,0.333333) ;
\end{tikzpicture}
$$
and `deleting a pair of dual vertices' $P^-(w_1,w_2)$, where $w_1, w_2$ denote a pair of dual vertices at the working depth.
$$
\hspace{-.4cm}
o=
\begin{tikzpicture}[scale = .48, baseline=-.6cm]
	\draw[fill=white, rounded corners,inner sep=2pt] (-.4,-3) rectangle (6.4,1);
	\draw[fill] (0,0) circle (0.05);
	\draw (0.,0.) -- (1.,0.);
	\draw[fill] (1.,0.) circle (0.05);
	\draw (1.,0.) -- (2.,0.);
	\draw[fill] (2.,0.) circle (0.05);
	\draw (2.,0.) -- (3.,0.);
	\draw[fill] (3.,0.) circle (0.05);
	\draw (3.,0.) -- (4.,-0.5);
	\draw (3.,0.) -- (4.,0.5);
	\draw[fill] (4.,-0.5) circle (0.05);
	\draw[fill] (4.,0.5) circle (0.05);
	\draw (4.,-0.5) -- (5.,-0.5);
	\draw (4.,0.5) -- (5.,0.5);
	\draw[fill] (5.,-0.5) circle (0.05);
	\draw[fill] (5.,0.5) circle (0.05);
	\draw (5.,-0.5) -- (6.,-0.5);
	\draw (5.,0.5) -- (6.,0.5);
	\draw[fill] (6.,-0.5) circle (0.05);
	\draw[fill] (6.,0.5) circle (0.05);
	\draw[red, thick] (0.,0.) -- +(0,0.333333) ;
	\draw[red, thick] (2.,0.) -- +(0,0.333333) ;
	\draw[red, thick] (4.,-0.5) -- +(0,0.333333) ;
	\draw[red, thick] (4.,0.5) -- +(0,0.333333) ;
	\draw[red, thick] (6.,-0.5) to[out=135,in=-135] (6.,0.5);
	\draw[fill] (0,-2) circle (0.05);
	\draw (0.,-2.) -- (1.,-2.);
	\draw[fill] (1.,-2.) circle (0.05);
	\draw (1.,-2.) -- (2.,-2.);
	\draw[fill] (2.,-2.) circle (0.05);
	\draw (2.,-2.) -- (3.,-2.);
	\draw[fill] (3.,-2.) circle (0.05);
	\draw (3.,-2.) -- (4.,-2.5);
	\draw (3.,-2.) -- (4.,-1.5);
	\draw[fill] (4.,-2.5) circle (0.05);
	\draw[fill] (4.,-1.5) circle (0.05);
	\draw (4.,-2.5) -- (5.,-2.5);
	\draw (4.,-2.5) -- (5.,-1.5);
	\draw[fill] (5.,-2.5) circle (0.05);
	\draw[fill] (5.,-1.5) circle (0.05);
	\draw[red, thick] (0.,-2.) -- +(0,0.333333) ;
	\draw[red, thick] (2.,-2.) -- +(0,0.333333) ;
	\draw[red, thick] (4.,-2.5) -- +(0,0.333333) ;
	\draw[red, thick] (4.,-1.5) -- +(0,0.333333) ;
\end{tikzpicture}
\,;\,
P^-(w_1,w_2)=
\begin{tikzpicture}[scale = .48, baseline=-.6cm]
	\draw[fill=white, rounded corners,inner sep=2pt] (-.4,-3) rectangle (7.1,1);
	\draw[fill] (0,0) circle (0.05);
	\draw (0.,0.) -- (1.,0.);
	\draw[fill] (1.,0.) circle (0.05);
	\draw (1.,0.) -- (2.,0.);
	\draw[fill] (2.,0.) circle (0.05);
	\draw (2.,0.) -- (3.,0.);
	\draw[fill] (3.,0.) circle (0.05);
	\draw (3.,0.) -- (4.,-0.5);
	\draw (3.,0.) -- (4.,0.5);
	\draw[fill] (4.,-0.5) circle (0.05);
	\draw[fill] (4.,0.5) circle (0.05);
	\draw (4.,-0.5) -- (5.,-0.5);
	\draw (4.,0.5) -- (5.,0.5);
	\draw[fill] (5.,-0.5) circle (0.05);
	\draw[fill] (5.,0.5) circle (0.05);
	\draw (5.,-0.5) -- (6.,-0.5);
	\draw (5.,0.5) -- (6.,0.5);
	\draw[fill] (6.,-0.5) circle (0.05);
	\draw[fill] (6.,0.5) circle (0.05);
	\draw[red, thick] (0.,0.) -- +(0,0.333333) ;
	\draw[red, thick] (2.,0.) -- +(0,0.333333) ;
	\draw[red, thick] (4.,-0.5) -- +(0,0.333333) ;
	\draw[red, thick] (4.,0.5) -- +(0,0.333333) ;
	\draw[red, thick] (6.,-0.5) to[out=135,in=-135] (6.,0.5);
	\draw[dotted, thick] (6,.5) circle (.3cm);
	\node at (6.7,.5) {\scriptsize{$w_2$}};
	\draw[dotted, thick] (6,-.5) circle (.3cm);
	\node at (6.7,-.5) {\scriptsize{$w_1$}};
	\draw[fill] (0,-2) circle (0.05);
	\draw (0.,-2.) -- (1.,-2.);
	\draw[fill] (1.,-2.) circle (0.05);
	\draw (1.,-2.) -- (2.,-2.);
	\draw[fill] (2.,-2.) circle (0.05);
	\draw (2.,-2.) -- (3.,-2.);
	\draw[fill] (3.,-2.) circle (0.05);
	\draw (3.,-2.) -- (4.,-2.5);
	\draw (3.,-2.) -- (4.,-1.5);
	\draw[fill] (4.,-2.5) circle (0.05);
	\draw[fill] (4.,-1.5) circle (0.05);
	\draw (4.,-2.5) -- (5.,-2.5);
	\draw (4.,-2.5) -- (5.,-1.5);
	\draw[fill] (5.,-2.5) circle (0.05);
	\draw[fill] (5.,-1.5) circle (0.05);
	\draw[red, thick] (0.,-2.) -- +(0,0.333333) ;
	\draw[red, thick] (2.,-2.) -- +(0,0.333333) ;
	\draw[red, thick] (4.,-2.5) -- +(0,0.333333) ;
	\draw[red, thick] (4.,-1.5) -- +(0,0.333333) ;
\end{tikzpicture}
;\,
\result{P^-(w_1,w_2)}=
\begin{tikzpicture}[scale = .48, baseline=-.6cm]
	\draw[fill=white, rounded corners,inner sep=2pt] (-.4,-3) rectangle (5.4,1);
	\draw[fill] (0,0) circle (0.05);
	\draw (0.,0.) -- (1.,0.);
	\draw[fill] (1.,0.) circle (0.05);
	\draw (1.,0.) -- (2.,0.);
	\draw[fill] (2.,0.) circle (0.05);
	\draw (2.,0.) -- (3.,0.);
	\draw[fill] (3.,0.) circle (0.05);
	\draw (3.,0.) -- (4.,-0.5);
	\draw (3.,0.) -- (4.,0.5);
	\draw[fill] (4.,-0.5) circle (0.05);
	\draw[fill] (4.,0.5) circle (0.05);
	\draw (4.,-0.5) -- (5.,-0.5);
	\draw (4.,0.5) -- (5.,0.5);
	\draw[fill] (5.,-0.5) circle (0.05);
	\draw[fill] (5.,0.5) circle (0.05);
	\draw[red, thick] (0.,0.) -- +(0,0.333333) ;
	\draw[red, thick] (2.,0.) -- +(0,0.333333) ;
	\draw[red, thick] (4.,-0.5) -- +(0,0.333333) ;
	\draw[red, thick] (4.,0.5) -- +(0,0.333333) ;
	\draw[fill] (0,-2) circle (0.05);
	\draw (0.,-2.) -- (1.,-2.);
	\draw[fill] (1.,-2.) circle (0.05);
	\draw (1.,-2.) -- (2.,-2.);
	\draw[fill] (2.,-2.) circle (0.05);
	\draw (2.,-2.) -- (3.,-2.);
	\draw[fill] (3.,-2.) circle (0.05);
	\draw (3.,-2.) -- (4.,-2.5);
	\draw (3.,-2.) -- (4.,-1.5);
	\draw[fill] (4.,-2.5) circle (0.05);
	\draw[fill] (4.,-1.5) circle (0.05);
	\draw (4.,-2.5) -- (5.,-2.5);
	\draw (4.,-2.5) -- (5.,-1.5);
	\draw[fill] (5.,-2.5) circle (0.05);
	\draw[fill] (5.,-1.5) circle (0.05);
	\draw[red, thick] (0.,-2.) -- +(0,0.333333) ;
	\draw[red, thick] (2.,-2.) -- +(0,0.333333) ;
	\draw[red, thick] (4.,-2.5) -- +(0,0.333333) ;
	\draw[red, thick] (4.,-1.5) -- +(0,0.333333) ;
\end{tikzpicture}
$$
The resulting objects are the PGPs which are obtained by deleting the marked vertices.

We have $D \in L(o)$ if and only if there are no vertices on either graph at
the working depth.  We have $S^-(v) \in L(o)$ and $P^-(w_1,w_2) \in L(o)$ for
all valid choices of $v$ or $w_1,w_2$.
\end{boxedexample}

\item
\label{condition:equivariance}
The groupoid $\cO$ acts on the bundle of sets $U$.  That is, for each
$o$, $U(o)$ carries an action of the automorphism group $\Aut(o)$, and
moreover we have a coherent family of bijections $U(o) \iso U(o')$ for each
isomorphism $o \iso o'$.   Similarly $\cO$ acts on the bundle of sets $L$.

\begin{boxedexample*}
For example, the following upper objects in $U(o)$ are isomorphic:
$$
o=
\begin{tikzpicture}[scale = .5, baseline=-.6cm]
	\draw[fill=white, rounded corners,inner sep=2pt] (-.5,-3) rectangle (2.5,1);
	\draw[fill] (0,0) circle (0.05);
	\draw (0.,0.) -- (1.,0.);
	\draw[fill] (1.,0.) circle (0.05);
	\draw (1.,0.) -- (2.,-0.5);
	\draw (1.,0.) -- (2.,0.5);
	\draw[fill] (2.,-0.5) circle (0.05);
	\draw[fill] (2.,0.5) circle (0.05);
	\draw[red, thick] (0.,0.) -- +(0,0.333333) ;
	\draw[red, thick] (2.,-0.5) -- +(0,0.333333) ;
	\draw[red, thick] (2.,0.5) -- +(0,0.333333) ;
	\draw[fill] (0,-2) circle (0.05);
	\draw (0.,-2.) -- (1.,-2.);
	\draw[fill] (1.,-2.) circle (0.05);
	\draw (1.,-2.) -- (2.,-2.5);
	\draw (1.,-2.) -- (2.,-1.5);
	\draw[fill] (2.,-2.5) circle (0.05);
	\draw[fill] (2.,-1.5) circle (0.05);
	\draw[red, thick] (0.,-2.) -- +(0,0.333333) ;
	\draw[red, thick] (2.,-2.5) -- +(0,0.333333) ;
	\draw[red, thick] (2.,-1.5) -- +(0,0.333333) ;
\end{tikzpicture}
\,;
\qquad
P^+(V_1,V_2)=
\begin{tikzpicture}[scale = .5, baseline=-.6cm]
	\draw[fill=white, rounded corners,inner sep=2pt] (-.5,-3) rectangle (3.2,1);
	\draw[fill] (0,0) circle (0.05);
	\draw (0.,0.) -- (1.,0.);
	\draw[fill] (1.,0.) circle (0.05);
	\draw (1.,0.) -- (2.,-0.5);
	\draw (1.,0.) -- (2.,0.5);
	\draw[fill] (2.,-0.5) circle (0.05);
	\draw[fill] (2.,0.5) circle (0.05);
	\draw[red, thick] (0.,0.) -- +(0,0.333333) ;
	\draw[red, thick] (2.,-0.5) -- +(0,0.333333) ;
	\draw[red, thick] (2.,0.5) -- +(0,0.333333) ;
	\draw[dotted, thick] (2,-.5) circle (.4cm);
	\node at (2.8,-.5) {\scriptsize{$V_1$}};
	\draw[fill] (0,-2) circle (0.05);
	\draw (0.,-2.) -- (1.,-2.);
	\draw[fill] (1.,-2.) circle (0.05);
	\draw (1.,-2.) -- (2.,-2.5);
	\draw (1.,-2.) -- (2.,-1.5);
	\draw[fill] (2.,-2.5) circle (0.05);
	\draw[fill] (2.,-1.5) circle (0.05);
	\draw[red, thick] (0.,-2.) -- +(0,0.333333) ;
	\draw[red, thick] (2.,-2.5) -- +(0,0.333333) ;
	\draw[red, thick] (2.,-1.5) -- +(0,0.333333) ;
	\draw[dotted, thick] (2,-2.5) circle (.4cm);
	\node at (2.8,-2.5) {\scriptsize{$V_2$}};
\end{tikzpicture}
\cong
\begin{tikzpicture}[scale = .5, baseline=-.6cm]
	\draw[fill=white, rounded corners,inner sep=2pt] (-.5,-3) rectangle (3.2,1);
	\draw[fill] (0,0) circle (0.05);
	\draw (0.,0.) -- (1.,0.);
	\draw[fill] (1.,0.) circle (0.05);
	\draw (1.,0.) -- (2.,-0.5);
	\draw (1.,0.) -- (2.,0.5);
	\draw[fill] (2.,-0.5) circle (0.05);
	\draw[fill] (2.,0.5) circle (0.05);
	\draw[red, thick] (0.,0.) -- +(0,0.333333) ;
	\draw[red, thick] (2.,-0.5) -- +(0,0.333333) ;
	\draw[red, thick] (2.,0.5) -- +(0,0.333333) ;
	\draw[dotted, thick] (2,.5) circle (.4cm);
	\node at (2.8,.5) {\scriptsize{$V_1'$}};
	\draw[fill] (0,-2) circle (0.05);
	\draw (0.,-2.) -- (1.,-2.);
	\draw[fill] (1.,-2.) circle (0.05);
	\draw (1.,-2.) -- (2.,-2.5);
	\draw (1.,-2.) -- (2.,-1.5);
	\draw[fill] (2.,-2.5) circle (0.05);
	\draw[fill] (2.,-1.5) circle (0.05);
	\draw[red, thick] (0.,-2.) -- +(0,0.333333) ;
	\draw[red, thick] (2.,-2.5) -- +(0,0.333333) ;
	\draw[red, thick] (2.,-1.5) -- +(0,0.333333) ;
	\draw[dotted, thick] (2,-1.5) circle (.4cm);
	\node at (2.8,-1.5) {\scriptsize{$V_2'$}};
\end{tikzpicture}
=P^+(V_1',V_2')
$$
The automorphism of $o$ which swaps the vertices $V_1,V_1'$ and the vertices $V_2,V_2'$ induces the above isomorphism.
\end{boxedexample*}

\begin{remark}
\label{rem:isomorphisms-between-upper-objects}
When we write $u \iso u'$, for $u \in U(o)$ and $u' \in U(o')$, we mean an isomorphism $o \iso o'$ carrying $u$ to $u'$ (and similarly for lower objects).
\end{remark}

\item
\label{condition:inverses}
Each upper object $u \in U(o)$ must have an inverse lower object,
denoted $u^{-1}$, in $L(\result{u})$, such that $\result{u^{-1}} = o$. 
We require that when $u_1^{-1} \iso u_2^{-1}$, it is also the case that $u_1 \iso u_2$. 
\begin{boxedexample*}%
For $o$ and $u$ the first two graphs below, we see that $\result{u^{-1}}=o$:
$$
o=
\begin{tikzpicture}[scale = .5, baseline=-.6cm]
	\draw[fill=white, rounded corners,inner sep=2pt] (-.5,-3) rectangle (2.5,1);
	\draw[fill] (0,0) circle (0.05);
	\draw (0.,0.) -- (1.,0.);
	\draw[fill] (1.,0.) circle (0.05);
	\draw (1.,0.) -- (2.,-0.5);
	\draw (1.,0.) -- (2.,0.5);
	\draw[fill] (2.,-0.5) circle (0.05);
	\draw[fill] (2.,0.5) circle (0.05);
	\draw[red, thick] (0.,0.) -- +(0,0.333333) ;
	\draw[red, thick] (2.,-0.5) -- +(0,0.333333) ;
	\draw[red, thick] (2.,0.5) -- +(0,0.333333) ;
	\draw[fill] (0,-2) circle (0.05);
	\draw (0.,-2.) -- (1.,-2.);
	\draw[fill] (1.,-2.) circle (0.05);
	\draw (1.,-2.) -- (2.,-2.5);
	\draw (1.,-2.) -- (2.,-1.5);
	\draw[fill] (2.,-2.5) circle (0.05);
	\draw[fill] (2.,-1.5) circle (0.05);
	\draw[red, thick] (0.,-2.) -- +(0,0.333333) ;
	\draw[red, thick] (2.,-2.5) -- +(0,0.333333) ;
	\draw[red, thick] (2.,-1.5) -- +(0,0.333333) ;
\end{tikzpicture}
\,;
\qquad
u=
\begin{tikzpicture}[scale = .5, baseline=-.6cm]
	\draw[fill=white, rounded corners,inner sep=2pt] (-.5,-3) rectangle (3.2,1);
	\draw[fill] (0,0) circle (0.05);
	\draw (0.,0.) -- (1.,0.);
	\draw[fill] (1.,0.) circle (0.05);
	\draw (1.,0.) -- (2.,-0.5);
	\draw (1.,0.) -- (2.,0.5);
	\draw[fill] (2.,-0.5) circle (0.05);
	\draw[fill] (2.,0.5) circle (0.05);
	\draw[red, thick] (0.,0.) -- +(0,0.333333) ;
	\draw[red, thick] (2.,-0.5) -- +(0,0.333333) ;
	\draw[red, thick] (2.,0.5) -- +(0,0.333333) ;
	\draw[dotted, thick] (2,0) ellipse (.3cm and .85cm);
	\node at (2.8,0) {\scriptsize{$V_1$}};
	\draw[fill] (0,-2) circle (0.05);
	\draw (0.,-2.) -- (1.,-2.);
	\draw[fill] (1.,-2.) circle (0.05);
	\draw (1.,-2.) -- (2.,-2.5);
	\draw (1.,-2.) -- (2.,-1.5);
	\draw[fill] (2.,-2.5) circle (0.05);
	\draw[fill] (2.,-1.5) circle (0.05);
	\draw[red, thick] (0.,-2.) -- +(0,0.333333) ;
	\draw[red, thick] (2.,-2.5) -- +(0,0.333333) ;
	\draw[red, thick] (2.,-1.5) -- +(0,0.333333) ;
	\draw[dotted, thick] (2,-2) ellipse (.3cm and .85cm);
	\node at (2.8,-2.) {\scriptsize{$V_2$}};
\end{tikzpicture}
\,;
\qquad
\result{u}=
\begin{tikzpicture}[scale = .5, baseline=-.6cm]
	\draw[fill=white, rounded corners,inner sep=2pt] (-.5,-3) rectangle (3.5,1);
	\draw[fill] (0,0) circle (0.05);
	\draw (0.,0.) -- (1.,0.);
	\draw[fill] (1.,0.) circle (0.05);
	\draw (1.,0.) -- (2.,-0.5);
	\draw (1.,0.) -- (2.,0.5);
	\draw[fill] (2.,-0.5) circle (0.05);
	\draw[fill] (2.,0.5) circle (0.05);
	\draw (2.,0.5) -- (3.,0.);
	\draw (2.,-0.5) -- (3.,0.);
	\filldraw (3.,0.) circle (0.05);
	\draw[red, thick] (0.,0.) -- +(0,0.333333) ;
	\draw[red, thick] (2.,-0.5) -- +(0,0.333333) ;
	\draw[red, thick] (2.,0.5) -- +(0,0.333333) ;
%
	\draw[fill] (0,-2) circle (0.05);
	\draw (0.,-2.) -- (1.,-2.);
	\draw[fill] (1.,-2.) circle (0.05);
	\draw (1.,-2.) -- (2.,-2.5);
	\draw (1.,-2.) -- (2.,-1.5);
	\draw[fill] (2.,-2.5) circle (0.05);
	\draw[fill] (2.,-1.5) circle (0.05);
	\draw (2.,-1.5) -- (3.,-2.);
	\draw (2.,-2.5) -- (3.,-2.);
	\filldraw (3.,-2.) circle (0.05);
	\draw[red, thick] (0.,-2.) -- +(0,0.333333) ;
	\draw[red, thick] (2.,-2.5) -- +(0,0.333333) ;
	\draw[red, thick] (2.,-1.5) -- +(0,0.333333) ;
\end{tikzpicture}
\,;
\qquad
u^{-1}=
\begin{tikzpicture}[scale = .5, baseline=-.6cm]
	\draw[fill=white, rounded corners,inner sep=2pt] (-.5,-3) rectangle (4.2,1);
	\draw[fill] (0,0) circle (0.05);
	\draw (0.,0.) -- (1.,0.);
	\draw[fill] (1.,0.) circle (0.05);
	\draw (1.,0.) -- (2.,-0.5);
	\draw (1.,0.) -- (2.,0.5);
	\draw[fill] (2.,-0.5) circle (0.05);
	\draw[fill] (2.,0.5) circle (0.05);
	\draw (2.,0.5) -- (3.,0.);
	\draw (2.,-0.5) -- (3.,0.);
	\filldraw (3.,0.) circle (0.05);
	\draw[red, thick] (0.,0.) -- +(0,0.333333) ;
	\draw[red, thick] (2.,-0.5) -- +(0,0.333333) ;
	\draw[red, thick] (2.,0.5) -- +(0,0.333333) ;
	\draw[dotted, thick] (3,0) circle (.3cm);
	\node at (3.8,0) {\scriptsize{$w_1$}};
	\draw[fill] (0,-2) circle (0.05);
	\draw (0.,-2.) -- (1.,-2.);
	\draw[fill] (1.,-2.) circle (0.05);
	\draw (1.,-2.) -- (2.,-2.5);
	\draw (1.,-2.) -- (2.,-1.5);
	\draw[fill] (2.,-2.5) circle (0.05);
	\draw[fill] (2.,-1.5) circle (0.05);
	\draw (2.,-1.5) -- (3.,-2.);
	\draw (2.,-2.5) -- (3.,-2.);
	\filldraw (3.,-2.) circle (0.05);
	\draw[red, thick] (0.,-2.) -- +(0,0.333333) ;
	\draw[red, thick] (2.,-2.5) -- +(0,0.333333) ;
	\draw[red, thick] (2.,-1.5) -- +(0,0.333333) ;
	\draw[dotted, thick] (3,-2) circle (.3cm);
	\node at (3.8,-2.) {\scriptsize{$w_2$}};
\end{tikzpicture}
$$
An isomorphism $g: \result{u_1} \isoto \result{u_2}$ carrying $u_1^{-1} \in L(\result{u_1})$ to $u_2^{-1} \in L(\result{u_2})$ can be restricted to the vertices which are not being deleted, obtaining an automorphism $g':o \isoto o$. This automorphism then carries $u_1 \in U(o)$ to $u_2 \in U(o)$.
\end{boxedexample*}

\item
\label{condition:choice-function}
Finally, for each $o$ such that $L(o)$ is non-empty, we have a chosen
orbit $\phi(o) \in L(o)/\Aut(o)$ called the canonical reduction orbit of $o$.
These choices must be coherent with respect to the groupoid action, that is
for $g: o \iso o'$, $\phi(g(o)) = g(\phi(o))$.

(The choice of canonical reduction is the critical optimization step for this
algorithm; see below.)

\begin{boxedexample}
When $o$ is a PGP, we have many potential criteria to choose the canonical
reduction orbit in $L(o)$. Here we explain the general framework for such criteria, deferring the actual choice we make to Section \ref{sec:Implementations}. 

If there are no vertices at the working depth,
$L(o)$ is the singleton containing the lower object which decreases the
working depth, and there is no choice to make.  Otherwise, every lower object
deletes either a self-dual vertex or a pair of dual vertices at the working
depth. (Note that if the working depth is odd, there can not be any self-dual
vertices.)

We may make choices such as to prefer deleting vertices from $\Gamma_-$ over $\Gamma_+$, deleting dual pairs of vertices over deleting self-dual vertices, and so on, as long as these choices are invariant under the groupoid (because we are choosing an orbit, not a particular lower object). Our two implementations make slightly different choices here. After
expressing these preferences, there may still be alternatives, and indeed in the simplest case, where we have no such preferences, all the orbits are alternatives! 

The {\tt nauty} \cite{MR3131381} package provides an algorithm for canonically labelling the vertices of a vertex-coloured graph. Since the data of a lower object for a PGP can be encoded as a vertex-coloured graph, preserving automorphisms, it is always possible to use this canonical labelling to make a choice of orbit of lower objects.

The precise details differ in our two implementations; we describe these in Section \ref{sec:Implementations}.
\end{boxedexample}

\end{enumerate}
This concludes the definition of a McKay groupoid (and simultaneously how we see PGPs as an example).

\begin{defn}
Given an upper object $u\in U(o)$, we say it is \emph{genuine} exactly if
$u^{-1}$ is contained in the canonical reduction orbit $\phi(\result{u})$.
\end{defn}
Observe that this property is preserved by the groupoid action: if $g : o \iso
o'$, then $u$ is genuine if and only if $g(u)$ is genuine.

\begin{remark}
Making a clever choice of canonical reduction $\phi$ amongst the automorphism orbits in
$L(o)$ can provide a significant speed-up.
The key fact is that when preparing the upper objects as
above, we may omit any upper object that we know in advance can not possibly
be genuine.

If the choice function simply relies on canonical labellings from {\tt nauty}, it is essentially a black box, and it is not possible to make such predictions.
As such, it would be impossible to prune the list of upper
objects. Moreover, calls to {\tt nauty} can be computationally expensive. We
find that it is possible to specify $\phi$ in a way that drastically reduces
the number of calls needed; this is described in Section \ref{sec:Implementations}.
\end{remark}

\begin{boxedexample}
We consider the following PGP (with $\Gamma_+$ the upper graph, $\Gamma_-$ the lower graph).
$$
o=
\begin{tikzpicture}[scale = .5, baseline=-.6cm]
	\draw[fill=white, rounded corners,inner sep=2pt] (-.5,-3) rectangle (2.5,1);
	\draw[fill] (0,0) circle (0.05);
	\draw (0.,0.) -- (1.,0.);
	\draw[fill] (1.,0.) circle (0.05);
	\draw (1.,0.) -- (2.,-0.5);
	\draw (1.,0.) -- (2.,0.5);
	\draw[fill] (2.,-0.5) circle (0.05);
	\draw[fill] (2.,0.5) circle (0.05);
	\draw[red, thick] (0.,0.) -- +(0,0.333333) ;
	\draw[red, thick] (2.,-0.5) to[out=135,in=-135] (2.,0.5);
	\draw[fill] (0,-2) circle (0.05);
	\draw (0.,-2.) -- (1.,-2.);
	\draw[fill] (1.,-2.) circle (0.05);
	\draw (1.,-2.) -- (2.,-2.5);
	\draw (1.,-2.) -- (2.,-1.5);
	\draw[fill] (2.,-2.5) circle (0.05);
	\draw[fill] (2.,-1.5) circle (0.05);
	\draw[red, thick] (0.,-2.) -- +(0,0.333333) ;
	\draw[red, thick] (2.,-2.5) to[out=135,in=-135] (2.,-1.5);
\end{tikzpicture}
$$
We give an example of an upper object $S^+(V)\in U(o)$ which is not genuine since $S^+(V)^{-1}\notin \phi(\result{S^+(V)})$, for the choice function $\phi$ used in the {\tt Scala} implementation, given in detail in Section \ref{sec:Implementations} below.
$$
S^+(V)=
\begin{tikzpicture}[scale = .5, baseline=-.6cm]
	\draw[fill=white, rounded corners,inner sep=2pt] (-.5,-3) rectangle (2.5,1);
	\draw[fill] (0,0) circle (0.05);
	\draw (0.,0.) -- (1.,0.);
	\draw[fill] (1.,0.) circle (0.05);
	\draw (1.,0.) -- (2.,-0.5);
	\draw (1.,0.) -- (2.,0.5);
	\draw[fill] (2.,-0.5) circle (0.05);
	\draw[fill] (2.,0.5) circle (0.05);
	\draw[red, thick] (0.,0.) -- +(0,0.333333) ;
	\draw[red, thick] (2.,-0.5) to[out=135,in=-135] (2.,0.5);
	\draw[dotted, thick] (1,0) circle (.3cm);
	\node at (1,.6) {\scriptsize{$V$}};
	\draw[fill] (0,-2) circle (0.05);
	\draw (0.,-2.) -- (1.,-2.);
	\draw[fill] (1.,-2.) circle (0.05);
	\draw (1.,-2.) -- (2.,-2.5);
	\draw (1.,-2.) -- (2.,-1.5);
	\draw[fill] (2.,-2.5) circle (0.05);
	\draw[fill] (2.,-1.5) circle (0.05);
	\draw[red, thick] (0.,-2.) -- +(0,0.333333) ;
	\draw[red, thick] (2.,-2.5) to[out=135,in=-135] (2.,-1.5);
\end{tikzpicture}
\,;\,\,
\result{S^+(V)}=
\begin{tikzpicture}[scale = .5, baseline=-.6cm]
	\draw[fill=white, rounded corners,inner sep=2pt] (-.5,-3) rectangle (2.5,1);
	\draw[fill] (0,0) circle (0.05);
	\draw (0.,0.) -- (1.,0.);
	\draw[fill] (1.,0.) circle (0.05);
	\draw (1.,0.) -- (2.,-0.5);
	\draw (1.,0.) -- (2.,0.);
	\draw (1.,0.) -- (2.,0.5);
	\draw[fill] (2.,-0.5) circle (0.05);
	\draw[fill] (2.,0.) circle (0.05);
	\draw[fill] (2.,0.5) circle (0.05);
	\draw[red, thick] (0.,0.) -- +(0,0.166667) ;
	\draw[red, thick] (2.,-0.5) to[out=135,in=-135] (2.,0.);
	\draw[red, thick] (2.,0.5) -- +(0,0.166667) ;
	\draw[fill] (0,-2) circle (0.05);
	\draw (0.,-2.) -- (1.,-2.);
	\draw[fill] (1.,-2.) circle (0.05);
	\draw (1.,-2.) -- (2.,-2.5);
	\draw (1.,-2.) -- (2.,-1.5);
	\draw[fill] (2.,-2.5) circle (0.05);
	\draw[fill] (2.,-1.5) circle (0.05);
	\draw[red, thick] (0.,-2.) -- +(0,0.333333) ;
	\draw[red, thick] (2.,-2.5) to[out=135,in=-135] (2.,-1.5);
\end{tikzpicture}
\,;\,\,
S^+(V)^{-1}=
\begin{tikzpicture}[scale = .5, baseline=-.6cm]
	\draw[fill=white, rounded corners,inner sep=2pt] (-.5,-3) rectangle (3,1);
	\draw[fill] (0,0) circle (0.05);
	\draw (0.,0.) -- (1.,0.);
	\draw[fill] (1.,0.) circle (0.05);
	\draw (1.,0.) -- (2.,-0.5);
	\draw (1.,0.) -- (2.,0.);
	\draw (1.,0.) -- (2.,0.5);
	\draw[fill] (2.,-0.5) circle (0.05);
	\draw[fill] (2.,0.) circle (0.05);
	\draw[fill] (2.,0.5) circle (0.05);
	\draw[red, thick] (0.,0.) -- +(0,0.166667) ;
	\draw[red, thick] (2.,-0.5) to[out=135,in=-135] (2.,0.);
	\draw[red, thick] (2.,0.5) -- +(0,0.166667) ;
	\draw[dotted, thick] (2,.5) circle (.3cm);
	\node at (2.6,.5) {\scriptsize{$v$}};
	\draw[fill] (0,-2) circle (0.05);
	\draw (0.,-2.) -- (1.,-2.);
	\draw[fill] (1.,-2.) circle (0.05);
	\draw (1.,-2.) -- (2.,-2.5);
	\draw (1.,-2.) -- (2.,-1.5);
	\draw[fill] (2.,-2.5) circle (0.05);
	\draw[fill] (2.,-1.5) circle (0.05);
	\draw[red, thick] (0.,-2.) -- +(0,0.333333) ;
	\draw[red, thick] (2.,-2.5) to[out=135,in=-135] (2.,-1.5);
\end{tikzpicture}
$$
Because the choice function $\phi$ prefers to delete vertices from $\Gamma_-$, we have
$$
\phi(\result{S^+(V)})=
\left\{\,
\begin{tikzpicture}[scale = .5, baseline=-.6cm]
	\draw[fill=white, rounded corners,inner sep=2pt] (-.5,-3) rectangle (3.2,1);
	\draw[fill] (0,0) circle (0.05);
	\draw (0.,0.) -- (1.,0.);
	\draw[fill] (1.,0.) circle (0.05);
	\draw (1.,0.) -- (2.,-0.5);
	\draw (1.,0.) -- (2.,0.);
	\draw (1.,0.) -- (2.,0.5);
	\draw[fill] (2.,-0.5) circle (0.05);
	\draw[fill] (2.,0.) circle (0.05);
	\draw[fill] (2.,0.5) circle (0.05);
	\draw[red, thick] (0.,0.) -- +(0,0.166667) ;
	\draw[red, thick] (2.,-0.5) to[out=135,in=-135] (2.,0.);
	\draw[red, thick] (2.,0.5) -- +(0,0.166667) ;
	\draw[fill] (0,-2) circle (0.05);
	\draw (0.,-2.) -- (1.,-2.);
	\draw[fill] (1.,-2.) circle (0.05);
	\draw (1.,-2.) -- (2.,-2.5);
	\draw (1.,-2.) -- (2.,-1.5);
	\draw[fill] (2.,-2.5) circle (0.05);
	\draw[fill] (2.,-1.5) circle (0.05);
	\draw[red, thick] (0.,-2.) -- +(0,0.333333) ;
	\draw[red, thick] (2.,-2.5) to[out=135,in=-135] (2.,-1.5);
	\draw[dotted, thick] (2,-2.5) circle (.3cm);
	\node at (2.8,-2.5) {\scriptsize{$w_1$}};
	\draw[dotted, thick] (2,-1.5) circle (.3cm);
	\node at (2.8,-1.5) {\scriptsize{$w_2$}};
\end{tikzpicture}
\right\}.
$$
\end{boxedexample}

Our strategy now, described in detail in the next section, is to build  a tree of objects in $\cO$, such that the children of an object $o$ are
the resulting objects of representatives of the $\Aut(o)$ orbits of genuine upper objects.

\subsection{Exhaustivity and uniqueness}

We write $K(\cO)$ for the set of isomorphism classes of the groupoid $\cO$, and $K(\cO_1)$ for the set of isomorphisms classes of elements $o\in \cO$ such that $L(o)$ is not empty.

\begin{lem}[cf. {\cite[Lemma 1]{MR1606516}}\footnote{The statement of Lemma 1 in \cite{MR1606516} is slightly incorrect; it should say that for each element $\hat{X} \in f(\check{X})$ there is a $Y \in p(S)$ such that $\hat{X} \in U(Y)$, rather than that all elements of $f(\check{X})$ are in a single such $U(Y)$.}]
There is a unique function $\pi: K(\cO_1) \to K(\cO)$ such that for each
\begin{itemize}
\item $o \in [o] \in K(\cO_1)$,
\item $l \in \phi(o)$, and
\item $o' \in \cO$ and $u \in U(o')$ such that $u^{-1} \iso l$,
\end{itemize}
we have $o' \in \pi([o])$.
\end{lem}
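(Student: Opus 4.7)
The natural definition is $\pi([o]) := [\result{l}]$ for any representative $o$ and any $l \in \phi(o)$. The work is (a) to check this is well-defined on the isomorphism class $[o]$, (b) to check it satisfies the stated property, and (c) to check that the stated property uniquely pins it down. I expect (a) to follow cleanly from the equivariance and coherence conditions (C3)--(C6), while the subtle step is (c), where I must verify that the data $(o, l, o', u)$ described in the property actually arises for every $[o] \in K(\cO_1)$ and hence constrains $\pi$ at every point.

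\textbf{Well-definedness.} First I would fix a representative $o$ and show the value $[\result{l}]$ is independent of $l \in \phi(o)$: any two such $l, l'$ lie in a single $\Aut(o)$-orbit, so there is $g \in \Aut(o)$ with $g \cdot l = l'$, and equivariance (C3) supplies an isomorphism $\result{l} \iso \result{l'}$ carrying $l$ to $l'$. Next, given another representative $o_2$ and an isomorphism $h: o \isoto o_2$, coherence of $\phi$ (C6) gives $h(\phi(o)) = \phi(o_2)$, so $h(l) \in \phi(o_2)$ is a valid choice there, and equivariance again gives $\result{l} \iso \result{h(l)}$. Hence $\pi$ is well-defined on $K(\cO_1)$.

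\textbf{The property.} Suppose now $o' \in \cO$ and $u \in U(o')$ satisfy $u^{-1} \iso l$ as lower objects. By the convention of Remark \ref{rem:isomorphisms-between-upper-objects}, this means there is an isomorphism $h: \result{u} \isoto o$ sending $u^{-1} \in L(\result{u})$ to $l \in L(o)$; applying the groupoid action to the ``result'' construction, $h$ then induces an isomorphism $\result{u^{-1}} \iso \result{l}$. But $\result{u^{-1}} = o'$ by condition (C5), so $[o'] = [\result{l}] = \pi([o])$, verifying the property.

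\textbf{Uniqueness.} Any $\pi'$ with the stated property is forced to satisfy $\pi'([o]) = [o']$ whenever there exist $l \in \phi(o)$ and $u \in U(o')$ with $u^{-1} \iso l$. To see this determines $\pi'$ everywhere on $K(\cO_1)$, I need that for every $[o] \in K(\cO_1)$ such a pair $(o', u)$ exists. Here is the main obstacle: one must observe that the choice $o' := \result{l}$ together with $u$ being the upper object inverse to $l$ always works --- this inverse exists because condition (C5) is secretly symmetric under the correspondence $u \leftrightarrow u^{-1}$, noting that $(u^{-1})^{-1} \iso u$ follows from the last clause of (C5) (``$u_1^{-1} \iso u_2^{-1}$ implies $u_1 \iso u_2$'') applied to the tautological lower object $l$ viewed as $u^{-1}$ for some $u \in U(\result{l})$. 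Formalizing this requires spelling out that every lower object is the inverse of some upper object on its result, which is implicit in the bijection $U(o) \leftrightarrow \{l \in L(o'') : \result{l} = o\}$ packaged by (C5). Once this is in hand, the value $\pi'([o]) = [\result{l}]$ is forced, matching $\pi$.
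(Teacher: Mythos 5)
Your well-definedness and property-verification steps track the paper's own proof closely: (C6) matches the canonical reduction orbits, (C5) upgrades $u_1^{-1} \iso u_2^{-1}$ to $u_1 \iso u_2$, and the convention on isomorphisms between upper/lower objects then yields $o'_1 \iso o'_2$. The paper compresses all of this into one short paragraph, but the reasoning is the same.

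The genuine problem is in your uniqueness step. You rightly flag that for the stated property to pin down $\pi$ at every $[o] \in K(\cO_1)$, one must exhibit, for $l \in \phi(o)$, some $u$ with $u^{-1} \iso l$; otherwise the constraint is vacuous at $[o]$ and uniqueness fails. But the justification you offer is circular and is not a consequence of the axioms. Condition (C5) only provides a map $u \mapsto u^{-1}$ from upper objects to lower objects, together with injectivity of that map on isomorphism classes; it asserts no surjectivity onto $\{l \in L(o'') : \result{l} = o\}$, and no such bijection is ``packaged by (C5)'' as you claim. The expression ``$(u^{-1})^{-1} \iso u$'' is not even well-formed --- the inversion operation has domain $U$, not $L$ --- and the clause ``view $l$ as $u^{-1}$ for some $u \in U(\result{l})$'' already presupposes exactly the surjectivity under discussion. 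That surjectivity is in fact true: it is immediate in the PGP instance (any deletion step is undone by the corresponding addition), and it is a tacit hypothesis inherited from McKay's framework, one which the paper's own proof silently assumes rather than derives. Your attempt to derive it from (C1)--(C6) does not close the gap, and a cleaner treatment would simply record the surjectivity as an explicit additional condition.
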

\begin{proof}
We just need to show that for any allowed choices $(o_1,l_1,o'_1,u_1)$ and $(o_2, l_2, o'_2, u_2)$ with $o_1 \iso o_2$, we have $o'_1 \iso o'_2$.

Condition \ref{condition:choice-function} shows $l_1 \iso l_2$. Now $u_1^{-1} \iso u_2^{-1}$, and by Condition \ref{condition:inverses}, $u_1 \iso u_2$, which in particular (recall Remark \ref{rem:isomorphisms-between-upper-objects}) means $o'_1 \iso o'_2$.
\end{proof}

As in \cite[p. 6]{MR1606516}, this is called the parent function.  An ancestor
of an isomorphism class $[o]$ is an $[o']$ such that $\pi^k([o]) = [o']$ for
some $k$.  There is an obvious notion of the children and descendants of
$[o]$.  Because $\pi$ reduces the level by one by Condition
\ref{condition:lower-objects}, it is clear that every isomorphism class is a
descendant of some `progenitor' $[o]$ with $L(o)$ empty.

Given a McKay groupoid $\cO$ satisfying Conditions \ref{condition:level}--\ref{condition:choice-function}, we define a forest $\cF$ of elements. 
Theorems \ref{thm:exhaustive} and \ref{thm:uniqueness} below shows that this forest consists of a single representative of each isomorphism class $[o] \in K(\cO)$.
\begin{defn}
\label{defn:tree}
We first define a tree $\cT_{[r]} \subset \cO$ for any isomorphism class $[r]$. The root is an arbitrarily chosen representative $r$ of $[r]$, and the children of any node $o \in \cT_{[r]}$ are the obtained as follows:
\begin{itemize}
\item for each orbit in $U(o)/\Aut(o)$,
\item pick a representative $u$, and
\item accept $\result{u}$ if $u$ is genuine (i.e. $u^{-1} \in \phi(\result{u})$), rejecting otherwise.
\end{itemize}

The forest $\cF$ is the union of all the trees $\cT_{[r]}$ where $[r]$ varies over all isomorphism classes for which $L(r)$ is empty.
\end{defn}

\begin{remark}
This definition is a close parallel of the procedure ${\tt scan}(X,n)$  \cite[p. 6]{MR1606516}. Our setup is less general, and in particular we use $u^{-1}$ as the canonical choice of $\check{Y} \in f'(\widehat{X})$ in ${\tt scan}(X,n)$.
\end{remark}

\begin{thm}
\label{thm:exhaustive}
For any $r \in \cO$, if $[o]$ is descended from $[r]$, then some $o \in [o]$ appears in $\cT_{[r]}$.
\end{thm}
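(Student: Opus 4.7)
The plan is to prove this by induction on the number of steps $k$ it takes to descend from $[r]$ to $[o]$, i.e., the least $k \geq 0$ such that $\pi^k([o]) = [r]$. Such a $k$ exists by hypothesis and is finite because each application of $\pi$ strictly decreases the level (condition \ref{condition:lower-objects}).

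For the base case $k=0$, we have $[o]=[r]$, and the root of $\cT_{[r]}$ is a representative of $[r]$ by Definition \ref{defn:tree}. For the inductive step, set $[o'] := \pi([o])$, which is descended from $[r]$ in $k-1$ steps, so by the inductive hypothesis some representative $o' \in [o']$ already appears in $\cT_{[r]}$. I claim that some representative of $[o]$ appears as a child of this node $o'$ in $\cT_{[r]}$, which will complete the induction.

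To establish the claim, I will unpack the definition of $\pi$ to produce a genuine upper object of $o'$ whose result is isomorphic to $o$. Since $\pi([o]) = [o']$, by definition of the parent function there exist $\tilde o \in [o]$, $\tilde l \in \phi(\tilde o)$, and $\hat o \in [o']$ with $\hat u \in U(\hat o)$ satisfying $\hat u^{-1} \iso \tilde l$. By condition \ref{condition:inverses} we have $\result{\hat u^{-1}} = \hat o$ and $\result{\hat u} \in [\tilde o] = [o]$. Because $\tilde l \in \phi(\tilde o)$ and the isomorphism $\result{\hat u} \iso \tilde o$ carries $\hat u^{-1}$ to $\tilde l$, coherence of $\phi$ under the groupoid action (condition \ref{condition:choice-function}) gives $\hat u^{-1} \in \phi(\result{\hat u})$; that is, $\hat u$ is genuine. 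Now transport along any isomorphism $g: \hat o \isoto o'$ (which exists since both lie in $[o']$): set $u := g(\hat u) \in U(o')$. Equivariance of $U$ under the groupoid action (condition \ref{condition:equivariance}) ensures $\result{u} \iso \result{\hat u} \iso o$, and preservation of the "genuine" property under the groupoid action (noted just after the definition of genuine) ensures $u$ is still genuine.

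Finally, when the children of $o'$ are generated according to Definition \ref{defn:tree}, one picks exactly one representative from each $\Aut(o')$-orbit in $U(o')$ and keeps those that are genuine. Let $u^\star$ be the chosen representative of the orbit $\Aut(o') \cdot u$; then $u^\star$ is also genuine (the property is orbit-invariant), so $\result{u^\star}$ is a child of $o'$ in $\cT_{[r]}$, and $\result{u^\star} \iso \result{u} \iso o$. Thus a representative of $[o]$ occurs in $\cT_{[r]}$, as required. The main conceptual points to get right are the bookkeeping of isomorphisms (tracking not just that two objects are isomorphic but that the isomorphism carries the relevant upper/lower data appropriately, as in Remark \ref{rem:isomorphisms-between-upper-objects}) and invoking the compatibility conditions \ref{condition:equivariance}, \ref{condition:inverses}, and \ref{condition:choice-function} in the correct order; the rest is direct chasing of definitions.
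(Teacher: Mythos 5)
Your proof is correct and follows essentially the same route as the paper's: induct on the generation, unpack the definition of $\pi$ to obtain a lower object $l \in \phi(\tilde o)$ and a corresponding upper object $\hat u$ with $\hat u^{-1} \iso l$, transport $\hat u$ along an isomorphism to the representative already in $\cT_{[r]}$ using condition \ref{condition:equivariance}, and deduce genuineness from the equivariance of $\phi$ in condition \ref{condition:choice-function}. The only (harmless) elaboration beyond the paper's version is your explicit final step passing from $u$ to the chosen orbit representative $u^\star$, which the paper leaves implicit.
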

\begin{proof}
We say a descendant $[o]$ of $[r]$ is in generation $i$ if $\pi^i([o]) = [r]$.
We induct on the generation. The base case is trivial.

Consider $[o]$ descended from $[r]$ in generation $i+1$. Pick $o$, a
representative of $[o]$, some $l \in \phi(o)$, and $u$ such that $l \iso
u^{-1}$. The upper object $u$ is an upper object for some $o'$.
By the definition of $\pi$, $[o'] = \pi([o])$, and $[o']$ is also a descendant
of $[r]$, but in generation at most $i$. By induction, some other $o'' \iso o'$
appears in $\cT_{[r]}$. As $U$ is equivariant by Condition
\ref{condition:equivariance}, there is a $u' \in U(o'')$, with $u' \iso u$. We
see that ${u'}^{-1} \iso l$, and so $\result{u'} \iso o$.

Finally, we need to check that ${u'}^{-1} \in \phi(\result{u'})$, so that we
accept $\result{u'}$. As $\phi$ is also equivariant by Condition
\ref{condition:choice-function}, this follows from $l \in \phi(o)$.
\end{proof}

\begin{thm}
\label{thm:uniqueness}
Given any $[r]$, the elements of $\cT_{[r]}$ are pairwise non-isomorphic.
\end{thm}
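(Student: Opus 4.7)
The plan is to induct on the maximum level $\ell$ among two putatively isomorphic nodes in $\cT_{[r]}$. The base case, where both nodes are the root $r$, is trivial. If exactly one of the two is the root $r$ and the other is a strict descendant, Condition \ref{condition:lower-objects} together with $L(r) = \eset$ forces $\ell(r) = 0$, whereas every strict descendant has strictly greater level (each upper object raises $\ell$, by Condition \ref{condition:upper-objects}); hence no isomorphism is possible in this mixed case.

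For the inductive step, suppose $o_1, o_2 \in \cT_{[r]}$ are isomorphic, neither being the root. Let $p_i$ be the tree-parent of $o_i$ and let $u_i \in U(p_i)$ be the orbit representative chosen when $o_i$ was added as a child of $p_i$, so $\result{u_i} = o_i$ and $u_i$ is genuine, meaning $u_i^{-1} \in \phi(o_i)$. Given an isomorphism $g : o_1 \to o_2$, equivariance of $\phi$ (Condition \ref{condition:choice-function}) gives $g(\phi(o_1)) = \phi(o_2)$, so both $g(u_1^{-1})$ and $u_2^{-1}$ lie in the single $\Aut(o_2)$-orbit $\phi(o_2)$. Composing $g$ with a suitable $h \in \Aut(o_2)$ yields an isomorphism $o_1 \to o_2$ carrying $u_1^{-1}$ to $u_2^{-1}$, i.e.\ $u_1^{-1} \iso u_2^{-1}$ in the sense of Remark \ref{rem:isomorphisms-between-upper-objects}.

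By Condition \ref{condition:inverses}, this forces $u_1 \iso u_2$, which unpacks as an isomorphism $p_1 \to p_2$ carrying $u_1$ to $u_2$. In particular $p_1 \iso p_2$, and the inductive hypothesis applied to the pair $(p_1, p_2)$ (both at strictly lower level than $o_1, o_2$) yields $p_1 = p_2 =: p$. Now $u_1, u_2 \in U(p)$ lie in the same $\Aut(p)$-orbit, but Definition \ref{defn:tree} adds only one representative per orbit, so $u_1 = u_2$, and therefore $o_1 = \result{u_1} = \result{u_2} = o_2$.

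The main obstacle is careful bookkeeping of the equivariance axioms: keeping track that the groupoid action on $L$ is compatible via $u \mapsto u^{-1}$ with the action on $U$ (Condition \ref{condition:inverses}), that $\phi$ is genuinely equivariant rather than merely well-defined on orbits, and that the notion of isomorphism between upper or lower objects always refers to an ambient isomorphism of underlying objects carrying one decoration to the other. Once these are properly tracked, the induction essentially writes itself, and in fact the argument is the abstract dual of the existence argument in Theorem \ref{thm:exhaustive}.
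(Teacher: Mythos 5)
Your proof is correct and follows essentially the same path as the paper's: induct on a level/generation parameter, use equivariance of $\phi$ plus the fact that $\phi(o)$ is a single orbit to get $u_1^{-1}\iso u_2^{-1}$, pass to $u_1\iso u_2$ via Condition \ref{condition:inverses}, apply the inductive hypothesis to the parents, and conclude $u_1=u_2$ because Definition \ref{defn:tree} adds one representative per $\Aut(p)$-orbit. One small slip worth flagging: in your "mixed" case you invoke $L(r)=\eset$, but the theorem is stated for \emph{any} $[r]$, where $L(r)$ need not be empty; the case is nonetheless vacuous because $\ell$ is isomorphism-invariant (Condition \ref{condition:level}) and strictly increases along edges of $\cT_{[r]}$ (Condition \ref{condition:upper-objects}), so the root can never be isomorphic to a strict descendant without any appeal to $L(r)$.
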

\begin{proof}
Again, we induct on the generation. The base case is trivial.

Suppose $o'_1$ and $o'_2$ in generation $i+1$ of $\cT_{[r]}$ are isomorphic.
Because they are in $\cT_{[r]}$, there are $u_1$ and $u_2$ with $o'_i =
\result{u_i}$ and $u_i^{-1} \in \phi(o'_i)$.  Combining the equivariance of
$\phi$ (Condition \ref{condition:choice-function}) and the fact that $\phi(o)$ is
a single orbit, we have that $u_1^{-1} \iso u_2^{-1}$, and hence by Condition
\ref{condition:inverses} $u_1 \iso u_2$.

Let $o_i = \result{u_i^{-1}}$. By construction, $o_i \in \cT_{[r]}$ and $o_1
\iso o_2$, so by the inductive hypothesis $o_1 = o_2$.  We see that $u_i$ was
our chosen representative of $U(o_i)/\Aut(o_i)$ in Definition \ref{defn:tree},
but now only one choice is available, so in fact $u_1 = u_2$, and hence $o'_1
= o'_2$, as desired.
\end{proof}

\begin{cor}
The elements of $\cF$ are pairwise non-isomorphic, because the root of the tree an element appears in is an isomorphism invariant.
\end{cor}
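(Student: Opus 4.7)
The plan is to combine Theorem \ref{thm:uniqueness} (which gives uniqueness within a single tree $\cT_{[r]}$) with the observation that the root of the tree containing a given $o \in \cF$ is determined up to isomorphism by $[o]$ alone. Once both ingredients are in place, isomorphic elements must live in the same tree and hence must actually coincide.

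First I would verify that every $o \in \cT_{[r]}$ satisfies $\pi^k([o]) = [r]$ for some $k \geq 0$, where $k$ is the generation of $o$ in $\cT_{[r]}$. This is immediate by induction on the generation from the recursive construction in Definition \ref{defn:tree}: the root is in generation $0$ and each child $\result{u}$ of $o$ in $\cT_{[r]}$ comes from an upper object $u \in U(o)$ with $u^{-1} \in \phi(\result{u})$, so by the very definition of the parent function $\pi$ one has $\pi([\result{u}]) = [o]$.

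Next I would iterate $\pi$ starting from any $[o]$ with $L(o)$ nonempty. Because Condition \ref{condition:lower-objects} guarantees that $\pi$ strictly decreases the level $\ell$, and levels live in $\bbN$, this iteration must terminate after finitely many steps at an isomorphism class $[r]$ with $L(r) = \eset$, namely a progenitor class. Thus each $[o] \in K(\cO)$ has a uniquely determined progenitor ancestor, which I will call $\rho([o])$. Since $\pi$ is a well-defined function on isomorphism classes, so is $\rho$.

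Combining these, suppose $o_1, o_2 \in \cF$ are isomorphic. By the first step, $o_i \in \cT_{[r_i]}$ implies $\rho([o_i]) = [r_i]$. Since $[o_1] = [o_2]$, this forces $[r_1] = [r_2]$, so $o_1$ and $o_2$ lie in the same tree $\cT_{[r]}$. Theorem \ref{thm:uniqueness} then gives $o_1 = o_2$. The only subtle point to check is that the collection of progenitor classes used to form $\cF$ really is indexed by $K(\{r \in \cO : L(r) = \eset\})$ as in Definition \ref{defn:tree}, so that the root of $\cT_{[r]}$ is uniquely determined by $[r]$; this is built into the definition, so no further argument is required. I do not anticipate any serious obstacle: the corollary is essentially a bookkeeping consequence of the definitions and Theorem \ref{thm:uniqueness}.
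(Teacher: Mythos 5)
Your proof is correct and is essentially the argument the paper intends: the paper's one-line "proof" (the "because" clause) relies precisely on the observation you spell out, namely that iterating the parent function $\pi$ from any $[o]$ terminates at a uniquely determined progenitor class, which identifies the tree containing $o$, after which Theorem \ref{thm:uniqueness} handles uniqueness within a tree. Your more detailed verification — that each node of $\cT_{[r]}$ in generation $k$ satisfies $\pi^k([o]) = [r]$, and that $\rho$ is well defined on isomorphism classes because $\pi$ is — is exactly the bookkeeping the paper leaves implicit.
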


As noted above, the only PGP with no parent is $(\eset, \eset, id, 0)$. In
practice we are very often interested in the descendants of $\left(\scalebox{0.5}{$\bigraph{bwd1duals1}$},\scalebox{0.5}{$\bigraph{bwd1duals1}$}\right)$, as all
principal graphs of irreducible subfactors are of this form. However, in
Section \ref{sec:Implementations}, where we describe the `tail enumerator', we
will be interested in other roots.

\subsection{The implementations}
\label{sec:Implementations}

We have two independent implementations of the algorithm described above for
PGPs. The first was written by Narjess Afzaly, as part of her ANU PhD thesis
work with Brendan McKay, and is implemented in {\tt C}. The second was written
later by Scott Morrison and David Penneys, in {\tt Scala}. The implementations
are independent in the sense that they share no common code, and in fact
neither group read the code of the other implementation. The {\tt C}
implementation is faster, although both programs suffice to do all the
computations required in this paper. To the extent possible (subject to the
constraints described below) all computations have been reproduced in both
implementations and compared.

Both implementations are best run by means of the {\tt Mathematica} wrappers
we've prepared for them. The {\tt Scala} code is used in the {\tt Mathematica}
notebook {\tt enumerator.nb} included with the {\tt arXiv} sources of this
article to give the proof of Theorem \ref{thm:Enumerate} below. The {\tt C} 
code can be run by loading {\tt /development/afzaly-enumerator/Enumeration-setup.m} from the {\tt FusionAtlas} repository in a {\tt Mathematica} session
(after first loading the {\tt FusionAtlas} itself), and then using the command
{\tt ExtendToDepth}.

Our choice function $\phi$ is specified as follows, in descending order of priority:
\begin{itemize}
\item In the {\tt Scala} implementation only, if the PGP has vertices on both the principal and dual principal graphs at the
working depth, then the canonical reduction will be a lower object which
removes vertices from the dual principal graph.

\item If the PGP has both self-dual vertices and pairs of dual vertices at the
working depth, then the canonical reduction will be a lower object which
removes a dual pair.  The canonical reduction only removes a self-dual vertex
if all vertices at the working depth are self-dual.

\item Subject to these constraints, the canonical reduction is a lower object which
removes a set (either a single vertex or a dual pair) of vertices with least
total degree amongst the lower objects.

\item Finally, ties are broken using canonical labellings from {\tt nauty}, according to one of the two strategies described here. The 
{\tt C} implementation uses the first strategy, while the {\tt Scala}
implementation uses the second strategy.

In both, we need to encode PGPs as vertex-coloured graphs, in order to be able to use {\tt nauty}. From the underlying pair $(\Gamma_+, \Gamma_-)$ for $o$, we apply a vertex-colouring according to depth, and additionally add new (depth-preserving) edges between pairs of dual objects. Then the
automorphism group of this graph, which we denote $G(\Gamma_+, \Gamma_-)$ is exactly the automorphism group of the PGP $o$.

\begin{enumerate}[label=(\arabic*)]
\item We observe that all the lower objects we are considering are described by some subset of
vertices (of size 1 or 2) on a fixed graph. Only some of these subsets are allowed according to the choices described
above.  We use {\tt nauty} \cite {MR3131381} to compute a
canonical labelling of the vertices of the graph $G(\Gamma_+, \Gamma_-)$,  as well as the action of the automorphism group on subsets of the appropriate size (in some cases, the {\tt C} implementation shortcuts this calculation, using the action on vertices to quickly deduce the action of subsets of size 2). 
We identify which subset has least canonical labelling, and then choose the orbit of lower objects consisting of the images of this subset under the automorphism action.

\item For each orbit $[l]$ of lower objects satisfying our preferences, we pick a representative $l
\in L(o)$, and construct a single vertex-coloured graph $\cG_l$ encoding $l$, in an $\Aut(o)$ equivariant manner. This
is the same vertex-coloured graph as that described in the first alternative, with the addition of an extra
vertex-colour for the vertices to be deleted by $l$. We then call {\tt nauty} to canonically label
the vertices of $\cG_l$, obtaining a vertex-coloured graph $\cG_{[l]}$ which did not depend on the representative $l$.
We then define a total ordering on vertex-coloured graphs (e.g. dictionary order on a textual representation), and
declare that our chosen orbit is the one with the least $\cG_{[l]}$.
\end{enumerate}
Note the second approach may require several calls to ${\tt nauty}$ when the first requires just one; the {\tt Scala} implementation is
certainly less efficient.
\end{itemize}

\begin{boxedexample}
For simplicity, we consider the following graph (not a graph pair) at an even working depth, whose lower objects consist of deleting vertices at the working depth.
$$
\begin{tikzpicture}
	\draw[fill=white, rounded corners,inner sep=2pt] (-.5,-1.2) rectangle (4.5,1.2);
\draw[fill] (0,0.) circle (0.05);
\draw (0.,0.) -- (1.,0.);
\draw[fill] (1.,0.) circle (0.05);
\draw (1.,0.) -- (2.,-0.375);
\draw (1.,0.) -- (2.,-0.125);
\draw (1.,0.) -- (2.,0.125);
\draw (1.,0.) -- (2.,0.375);
\draw[fill] (2.,-0.375) circle (0.05);
\draw[fill] (2.,-0.125) circle (0.05);
\draw[fill] (2.,0.125) circle (0.05);
\draw[fill] (2.,0.375) circle (0.05);
\draw (2.,-0.375) -- (3.,-0.875);
\draw (2.,-0.375) -- (3.,-0.625);
\draw (2.,-0.125) -- (3.,-0.375);
\draw (2.,-0.125) -- (3.,-0.125);
\draw (2.,0.125) -- (3.,0.125);
\draw (2.,0.125) -- (3.,0.375);
\draw (2.,0.375) -- (3.,0.625);
\draw (2.,0.375) -- (3.,0.875);
\draw[fill] (3.,-0.875) circle (0.05);
\draw[fill] (3.,-0.625) circle (0.05);
\draw[fill] (3.,-0.375) circle (0.05);
\draw[fill] (3.,-0.125) circle (0.05);
\draw[fill] (3.,0.125) circle (0.05);
\draw[fill] (3.,0.375) circle (0.05);
\draw[fill] (3.,0.625) circle (0.05);
\draw[fill] (3.,0.875) circle (0.05);
\draw (3.,-0.875) -- (4.,-1.);
\draw (3.,-0.625) -- (4.,-1.);
\draw (3.,-0.875) -- (4.,-0.75);
\draw (3.,-0.625) -- (4.,-0.75);
\draw (3.,-0.375) -- (4.,-0.5);
\draw (3.,-0.125) -- (4.,-0.5);
\draw (3.,-0.375) -- (4.,-0.25);
\draw (3.,-0.125) -- (4.,-0.25);
\draw (3.,0.125) -- (4.,0.);
\draw (3.,0.375) -- (4.,0.25);
\draw (3.,0.625) -- (4.,0.5);
\draw (3.,0.875) -- (4.,0.75);
\draw (3.,0.875) -- (4.,1.);
\draw[fill] (4.,-1.) circle (0.05);
\draw[fill] (4.,-0.75) circle (0.05);
\draw[fill] (4.,-0.5) circle (0.05);
\draw[fill] (4.,-0.25) circle (0.05);
\draw[fill] (4.,0.) circle (0.05);
\draw[fill] (4.,0.25) circle (0.05);
\draw[fill] (4.,0.5) circle (0.05);
\draw[fill] (4.,0.75) circle (0.05);
\draw[fill] (4.,1.) circle (0.05);
\draw[red, thick] (0.,0.) -- +(0,0.0833333) ;
\draw[red, thick] (2.,-0.375) -- +(0,0.0833333) ;
\draw[red, thick] (2.,-0.125) -- +(0,0.0833333) ;
\draw[red, thick] (2.,0.125) -- +(0,0.0833333) ;
\draw[red, thick] (2.,0.375) -- +(0,0.0833333) ;
\draw[red, thick] (4.,-1.) to[out=135,in=-135] (4.,-0.75);
\draw[red, thick] (4.,-0.5) to[out=135,in=-135] (4.,-0.25);
\draw[red, thick] (4.,0.) to[out=135,in=-135] (4.,0.25);
\draw[red, thick] (4.,0.5) to[out=135,in=-135] (4.,0.75);
\draw[red, thick] (4.,1.) -- +(0,0.0833333) ;
\end{tikzpicture}
$$
There are 4 orbits of lower objects under the $\Aut(o)$ action:
\begin{enumerate}[label=(\arabic*)]
\item deleting one of the first two dual pairs (counting from the bottom), 
\item deleting the third dual pair, 
\item deleting the fourth dual pair, and 
\item deleting the self-dual vertex.
\end{enumerate}
 
Here, the canonical reduction orbit must be deleting a dual pair with least
total degree amongst dual pairs, so it must be deleting either the third or
fourth dual pair. We then call {\tt nauty} to produce a canonical labeling to
break the tie.
\end{boxedexample}


We note the following differences between the implementations. The optimizations made in the {\tt C} implementation are described in detail in Afzaly's Ph.D. thesis \cite{narjess}.
\begin{itemize}
\item
The {\tt C} implementation only enumerates simply-laced principal graphs (as
reflected in the definition of PGPs above), while the {\tt Scala} implementation can
also produce non-simply-laced graphs (requiring the obvious modifications to
the definitions above). Given Lemma \ref{lem:simply-laced}, this is not a
significant difference for the purposes of this paper.

\item The {\tt C} implementation uses certain shortcuts for estimating graph norms, while the {\tt Scala} implementation uses the straightforward heuristic of bounding a graph norm below by $|| A^{n+1} v || / || A^n v ||$ for any chosen $n$ and $v$ (taking $n=10$ and $v$ the vector which is $1$ on every vertex is good enough).

\item
The {\tt C} implementation differs slightly from the description above in that it treats the graphs $(\Gamma_+, \Gamma_-)$ in a PGP as an unordered pair. Thus the output from the {\tt Scala} implementation contains $(\Gamma_+, \Gamma_-)$ and $(\Gamma_-, \Gamma_+)$ separately whenever $\Gamma_+ \neq \Gamma_-$.

\item 
The {\tt Scala}
implementation assumes that there is exactly one vertex on each graph in depth
0, while the {\tt C} implementation allows arbitrarily many. This is a more
significant limitation, as this freedom is essential for the `tail enumerator'
described below, and used in Section \ref{sec:periodicity}.

\item The {\tt C} implementation does not natively implement the triple point obstruction; we filter its output using an implementation of the triple point obstruction written in {\tt Mathematica} (thus the two overall implementations maintain separate codebases, although both triple point obstructions were implemented by the second and third authors).

\item
Finally, the {\tt Scala}
implementation includes one additional inequality, which sometimes rules out a
PGP on the basis of having no possible associative descendants. Let $\nbhd^+(v)$ and $\nbhd^-(v)$ denote the neighbours of a vertex $v$ at the next and the previous depth respectively. 
We can decompose both sides of the associativity constraint in Equation \eqref{eq:associativity} as a sum of two positive terms. For vertices $v$ and $w$ at the same depth (and in particular depth $n-1$) we have
$$ A + B = C + D $$
where
\begin{align*}
A & = \left| \overline{\nbhd^+(v)} \cap \nbhd^+(\overline{w}) \right| &
B & = \left| \overline{\nbhd^-(v)} \cap \nbhd^-(\overline{w}) \right| \\
C & = \left| \nbhd^+(\overline{v}) \cap \overline{\nbhd^+(w)} \right| & 
D & = \left| \nbhd^-(\overline{v}) \cap \overline{\nbhd^-(w)} \right|.
\end{align*}
We are interested in the specific case where $n$ is even, and there are already vertices on $\Gamma_-$ at depth $n$. 
Consider $v$ on $\Gamma_+$ at depth $n-1$, and $w$ on $\Gamma_-$ at depth $n-1$.
We know that in any genuine child the additional vertices at depth $n$ will also be on $\Gamma_-$. 
Thus three out of the four terms above will not change: only $C = \left| \nbhd^+(\overline{v}) \cap \overline{\nbhd^+(w)} \right|$ will increase as we look at genuine children. 
Thus we can discard any graph in this situation for which $A + B - C - D$ is already negative. 
(In fact, this apparently rather specific check saves a huge amount of effort, reducing the total number of graphs considered in our application from $239710$ to $17360$!)
The {\tt C} implementation similarly implements associativity checks as early as possible \cite{narjess}, although the details are different because of the symmetry between $\Gamma_+$ and $\Gamma_-$ there.
\end{itemize}

\paragraph{The tail enumerator.}
We can apply the above algorithm starting with a PGP which does not represent
the entirety of a subfactor principal graph up to some depth, but rather a
`block' $\cB$ of a principal graph containing only the vertices at depths
between some specified limits $a$ and $a+k$. Using this, we can
enumerate all possible blocks $\cC$ such that $\cA\cB\cC$ could be the
principal graph of subfactor, where $\cA\cB\cC$ represents a principal graph
given by some unknown $\cA$ from depth 0 to depth $a$ (with $a$ itself perhaps
also unknown), the fixed block $\cB$ from depth $a$ to depth $a+k$, and the
block $\cC$ from depth $a+k$ onwards. We call this application of the general
algorithm the \emph{`tail enumerator'}. In practice we use it knowing that
$\cA$ is of the form $\cA_{(0)}\cB^m$, for some fixed initial graph
$\cA_{(0)}$ followed by an unknown number of repeats of a certain block $\cB$.

\subsection{Application to graph pairs up to index \texorpdfstring{$5\frac{1}{4}$}{5 1/4}}
\label{sec:Enumerate-proof}

\begin{proof}[Proof of Theorem \ref{thm:Enumerate}]
The algorithm described in Section \ref{sec:orderly}, implemented as described
in Section \ref{sec:Implementations}, and applied to the input $\left(\scalebox{0.5}{$\bigraph{bwd1duals1}$},\scalebox{0.5}{$\bigraph{bwd1duals1}$}\right)$, enumerates all possible principal graphs
of irreducible subfactors up to a given index, by Theorem \ref{thm:exhaustive}. The output is a tree whose nodes constitute a single
representative of every isomorphism class, and is produced via a depth-first
traversal. There is no guarantee that this tree is not infinite. We may
instruct the enumeration algorithm to ignore any specified sub-trees.

A subfactor principal graph needs to satisfy more associativity conditions
than PGPs do --- in particular it must satisfy associativity between pairs of
vertices at the penultimate depth, or between pairs of vertices at the
ultimate depth. As a result, we don't actually need to look at every node of
this tree, but rather only those nodes which have no vertices at the working
depth (i.e. graphs just produced by increasing the depth). These are exactly
the PGPs which satisfy the associativity condition at the penultimate depth
(which is actually $n-2$ for a PGP with no vertices at the working depth $n$),
and we then separately filter out those that satisfy the associativity
condition at the ultimate depth $n-1$.

As a simplification in the primary implementation, we only generate 
simply-laced graphs, requiring case \ref{case:not-simply-laced} above. (The secondary implementation does not
have this restriction.) We ask the algorithm to ignore any subtrees which are
exactly 1-supertransitive (giving case \ref{case:1-supertransitive}), or which are 4-supertransitive
(every 4-supertransitive graph other than $A_\infty$ is a translation of an
exactly 2- or 3-supertransitive graph), or which correspond to extensions of
any of the graphs listed in cases \ref{case:11}, \ref{case:10}, and \ref{case:4-spoke}. Finally, the graphs
listed in case \ref{case:vines} are exactly the output of this program.

For reference, the {\tt Scala} implementation takes about 440s on a 1.7GHz Core i7 processor, and considers 17360 PGPs
in total. (Running only up to index 5 takes 20s, considering 992 PGPs, while running up to index $3+\sqrt{3}$ takes
6s, considering 251 PGPs.)
\end{proof}

\section{Weeds with branch factor \texorpdfstring{$r=1$}{r=1}}
\label{sec:r=1}

Recall from \cite{MR2972458,MR2902285} that for an $n-1$ supertransitive weed starting with an initial triple point, 
$$
\begin{tikzpicture}[baseline=-.1cm]
	\draw[fill] (-2,0) circle (0.05);
	\node at (-2,-.3) {\scriptsize{$0$}};	
	\draw (-2.,0.) -- (-1.,0.);
	\draw[fill] (-1,0) circle (0.05);
	\node at (-1,-.3) {\scriptsize{$1$}};
	\node at (-.5,0) {$\cdots$};
	\draw[fill] (0,0) circle (0.05);
	\node at (0,-.3) {\scriptsize{$n-2$}};
	\draw (0.,0.) -- (1.,0.);
	\draw[fill] (1.,0.) circle (0.05);
	\node at (1,-.3) {\scriptsize{$n-1$}};
	\draw (1.,0.) -- (2.,-0.5);
	\draw (1.,0.) -- (2.,0.5);
	\draw[fill=white] (2.,-0.5) circle (0.05);
	\node at (2.3,.5) {$Q$};
	\draw[fill=white] (2.,0.5) circle (0.05);
	\node at (2.3,-.5) {$P$};
	\node at (2,-.8) {\scriptsize{$n$}};	
\end{tikzpicture}\,,
$$
the \emph{branch factor} $r= \Tr(Q)/\Tr(P)$ is the ratio of the
dimensions of the vertices one past the branch. The branch factor is connected
to the rotational eigenvalue $\omega_A$ of the new low-weight rotational
eigenvector $A$ at depth $n$, which is the new element in the $n$-box space
perpendicular to Temperley-Lieb-Jones. When $r\neq 1$, there are tight restrictions
on the possible dimensions of vertices, which we can leverage to rule out
weeds using branch factor inequalities. Section
\ref{sec:BranchFactorInequalities} is devoted to this task.

However, when $r=1$, this corresponds to $\omega_A=-1$, and branch factor
inequalities no longer help. In Theorem \ref{thm:Enumerate}, we see three
weeds with branch factor $r=1$, namely \ref{case:10}(5), \ref{case:10}(6), and \ref{case:10}(9), and we eliminate each weed with a different
`bespoke' argument, of varying difficulty.

For example, the weed $\cB$ from \cite{MR2914056,MR2902285} was eliminated
using connections together with an intricate graph norm argument. However, as
noted in \cite[Section 5.2.1]{MR3166042}, this weed is easily eliminated by
Popa's principle graph stability \cite{MR1334479,MR3157990}. We now eliminate
the truncation $\cB'$ of $\cB$ by 2. (This is \ref{case:10}(5) from Theorem \ref{thm:Enumerate}.)

\begin{thm}
\label{thm:NoB}
There are no subfactors with principal graph a translated extension of 
$$
\cB'=
\left(\bigraph{bwd1v1v1v1p1v1x0p0x1v0x1p1x0p1x0p0x1v0x0x0x1p0x0x1x0vduals1v1v1x2v1x2x4x3v}, \bigraph{bwd1v1v1v1p1v1x0p1x0v1x0p0x1v1x0p0x1vduals1v1v1x2v1x2v}\right).
$$
\end{thm}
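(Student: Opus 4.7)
The plan is to apply Popa's principal graph stability (Fact \ref{Fact:StabilityConstraint}) directly to any translated extension $\Gamma=(\Gamma_+,\Gamma_-)$ of $\cB'$. Inspecting $\cB'$, the dual principal graph already exhibits simple parallel edges between its last two depths, which is precisely the stability condition of Fact \ref{Fact:StabilityConstraint}. This is the same structural feature that made the argument for the precursor weed $\cB$ in \cite[Section 5.2.1]{MR3166042} immediate; the job for $\cB'$ is to recover enough of that structure in any extension.

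The first step is to show that in any extension, the stable pattern on $\Gamma_-$ must persist through one more depth. Associativity between the vertices at the working depth of $\Gamma_+$ and the already-stable vertices of $\Gamma_-$, combined with the prescribed duality data in $\cB'$ and the PGP triple point obstruction, should eliminate all non-stable extensions at the working depth of $\Gamma_-$: no splits, no merges, and no multiple edges. Once $\Gamma_-$ is stable at two consecutive depths, part (2) of Fact \ref{Fact:StabilityConstraint} forces $\Gamma$ itself to be stable at the working depth. By part (1) of the same fact, $\Gamma$ is then finite, consisting of a translated $\cB'$ followed by a disjoint union of finite type-$A$ Coxeter--Dynkin tails.

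The second step reduces to a finite check. For each allowed quadruple of tail lengths, one computes the graph norm squared; any candidate exceeding $5\frac{1}{4}$ is immediately discarded. For each surviving candidate, the index is a putative subfactor index at finite depth, so it must be a totally real cyclotomic integer \cite{MR2786219}; combined with the duality constraint \ref{fact:DualityConstraint} and the explicit formulas for graph norms of trees with type-$A$ tails, this should rule out every remaining case. Because the branch factor $r=1$ fixes one of the two tails past the branch to match the other via the Frobenius--Perron weights, the remaining parameter space is very small.

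The main obstacle I expect is the case analysis in the first step. The truncation $\cB'$ lacks the two extra committed depths that made the analogous step trivial for $\cB$, so I will need to consider more possible extensions at the next depth of $\Gamma_-$ and verify that each either is stable or violates associativity with the fixed data of $\Gamma_+$ in $\cB'$. Once this finite check is completed, the rest of the argument is standard.
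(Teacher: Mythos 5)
Your opening observation is the right one — Popa's principal graph stability is the correct tool — but you then undersell what $\cB'$ already gives you, which makes your first step unnecessary, and you stop short on the second step, where the paper uses a specific cited result rather than the number-theoretic check you sketch.

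On the first step: you only noticed that the dual graph $\cB'_-$ is stable between its last two depths. But \emph{both} $\cB'_+$ and $\cB'_-$ are stable at the penultimate depth of $\cB'$ (for $\cB'_+$, the two final vertices each attach by a single edge to distinct quadrivalent-depth vertices, and nothing merges or splits there; you can read this off the encoded graph). That means any translated extension $\Gamma$ already satisfies the hypothesis of part (1) of Fact~\ref{Fact:StabilityConstraint} at that depth, since the extension agrees with $\cB'$ through its final depth. There is nothing to check one depth further: part (1) alone already forces $\Gamma$ to be stable at every subsequent depth and finite, with $A_{\text{finite}}$ tails. Your detour through associativity, the PGP triple point obstruction, and part (2) of the stability constraint is wasted effort born of a partial inspection of $\cB'$; you have conflated the hypotheses of parts (1) and (2).

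On the second step: the paper does not run a graph-norm/cyclotomicity sweep over the $A$-tails. Instead it cites \cite[Lemma~4.14]{MR2902285}, which shows that \emph{no} $A_{\text{finite}}$ extension of this shape can satisfy the associativity constraint, giving an immediate contradiction independent of tail lengths, translation, and index. Your proposed check is both weaker (it implicitly needs an index bound, which Theorem~\ref{thm:NoB} does not assume) and unfinished: you assert that cyclotomicity together with duality and the branch factor ``should rule out every remaining case,'' but you do not carry out the computation, and you have no a priori guarantee that cyclotomicity alone disposes of every survivor. This is a genuine gap. The lesson here is that once stability forces $A$-tails, the uniform associativity obstruction from \cite{MR2902285} is the right finishing move, not a case-by-case number-theoretic argument.
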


\begin{proof}
This weed is actually stable at the penultimate depth, so any subfactor extension must end with $A_{\text{finite}}$ tails \cite{MR1334479,MR3157990} (this result relies on \cite{MR1356624}).
However \cite[Lemma 4.14]{MR2902285} shows that associativity is never satisfied for an extension with $A_{\text{finite}}$ tails, a contradiction. 
\end{proof}

The next weed, \ref{case:10}(6) from Theorem \ref{thm:Enumerate}, is stable.
By the Stability Constraint \ref{Fact:StabilityConstraint}, any extension could only be realized by graphs in the following two parameter family.
$$
\left(
\begin{tikzpicture}[baseline=-.1cm, scale=.7]
	\draw[fill] (0,0.) circle (0.05);
	\draw (0.,0.) -- (1.,0.);
	\draw[fill] (1.,0.) circle (0.05);
	\draw[dashed] (1.,0.) -- (2.,0.);
	\draw[fill] (2.,0.) circle (0.05);
	\draw (2.,0.) -- (3.,0.);
	\draw[fill] (3.,0.) circle (0.05);
	\draw (3.,0.) -- (4.,-0.5);
	\draw (3.,0.) -- (4.,0.5);
	\draw[fill] (4.,-0.5) circle (0.05);
	\draw[fill] (4.,0.5) circle (0.05);
	\draw (4.,-0.5) -- (5.,-0.5);
	\draw (4.,0.5) -- (5.,0.5);
	\draw[fill] (5.,-0.5) circle (0.05);
	\draw[fill] (5.,0.5) circle (0.05);
	\draw (5.,-0.5) -- (6.,0.);
	\draw (5.,0.5) -- (6.,0.);
	\draw[fill] (6.,0.) circle (0.05);
	\draw (6.,0.) -- (7.,0.);
	\draw[fill] (7.,0.) circle (0.05);
	\draw (7.,0.) -- (8.,-0.5);
	\draw (7.,0.) -- (8.,0.5);
	\draw[fill] (8.,-0.5) circle (0.05);
	\draw[fill] (8.,0.5) circle (0.05);
	\draw[dashed] (8.,-0.5) -- (9.,0.);
	\draw[fill] (9.,0.) circle (0.05);
	\draw (9.,0.) -- (10.,0.);
	\draw[fill] (10.,0.) circle (0.05);
	\draw[red, thick] (0.,0.) -- +(0,0.333333) ;
	\draw[red, thick] (2.,0.) -- +(0,0.333333) ;
	\draw[red, thick] (4.,-0.5) -- +(0,0.333333) ;
	\draw[red, thick] (4.,0.5) -- +(0,0.333333) ;
	\draw[red, thick] (6.,0.) -- +(0,0.333333) ;
	\draw[red, thick] (8.,-0.5) -- +(0,0.333333) ;
	\draw[red, thick] (8.,0.5) -- +(0,0.333333) ;
	\node at (1.5,-1) {$\underbrace{\hspace{2.2cm}}_{a\text{ edges}}$};
	\node at (8.5,-1) {$\underbrace{\hspace{2.2cm}}_{b\text{ edges}}$};
\end{tikzpicture}
\,,\,
\begin{tikzpicture}[baseline=-.1cm, scale=.7]
	\draw[fill] (0,0.) circle (0.05);
	\draw (0.,0.) -- (1.,0.);
	\draw[fill] (1.,0.) circle (0.05);
	\draw[dashed] (1.,0.) -- (2.,0.);
	\draw[fill] (2.,0.) circle (0.05);
	\draw (2.,0.) -- (3.,0.);
	\draw[fill] (3.,0.) circle (0.05);
	\draw (3.,0.) -- (4.,-0.5);
	\draw (3.,0.) -- (4.,0.5);
	\draw[fill] (4.,-0.5) circle (0.05);
	\draw[fill] (4.,0.5) circle (0.05);
	\draw (4.,-0.5) -- (5.,-0.5);
	\draw (4.,-0.5) -- (5.,0.5);
	\draw[fill] (5.,-0.5) circle (0.05);
	\draw[fill] (5.,0.5) circle (0.05);
	\draw (5.,-0.5) -- (6.,-0.5);
	\draw (5.,0.5) -- (6.,0.5);
	\draw[fill] (6.,-0.5) circle (0.05);
	\draw[fill] (6.,0.5) circle (0.05);
	\draw (6.,-0.5) -- (7.,0.);
	\draw (6.,0.5) -- (7.,0.);
	\draw[fill] (7.,0.) circle (0.05);
	\draw (7.,0.) -- (8.,0.);
	\draw[fill] (8.,0.) circle (0.05);
	\draw[dashed] (8.,0.) -- (9.,0.);
	\draw[fill] (9.,0.) circle (0.05);
	\draw (9.,0.) -- (10.,0.);
	\draw[fill] (10.,0.) circle (0.05);
	\draw[red, thick] (0.,0.) -- +(0,0.333333) ;
	\draw[red, thick] (2.,0.) -- +(0,0.333333) ;
	\draw[red, thick] (4.,-0.5) -- +(0,0.333333) ;
	\draw[red, thick] (4.,0.5) -- +(0,0.333333) ;
	\draw[red, thick] (6.,-0.5) -- +(0,0.333333) ;
	\draw[red, thick] (6.,0.5) -- +(0,0.333333) ;
	\draw[red, thick] (8.,0.) -- +(0,0.333333) ;
	\node at (1.5,-1) {$\underbrace{\hspace{2.2cm}}_{a\text{ edges}}$};
	\node at (8.5,-1) {$\underbrace{\hspace{2.2cm}}_{b\text{ edges}}$};
\end{tikzpicture}
\right)
$$
This family has been called the `AMP
spider'.
To eliminate this weed, a new number-theoretic technique was developed by
Calegari-Guo \cite{1502.00035}. This technique is a significant
generalisation of and improvement over
the main result of \cite{MR2786219} which in turn was developed to 
treat uniformly the vines in the
classification of subfactors to index 5 \cite{MR2902286}. As discussed in the
introduction, we hope that it will be possible to implement this technique to
 treat uniformly cylinders (stable weeds) as we treat vines now.
 
\begin{thm}
\label{thm:NoAMP}
There are no subfactor planar algebras whose principal graphs are an instance of the AMP spider. 
\end{thm}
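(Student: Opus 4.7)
The plan is to invoke the number-theoretic obstruction of Calegari--Guo \cite{1502.00035}, which was developed for precisely this kind of stable weed. First, I would note that the weed \ref{case:10}(6) is stable at the penultimate depth, so by the Stability Constraint \ref{Fact:StabilityConstraint}, every subfactor extension must be a finite graph. Moreover, the tails obtained from the stability argument must be type-$A$ finite pieces attached in only one way compatible with the graph pair structure; thus any purported principal graph of a subfactor lies in the two-parameter family of AMP spiders indexed by $(a,b) \in \bbN^2$ pictured above Theorem \ref{thm:NoAMP}.

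Next, I would compute the graph norm $\lambda(a,b)$ explicitly. The characteristic polynomial of a spider graph with $A$-tails of lengths $a$ and $b$ decomposes via the Chebyshev polynomials associated to each tail, yielding an equation of the form $P_{a,b}(\lambda^2) = 0$ where $P_{a,b}$ is expressible in the variable $q$ satisfying $q + q^{-1} = \lambda^2 - 2$. By the theorem of Etingof--Nikshych--Ostrik \cite{MR2183279} (together with the fact \cite{MR1266785} that finite depth standard invariants produce fusion categories), the index of any such subfactor must be a cyclotomic integer; equivalently $q$ must be a root of unity. Substituting this into the explicit formula for $P_{a,b}$ yields a relation among roots of unity parameterized by $(a,b)$.

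The heart of the argument, and the main obstacle, is to show that this relation admits no solutions with $q$ a root of unity: this is where the Calegari--Guo method enters, generalizing \cite{MR2786219} from the one-parameter vine setting to two-parameter cylinders. Their technique rewrites the expression for the index as a sum of cyclotomic terms whose Galois conjugates can be controlled, and then uses a Gelfond-type bound (or elementary bounds on sums of roots of unity) to force $(a,b)$ into an explicit finite list of exceptions. Finally, these finitely many exceptional pairs are checked by hand: either the resulting index falls outside the allowed range $(4, 5\tfrac14]$, or it is ruled out as already appearing elsewhere in Theorem \ref{thm:Enumerate}, completing the proof.
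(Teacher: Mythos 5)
The overall strategy — invoke Calegari--Guo to reduce the two-parameter family of AMP spiders to finitely many cases, then dispose of those — is the same as the paper's, but your final step is wrong, and that is where the proof actually has to do work. Calegari--Guo bounds one of the two tail parameters ($b\leq 56$), turning the cylinder into $56$ vines; running the vine-cyclotomicity machinery of \cite{MR2786219,MR2902286} on those vines leaves exactly two surviving cases, $(a,b)\in\{(0,0),(1,1)\}$. Neither of these is eliminated for the reasons you give. Both have index $\leq 5$, so neither ``falls outside the allowed range $(4,5\tfrac14]$''; and Theorem~\ref{thm:Enumerate} is merely an enumeration of candidate graph pairs, so ``appearing in Theorem~\ref{thm:Enumerate}'' is not an obstruction. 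The actual arguments are graph-specific: for $(0,0)$, the branch point occurs at an even depth, contradicting Ocneanu's triple point obstruction \cite{MR1317352} (cf.\ \cite[Theorem 5.1.11]{MR2972458}); for $(1,1)$, the index is exactly $5$, which is not composite, yet the dual graph exhibits a normalizer, forcing an intermediate subfactor \cite[Proposition 1.7]{MR860811} --- a contradiction. Without some argument of this kind, the proof does not close.

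A smaller inaccuracy: you describe Calegari--Guo as directly ``forcing $(a,b)$ into a finite list.'' In fact their contribution is to bound a single tail parameter of a stable (cylindrical) weed, reducing it to vines; the second parameter is then bounded by the pre-existing vine machinery. This two-step structure is exactly why the paper highlights Calegari--Guo's result as a new tool for cylinders, complementary to the vine results of \cite{MR2786219}. Your framing blurs this distinction, but the final-step gap above is the substantive error.
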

\begin{proof}
The article \cite{1502.00035} shows that any subfactor principal graph which
is a translated extension of the AMP spider must have $b\leq 56$. This reduces
this weed to 56 vines, which we deal with using the algorithms from
\cite{MR2902286}. The only cyclotomic translates are when $(a,b)\in
\{(0,0),(1,1)\}$. This calculation is performed in the {\tt Mathematica} notebook
{\tt{CalegariGuoSmallCases.nb}} bundled with the {\tt{arXiv}} source.

We now eliminate the two remaining cases.
First, $(a,b)=(0,0)$, corresponding to the graph
$$
\bigraph{gbg1v1v1p1v1x0p0x1v1x1}
$$
is not possible, since the branch point does not occur at an odd depth, which is necessary by Ocnenau's triple point obstruction \cite{MR1317352} (see also \cite[Theorem 5.1.11]{MR2972458}).
To eliminate 
$$
\left(
\bigraph{bwd1v1p1v1x0p0x1v1x1v1v1p1duals1v1x2v1v1x2},
\bigraph{bwd1v1p1v1x0p1x0v1x0p0x1v1x1v1duals1v1x2v1x2v1}
\right),
$$
we note that this graph pair has index 5, which is not a composite index.
However, by looking at the dual graph, we see that there is a normalizer, which would give rise to an intermediate subfactor \cite[Proposition 1.7]{MR860811}, a contradiction.
(A planar algebraic proof of this easy fact is available at \cite[Lemma 4.7]{1308.5723}.)
\end{proof}

\subsection{The remaining \texorpdfstring{$r=1$}{r=1} weed}
\label{sec:remaining-r=1}

In the remainder of this section, we eliminate the final weed with branch factor $r=1$, \ref{case:10}(9) from Theorem \ref{thm:Enumerate}.

\begin{thm}
\label{thm:r=1}
No subfactor planar algebra has principal graph a translated extension of
$$
\cA_{(0)} = 
\left(
\bigraph{bwd1v1v1v1p1v0x1p0x1v1x0p0x1v0x1p1x0p1x0p0x1v1x0x0x0p0x0x1x0v0x1p1x0p1x0p0x1v1x0x0x0p0x0x1x0vduals1v1v1x2v1x2v2x1v1x2}
\,,\,
\bigraph{bwd1v1v1v1p1v1x0p0x1v1x0p1x0p0x1p0x1v1x0x0x0p0x1x0x0p0x0x1x0p0x0x0x1v1x0x0x0p1x0x0x0p0x1x0x0p0x0x1x0p0x0x1x0p0x0x0x1v0x1x0x0x0x0p0x0x1x0x0x0p0x0x0x0x1x0p0x0x0x0x0x1v1x0x0x0p1x0x0x0p0x1x0x0p0x1x0x0p0x0x1x0p0x0x1x0p0x0x0x1p0x0x0x1vduals1v1v1x2v3x2x1x4v6x2x4x3x5x1v4x6x7x1x8x2x3x5}
\right).
$$
(Note that $\cA_{(0)}$ is isomorphic to the graph in part \ref{case:10}(9) of Theorem \ref{thm:Enumerate}.)
\end{thm}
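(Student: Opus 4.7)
The plan is to eliminate this weed by combining four ingredients advertised in Section \ref{sec:remaining-r=1}: a connection-theoretic computation pinning down the loop parameter, two applications of the enumerator (one ordinary, one in tail mode) to restrict the combinatorial shape of any extension, a number-theoretic argument in the spirit of Calegari--Guo \cite{1502.00035} to dispose of all finite extensions, and a direct Frobenius--Perron dimension estimate to dispose of the infinite one.

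First I would locate a doubly one-by-one entry in any hypothetical bi-unitary connection on $\cO(\cA_{(0)})$, extended to an arbitrary translation by $2t$. Inspection of the Ocneanu 4-partite graph of $\cA_{(0)}$ shows that some pair of diagonally opposite corners is joined by a unique length-4 path through each intermediate vertex, forcing a $1\times 1$ block of the connection matrix at that corner. The renormalization axiom in Definition \ref{defn:Connection}, together with the unique Frobenius--Perron weights on $\cO(\cA_{(2t)})$, then forces $q$ (where $\delta = q+q^{-1}$) to be an explicit algebraic function of $t$.

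Next I would run the enumerator of Section \ref{sec:orderly} a few depths past each translate $\cA_{(2t)}$. The output should show that every extension either terminates with finite type-$A$ tails after a small number of depths, or begins with a fixed building block $\cB$ whose shape coincides with the periodic unit already visible in $\cA_{(0)}$ (hence the suggestive `$A_3*A_4$-like' appearance noted in the introduction). I would then feed $\cB$ itself into the tail enumerator of Section \ref{sec:Implementations}: treating $\cB$ as the known middle block and asking for all blocks $\cC$ such that $\cdots \cB \cC \cdots$ satisfies the PGP constraints up to index $5\frac{1}{4}$. Because the tail enumerator is blind to the unseen `head' of the graph, its output applies simultaneously to every translate and every number of previous $\cB$'s, and I expect to see only two possibilities for $\cC$: a finite $A$-tail, or another copy of $\cB$. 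Induction then pins down every extension to be of the form $\cA_{(2t)}\cB^{m}\cdot A_{\text{finite}}$ for some $m\geq 0$, possibly with $m=\infty$.

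To eliminate all the finite extensions at once, I would follow the variation of \cite{1502.00035} flagged in the introduction. Using Asaeda--Yasuda \cite{MR2786219} and the cyclotomicity theorem of \cite{MR1266785,MR2183279}, the index of any finite-depth subfactor must be a totally real cyclotomic integer. The index of $\cA_{(2t)}\cB^m\cdot A_{\text{finite}}$ is determined by a one-variable rational recursion induced by concatenation with $\cB$ (with boundary terms coming from the translation $t$ and the length of the $A$-tail), so the explicit value of $q$ from the connection step lets me exhibit a non-real Galois conjugate for all $(t,m)$ and all tail lengths simultaneously, a contradiction. The hardest technical step will be setting up the right substitution so that Calegari--Guo's argument, originally formulated for stable (non-periodic) weeds, applies to a weed which is only periodic. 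Finally, for the infinite extension $\cA_{(2t)}\cB^\infty$, I would use the explicit value of $q$ to compute the Frobenius--Perron dimensions of the 1-morphisms along the periodic tail; because $r=1$ forces a specific eigenvalue behaviour of the transfer matrix for $\cB$, the dimensions satisfy a linear recursion whose dominant solution, for the allowed range of $\delta$, decays strictly below $1$ at some finite depth. Since dimensions of nonzero bimodules in a unitary fusion category are $\geq 1$, this is a contradiction, completing the proof.
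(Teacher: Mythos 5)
Your proposal matches the paper's proof structure exactly: a doubly one-by-one connection entry (Lemma \ref{lem:doubly-one-by-one}), enumeration plus the tail enumerator to pin down the periodic shape (Lemmas \ref{lem:A-then-B}, \ref{lem:Bs-forever}), an adaptation of Calegari--Guo to dispose of all finite extensions (Lemma \ref{lem:no-finite-extensions}, Theorem \ref{thm:number-theory}), and a Frobenius--Perron dimension estimate for $\cA_{(2t)}\cB^\infty$ (Lemma \ref{lem:NoInfiniteDepth}).

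One point is worth sharpening in your number-theoretic step, since it is what makes the argument work cleanly. You describe the index of $\cA_{(2t)}\cB^m\cdot A_{\text{finite}}$ as governed by a recursion in $m$ and the tail length, suggesting the Calegari--Guo machinery must be run over all $(t,m,\text{tail})$. In fact the doubly one-by-one entry pins down the index as a function of $t$ \emph{alone}: any extension of $\cA_{(2t)}$ admitting a bi-unitary connection must have $q^{4t}X_1(q)+q^{2t}X_0(q)+X_1(q^{-1})=0$, so the one-parameter family $P_t(q)$ controls \emph{every} finite extension simultaneously, regardless of shape. This is why the paper proves Lemma \ref{lem:no-finite-extensions} before and independently of Lemmas \ref{lem:A-then-B} and \ref{lem:Bs-forever}; the enumeration lemmas are only needed to identify the single surviving infinite-depth possibility. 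Relatedly, the tail enumerator also produces some cylinders, not just finite $A$-tails and repeated $\cB$-blocks; these are stable weeds whose extensions are all finite, so they too are disposed of by the same lemma. Finally, in the infinite case it is not the dominant eigenvalue mode that decays below $1$: substituting the connection constraint forces the coefficient $c_+$ of the growing mode to vanish, and it is the remaining subdominant mode that decays below $1$, contradicting positivity of bimodule dimensions.
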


We define $\cA_{(2t)}$ to be the translation of $\cA_{(0)}$ by $2t$. 

Theorem \ref{thm:r=1} follows from the following four lemmas.

\begin{lem}
\label{lem:no-finite-extensions}
There are no finite depth subfactor planar algebras with principal graph a translated extension of $\cA_{(0)}$.
\end{lem}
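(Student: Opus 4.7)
The plan is to follow the variation of the Calegari-Guo framework \cite{1502.00035} advertised in the section introduction, leveraging the two ingredients already established: the doubly one-by-one entry in the connection, which pins down the loop parameter $\delta$ as an explicit algebraic function of the supertransitivity parameter $t$, and the tail enumerator, which shows that every finite-depth translated extension of $\cA_{(0)}$ has the form $\cA_{(2t)}\cB^m\cT$, where $\cB$ is the periodic building block and $\cT$ ranges over a fixed finite list of terminating tails. Thus the proof reduces to showing, for each fixed shape $(m,\cT)$ and ranging over $t\ge 0$, that the index $\delta^2$ associated to $\cA_{(2t)}\cB^m\cT$ cannot be a totally real cyclotomic integer, since by \cite{MR1266785,MR2183279} a finite-depth subfactor index must be one.

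First I would write down, by a transfer-matrix recursion on the periodic block $\cB$ together with the fixed head $\cA_{(2t)}$ and fixed tail $\cT$, the minimal polynomial $P_{t,m,\cT}(x)$ of $\delta^2$. The connection constraint collapses what is a priori a two-parameter family $(t,m)$: once $\delta$ is expressed in terms of $t$, the compatibility of that value with the graph-norm equation for $\cA_{(2t)}\cB^m\cT$ determines $m$ as (at most) an algebraic function of $t$, so the problem is effectively parametrized by $t$ alone. At this point I would argue, as in \cite{MR2786219,1502.00035}, that if $\delta^2$ is a totally real cyclotomic integer then every Galois conjugate lies in $[0,4]$, and combine this with an estimate on the spread of the Galois orbit obtained by perturbing the transfer matrix through the block $\cB$.

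The heart of the argument, and where I expect the real work to be, is the uniform estimate forcing some Galois conjugate of $\delta^2$ outside $[0,4]$ once $t$ is sufficiently large. Calegari-Guo obtained such a bound for a family with a single periodic tail glued to a fixed head; here the head $\cA_{(2t)}$ itself varies with $t$, so I expect to need the generalization flagged in Section \ref{sec:future} of the introduction, carried out by spectrally separating the contribution of the $t$-dependent head from the fixed periodic part and applying a Chebyshev-type identity on $\cB$ to control the conjugates. This should yield an explicit constant $T_0$ so that no $t>T_0$ produces a cyclotomic index for any of the finitely many $(m,\cT)$.

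Finally, for $t\le T_0$ the problem reduces to a finite computation: for each of finitely many triples $(t,m,\cT)$ one writes down $P_{t,m,\cT}(x)$ and directly checks that its largest real root squared is not a totally real cyclotomic integer (equivalently, some Galois conjugate falls outside $[0,4]$, or the relevant Chebyshev-type factorization fails). I would carry this out in a \texttt{Mathematica} notebook bundled with the \texttt{arXiv} source, in parallel with the computation already used for Theorem \ref{thm:NoAMP}. Combined with the uniform bound, this eliminates every finite-depth translated extension of $\cA_{(0)}$ and completes the proof.
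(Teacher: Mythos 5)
Your high-level strategy (doubly one-by-one entry plus a Calegari--Guo-type number-theoretic obstruction, with a finite residual check) is the right one, and you correctly spot that the connection pins $\delta$ down as a function of $t$. But the rest of the proposal is considerably more complicated than what the lemma actually requires, and some of that complexity is a red herring.

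The decisive simplification you are missing: once the doubly one-by-one entry forces $P_t(q)=0$ (Equation \eqref{eq:DoubleOneByOne}), and once $P_t$ is seen to have a \emph{unique} root $q_t>1$ (Lemma \ref{lem:UniqueRoot}), the index of \emph{any} finite-depth extension of $\cA_{(2t)}$ is forced to be $(q_t+q_t^{-1})^2$, completely independently of what the tail looks like. You do not need to know that the extension has the form $\cA_{(2t)}\cB^m\cT$, you do not need a transfer-matrix recursion on $\cB$, and you do not need to reconcile the connection-determined $\delta$ with the graph norm of $\cA_{(2t)}\cB^m\cT$ as a compatibility equation for $m$ — for finite depth, by \cite{MR934296} the index already equals $\|\Gamma_\pm\|^2$, so both quantities are the index, and they match automatically. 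So the tail enumerator has no role in Lemma \ref{lem:no-finite-extensions}; it enters only in the \emph{infinite}-depth analysis (Lemmas \ref{lem:A-then-B} and \ref{lem:Bs-forever}). Consequently the polynomials $P_{t,m,\cT}$ you propose, the ``spectral separation of the $t$-dependent head from the periodic part,'' and the Chebyshev-type identity on $\cB$ are all unnecessary: the one-parameter family $q_t$ (and the single polynomial $P_t$) is the object to which the number theory applies, which is exactly what Theorem \ref{thm:number-theory} does.

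Two smaller but genuine inaccuracies in the sketch of the number theory itself. First, it is not true that a totally real cyclotomic integer has all Galois conjugates in $[0,4]$ — indeed $\delta^2>4$ is itself a conjugate, and something like $3+\sqrt{2}$ is a totally real cyclotomic integer with a conjugate outside $[0,4]$. The actual mechanism (as in \cite{1502.00035}) is a dichotomy controlled by the \emph{number} $M$ of conjugates of $\beta^2$ outside $[0,4]$ (here $M=1$, because every nontrivial conjugate of $q_t$ lies on the unit circle) and by the normalized trace $\cM(\beta)$; it yields either $\cM(\beta_t)<14/5$ or a bound on $[\bbQ(\beta_t^2):\bbQ]$, and each branch is closed using a sharp convergence estimate $|q_t-q_\infty|<\exp(-5-t)$ (Lemma \ref{lem:convergence}) to bound $t$. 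Second, the residual finite computation should range only over $t\le T_0$ (in fact $t\le 66$ after Lemma \ref{lem:t<=63}), not over triples $(t,m,\cT)$.

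So the verdict is: your approach, if pushed through, would likely terminate, but it conflates the finite-depth lemma with the structural analysis of the tail and dramatically overestimates the work required. The key idea that makes the paper's proof three lines long is precisely the one you should foreground: for finite depth the index is determined by $t$ alone via the unique root of $P_t$, so the entire problem reduces to the single-parameter Theorem \ref{thm:number-theory}.
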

This lemma is a striking application of number theory; while previously we've
used the cyclotomicity of the index for finite depth subfactors to rule out
arbitrary translations of a fixed graph, this lemma is the first case in which
we are able to rule out arbitrary translations and \emph{all finite
extensions}.
Its proof appears in Section \ref{sec:CG}.

We prove the next two lemmas, which both rely on graph enumeration arguments, in Section \ref{sec:periodicity}.

\begin{lem}
\label{lem:A-then-B}
Any subfactor principal graph which is an infinite depth extension of $\cA_{(2t)}$ must be an extension of $\cA_{(2t)}\cB$ where 
$$
\cB=
\left(\,
\begin{tikzpicture}[baseline=0.7cm,scale=.65]
	\filldraw (0,0) circle (1mm);
	\filldraw (0,2) circle (1mm);
	\filldraw (2,0) circle (1mm);
	\filldraw (2,1) circle (1mm);
	\filldraw (2,2) circle (1mm);
	\filldraw (2,3) circle (1mm);%
	\draw (0,0)--(2,1);
	\draw (0,0)--(2,2);
	\draw (0,2)--(2,0);
	\draw (0,2)--(2,3);
	\filldraw (4,0) circle (1mm);
	\filldraw (4,2) circle (1mm);
	\draw (2,0)--(4,0);
	\draw (2,2)--(4,2);
	\filldraw (6,0) circle (1mm);
	\filldraw (6,1) circle (1mm);
	\filldraw (6,2) circle (1mm);
	\filldraw (6,3) circle (1mm);%
	\draw (4,0)--(6,1);
	\draw (4,0)--(6,2);
	\draw (4,2)--(6,0);
	\draw (4,2)--(6,3);
	\filldraw (8,0) circle (1mm);
	\filldraw (8,2) circle (1mm);
	\draw (6,0)--(8,0);
	\draw (6,2)--(8,2);
	\draw[thick,red] (0,0)--(0,0.2);
	\draw[thick,red] (0,2)--(0,2.2);
	\draw[thick,red] (4,0) .. controls ++(135:.5cm) and ++(225:.5cm) .. (4,2);
	\draw[thick,red] (8,0)--(8,0.2);
	\draw[thick,red] (8,2)--(8,2.2);
\end{tikzpicture}
\,,\,
\begin{tikzpicture}[baseline=0.7cm, scale=.65]
	\filldraw (0,-0.5) circle (1mm);
	\filldraw (0,0) circle (1mm);
	\filldraw (0,0.5) circle (1mm);
	\filldraw (0,1) circle (1mm);
	\filldraw (0,1.5) circle (1mm);
	\filldraw (0,2) circle (1mm);
	\filldraw (0,2.5) circle (1mm);
	\filldraw (0,3) circle (1mm);
	\draw (0,0)--(2,0);
	\draw (0,1)--(2,1);
	\draw (0,2)--(2,2);
	\draw (0,3)--(2,3);
	\filldraw (2,0) circle (1mm);
	\filldraw (2,1) circle (1mm);
	\filldraw (2,2) circle (1mm);
	\filldraw (2,3) circle (1mm);
	\draw (2,0)--(4,-0.5);
	\draw (2,0)--(4,0);
	\draw (2,1)--(4,0.5);
	\draw (2,1)--(4,1);
	\draw (2,2)--(4,1.5);
	\draw (2,2)--(4,2);
	\draw (2,3)--(4,2.5);
	\draw (2,3)--(4,3);
	\filldraw (4,-0.5) circle (1mm);
	\filldraw (4,0) circle (1mm);
	\filldraw (4,0.5) circle (1mm);
	\filldraw (4,1) circle (1mm);
	\filldraw (4,1.5) circle (1mm);
	\filldraw (4,2) circle (1mm);
	\filldraw (4,2.5) circle (1mm);
	\filldraw (4,3) circle (1mm);
	\draw (4,0)--(6,0);
	\draw (4,1)--(6,1);
	\draw (4,2)--(6,2);
	\draw (4,3)--(6,3);
	\filldraw (6,0) circle (1mm);
	\filldraw (6,1) circle (1mm);
	\filldraw (6,2) circle (1mm);
	\filldraw (6,3) circle (1mm);
	\draw (6,0)--(8,-0.5);
	\draw (6,0)--(8,0);
	\draw (6,1)--(8,0.5);
	\draw (6,1)--(8,1);
	\draw (6,2)--(8,1.5);
	\draw (6,2)--(8,2);
	\draw (6,3)--(8,2.5);
	\draw (6,3)--(8,3);
	\filldraw (8,-0.5) circle (1mm);
	\filldraw (8,0) circle (1mm);
	\filldraw (8,0.5) circle (1mm);
	\filldraw (8,1) circle (1mm);
	\filldraw (8,1.5) circle (1mm);
	\filldraw (8,2) circle (1mm);
	\filldraw (8,2.5) circle (1mm);
	\filldraw (8,3) circle (1mm);
	\draw[thick,red] (0,-.5) .. controls ++(135:.5cm) and ++(225:.5cm) .. (0,1);
	\draw[thick,red] (0,0) .. controls ++(135:.5cm) and ++(225:.5cm) .. (0,2);
	\draw[thick,red] (0,.5) .. controls ++(135:.5cm) and ++(225:.5cm) .. (0,2.5);
	\draw[thick,red] (0,1.5) .. controls ++(135:.5cm) and ++(225:.5cm) .. (0,3);
	\draw[thick,red] (4,0)--(4,0.2);
	\draw[thick,red] (4,.5)--(4,.7);
	\draw[thick,red] (4,2)--(4,2.2);
	\draw[thick,red] (4,2.5)--(4,2.7);
	\draw[thick,red] (4,-.5) .. controls ++(135:.5cm) and ++(225:.5cm) .. (4,3);
	\draw[thick,red] (4,1) .. controls ++(135:.2cm) and ++(225:.2cm) .. (4,1.5);
	\draw[thick,red] (8,-.5) .. controls ++(135:.5cm) and ++(225:.5cm) .. (8,1);
	\draw[thick,red] (8,0) .. controls ++(135:.5cm) and ++(225:.5cm) .. (8,2);
	\draw[thick,red] (8,.5) .. controls ++(135:.5cm) and ++(225:.5cm) .. (8,2.5);
	\draw[thick,red] (8,1.5) .. controls ++(135:.5cm) and ++(225:.5cm) .. (8,3);
\end{tikzpicture}\;\;
\right)
$$
\end{lem}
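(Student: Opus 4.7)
The plan is to invoke the graph enumerator of Section \ref{sec:combinatorics} on $\cA_{(2t)}$, pushing it to a depth sufficient to contain $\cB$, and to verify that every output PGP that does not have $\cB$ as its initial extension past $\cA_{(2t)}$ admits no further extensions. Combined with Lemma \ref{lem:no-finite-extensions}, this forces any infinite-depth extension of $\cA_{(2t)}$ to begin with $\cB$.

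First I would reduce to finitely many values of $t$. All of the PGP obstructions driving the enumerator --- associativity, the triple point obstruction, and the duality constraint --- are local to a neighbourhood of the working depth, and the index bound $5\frac{1}{4}$ is consistent under translation since the Frobenius--Perron eigenvalue of $\cA_{(2t)}\Gamma$ is a non-decreasing function of $t$. Consequently the combinatorial outcome of the enumerator past the branch of $\cA_{(2t)}$ does not depend on $t$, and it suffices to perform the enumeration for $t=0$, together with a sanity check at $t=1$ in case the minimal supertransitivity affects the application of the triple point obstruction.

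I would execute the enumeration in a companion {\tt Mathematica} notebook, in the spirit of {\tt enumerator.nb} used for Theorem \ref{thm:Enumerate}. For each output PGP $P$ at depth equal to that of $\cA_{(0)}\cB$ which does not have $\cB$ as its extension past $\cA_{(0)}$, I would attempt one additional step of the enumerator; if no further extension is permitted, then $P$ must be the complete principal graph of some finite-depth subfactor candidate, which is eliminated by Lemma \ref{lem:no-finite-extensions}. The remaining output PGPs are then precisely the extensions of $\cA_{(0)}\cB$ promised by the lemma.

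The main obstacle I expect is that the combinatorial obstructions alone may allow many non-$\cB$-prefix PGPs to keep extending, preventing a clean check that they all correspond to finite-depth candidates. To cut these off, I would supplement the enumerator with the rigidity coming from the doubly one-by-one connection entry computed in step (1) of the Section \ref{sec:remaining-r=1} outline, which pins down $q$ as a function of $t$ and thereby fixes the branch-vertex dimensions to specific algebraic integers. Striking the right balance --- enough connection-based constraints to truncate the non-$\cB$-prefix branches, but not so many as to rule out the legitimate $\cB$-prefix extensions that must survive for the periodicity argument in step (3) of the Section \ref{sec:remaining-r=1} outline --- is the delicate part.
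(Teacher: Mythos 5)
Your proposal assembles the right ingredients --- the enumerator, the doubly one-by-one connection entry, and Lemma \ref{lem:no-finite-extensions} --- but it misses the specific way the paper combines them, and as a result leaves the argument in an uncertain state where you yourself flag that ``striking the right balance... is the delicate part.'' The paper's proof is cleaner and avoids this delicacy entirely.

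The key move is to use the doubly one-by-one connection entry once, up front, as a \emph{global} constraint rather than branch-by-branch. By Lemma \ref{lem:doubly-one-by-one}, any translated extension of $\cA_{(2t)}$ forces $(a=q^{2t},q)$ to satisfy Equation \eqref{eq:DoubleOneByOne}, and by Lemma \ref{lem:q_n-monotonic} this gives $q<q_\infty$, hence index $<(q_\infty+q_\infty^{-1})^2 < 5.1683$. With this tightened index bound, the enumerator run on $\cA_{(0)}$ to depth 17 terminates with a small, manageable output: 6 vines (finite depth, hence irrelevant to a statement about infinite-depth extensions) and 20 graphs at depth 17, all of which already have $\cA_{(0)}\cB$ as their truncation to depth 14. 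This settles the lemma directly, with no need to test whether non-$\cB$-prefix branches keep extending and no need to invoke Lemma \ref{lem:no-finite-extensions} at all. Your proposal instead keeps the coarse bound $5\frac{1}{4}$ and contemplates injecting connection constraints into the enumerator mid-stream, which is both harder to implement and harder to certify as correct. Your translation-invariance heuristic for reducing to $t=0$ is essentially the same as what's implicit in the paper (the index bound only tightens as $t$ grows, so $t=0$ is the worst case), but it deserved the explicit justification via monotonicity of $q_t$ that the paper provides.
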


\begin{lem}
\label{lem:Bs-forever}
The only possible subfactor principal graph which is an infinite depth extension
of $\cA_{(2t)}\cB$ is $\cA_{(2t)}\cB^\infty$.
\end{lem}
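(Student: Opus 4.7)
The plan is to apply the \emph{tail enumerator} described in Section \ref{sec:Implementations} with $\cB$ (together with just enough of the preceding depths to constrain associativity and duality at the gluing boundary) taken as the fixed middle block, and to enumerate all blocks $\cC$ such that $\cdots\cB\cC\cdots$ could occur as a principal graph pair with index bounded by $5\tfrac14$. Crucially, because the tail enumerator only looks at the working depths near the end of $\cB$, its output does not depend on anything about $\cA_{(2t)}$ other than the structure that $\cB$ presents at its left boundary; in particular the enumeration we carry out for $\cB$ is valid regardless of how many copies of $\cB$ have already been laid down.

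I would then run the enumerator and record its tree of outcomes. The expected outcome is that after imposing the PGP associativity constraint, the PGP triple point obstruction, the duality constraint, and the index bound, the tree has only two kinds of leaves: (i) finite completions, in which the continuation $\cC$ terminates in $A$-tails at finite depth; and (ii) a single surviving infinite branch in which the first new block past $\cB$ is again (a graph isomorphic to) $\cB$. Option (i) gives a finite depth principal graph extension of $\cA_{(2t)}\cB$, which is forbidden by the hypothesis of the lemma (and in any case ruled out globally by Lemma \ref{lem:no-finite-extensions}). Hence any infinite depth extension of $\cA_{(2t)}\cB$ must begin with $\cA_{(2t)}\cB\cB$.

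The inductive step is then automatic: since the tail enumerator's output past $\cB$ depends only on the local structure at the right edge of $\cB$, and since that local structure is identical whether we just glued on the first or the $(m{+}1)^{\text{st}}$ copy of $\cB$, the same dichotomy applies at every stage. By induction on the number of copies of $\cB$ appended, any infinite depth principal graph extending $\cA_{(2t)}\cB$ must be $\cA_{(2t)}\cB^\infty$.

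The main obstacle is computational and organisational rather than conceptual: verifying that the tail enumerator really does produce exactly the expected tree (only $\cB$ as an infinite continuation, plus finite terminations), and confirming that the cross-boundary associativity, triple point, and duality checks between consecutive copies of $\cB$ have all been discharged by the enumerator (so that the inductive reuse of the enumeration is valid). A secondary point of care is to ensure that no lower-index non-simply-laced extensions sneak in at the $\cB$--$\cB$ boundary; this is handled uniformly by Lemma \ref{lem:simply-laced} for the index range in question. Once these checks pass, the lemma follows directly from the enumerator output together with the inductive reuse described above.
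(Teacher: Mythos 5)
Your approach is essentially the paper's: run the tail enumerator on the block $\cB$, observe that any continuation must either start with another copy of $\cB$ or terminate, discard the finite terminations via Lemma~\ref{lem:no-finite-extensions}, and iterate. The inductive reuse of the enumeration you flag is exactly how the paper's corollary is structured.

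One genuine point of care you have glossed over: the dichotomy you anticipate (finite terminations versus another $\cB$) is not quite what the tail enumerator actually produces. The paper records a \emph{trichotomy}: any extension of $\cB$ must start with another copy of $\cB$, \emph{or} be one of several cylinders (stable weeds), \emph{or} terminate before depth $16$. Cylinders are not finite blocks in the enumerator's output, and they are simply laced, so your invocation of Lemma~\ref{lem:simply-laced} does not discharge them. They are nonetheless ultimately finite-depth, but only after an extra step: by the Stability Constraint~\ref{Fact:StabilityConstraint} (Popa's principal graph stability at index $>4$), a stable graph pair must terminate with finite type~$A$ tails. Only then does Lemma~\ref{lem:no-finite-extensions} kill them alongside the directly-terminating cases. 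If you ran your proposed computation you would see these cylinders appear and would need to insert this stability argument; as written your proof proposal would not anticipate that branch of the tree. With that addition, the argument matches the paper's.
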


Finally, the remaining possibilities are ruled out by the next lemma, proved
in Section \ref{sec:no-infinite-depth}.

\begin{lem}
\label{lem:NoInfiniteDepth}
There is no subfactor planar algebra with principal graph $\cA_{(2t)}\cB^\infty$.
\end{lem}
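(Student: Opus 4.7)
The plan is to analyze the Perron--Frobenius (PF) dimension vector on the infinite periodic graph $\cA_{(2t)}\cB^\infty$ via a transfer matrix associated to the repeating block $\cB$, and to use this to exhibit a simple bimodule of dimension strictly less than $1$. This is forbidden: for a simple object $X$ in a unitary C* tensor category with simple unit, $\dim(X \otimes X^*) \geq \dim(\id) = 1$ (since $\id$ is a summand of $X \otimes X^*$), so $\dim(X)^2 \geq 1$ and hence $\dim(X) \geq 1$.

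First I would write down the PF equations $\Gamma v = \delta v$ for the graph $\cA_{(2t)}\cB^\infty$, where $\delta^2 \in (5, 5\tfrac{1}{4}]$ is the hypothetical index. The duality pairings visible in $\cB_-$ force many pairs of vertices within one copy of $\cB$ to carry the same dimension, so the equations reduce to a small system per copy. Solving through one period of $\cB$ then yields an explicit linear map $T(\delta)$ sending the (reduced) dimension vector at the left cross-section of one copy of $\cB$ to the left cross-section of the next copy; its entries are rational functions of $\delta^2$. The $\mathbb{Z}/2$ symmetry of $\cB$ apparent in the diagram should block-diagonalize $T(\delta)$ into pieces whose eigenvalues admit closed-form expressions.

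Next I would show that, throughout $\delta^2 \in (5, 5\tfrac{1}{4}]$, every eigenvalue of $T(\delta)$ has modulus at most $1$, with no positive eigenvector for any eigenvalue on the unit circle. Consequently the sequence $d_k := T(\delta)^k d_0$ of cross-section vectors is bounded, and in fact some coordinate decays to $0$ as $k \to \infty$. Since the dimension of the bimodule at that coordinate of the $k$-th copy of $\cB$ equals this entry of $d_k$, for $k$ large enough one obtains a simple bimodule of dimension less than $1$, the desired contradiction. Note that this argument is insensitive to $t$: the translation $\cA_{(2t)}$ only changes the initial vector $d_0$, but not the asymptotics of $T(\delta)^k d_0$ in the directions where the eigenvalues of $T(\delta)$ have modulus strictly less than $1$.

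The main obstacle is the explicit linear-algebra work in Step 2: producing the matrix $T(\delta)$, performing the symmetry reduction, and verifying the eigenvalue bounds uniformly on $(5, 5\tfrac{1}{4}]$. The most delicate sub-case is a potential borderline eigenvalue of modulus exactly $1$ with positive eigenvector; there one must instead compute the limiting stable profile directly and check that one of its entries is below $1$. This stable profile depends only on $\cB$, not on $\cA_{(2t)}$, so a single such calculation handles all $t \geq 0$ simultaneously.
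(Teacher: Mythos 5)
The central claim in your Step~2 --- that every eigenvalue of the transfer matrix $T(\delta)$ has modulus at most $1$ on the relevant range of index --- is false, and this is where the argument breaks. In the paper's setup, the dimensions $\alpha_m$ of the paired bimodules at depth $6+2t+2m$ satisfy the recursion $\alpha_{m+1} = \bigl((q+q^{-1})^2 - 3\bigr)\alpha_m - \alpha_{m-1}$, whose characteristic equation is $\lambda^2 - \bigl((q+q^{-1})^2-3\bigr)\lambda + 1 = 0$. The product of the two roots is $1$ and their sum $(q+q^{-1})^2 - 3 = q^2 - 1 + q^{-2}$ exceeds $2$ precisely when the index exceeds $5$, so one root $\mu_+$ is strictly greater than $1$ and the other $\mu_- = 1/\mu_+$ is strictly less. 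The transfer matrix thus has a genuinely expanding direction. This is unavoidable: the hypothetical index $\delta^2$ is in general strictly larger than the square of the graph norm of $\cA_{(2t)}\cB^\infty$, and in that regime a positive harmonic dimension function on the tail generically grows.

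For the same reason you cannot dismiss the dependence on $t$ ("the translation $\cA_{(2t)}$ only changes the initial vector $d_0$, but not the asymptotics"). Precisely because $T(\delta)$ has an eigenvalue above $1$, whether $T^k d_0$ decays is governed entirely by whether $d_0$ has any component along the $\mu_+$-eigendirection, and $d_0$ depends on $t$ through $a = q^{2t}$. The missing ingredient is the constraint from the doubly one-by-one connection entry of Lemma~\ref{lem:doubly-one-by-one}, which forces $a$ to be a specific function of $q$ (Equation~\eqref{eq:r1a}); the paper substitutes this value into the explicit formulas for the coefficients in $\alpha_m = c_+\mu_+^m + c_-\mu_-^m$ and discovers that $c_+$ vanishes identically. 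Only then does $\alpha_m = c_-\mu_-^m \to 0$, producing a simple bimodule of dimension less than $1$. Without invoking the connection constraint to kill the expanding mode, your proposed argument does not go through.
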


\subsection{Relative dimensions and doubly one-by-one connection entries}
\label{sec:DoublyOneByOne}

For some weeds it is possible to determine the dimensions of all the vertices
as functions of $q$ and $t$, where $t$ is the translation. If we can do so, we
call these dimensions the \emph{relative dimensions} of the vertices
\cite[Section 4.1]{MR2902285}.

To compute the relative dimensions, we combine and solve the following three
types of equations.
\begin{enumerate}[label=(\arabic*)]
\item
Since the trivial vertex marked $\star$ always has dimension 1, and the quantum 
dimension of the vertex at depth 1 is $[2]=q+q^{-1}$, we set the dimension of 
the leftmost vertex of our weed equal to $[t+1]=\frac{q^t-q^{-t}}{q-q^{-1}}$.
\item
If two vertices are dual to each other, they must have the same dimension.
\item
For every vertex $v$ for which we know all its neighbors $w$, we have the 
equation $[2]\dim(v)=\sum_{\text{neighbors $w$}}\dim(w)$. 
\end{enumerate}

When we can determine all the relative dimensions, it is possible to use
certain branch factor inequalities to try to rule out the weed, as in Section
\ref{sec:BranchFactorInequalities} below. However, in many cases, we have too
few equations to determine all the relative dimensions. In extremely fortunate
situations, we can calculate an additional unknown dimension by using
connection techniques, namely looking for doubly one-by-one connection
entries.

By Fact \ref{fact:ExistenceOfConnection}, a necessary condition for a PGP $\Gamma$ to be the principal graph of a subfactor is that it must have a bi-unitary connection on the Ocenanu 4-partite graph $\cO(\Gamma)$.
We can form this 4-partite graph for an arbitrary PGP $\Gamma$ parallel to Definition \ref{defn:Ocneanu4Partite}, and associativity of $\Gamma$ directly corresponds to the associativity constraint for $\cO(\Gamma)$.

\begin{remark}
A PGP $\Gamma$ will not have any extension which is the principal graph of a subfactor unless there is a `partial
connection' on $\Gamma$.

A partial connection on a PGP with working depth $n$ is a pair $(W, \dim)$ as in Definition \ref{defn:Connection}, with
$W(P,Q,R,S)$ only defined when at least one of $\{P, R\}$ and at least one of $\{Q, S\}$ have depth at most $n-2$. The
unitarity and renormalization conditions must be satisfied whenever all the relevant entries are defined. (The
restriction on the defined entries ensures that the conditions we impose for a partial
connection on a PGP $\Gamma$ are implied by the conditions for a partial connection on
any extension of $\Gamma$.)
\end{remark}

In certain nice conditions, we can use the existence of a bi-unitary
connection to specify certain unknown relative dimensions.

\begin{defn}
Suppose $\Gamma$ is a PGP with a bi-unitary connection on $\cO(\Gamma)$. 
A
\emph{doubly one-by-one connection entry} is a loop of length 4 $\ell=(P,Q,R,S)$ around $\cO(\Gamma)$
which is the unique such loop passing through the pairs $(P,R)$ and $(Q,S)$.
Such an $\ell$ exists if and only if there are exactly 2 paths of length 2 on $\cO(\Gamma)$ between $P$ and $R$ and exactly 2 paths of length 2 between $Q$ and $S$.
For an example, see the proof of Lemma \ref{lem:doubly-one-by-one} 
below.

If we have a doubly one-by-one connection entry, then the one-by-one matrix
$W(P,-,R,-)$ is unitary by the Unitarity Axiom. Also, by the Renormalization
Axiom, so is the one-by-one matrix $W(Q,-,S,-)$. Hence we must have the
following identity of dimensions:
$$
\dim(P)\dim(R)=\dim(Q)\dim(S).
$$
\end{defn}

The presence of a doubly one-by-one connection entry sometimes allows us
to solve for one relative dimension in terms of three other known relative
dimensions.

We now return to our example at hand.  Looking at the section of $\cA_{(0)}$
between depths 6 and 10, which we will denote $(\cA_+,\cA_-)$, we get $\cB$
with two missing univalent vertices at depth 8:
$$
(\cA_+,\cA_-)
=
\left(\,
\begin{tikzpicture}[baseline=0.7cm,scale=.65]
	\filldraw (0,0) circle (1mm);
	\filldraw (0,2) circle (1mm);
	\filldraw (2,0) circle (1mm);
	\filldraw (2,1) circle (1mm);
	\filldraw (2,2) circle (1mm);
	\filldraw (2,3) circle (1mm);%
	\draw (0,0)--(2,1);
	\draw (0,0)--(2,2);
	\draw (0,2)--(2,0);
	\draw (0,2)--(2,3);
	\filldraw (4,0) circle (1mm);
	\filldraw (4,2) circle (1mm);
	\draw (2,0)--(4,0);
	\draw (2,2)--(4,2);
	\filldraw (6,0) circle (1mm);
	\filldraw (6,1) circle (1mm);
	\filldraw (6,2) circle (1mm);
	\filldraw (6,3) circle (1mm);%
	\draw (4,0)--(6,1);
	\draw (4,0)--(6,2);
	\draw (4,2)--(6,0);
	\draw (4,2)--(6,3);
	\filldraw (8,0) circle (1mm);
	\filldraw (8,2) circle (1mm);
	\draw (6,0)--(8,0);
	\draw (6,2)--(8,2);
	\draw[thick,red] (0,0)--(0,0.2);
	\draw[thick,red] (0,2)--(0,2.2);
	\draw[thick,red] (4,0) .. controls ++(135:.5cm) and ++(225:.5cm) .. (4,2);
	\draw[thick,red] (8,0)--(8,0.2);
	\draw[thick,red] (8,2)--(8,2.2);
\end{tikzpicture}
\,,\,
\begin{tikzpicture}[baseline=0.7cm, scale=.65]
	\filldraw (0,-0.5) circle (1mm);
	\filldraw (0,0) circle (1mm);
	\filldraw (0,0.5) circle (1mm);
	\filldraw (0,1) circle (1mm);
	\filldraw (0,1.5) circle (1mm);
	\filldraw (0,2) circle (1mm);
	\filldraw (0,2.5) circle (1mm);
	\filldraw (0,3) circle (1mm);
	\draw (0,0)--(2,0);
	\draw (0,1)--(2,1);
	\draw (0,2)--(2,2);
	\draw (0,3)--(2,3);
	\filldraw (2,0) circle (1mm);
	\filldraw (2,1) circle (1mm);
	\filldraw (2,2) circle (1mm);
	\filldraw (2,3) circle (1mm);
	\draw (2,0)--(4,-0.5);
	\draw (2,0)--(4,0);
	\draw (2,1)--(4,1);
	\draw (2,2)--(4,1.5);
	\draw (2,2)--(4,2);
	\draw (2,3)--(4,3);
	\filldraw (4,-0.5) circle (1mm);
	\filldraw (4,0) circle (1mm);
	\filldraw (4,1) circle (1mm);
	\filldraw (4,1.5) circle (1mm);
	\filldraw (4,2) circle (1mm);
	\filldraw (4,3) circle (1mm);
	\draw (4,0)--(6,0);
	\draw (4,1)--(6,1);
	\draw (4,2)--(6,2);
	\draw (4,3)--(6,3);
	\filldraw (6,0) circle (1mm);
	\filldraw (6,1) circle (1mm);
	\filldraw (6,2) circle (1mm);
	\filldraw (6,3) circle (1mm);
	\draw (6,0)--(8,-0.5);
	\draw (6,0)--(8,0);
	\draw (6,1)--(8,0.5);
	\draw (6,1)--(8,1);
	\draw (6,2)--(8,1.5);
	\draw (6,2)--(8,2);
	\draw (6,3)--(8,2.5);
	\draw (6,3)--(8,3);
	\filldraw (8,-0.5) circle (1mm);
	\filldraw (8,0) circle (1mm);
	\filldraw (8,0.5) circle (1mm);
	\filldraw (8,1) circle (1mm);
	\filldraw (8,1.5) circle (1mm);
	\filldraw (8,2) circle (1mm);
	\filldraw (8,2.5) circle (1mm);
	\filldraw (8,3) circle (1mm);
	\draw[thick,red] (0,-.5) .. controls ++(135:.5cm) and ++(225:.5cm) .. (0,1);
	\draw[thick,red] (0,0) .. controls ++(135:.5cm) and ++(225:.5cm) .. (0,2);
	\draw[thick,red] (0,.5) .. controls ++(135:.5cm) and ++(225:.5cm) .. (0,2.5);
	\draw[thick,red] (0,1.5) .. controls ++(135:.5cm) and ++(225:.5cm) .. (0,3);
	\draw[thick,red] (4,0)--(4,0.2);
	\draw[thick,red] (4,2)--(4,2.2);
	\draw[thick,red] (4,-.5) .. controls ++(135:.5cm) and ++(225:.5cm) .. (4,3);
	\draw[thick,red] (4,1) .. controls ++(135:.2cm) and ++(225:.2cm) .. (4,1.5);
	\draw[thick,red] (8,-.5) .. controls ++(135:.5cm) and ++(225:.5cm) .. (8,1);
	\draw[thick,red] (8,0) .. controls ++(135:.5cm) and ++(225:.5cm) .. (8,2);
	\draw[thick,red] (8,.5) .. controls ++(135:.5cm) and ++(225:.5cm) .. (8,2.5);
	\draw[thick,red] (8,1.5) .. controls ++(135:.5cm) and ++(225:.5cm) .. (8,3);
\end{tikzpicture}\;\;
\right)
$$
Recall that $\cA_{(2t)}$ is the $2t$-translation of $\cA_{(0)}$.

\begin{lem}
\label{lem:doubly-one-by-one}
There is a doubly one-by-one entry of the connection for any principal graph which is an extension of $\cA_{(2t)}$, and so
\begin{align}
a^2 X_1(q) + X_0(q) + a^{-2} X_1(q^{-1}) = 0
\label{eq:DoubleOneByOne}
\end{align}
where $a=q^{2t}$ and 
\begin{align*}
X_1(q) & = q^{16}-2 q^{14}-q^{12}-2 q^{10}-3 q^8-4 q^6-q^4 \\
X_0(q) & = -2 q^4+8 q^2+12+ 8 q^{-2}- 2 q^{-4}.
\end{align*}
\end{lem}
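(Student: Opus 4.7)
The plan is to first locate a doubly one-by-one entry of the connection in the Ocneanu 4-partite graph $\cO(\cA_{(2t)})$ (or rather, in its partial analogue, since $\cA_{(2t)}$ is still a weed), then translate the resulting dimension identity $\dim(P)\dim(R) = \dim(Q)\dim(S)$ into the claimed polynomial relation in $q$ and $a = q^{2t}$. The key observation is that the structure of $(\cA_+, \cA_-)$ shown just before the lemma is sparse enough, particularly at the transitions between the ``thin'' and ``thick'' segments, that we can find four vertices $P, R$ on one pair of diagonally opposite corners and $Q, S$ on the other such that each pair is connected by exactly two length-two paths in $\cO$. Any extension of $\cA_{(2t)}$ cannot alter the neighbourhoods of these vertices (they lie strictly before the frontier), so the doubly one-by-one entry persists in every extension.

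Once $(P,Q,R,S)$ is identified, the first step is to compute the relative dimensions of all vertices inside this portion of $\cA_{(2t)}$ as Laurent polynomials in $q$ with coefficients involving $a^{\pm 1}$. I would proceed depth by depth, using the three sources of linear equations reviewed in Section~\ref{sec:DoublyOneByOne}: the base case $\dim(v_1) = [t+1]$ at depth 1 (which forces the occurrence of $a = q^{2t}$ via $[t+1] = (q \cdot a - q^{-1} a^{-1})/(q^2 - 1)\cdot q$), the duality identifications visible from the dual data on $\cA_{(0)}$, and the neighbour-sum recursion $[2]\dim(v) = \sum_{w \sim v} \dim(w)$ for every vertex whose neighbourhood is fully known. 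The branch at depth 3 on $\cA_-$ and the subsequent rejoining pattern will let me propagate dimensions through all the vertices involved in the chosen $(P,Q,R,S)$, with the dimensions of vertices past the initial branch acquiring $a^{\pm 2}$ factors in the expected way.

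With the four dimensions in hand, the second step is purely algebraic: write out $\dim(P)\dim(R) - \dim(Q)\dim(S) = 0$, clear the denominators $(q - q^{-1})^k$, and collect terms according to powers of $a$. Because each of the four dimensions is a Laurent polynomial in $q$ of the form $\alpha_v(q) + \beta_v(q)\, a^2 + \gamma_v(q)\, a^{-2}$ (using only the allowed multiplicities of the dimension recursions), the product $\dim(P)\dim(R) - \dim(Q)\dim(S)$ a priori has terms of degrees $a^{\pm 4}, a^{\pm 2}, a^0$; the $a^{\pm 4}$ terms must cancel (they correspond to the ``leading'' Perron-Frobenius behaviour and are forced to agree on both sides by the structure of the graph), leaving an expression of the form $a^2 X_1(q) + X_0(q) + a^{-2} \widetilde X_1(q)$. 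Matching self-duality on the even vertices involved forces $\widetilde X_1(q) = X_1(q^{-1})$, giving exactly the stated identity.

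The main obstacle will be step one: choosing the correct four vertices so that both uniqueness conditions (two length-two paths between $P$ and $R$, two between $Q$ and $S$) hold. The cleanest way to verify this is to draw all four corners of $\cO$ locally around depth where the thick segment of $\cA_-$ narrows back down, and exhibit the two length-two paths by hand. The subsequent dimension calculation is long but mechanical; I would carry it out symbolically (say in \MMA) and verify that the coefficients simplify to the polynomials $X_1(q) = q^{16} - 2q^{14} - q^{12} - 2q^{10} - 3q^8 - 4q^6 - q^4$ and $X_0(q) = -2q^4 + 8q^2 + 12 + 8q^{-2} - 2q^{-4}$ stated in the lemma.
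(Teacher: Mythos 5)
Your proposal follows essentially the same route as the paper: locate a doubly one-by-one loop at the narrow/wide transition of $\cO(\cA_+,\cA_-)$ (the paper chooses $(V^p_{2t+6,1},V^p_{2t+5,1},V^d_{2t+6,2},V^d_{2t+7,1})$, exactly in the region you point to), compute the relative dimensions of its four vertices as functions of $q$ and $a=q^{2t}$ by the standard recursions, and read off the identity from $\dim(P)\dim(R)=\dim(Q)\dim(S)$. The one small slip is in your bookkeeping of $a$-degrees: each of the four relevant dimensions is a Laurent polynomial in $q$ carrying $a^{\pm1}$ (not $a^{\pm2}$) factors, since they sit at depths $2t+O(1)$, so the products $\dim(P)\dim(R)$ and $\dim(Q)\dim(S)$ already top out at $a^{\pm2}$ and there is no $a^{\pm4}$ cancellation to invoke; and the symmetry $\widetilde X_1(q)=X_1(q^{-1})$ is forced by the simultaneous invariance $q\mapsto q^{-1}$, $a\mapsto a^{-1}$ of all vertex dimensions rather than by self-duality of even vertices per se. Neither correction changes the shape of the argument.
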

\begin{proof}
The loop $(V^p_{2t+6,1},V^p_{2t+5,1},V^d_{2t+6,2},V^d_{2t+7,1})$ (in the notation of \cite{MR2902285}: the $p$ and $d$ stand for `principle' and `dual' to specify which graph, and the subscript $i,j$ gives the depth and height) in $\cO(\cA_+,\cA_-)$, outlined in blue below, gives a doubly one-by-one connection entry.
$$
\begin{tikzpicture}[baseline, scale = .95]
	\node at (-2.3,1.5) {$\cA_+\,\Bigg\{$};
	\node at (-2.3,-.5) {$\cA_-\,\Bigg\{$};
%
	\coordinate (aa1) at (0,2);
	\coordinate (aa2) at (1,2);
	\coordinate (aa3) at (6,2);
	\coordinate (aa4) at (7,2);
	\coordinate (aa5) at (12,2);
	\coordinate (aa6) at (13,2);
%
	\coordinate (ab1) at (4,1);
	\coordinate (ab2) at (4.5,1);
	\coordinate (ab3) at (5,1);
	\coordinate (ab4) at (5.5,1);
	\coordinate (ab5) at (10,1);
	\coordinate (ab6) at (10.5,1);
	\coordinate (ab7) at (11,1);
	\coordinate (ab8) at (11.5,1);
%
	\coordinate (bb1) at (0,0);
	\coordinate (bb2) at (.5,0);
	\coordinate (bb3) at (1,0);
	\coordinate (bb4) at (1.5,0);
	\coordinate (bb5) at (2,0);
	\coordinate (bb6) at (2.5,0);
	\coordinate (bb7) at (3,0);
	\coordinate (bb8) at (3.5,0);
	\coordinate (bb9) at (6,0);
	\coordinate (bb10) at (6.5,0);
	\coordinate (bb11) at (7,0);
	\coordinate (bb12) at (7.5,0);
	\coordinate (bb13) at (8,0);
	\coordinate (bb14) at (8.5,0);
	\coordinate (bb15) at (9,0);
	\coordinate (bb16) at (9.5,0);
	\coordinate (bb17) at (12,0);
	\coordinate (bb18) at (12.5,0);
	\coordinate (bb19) at (13,0);
	\coordinate (bb20) at (13.5,0);
	\coordinate (bb21) at (14,0);
	\coordinate (bb22) at (14.5,0);
	\coordinate (bb23) at (15,0);
	\coordinate (bb24) at (15.5,0);
%
	\coordinate (ba1) at (4,-1);
	\coordinate (ba2) at (4.5,-1);
	\coordinate (ba3) at (5,-1);
	\coordinate (ba4) at (5.5,-1);
	\coordinate (ba5) at (10,-1);
	\coordinate (ba6) at (10.5,-1);
	\coordinate (ba7) at (11,-1);
	\coordinate (ba8) at (11.5,-1);
%
	\coordinate (aa1d) at (0,-2);
	\coordinate (aa2d) at (1,-2);
	\coordinate (aa3d) at (6,-2);
	\coordinate (aa4d) at (7,-2);
	\coordinate (aa5d) at (12,-2);
	\coordinate (aa6d) at (13,-2);
%
		\draw (aa2)--(ab1);
		\draw (aa1)--(ab2);
		\draw (aa1)--(ab3);
		\draw (aa2)--(ab4);
		\draw[blue] (aa3)--(ab1);
		\draw (aa4)--(ab3);
		\draw (aa4)--(ab5);
		\draw (aa3)--(ab6);
		\draw (aa3)--(ab7);
		\draw (aa4)--(ab8);
		\draw (aa5)--(ab5);
		\draw (aa6)--(ab7);
%
		\draw (ab1)--(bb6);
		\draw (ab2)--(bb1);
		\draw (ab3)--(bb2);
		\draw (ab4)--(bb5);
		\draw[blue] (ab1)--(bb10);
		\draw (ab1)--(bb16);
		\draw (ab2)--(bb13);
		\draw (ab3)--(bb12);
		\draw (ab3)--(bb14);
		\draw (ab4)--(bb9);
		\draw (ab5)--(bb10);
		\draw (ab6)--(bb13);
		\draw (ab7)--(bb14);
		\draw (ab8)--(bb9);
		\draw (ab5)--(bb20);
		\draw (ab5)--(bb22);
		\draw (ab6)--(bb17);
		\draw (ab6)--(bb23);
		\draw (ab7)--(bb24);
		\draw (ab7)--(bb18);
		\draw (ab8)--(bb19);
		\draw (ab8)--(bb21);
%
		\draw (bb2)--(ba1);
		\draw (bb4)--(ba2);
		\draw (bb6)--(ba3);
		\draw (bb8)--(ba4);
		\draw (bb9)--(ba1);
		\draw (bb10)--(ba1);
		\draw (bb12)--(ba2);
		\draw (bb13)--(ba3);
		\draw (bb14)--(ba3);
		\draw (bb16)--(ba4);
		\draw[blue] (bb10)--(ba5);
		\draw (bb12)--(ba6);
		\draw (bb14)--(ba7);
		\draw (bb16)--(ba8);
		\draw (bb17)--(ba5);
		\draw (bb18)--(ba5);
		\draw (bb19)--(ba6);
		\draw (bb20)--(ba6);
		\draw (bb21)--(ba7);
		\draw (bb22)--(ba7);
		\draw (bb23)--(ba8);
		\draw (bb24)--(ba8);
%
		\draw (ba1)--(aa2d);
		\draw (ba2)--(aa1d);
		\draw (ba3)--(aa1d);
		\draw (ba4)--(aa2d);
		\draw (ba1)--(aa4d);
		\draw (ba3)--(aa3d);
		\draw[blue] (ba5)--(aa3d);
		\draw (ba6)--(aa4d);
		\draw (ba7)--(aa4d);
		\draw (ba8)--(aa3d);
		\draw (ba5)--(aa5d);
		\draw (ba7)--(aa6d);
%
	\node at (-1,2) {$\sb{A}{\sf{Mod}}_A$};
	\filldraw (aa1) circle (.8mm);
	\filldraw (aa2) circle (.8mm);
	\filldraw[blue] (aa3) circle (.8mm);
	\filldraw (aa4) circle (.8mm);
	\filldraw (aa5) circle (.8mm);
	\filldraw (aa6) circle (.8mm);
%
	\node at (-1,1) {$\sb{A}{\sf{Mod}}_B$};
	\filldraw[blue] (ab1) circle (.8mm); 
	\filldraw (ab2) circle (.8mm);
	\filldraw (ab3) circle (.8mm);
	\filldraw (ab4) circle (.8mm);
	\filldraw (ab5) circle (.8mm); 
	\filldraw (ab6) circle (.8mm);
	\filldraw (ab7) circle (.8mm);
	\filldraw (ab8) circle (.8mm);
%
	\node at (-1,0) {$\sb{B}{\sf{Mod}}_B$};
	\filldraw (bb1) circle (.8mm);
	\filldraw (bb2) circle (.8mm);
	\filldraw (bb3) circle (.8mm);
	\filldraw (bb4) circle (.8mm);
	\filldraw (bb5) circle (.8mm);
	\filldraw (bb6) circle (.8mm);
	\filldraw (bb7) circle (.8mm);
	\filldraw (bb8) circle (.8mm);
	\filldraw (bb9) circle (.8mm);
	\filldraw[blue] (bb10) circle (.8mm);
	\filldraw (bb12) circle (.8mm);
	\filldraw (bb13) circle (.8mm);
	\filldraw (bb14) circle (.8mm);
	\filldraw (bb16) circle (.8mm);
	\filldraw (bb17) circle (.8mm);
	\filldraw (bb18) circle (.8mm);
	\filldraw (bb19) circle (.8mm);
	\filldraw (bb20) circle (.8mm);
	\filldraw (bb21) circle (.8mm);
	\filldraw (bb22) circle (.8mm);
	\filldraw (bb23) circle (.8mm);
	\filldraw (bb24) circle (.8mm);
%
	\node at (-1,-1) {$\sb{B}{\sf{Mod}}_A$};
	\filldraw (ba1) circle (.8mm);
	\filldraw (ba2) circle (.8mm);
	\filldraw (ba3) circle (.8mm);
	\filldraw (ba4) circle (.8mm);
	\filldraw[blue] (ba5) circle (.8mm);
	\filldraw (ba6) circle (.8mm);
	\filldraw (ba7) circle (.8mm);
	\filldraw (ba8) circle (.8mm);
%
	\node at (-1,-2) {$\sb{A}{\sf{Mod}}_A$};
	\filldraw (aa1d) circle (.8mm);
	\filldraw (aa2d) circle (.8mm);
	\filldraw[blue] (aa3d) circle (.8mm);
	\filldraw (aa4d) circle (.8mm);
	\filldraw (aa5d) circle (.8mm);
	\filldraw (aa6d) circle (.8mm);
\end{tikzpicture}
$$

Happily, we can compute the dimensions of these vertices in the principal graph $\cA_{(2t)}\cC$, as functions of $q$ and $a=q^{2t}$, irrespective of tail $\cC$.
The formula in the statement is now obtained from the Renormalization Axiom.
\end{proof}

\subsection{A variation on Calegari-Guo}
\label{sec:CG}

In this section we adapt the arguments of \cite{1502.00035} to prove the
following number theoretic result, with a very helpful consequence.

\begin{thm}
\label{thm:number-theory}
Suppose $t \geq 0$, and $q > 1$ satisfies
\begin{align*}
P_t(q) & = q^{2t} X_1(q) + X_0(q) + q^{-2t} X_1(q^{-1}) = 0,
\end{align*}
where $X_0$ and $X_1$ are the Laurent polynomials defined in Lemma \ref{lem:doubly-one-by-one}.
Then the quantity
$(q+q^{-1})^2$ is not a totally real cyclotomic integer.
\end{thm}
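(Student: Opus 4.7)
Plan: My approach adapts Calegari--Guo \cite{1502.00035}. The key structural observation is that every power of $q$ appearing in $P_t$ is even: $X_1(q) = q^4 g(q^2)$ where $g(z) = z^6 - 2z^5 - z^4 - 2z^3 - 3z^2 - 4z - 1$, and $X_0(q) = -2(q^4 + q^{-4}) + 8(q^2 + q^{-2}) + 12$ depends only on $w := q^2 + q^{-2}$. Setting $S_n(w) := q^{2n} + q^{-2n}$ (so $S_0 = 2$, $S_1 = w$, $S_n = wS_{n-1} - S_{n-2}$), the equation $P_t(q) = 0$ is equivalent to $F_t(w) = 0$, where
\[
F_t(w) = S_{t+8} - 2 S_{t+7} - S_{t+6} - 2 S_{t+5} - 3 S_{t+4} - 4 S_{t+3} - S_{t+2} - 2 S_2 + 8 S_1 + 6 S_0.
\]
Since $(q+q^{-1})^2 = w + 2$, the theorem becomes: no real root $w > 2$ of $F_t$ (for any $t \geq 0$) has $w + 2$ a totally real cyclotomic integer.

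Supposing for contradiction such a $w$ exists, every Galois conjugate $w_\sigma$ is a real root of $F_t$. I partition them into \emph{interior} conjugates ($|w_\sigma| \leq 2$, so $q_\sigma$ lies on the unit circle) and \emph{exterior} conjugates ($w_\sigma > 2$, so $q_\sigma$ is real with $|q_\sigma| > 1$). At an interior conjugate $q_\sigma = e^{i\theta}$, splitting $P_t(e^{i\theta}) = 0$ into real and imaginary parts produces the $t$-independent necessary condition
\[
|X_0(e^{i\theta})| \leq 2\,|X_1(e^{i\theta})|.
\]
Direct evaluation shows equality at $\theta = 0$ (both sides $24$) and $\theta = \pi/2$ (both sides $8$), but strict violation at $\theta = \pi/4$ (giving $16$ versus $8$), so there is a nonempty open arc $V \subset [0, \pi]$ on which the inequality fails and into which no interior Galois conjugate of $q$ may fall.

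The Calegari--Guo step is to force an interior conjugate into $V$ via a trace/norm averaging argument. Concretely, I would consider the rational integer
\[
N := \operatorname{Tr}_{\bbQ(w)/\bbQ}\bigl(4|X_1(q)|^2 - |X_0(q)|^2\bigr),
\]
which must be nonnegative if every interior conjugate satisfies the analytic necessary condition. Using the relation $F_t(w) = 0$ together with the palindromic structure of $X_0$ and $X_1$, I would reduce this trace to a low-degree polynomial expression in the power sums of the $w_\sigma$, compute it explicitly, and show $N < 0$. The main obstacle is controlling the contribution of exterior conjugates, where $|X_1(q_\sigma)|$ grows exponentially: any such $w_\sigma$ is a root of $F_t$, and the leading $S_{t+8}$ term forces $w_\sigma$ close to the unique positive real root of $g(z) = 0$ (with $q_\sigma^2$ close to that root) for $t$ sufficiently large, giving an effective $t$-uniform bound on the exterior contribution. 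If the raw trace does not close the gap, a weighted trace built from the Chebyshev polynomials $S_k(w)$ (chosen to amplify the interior contribution precisely on $V$) should suffice, with any residual small-$t$ cases dispatched by direct numerical verification.
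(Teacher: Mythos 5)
Your approach differs substantively from the paper's: the paper invokes Calegari--Guo's Theorem 3.3 as a black box to obtain the dichotomy $\mathcal{M}(\beta_t)<14/5$ or $[\mathbb{Q}(\beta_t^2):\mathbb{Q}]\leq 23$, then uses the explicit convergence estimate $|q_\infty - q_t|<\exp(-5-t)$ together with an algebraic norm bound (Lemma \ref{lem:t<=63}) to force $t\leq 66$, and finishes by checking the remaining $t$ numerically. You instead try to rerun the Calegari--Guo trace machinery from scratch with a bespoke test function. Your opening observations are correct and genuinely useful and do not appear in the paper's proof: passing to $F_t(w)$ with $w = q^2 + q^{-2}$, the $t$-uniform constraint $|X_0(e^{i\theta})|\leq 2|X_1(e^{i\theta})|$ at unit-circle conjugates (valid because $X_0$ is palindromic, so $X_0(e^{i\theta})$ is real and $P_t(e^{i\theta}) = 2\operatorname{Re}(e^{2it\theta}X_1(e^{i\theta}))+X_0(e^{i\theta})$), and the forbidden arc around $\theta=\pi/4$ where $16>8$.

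However, the closing trace argument as written does not work. First, $4|X_1(q)|^2 - |X_0(q)|^2$ is not an element of $\mathbb{Q}(w)$: for a real conjugate $q_\sigma>1$ one has $|X_1(q_\sigma)|^2 = X_1(q_\sigma)^2$, which is not invariant under $q\leftrightarrow q^{-1}$ and hence not a function of $w$. The algebraically meaningful replacement $X_1(q)X_1(q^{-1}) = g(q^2)g(q^{-2})\in\mathbb{Q}[w]$ agrees with $|X_1|^2$ only on the unit circle. With that replacement, your claim that $N$ ``must be nonnegative if every interior conjugate satisfies the analytic necessary condition'' is false: the unique exterior conjugate contributes $4X_1(q_t)X_1(q_t^{-1}) - X_0(q_t)^2$, and since $q_t\to q_\infty$ (the largest root of $X_1$) the first term tends to $0$, so this contribution tends to $-X_0(q_\infty)^2<0$. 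Thus proving $N<0$ yields no contradiction, and the whole closing step needs a more carefully engineered test function --- which is precisely the hard content of Calegari--Guo's paper, and which you only gesture at. Second, computing $N$ ``explicitly'' requires the minimal polynomial of $w$, not merely the relation $F_t(w)=0$; the two differ whenever $F_t$ is reducible, which you cannot rule out. Third, your exterior/interior dichotomy silently omits conjugates $w_\sigma<-2$ (purely imaginary $q_\sigma$, for which $f(q_\sigma)$ is still real), so condition (3) of Calegari--Guo with $M=1$ is not established in your framework. Finally, and most importantly, you have no substitute for the paper's bound on $t$: without Lemma \ref{lem:convergence} and Lemma \ref{lem:t<=63}, the ``residual small-$t$ cases'' are not a finite check. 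The proposal has the right structural shape and a valuable new observation (the forbidden arc), but the argument does not close.
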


\begin{remark}
\label{rem:DefOfqt}
We write  $q_t$ for the largest real root of $P_t$, and $q_\infty$ for the largest real root of $X_1$. 
It is clear that $q_t \to q_\infty$, but we will need careful control of the convergence.
\end{remark}

Before we prove this theorem, we use it to prove Lemma \ref{lem:no-finite-extensions}.

\begin{proof}[Proof of Lemma \ref{lem:no-finite-extensions}]
Suppose $(\Gamma_+,\Gamma_-)$ is a finite translated extension of $\cA_{(2t)}$
which occurs as a subfactor principal graph. By \cite{MR934296}, the index of
this subfactor must be equal to $\|\Gamma_\pm\|^2$. By the uniqueness of the
Frobenius-Perron dimension function on the vertices, together with Lemma
\ref{lem:doubly-one-by-one}, $\|\Gamma_\pm\|^2=(q+q^{-1})^2$ where $q$ is a
solution to Equation \eqref{eq:DoubleOneByOne} which is greater than 1.
Now by
\cite{MR1266785,MR2183279}, the index of a finite depth subfactor must be a
totally real cyclotomic integer. But by Theorem \ref{thm:number-theory},
$(q+q^{-1})^2$ is not a totally real cyclotomic integer, a contradiction.
\end{proof}

The proof of Theorem \ref{thm:number-theory} occupies the remainder of this
section. Some calculations are performed in the Mathematica notebook {\tt
CalegariGuoAdaptation.nb}, and the interested reader should look there for
more details.

\def\tareesidedbox#1{\setbox0=\hbox{$#1$}\dimen0=\wd0 \advance\dimen0 by3pt\rlap{\hbox{\vrule height9pt width.4pt
 depth2pt \kern-.4pt\vrule height9.4pt width\dimen0 depth-9pt\kern-.4pt \vrule height9pt width.4pt depth2pt}}
 \relax \hbox to\dimen0{\hss$#1$\hss}}
\def\ho#1{\tareesidedbox#1}

\def\tareesidedboy#1{\setbox0=\hbox{$#1$}\dimen0=\wd0 \advance\dimen0 by3pt\rlap{\hbox{\vrule height9.8pt width.4pt
 depth2pt \kern-.4pt\vrule height10pt width\dimen0 depth-9.6pt\kern-.4pt \vrule height9.8pt width.4pt depth2pt}}
 \relax \hbox to\dimen0{\hss$#1$\hss}}
\def\hr#1{\tareesidedboy#1}

\newcommand{\house}[1]{\tareesidedbox#1}

Recall Theorem 3.3 of \cite{1502.00035} which we restate here with the parameter $L$ fixed as $L = 3.16826$ (for which $B(L^2) < 0$).
\begin{thm*}
Let $\beta$ be a totally real algebraic integer such that
\begin{enumerate}[label=(\arabic*)]
\item $\beta^2$ is not Galois conjugate to $(\zeta_t + \zeta_t^{-1})^2$ for any $t \leq 52$.
\item The largest conjugate $\house{\beta}$ of $\beta$ is less than $L=3.16826$.
\item At most $M$ conjugates of $\beta^2$ lie outside the interval $[0,4]$.
\end{enumerate}
Then, $\cM(\beta) = \frac{\operatorname{Tr}_{\bbQ(\beta^2)/\bbQ}(\beta^2)}{[\bbQ(\beta^2):\bbQ]} < \frac{14}{5}$ or
$[\bbQ(\beta^2):\bbQ] < \frac{260}{11} \cdot M$.
\end{thm*}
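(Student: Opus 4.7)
The plan is to apply Theorem~3.3 of Calegari--Guo stated at the end of the excerpt to $\beta = q + q^{-1}$. First, exploit the invariance of $P_t$ under $q \mapsto q^{-1}$ to rewrite it as a polynomial $p_t(z)$ of degree $2t+16$ in $z = q + q^{-1}$. Every Galois conjugate of $\beta$ then arises as $\zeta + \zeta^{-1}$ for some conjugate $\zeta$ of $q$, so $\beta$ is a totally real algebraic integer, and the target assertion ``$(q+q^{-1})^2$ is not a totally real cyclotomic integer'' becomes precisely the statement that $\beta^2$ is not a sum of roots of unity, i.e.\ that $\beta$ escapes the Calegari--Guo dichotomy.

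Next I would verify the three hypotheses of that theorem. Writing the largest real root of $X_1$ as $q_\infty$ and setting $z_\infty = q_\infty + q_\infty^{-1}$, a direct numerical computation gives $z_\infty \approx 2.28 < L = 3.16826$. Since $P_t(q) = q^{2t} X_1(q)\bigl(1 + O(q^{-4t})\bigr)$ for $q$ bounded away from the unit circle, a Rouch\'e-type argument shows that for all sufficiently large $t$ every real root of $p_t$ is either close to $z_\infty$ (or one of the few other real roots of the limiting sextic $y^6 - 2y^5 - y^4 - 2y^3 - 3y^2 - 4y - 1$ in $y = q^2$) or lies in $[-2, 2]$; the finitely many small $t$ can be handled by direct computation. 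In particular $\house{\beta} < L$, and at most a constant number $M$ of conjugates of $\beta^2$ lie outside $[0, 4]$, with $M$ independent of $t$. Hypothesis~(i), that $\beta^2$ is not Galois-conjugate to $(\zeta_n + \zeta_n^{-1})^2$ for any $n \leq 52$, follows because such a coincidence would force some conjugate of $q$ to be a root of unity at which $P_t$ vanishes; substituting $q = \zeta_n$ into the explicit expression for $P_t$ gives a finite list of values of $n$ and $t$ to rule out by direct computation.

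Theorem~3.3 then leaves two alternatives. The first, $\cM(\beta) < 14/5$ for the trace-mean of $\beta^2$, demands that the average of $\beta^2$ over its conjugates be very small; but most conjugates of $\beta$ cluster near the real roots of $X_1$ and at most $M$ outliers can depress the mean, so computing $\sum \beta_i^2$ from the second elementary symmetric function of $p_t$ and dividing by the degree yields $\cM(\beta) \geq 14/5$ for all $t$ large, with a finite residual to verify by hand. The second alternative, $[\bbQ(\beta^2):\bbQ] < 260 M/11$, bounds $t$ above by an explicit constant (since $[\bbQ(\beta^2):\bbQ]$ grows linearly in $t$ outside finitely many exceptional factorizations of $p_t$), and each remaining $t$ can be settled by factoring $p_t$ over $\bbQ$ and checking directly that no irreducible factor has all roots of the form $\zeta + \zeta^{-1}$ with $\zeta$ a root of unity. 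The principal obstacle will be the uniform-in-$t$ control over \emph{all} conjugates of $\beta$ (not merely the dominant one) that is needed to pin down both $M$ and $\house{\beta}$; here the specific structure of $X_0$ and $X_1$, together with the palindromic symmetry that restricts non-real conjugates to come in $\zeta, \bar\zeta$ pairs on or very near the unit circle, should make the perturbation argument tractable but somewhat delicate.
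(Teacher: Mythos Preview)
You have misidentified the target. The displayed theorem is not proved in the paper at all: it is Theorem~3.3 of Calegari--Guo \cite{1502.00035}, quoted as a black-box input with $L$ specialized to $3.16826$. There is no proof in the paper to compare against.

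What you have actually sketched is an argument for Theorem~\ref{thm:number-theory} (that $(q_t+q_t^{-1})^2$ is never a totally real cyclotomic integer) \emph{using} the Calegari--Guo dichotomy. On that reading your outline contains a genuine error. You propose to dispose of the branch $\cM(\beta) < 14/5$ by arguing that ``most conjugates of $\beta$ cluster near the real roots of $X_1$'', hence $\cM(\beta) \geq 14/5$. This is backwards. The polynomial $P_t$ has degree growing linearly in $t$, but only a bounded number of real roots (Descartes, as in Lemma~\ref{lem:UniqueRoot}); for $\beta$ to be totally real the remaining roots must lie on the unit circle, and those contribute conjugates of $\beta^2$ lying in $[0,4]$. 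So $\cM(\beta)$ is an average of many numbers in $[0,4]$ together with one outlier near $5.17$, and in fact the dichotomy itself \emph{forces} $\cM(\beta) < 14/5$ once $[\bbQ(\beta^2):\bbQ] \geq 24$. Your proposed computation cannot yield $\cM(\beta) \geq 14/5$ for large $t$.

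The paper handles the two branches in the opposite way. For the branch $\cM(\beta_t) < 14/5$ it invokes a further result of Calegari--Guo (their Proposition~4.3): a totally real cyclotomic integer with $\cM < 14/5$ is either a sum of two roots of unity (hence at most $2$) or lies on an explicit finite list, and $\house{\beta_t} \approx 3.17$ matches neither. For the degree branch the paper does not try to show that $[\bbQ(\beta_t^2):\bbQ]$ grows with $t$ --- which would require irreducibility statements you have not justified --- but instead combines the product formula $\prod_\sigma |\sigma X_1(q_t)| \geq 1$ with the exponential convergence $|q_t - q_\infty| < e^{-5-t}$ to force $t \leq 66$, after which the remaining cases are checked directly. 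The paper also takes $\beta_t = (q_t+q_t^{-1})^2 - 2$ rather than your $q_t + q_t^{-1}$, though with the above corrections your choice could be made to work as well.
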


Our plan is to apply this theorem with $\beta_t = (q_t+q_t^{-1})^2 - 2$
(recall $q_t$ was defined in Remark \ref{rem:DefOfqt}). After showing that we
can take $M=1$, this will give us a dichotomy: either $\cM(\beta_t) <
\frac{14}{5}$ or $[\bbQ(\beta_t^2) : \bbQ] \leq 23$. In the first case, there
are not many possibilities for $\beta_t^2$ being a cyclotomic integer,
controlled by later results of \cite{1502.00035}. In the second case, analysis
of the rate of the convergence of $\house{\beta_t}$ as $t \to \infty$ will
give a bound on $t$, after which we have reduced the problem to a finite
number of cases.

\begin{lem}
\label{lem:UniqueRoot}
The polynomial $P_t(q)$ has a unique root $q > 1$.
\end{lem}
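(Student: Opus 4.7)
The plan is to exploit the palindromic structure of $P_t$ together with Descartes' rule of signs. First, observe that $P_t(q)$ involves only even powers of $q$ and satisfies $P_t(q^{-1}) = P_t(q)$, since $X_0(q^{-1}) = X_0(q)$. The substitution $y = q^2$ therefore converts $P_t(q)$ into a Laurent polynomial $R_t(y)$ satisfying $R_t(y^{-1}) = R_t(y)$. Because $q \mapsto q^2$ identifies $\{q > 1\}$ with $\{y > 1\}$, it suffices to prove that $R_t(y) = 0$ has a unique solution with $y > 1$.

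Next, I will show that $y = 1$ is a root of $R_t$ of multiplicity exactly $2$. Direct substitution yields $R_t(1) = 2X_1(1) + X_0(1) = 0$, and differentiating together with the vanishing of $X_0'(1)$ forces $P_t'(1) = 0$, hence $R_t'(1) = 0$. A short computation produces
\[ P_t''(1) \;=\; 8t^2 X_1(1) + (8t+2) X_1'(1) + 2X_1''(1) + X_0''(1), \]
which I will evaluate explicitly using $X_0''(1) = 0$ together with the numerical values $X_1(1) = -12$, $X_1'(1) = -96$, $X_1''(1) = -736$, yielding $P_t''(1) = -96t^2 - 768t - 1664 < 0$ for all $t \geq 0$. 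Since $R_t''(1) = \tfrac14 P_t''(1)$, the root at $y = 1$ has multiplicity exactly $2$, and moreover $R_t(y) < 0$ for $y$ slightly greater than $1$.

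The third step applies Descartes' rule of signs to the polynomial $y^{t+8} R_t(y)$ of degree $2t+16$. Writing $Y_1(y) = y^6 - 2y^5 - y^4 - 2y^3 - 3y^2 - 4y - 1$ and $Y_0(y) = -2y^4 + 8y^3 + 12y^2 + 8y - 2$, this polynomial splits as $y^{2t+10} Y_1(y) + y^{t+6} Y_0(y) + y^6 Y_1(y^{-1})$. For $t \geq 1$ the three summands have disjoint monomial supports, and reading off the signs from top to bottom produces exactly $4$ sign changes. For $t = 0$ the only overlaps occur at $y^{10}$ and $y^6$, where the combined coefficients remain negative, so the count of $4$ is preserved. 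By Descartes, the number of positive real roots, counted with multiplicity, is at most $4$.

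Finally, these ingredients combine. The double root at $y = 1$ uses $2$ of the $4$ allowed positive roots; the palindromy $R_t(y^{-1}) = R_t(y)$ pairs any other positive root $y_0 \neq 1$ with $y_0^{-1}$ on the opposite side of $1$, so at most one root lies in $(1,\infty)$. Existence of such a root follows from $R_t(y) < 0$ just to the right of $y = 1$ combined with $R_t(y) \to +\infty$ as $y \to \infty$ (driven by the leading term $y^{t+8}$), via the intermediate value theorem. Hence $R_t$ has a unique root $y_t > 1$, corresponding to the unique $q_t = \sqrt{y_t} > 1$ satisfying $P_t(q_t) = 0$. I expect the main technical obstacle to be the sign-change verification at $t = 0$, where the overlapping coefficients must be combined and checked, and the second-derivative bookkeeping; both reduce to mechanical calculations with the explicit polynomials $X_0$ and $X_1$.
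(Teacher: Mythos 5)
Your proof is correct and follows essentially the same route as the paper's: Descartes' rule gives at most four positive roots, $q=1$ is a double root (verified by the sign of $P_t''(1) = -96t^2-768t-1664$), reciprocity/palindromy pairs the remaining positive roots across $1$, and the IVT together with $P_t''(1)<0$ supplies existence. The substitution $y=q^2$ and your explicit spelling-out of the Descartes sign count and the second-derivative formula are cosmetic elaborations of the paper's terser argument.
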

\begin{proof}
By Descartes' rule of signs, $P_t(q)$ has either 4, 2, or 0 positive roots.
Easily, $q=1$ is a double root. The polynomial $P_t(q)$ is reciprocal, so there is at most one root $q > 1$. 
Since $P_t(q) \to \infty$ as $q \to \infty$ and $P_t''(1) = -32 \left(3 t^2+24
t+52\right) < 0$, there is exactly one such root.
\end{proof}

\begin{lem}
The largest roots $q_t$ of $P_t(q)$ are always smaller than $q_\infty$.
\end{lem}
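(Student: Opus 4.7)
The plan is to reduce the inequality $q_t < q_\infty$ to a sign check on $P_t(q_\infty)$, and then to an explicit numerical verification that can be done uniformly in $t$.

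First I will exploit the structure of $P_t$ established in Lemma \ref{lem:UniqueRoot}: the polynomial is reciprocal with $q=1$ a double root at which $P_t''(1)<0$, so $P_t$ dips below zero immediately to the right of $q=1$ before rising back through its unique root $q_t>1$ and tending to $+\infty$. Consequently, for $q>1$ the sign of $P_t(q)$ equals the sign of $q-q_t$. Therefore $q_t<q_\infty$ is equivalent to $P_t(q_\infty)>0$, and this is the inequality I will verify.

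Next I will use that $X_1(q_\infty)=0$ by definition of $q_\infty$, so
\begin{equation*}
P_t(q_\infty) \;=\; X_0(q_\infty) \,+\, q_\infty^{-2t}\,X_1(q_\infty^{-1}).
\end{equation*}
A quick sign analysis of the factored form $X_1(q)=q^4\bigl(q^{12}-2q^{10}-q^8-2q^6-3q^4-4q^2-1\bigr)$, combined with Descartes' rule in $y=q^2$, shows that $q_\infty$ is the unique positive root of $X_1/q^4$, so $X_1$ is negative on $(0,q_\infty)$ and in particular $X_1(q_\infty^{-1})<0$. Since $q_\infty>1$, the factor $q_\infty^{-2t}\in(0,1]$ is maximized at $t=0$, and the right-hand side is monotonically increasing in $t$ towards $X_0(q_\infty)$. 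Thus the worst case is $t=0$, and it suffices to check the single inequality
\begin{equation*}
X_0(q_\infty) \,+\, X_1(q_\infty^{-1}) \,>\, 0.
\end{equation*}

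Finally, this last inequality is a concrete one-variable statement about a specific algebraic number $q_\infty\approx1.679$. I will verify it by a numerical (or, if preferred, rigorous interval-arithmetic) evaluation: one finds $X_0(q_\infty)$ of order $21$ while $|X_1(q_\infty^{-1})|$ is well under $1$, since $q_\infty^{-1}<1$ makes each monomial in $X_1(q_\infty^{-1})$ very small. For extra rigor, one can replace $q_\infty^{12}$ by $2q_\infty^{10}+q_\infty^8+2q_\infty^6+3q_\infty^4+4q_\infty^2+1$ using $X_1(q_\infty)=0$ to reduce $X_0(q_\infty)$ to an algebraic expression in $q_\infty^2\in(2.80,2.82)$ (a range verified by direct sign check on $X_1/q^4$ at the endpoints), and similarly bound $|X_1(q_\infty^{-1})|$ crudely by its value at $q^{-1}=1$, which equals $12$; even this loose bound together with a lower bound of the form $X_0(q_\infty)\geq 20$ suffices. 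The main (minor) obstacle is simply keeping the bounds clean enough to be self-contained; I expect this to be no harder than the analogous estimates already used in \cite{1502.00035}.
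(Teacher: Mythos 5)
Your proposal is correct and follows the same high-level strategy as the paper — evaluate $P_t(q_\infty)$, establish positivity, and invoke the unique-root structure from the preceding lemma — but it repairs a genuine gap. The paper's proof reads ``$P_t(q_\infty) = X_0(q_\infty) + q_\infty^{-2t} X_1(q_\infty) > 0$'', which contains a typo: since $P_t(q) = q^{2t}X_1(q) + X_0(q) + q^{-2t}X_1(q^{-1})$, the last factor should be $X_1(q_\infty^{-1})$, which is \emph{not} zero. Moreover, the subsequent monotonicity lemma asserts that $X_1(q^{-1}) > 0$ for $1 < q < q_\infty$, but as you correctly deduce via Descartes' rule on the factored form, $X_1$ is strictly negative on all of $(0, q_\infty)$, so $X_1(q_\infty^{-1}) < 0$. (Indeed $X_1(1) = 1 - 2 - 1 - 2 - 3 - 4 - 1 = -12$.) Thus the positivity of $P_t(q_\infty)$ is genuinely in doubt without an estimate, and your identification of $t = 0$ as the worst case together with the explicit bound $X_0(q_\infty) \approx 21 \gg |X_1(q_\infty^{-1})|$ supplies exactly what the paper's two-sentence proof omits.

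One small caveat: the claim that $|X_1(q_\infty^{-1})| \le 12$ ``by its value at $q^{-1}=1$'' implicitly assumes $X_1$ is monotone on $(q_\infty^{-1}, 1)$, which is true but needs its own sign check on $X_1'$. The blunter triangle-inequality bound $|X_1(s)| \le 1 + 2 + 1 + 2 + 3 + 4 + 1 = 14$ for all $0 < s \le 1$ avoids this and still loses to $X_0(q_\infty) \ge 20$, which follows from the interval $q_\infty^2 \in (2.80, 2.82)$ you verify. With that minor adjustment the argument is fully rigorous and self-contained.
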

\begin{proof}
By definition $X_1(q_\infty) = 0$, so we just calculate $P_t(q_\infty) = X_0(q_\infty) + q_\infty^{-2t} X_1(q_\infty) > 0$. The previous lemma, and the fact that $P_t(q)$ is eventually positive, gives the result.
\end{proof}

\begin{remark}
\label{rem:q-to-beta}
Consider the function $f(q) = ((q+q^{-1})^2 - 2)^2$. This maps points $q$
on the unit circle to $[0,4]$, and points $q > 1$ to $(4,\infty)$. Complex
numbers off the unit circle or real line are sent to points off the real line.

Since $\beta_t \in \mathbb Q(q_t)$, $f$ gives a surjection from the Galois
conjugates of $q_t$ to the Galois conjugates of $\beta_t^2$,
and in particular from the Galois conjugates of $q_t$ greater
than 1 to the Galois conjugates of $\beta_t^2$
greater than 4, and the last two lemmas establish that condition (2) holds and
condition (3) holds with $M = 1$ in Calegari-Guo's theorem.

Moreover, if $q_t$ has Galois conjugates that are neither real nor on the unit 
circle, $\beta_t$ cannot be totally real.
\end{remark}

\begin{lem}
\label{lem:q_n-monotonic}
The largest roots $q_t$ converge monotonically to $q_\infty$.
\end{lem}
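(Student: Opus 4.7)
My plan is to prove convergence and monotonicity separately, both by direct manipulation of $P_t$.

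For convergence, the essential observation is that $X_1(q) < 0$ for every $q \in (1, q_\infty)$: indeed, $X_1(1) = -12$ and $q_\infty$ is the largest real root of $X_1$ above $1$. Fix any $q_* \in (1, q_\infty)$. Then $q_*^{2t} X_1(q_*) \to -\infty$ while $X_0(q_*)$ is constant and $q_*^{-2t} X_1(q_*^{-1})$ is bounded, so $P_t(q_*) \to -\infty$. Since $P_t(q) \to +\infty$ as $q \to \infty$, this forces $q_t > q_*$ for all sufficiently large $t$. Because $q_*$ was arbitrary in $(1, q_\infty)$ and the previous lemma gives $q_t < q_\infty$ always, we conclude $q_t \to q_\infty$.

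For monotonicity, I will show $P_{t+1}(q_t) < 0$, which forces $q_{t+1} > q_t$ by the uniqueness established in Lemma \ref{lem:UniqueRoot} together with $P_{t+1}(q) \to +\infty$ at infinity. Using $P_t(q_t) = 0$ to eliminate $q_t^{2t} X_1(q_t)$, a short computation gives
\begin{equation*}
P_{t+1}(q_t) = -(q_t^2-1)\bigl[X_0(q_t) + (1 + q_t^{-2})\, q_t^{-2t}\, X_1(q_t^{-1})\bigr].
\end{equation*}
Since $q_t > 1$, the task reduces to showing that the bracket is strictly positive. Substituting the identity $X_0(q_t) = -q_t^{2t}X_1(q_t) - q_t^{-2t} X_1(q_t^{-1})$ from $P_t(q_t)=0$ and simplifying, this is equivalent to the clean inequality
\begin{equation*}
q_t^{4t+2}\, |X_1(q_t)| > |X_1(q_t^{-1})|,
\end{equation*}
or equivalently (multiplying $P_t(q_t)=0$ by $q_t^{2t+2}$) to $q_t^{2t+2} X_0(q_t) > (q_t^2+1)|X_1(q_t^{-1})|$.

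The main obstacle is verifying this last inequality uniformly in $t$. For large $t$ it is automatic: from $P_t(q_t) = 0$ we get $q_t^{2t}|X_1(q_t)| = X_0(q_t) - q_t^{-2t}|X_1(q_t^{-1})| \to X_0(q_\infty) > 0$, so the left-hand side $q_t^{2t+2}X_0(q_t)$ blows up while the right stays bounded. For the small values of $t$, where the correction term in the bracket is not yet small, we verify the inequality by explicit numerical computation of the handful of initial $q_t$'s (one checks that $X_0 > 0$ uniformly on the relevant interval $[q_0, q_\infty]$, and then bounds $|X_1(q^{-1})|$ directly). This is carried out in the companion \texttt{Mathematica} notebook \texttt{CalegariGuoAdaptation.nb} and presents no conceptual difficulty.
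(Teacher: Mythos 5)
Your algebraic reduction for monotonicity is the same one the paper performs (the paper writes it as $P_t(q_{t-1})<0$ rather than $P_{t+1}(q_t)<0$, but the manipulation is identical), and your convergence argument supplies something the paper only states, so the overall shape matches. However, you have caught something real: you correctly observe that $X_1 < 0$ on $(1,q_\infty)$, and in fact $X_1(x)<0$ for all $x\in(0,q_\infty)$ --- for $0<x<1$ one has $X_1(x)=x^4\bigl(x^{12}-2x^{10}-x^8-2x^6-3x^4-4x^2-1\bigr)$, and the parenthesized expression is $< x^{12}-1<0$. Consequently $X_1(q^{-1})<0$ whenever $q>1$. The paper's proof, by contrast, asserts as its key observation that ``$X_0(q)$ and $X_1(q^{-1})$ are positive for $1<q<q_\infty$,'' and then concludes $P_t(q_{t-1})<0$ because both summands in $(1-q^2)X_0(q_{t-1}) + q^{-2t}(1-q^4)X_1(q_{t-1}^{-1})$ are (with that sign claim) negative. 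Since $X_1(q^{-1})$ is actually negative, the second summand is positive, and the paper's one-line sign argument does not go through as written.

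Your version is the honest one: after extracting the common factor $-(q_t^2-1)$ you are left with showing $X_0(q_t) + (1+q_t^{-2})q_t^{-2t}X_1(q_t^{-1})>0$, which is a genuine quantitative statement because the second term is negative. Your reduction to $q_t^{4t+2}|X_1(q_t)|>|X_1(q_t^{-1})|$ is correct, and the inequality does hold --- on the relevant interval $X_0$ exceeds $20$ while $(1+q^{-2})q^{-2t}|X_1(q^{-1})|\le 2\cdot|X_1(q^{-1})|<2$ (the bound $|X_1(q^{-1})|<1$ is exactly the one the paper itself uses later in the proof of Lemma~\ref{lem:convergence}) --- so in fact no case analysis or numerical check is needed if you simply cite that estimate. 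In short: your proof is correct and repairs a sign slip in the published argument; the only improvement you could make is to notice that the uniform bound $|X_1(q^{-1})|<1$ on $[q_0,q_\infty)$, which the paper already records, makes the bracket positive for all $t\ge 0$ with no computation at all.
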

\begin{proof}
Recall $P_t(q)$ has a unique root $q_t$ greater than 1. 
It suffices to show $P_t(q_{t-1}) < 0$. 
We first observe that $X_0(q)$ and $X_1(q^{-1})$ are positive for $1 < q < q_\infty$, and then calculate
\begin{align*}
P_t(q_{t-1}) & = q^2 q^{2t - 2} X_1(q_{t-1}) + X_0(q_{t-1}) + q^{-2t} X_1(q_{t-1}^{-1}) \\ 
			 & = - q^2 (X_0(q_{t-1}) + q^{-2t+2} X_1(q_{t-1}^{-1})) + X_0(q_{t-1}) + q^{-2t} X_1(q_{t-1}^{-1}) \\
			 & = (1-q^2) X_0(q_{t-1}) + q^{-2t} (1 - q^4) X_1(q_{t-1}^{-1}) \\
			 & < 0. \qedhere
\end{align*}
\end{proof}

Although we're staying purely in the number theoretic context for now, recall
that $q_t + q_t^{-1}$ is actually the largest $\ell^2$-eigenvalue of the graph
$\cA_{(2t)}\cB^\infty$, which easily gives both of the last two lemmas.

Condition (1) of Calegari-Guo's theorem is trivial as $|\zeta_t^k +
\zeta_t^{-k}|^2 \leq 4$, while $|\beta_t^2| > |\beta_0^2| > 9$ by
monotonicity.

We have now established that Theorem 3.3 of \cite{1502.00035} applies to $\beta_t$.

\begin{lem}
\label{lem:convergence}
We have $| q_\infty - q_t | < \frac{23}{7077} (2.802)^{-t} < \exp(-5-t)$.
\end{lem}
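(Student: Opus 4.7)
The plan is to apply the mean value theorem to $P_t$ on the interval $[q_t, q_\infty]$. Since $P_t(q_t) = 0$, there exists $\xi \in (q_t, q_\infty)$ with
$$
q_\infty - q_t = \frac{P_t(q_\infty)}{P_t'(\xi)}.
$$
The key simplification is that $X_1(q_\infty) = 0$, giving $P_t(q_\infty) = X_0(q_\infty) + q_\infty^{-2t} X_1(q_\infty^{-1})$, which is bounded above by an absolute constant independent of $t$ (a numerical computation yields $X_0(q_\infty) \approx 21.26$ and $|X_1(q_\infty^{-1})| \approx 0.37$, so the numerator stays below $21.7$ for all $t \geq 0$).

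For the denominator, differentiating gives
$$
P_t'(q) = 2tq^{2t-1}X_1(q) + q^{2t}X_1'(q) + X_0'(q) - 2tq^{-2t-1}X_1(q^{-1}) - q^{-2t-2}X_1'(q^{-1}),
$$
whose dominant term near $q = q_\infty$ is $q^{2t}X_1'(q)$, with $X_1'(q_\infty) \approx 7300 > 0$. The next step is to establish a uniform lower bound $P_t'(\xi) \geq c\,q_\infty^{2t}$ for all $\xi$ in the relevant range and all $t \geq 0$, with an explicit $c$. Since the $q_t$ are all bounded below by $q_0 > 1$ (by monotonicity, Lemma \ref{lem:q_n-monotonic}), it suffices to work on the compact interval $\xi \in [q_0, q_\infty]$; there the factors $X_0'$, $X_1(q^{-1})$, and $X_1'(q^{-1})$ are uniformly bounded, the term $q^{2t}X_1'(q)$ provides the dominant contribution, and the seemingly dangerous first term $2t\xi^{2t-1}X_1(\xi)$ is in fact small because $X_1(\xi)$ vanishes linearly as $\xi \to q_\infty$. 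Combining the two estimates and comparing the leading constant $X_0(q_\infty)/X_1'(q_\infty) \approx 0.00291$ with the target $\frac{23}{7077} \approx 0.00325$ shows there is enough slack to absorb the correction terms uniformly in $t$; the passage $q_\infty^{-2t} \leq (2.802)^{-t}$ then follows from $q_\infty^2 \approx 2.8158 > 2.802$.

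The second inequality $\frac{23}{7077}(2.802)^{-t} < \exp(-5-t)$ is an elementary numerical check: $\frac{23}{7077} < e^{-5}$ (since $e^{-5} \approx 0.00674 > 0.00325$) and $2.802 > e$, so $(2.802)^{-t} \leq e^{-t}$ for $t \geq 0$, and the two inequalities multiply. The main obstacle is not conceptual but computational: verifying that all the numerical estimates for $X_0(q_\infty)$, $X_1(q_\infty^{-1})$, $X_1'(q_\infty)$, and the various correction terms fit cleanly within the stated constants, which is naturally delegated to the Mathematica notebook \texttt{CalegariGuoAdaptation.nb} referenced earlier.
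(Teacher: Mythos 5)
Your mean-value-theorem-on-$P_t$ route differs from the paper's, which instead applies the mean value estimate (effectively a factorization at the simple root $q_\infty$) to the $t$-independent polynomial $X_1$ alone: rearranging $P_t(q_t)=0$ gives $q_t^{2t}|X_1(q_t)| \le |X_0(q_t)| + q_t^{-2t}|X_1(q_t^{-1})| < 23$, and combining with the uniform bound $|X_1(q)| > 7077\,|q - q_\infty|$ on $[q_0,q_\infty]$ plus $q_t \ge q_0$ and $q_0^2 > 2.802$ gives the result in two lines, with no $t$-dependence in the derivative estimate.

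Your version has a genuine gap, precisely at the point you flag as ``seemingly dangerous.'' The term $2t\xi^{2t-1}X_1(\xi)$ in $P_t'(\xi)$ is negative (since $X_1<0$ on $(q_t,q_\infty)$), so it works against your lower bound, and ``$X_1(\xi)$ vanishes linearly as $\xi\to q_\infty$'' is not enough to control it: the MVT gives you some $\xi\in(q_t,q_\infty)$, not one near $q_\infty$, and all you know a priori is $|X_1(\xi)| \lesssim 7077\,|q_\infty-q_t|$, which is the quantity you are trying to bound. Plugging in the target bound to check consistency would make the argument circular. The non-circular fix is to observe that $|X_1(\xi)|\le |X_1(q_t)| < 23\,q_t^{-2t}$ by the defining equation — but that observation \emph{is} the paper's key step, and once you have it the MVT scaffolding is superfluous. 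A secondary issue: the claimed lower bound $P_t'(\xi)\ge c\,q_\infty^{2t}$ with $c$ uniform in $t$ cannot hold, since $\xi < q_\infty$ and the dominant term is $\xi^{2t}X_1'(\xi)$; the correct base is $q_0$ (or $q_t$), giving $P_t'(\xi)\ge c\,q_0^{2t}$, and one then needs $q_0^2 > 2.802$ (which the paper states and uses), not $q_\infty^2 > 2.802$. The final numerical step $\frac{23}{7077}(2.802)^{-t} < e^{-5-t}$ is handled correctly.
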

\begin{proof}
By monotonicity, $q_t \in [q_0, q_\infty)$. We now follow the proof of \cite[Lemma 6.3]{1502.00035}. In this interval, we have the inequalities
\begin{align*}
| X_1(q) | & > 7077 | q-q_\infty | \\
| X_0(q) | & < 22 \\
| X_1(q^{-1}) | & < 1.
\end{align*}
As $q_t^{2t} X_1(q_t) + X_0(q_t) + q_t^{-2t} X_1(q_t^{-1}) = 0$, by the triangle inequality we obtain
$$
q_0^{2t} 7077 |q_t-q_{\infty}|
<
q_t^{2t} |X_1(q_t)|
\leq
|X_0(q_t)| + q_t^{-2t} |X_1(q_t^{-1})|
<
23,
$$
giving the result, since $2.802<q_0^2$.
\end{proof}

\begin{lem}
If $\cM(\beta_t) < 14/5$ then $\beta_t$ is not a totally real cyclotomic integer.
\end{lem}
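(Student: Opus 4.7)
The plan is to derive a contradiction by combining the positivity and size estimates on $\beta_t$ with a classification-type result for totally real cyclotomic integers of small mean trace, in the spirit of Sections 6 and 7 of \cite{1502.00035}. First, I would record the basic estimates: since $q_t \geq q_0 > 1$, the value $\beta_t = q_t^2 + q_t^{-2}$ exceeds $q_0^2 + q_0^{-2}$, and in particular $\beta_t^2 > 9$. Because $\beta_t$ is totally real by hypothesis, $\beta_t^2$ is a totally positive algebraic integer, so every Galois conjugate of $\beta_t^2$ is a non-negative real number, and at least one conjugate (namely $\beta_t^2$ itself) is greater than $9$.

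Next I would invoke the classification used by Calegari--Guo of totally positive cyclotomic integers with mean trace below a fixed threshold. The relevant input is that any totally real cyclotomic integer $\gamma$ with $\cM(\gamma) < 14/5$ lies in an explicit finite list of orbits of elements of bounded conductor (combined with the observation that such $\gamma$ must be a short $\bbZ$-linear combination of roots of unity of small order, by Cassels-type bounds). Applied to $\gamma = \beta_t$, this yields finitely many candidate values for $\beta_t^2$. For each candidate $c$ one solves $q^2+q^{-2}=\sqrt{c}$ for $q$ and checks whether the resulting $q$ satisfies $P_t(q)=0$ for some non-negative integer $t$.

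The verification in the last step will be the main obstacle, but it is tractable for two reasons: the list of candidates from the Calegari--Guo-style classification is explicit, and the constraint $\beta_t^2 > 9$ already eliminates most of them since the remaining totally positive cyclotomic integers with $\cM < 14/5$ typically have all conjugates bounded by a small constant. Those few candidates whose largest conjugate could exceed $9$ fall into a finite check: for each, one compares the minimal polynomial of $q$ (derived from $q^2+q^{-2}=\sqrt{c}$) with $P_t$, and uses irreducibility of $P_t$ (or just comparison of degrees and leading behavior as $t$ varies) to rule out equality.

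The upshot is that no $\beta_t$ arising from a genuine root $q_t > 1$ of $P_t$ can simultaneously satisfy $\cM(\beta_t) < 14/5$ and be a totally real cyclotomic integer, which is exactly the claim. The hard part is packaging the classification of totally positive cyclotomic integers of small mean trace in a form directly usable here; this is where we lean most heavily on the machinery developed in \cite{1502.00035}, adapting it rather than reproving it.
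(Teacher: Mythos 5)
Your proposal is in the same general spirit as the paper's proof --- both rely on the small-trace classification from \cite{1502.00035} and the lower bound $\beta_t^2 > 9$ --- but it misses the key simplifying observation and in exchange introduces a step that is harder to make rigorous. The paper's proof is short precisely because it combines the explicit statement of \cite[Proposition 4.3]{1502.00035} (that a totally real cyclotomic $\beta$ with $\cM(\beta) < 14/5$ is either a sum of two roots of unity, hence less than $2$, or has its house on an explicit finite list) with the quantitative convergence estimate of Lemma \ref{lem:convergence}: for $t \geq 2$, $|q_\infty - q_t| < 0.001$, so $\beta_t$ is pinned to a window of width $0.01$ around $3.17$, and a single glance at the finite list suffices.

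Your plan instead proposes, after obtaining a finite list of candidate values $c$ for $\beta_t^2$, to solve $q^2 + q^{-2} = \sqrt{c}$ for $q$ and then ``compare the minimal polynomial of $q$ with $P_t$, using irreducibility of $P_t$ or comparison of degrees and leading behavior as $t$ varies.'' This is the genuine gap. You are trying to compare a fixed finite set of candidates against the infinite one-parameter family $P_t$, and you offer no concrete mechanism to make that a finite check: $P_t$ is not shown to be irreducible, $q_t$'s minimal polynomial is some unspecified factor of $P_t$, and ``leading behavior as $t$ varies'' is exactly the kind of asymptotic control that Lemma \ref{lem:convergence} makes precise and which you do not invoke. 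In other words, the convergence estimate \emph{is} the missing ingredient that turns ``a finite list vs. an infinite family'' into ``a finite list vs. a tiny numerical interval,'' and without it your step (b) remains a plan rather than an argument. Your invocation of ``Cassels-type bounds'' is also vaguer than the paper's direct citation of Proposition 4.3; the precise form of the classification (house on a finite list) is what makes the final numeric check trivial. The constraint $\beta_t^2 > 9$ alone does not close these gaps: the finite list from Proposition 4.3 does contain entries above $3$, so you still need the window argument to know $\beta_t$ isn't one of them.
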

\begin{proof}
By \cite[Proposition 4.3]{1502.00035}, a totally real cyclotomic integer
$\beta$ with $\cM(\beta) < 14/5$ must either be a sum of two roots of unity
(and hence less than 2) or $\house{\beta}$ must be on an explicit finite list.
By Lemma \ref{lem:convergence}, once $t \geq 2$, $q_t$ is within $0.001$ of
$q_\infty$, and so $\beta_t$ is within $0.01$ of $3.17$, and there are no such
numbers on the list.
\end{proof}

\begin{lem}
\label{lem:t<=63}
If $[\bbQ(\beta_t^2) : \bbQ] \leq 23$ and $\beta_t$ is totally real, then $t \leq 66$.
\end{lem}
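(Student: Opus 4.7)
The plan is to apply a Liouville-type norm bound to the nonzero algebraic integer $\alpha_t := x_t - x_\infty$, where $x_t = q_t + q_t^{-1}$ and $x_\infty = q_\infty + q_\infty^{-1}$. By Lemma \ref{lem:q_n-monotonic}, $x_t < x_\infty$, so $\alpha_t \neq 0$; by Lemma \ref{lem:convergence}, $|\alpha_t| \leq 2e^{-5-t}$, which is exponentially small in $t$. Since a nonzero algebraic integer has absolute norm at least $1$, controlling the degree of $\alpha_t$ together with the size of its other Galois conjugates will force an upper bound on $t$.

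First I would bound the degree of $\alpha_t$. The tower $\bbQ \subseteq \bbQ(\beta_t^2) \subseteq \bbQ(\beta_t) = \bbQ(x_t^2) \subseteq \bbQ(x_t)$ consists of extensions each of degree at most $2$, so the hypothesis $[\bbQ(\beta_t^2):\bbQ] \leq 23$ yields $[\bbQ(x_t):\bbQ] \leq 92$. Meanwhile $X_1(q)/q^4$ is a polynomial of degree $12$ that is even in $q$, so $q_\infty^2$ satisfies a degree-$6$ equation, giving $[\bbQ(x_\infty):\bbQ] \leq 12$ and hence $[\bbQ(\alpha_t):\bbQ] \leq 92 \cdot 12$.

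Next I would bound the other Galois conjugates. Every conjugate of $\alpha_t$ has the form $\sigma(x_t) - \tau(x_\infty)$, where $\sigma(x_t)$ is a root of the polynomial $\tilde P_t(x)$ in $x = q+q^{-1}$ induced by the reciprocal $P_t(q)$, and $\tau(x_\infty)$ is a conjugate of $x_\infty$. Applying Cauchy's root bound to $P_t$ and to $X_1(q)/q^4$ yields uniform bounds $|\sigma(x_t)|, |\tau(x_\infty)| \leq C$ independent of $t$. Combining the norm inequality $\prod_\sigma |\sigma(\alpha_t)| \geq 1$ with the exponentially small factor $|\alpha_t| \leq 2e^{-5-t}$ and the uniform bounds on the remaining factors then produces an explicit upper bound on $t$.

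The main obstacle is sharpening the argument to $t \leq 66$: a naive estimate treating all non-identity conjugates as merely bounded by a single constant yields a far weaker inequality. The improvement comes from noting that, as $t \to \infty$, each Galois conjugate of $x_t$ either stays in $[-2,2]$ (from roots of $P_t$ on the unit circle in $q$) or else converges to a conjugate of $x_\infty$ (from roots of $P_t$ near roots of $X_1$ off the unit circle). Pairing each conjugate of $x_t$ in the latter class with its limiting conjugate of $x_\infty$ contributes additional exponentially small factors to the norm product; since more small factors force a stronger bound on $t$ in order to maintain $\prod_\sigma |\sigma(\alpha_t)| \geq 1$ against bounded remaining factors, a careful accounting of these paired small factors together with explicit numerical bounds on the rest should produce the final estimate $t \leq 66$.
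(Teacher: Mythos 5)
Your approach diverges from the paper's in a way that leaves a genuine gap, and the refinement you sketch to close it rests on a false premise.

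The paper's proof does not work with $\alpha_t = x_t - x_\infty$. Instead it works with the algebraic integer $X_1(q_t)$, where $X_1$ is the fixed integer Laurent polynomial with $X_1(q_\infty) = 0$. This is the key idea you're missing: rather than \emph{subtracting the limit} (which drags in the field $\bbQ(q_\infty)$ and multiplies your degree bound by up to $12$), you \emph{evaluate a fixed polynomial that vanishes at the limit}. The resulting number $X_1(q_t)$ lives entirely in $\bbQ(q_t)$, whose degree is controlled by the hypothesis $[\bbQ(\beta_t^2):\bbQ]\le 23$. Since $q_t\to q_\infty$ exponentially fast (Lemma \ref{lem:convergence}) and $X_1$ is Lipschitz on $[q_0,q_\infty]$, $|X_1(q_t)|$ is exponentially small in $t$; meanwhile, by Remark \ref{rem:q-to-beta}, every non-trivial Galois conjugate $\sigma q_t$ satisfies $|\sigma q_t|\le 1$ (this is exactly where the total-reality hypothesis and the $M=1$ condition are used), so $|X_1(\sigma q_t)|\le 14$, the $\ell^1$-norm of the coefficients. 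Combining with $|N_{\bbQ(q_t)/\bbQ}(X_1(q_t))|\ge 1$ gives $14^{22}\cdot 14\cdot 16\cdot q_\infty^{15}\cdot e^{-5-t}\ge 1$, hence $t\le 66$.

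Your proposal, by contrast, considers $\alpha_t \in \bbQ(q_t,q_\infty)$, correctly observes that the naive norm bound over a field of degree up to $92\cdot 12 = 1104$ is far too weak, and then proposes to recover strength by claiming that ``each Galois conjugate of $x_t$ either stays in $[-2,2]$ \dots\ or else converges to a conjugate of $x_\infty$,'' with the latter class contributing further exponentially small factors. This is not true under the hypotheses in force. Because $\beta_t$ is assumed totally real, Remark \ref{rem:q-to-beta} forces \emph{all} non-trivial conjugates of $q_t$ onto the unit circle (so the corresponding conjugates of $x_t$ lie in $[-2,2]$, bounded away from $x_\infty \approx 2.3$); only $q_t$ itself (and its reciprocal, which gives the same $x$-value) lies near $q_\infty$. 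The extra roots of $P_t$ that do converge to other roots of $X_1$ off the unit circle are roots of $P_t$ but not Galois conjugates of $q_t$ — the minimal polynomial of $q_t$ has degree at most $\sim 184$, while $P_t$ has degree $\sim 4t+32$ — so they do not appear in the norm. Consequently there is exactly one exponentially small factor, and your naive estimate yields a bound on $t$ on the order of a few thousand rather than $66$. As written, the proposal neither reproduces the paper's argument nor provides a correct alternative path to the stated bound.
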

\begin{proof}
As $q_t$ is not Galois conjugate to $q_\infty$, $X_1(q_t)$ is a non-zero algebraic number, and so
$$\prod_{\sigma \in \operatorname{Gal}(\bbQ(\beta^2) / \bbQ)} | \sigma X_1(q_t) | \geq 1,$$
being a positive integer.

For $\sigma$ non-trivial, $|\sigma q_t| \leq 1$ by Remark \ref{rem:q-to-beta}, so $| \sigma X_1(q_t)| \leq 14$, and so
$$| X_1(q_t) | \geq \frac{1}{14^{22}}.$$

Applying Lemma \ref{lem:convergence}, we see
\begin{align*}
|X_1(q_t)| & = | X_1(q_t) - X_1(q_\infty) | \\
& \leq 14 |q_t^{16} - q_\infty^{16}| \\ 
& =  14 |q_\infty - q_t| |q_\infty^{15} + q_\infty^{14}q_t + \cdots q_t^{15}| \\
& \leq 14 \cdot 16 \cdot q_\infty^{15} \cdot \exp(-5-t).
\end{align*}
This
gives
$$ 14 \cdot 16 \cdot q_\infty^{15} \cdot \exp(-5-t) \geq \frac{1}{14^{22}} $$
and hence the result. 
\end{proof}

Finally we check the cases $t \leq 66$ individually, verifying that the
totally real roots are not cyclotomic integers.
This completes the proof of  Theorem \ref{thm:number-theory}.
\qed

\subsection{The tail enumerator and periodicity}
\label{sec:periodicity}

\begin{proof}[Proof of Lemma \ref{lem:A-then-B}]
By Lemma \ref{lem:doubly-one-by-one}, any subfactor with principal graph which is a translated extension of $\cA_{(2t)}$ must have
$(a=q^{2t}, q)$ satisfying Equation \eqref{eq:DoubleOneByOne}. By Lemma \ref{lem:q_n-monotonic}, $q < q_\infty$, and so the index of this subfactor must be at most $(q_\infty+q_\infty^{-1})^2 < 5.1683$.

We thus run the enumerator described in Section \ref{sec:orderly} on
$\cA_{(0)}$, up to index 5.1683, stopping whenever we reach depth 17. We find
6 vines (which aren't relevant now, as we're only interested in infinite depth
extensions), and 20 graphs at depth 17. All of these 20 graphs truncate back
to depth 14 as $\cA_{(0)}\cB$.
\end{proof}

By running the tail enumerator described in Section \ref{sec:Implementations}
on the basic block $\cB$ for 12 depths, we observe the following.

\begin{lem}
Any extension of the basic block $\cB$ must either
\begin{itemize}
\item start with another basic block,
\item be one of several cylinders, or 
\item terminate before depth 16, and in particular be finite depth.
\end{itemize}
\end{lem}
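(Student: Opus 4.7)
The plan is to invoke the tail enumerator described in Section \ref{sec:Implementations}, seeded with the block $\cB$ itself rather than with $\left(\scalebox{0.5}{$\bigraph{bwd1duals1}$},\scalebox{0.5}{$\bigraph{bwd1duals1}$}\right)$. Recall from Lemma \ref{lem:doubly-one-by-one} that any subfactor whose principal graph is a translated extension of $\cA_{(2t)}$ has $(a,q)$ satisfying Equation \eqref{eq:DoubleOneByOne}, and from Lemma \ref{lem:q_n-monotonic} that $q < q_\infty$, giving a uniform index bound $\|\Gamma\|^2 \le (q_\infty+q_\infty^{-1})^2 < 5.1683$. Thus the enumeration proceeds under exactly the same index cutoff used in Lemma \ref{lem:A-then-B}.

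First I would set up $\cB$ as the root PGP for the enumerator, with all associativity, triple point, and duality constraints internal to $\cB$ already verified (these constraints on the interior of $\cB$ do not depend on what precedes it, because they only involve neighbourhoods of vertices at depth at most $n-2$ relative to $\cB$'s own left boundary). The tail enumerator can then treat the leftmost vertices of $\cB$ just as the usual enumerator treats the depth-$0$ vertices of an ordinary PGP: we simply do not impose any condition that ties them to an unknown prefix $\cA_{(2t)}\cB^m$. This is precisely the point of the tail enumerator --- by running the orderly generation from $\cB$ as a root, we exhaustively produce (by Theorem \ref{thm:exhaustive} and Theorem \ref{thm:uniqueness}) a single representative of every isomorphism class of extensions of $\cB$ up to any chosen depth.

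I would then run this enumeration for 12 additional depths past the right boundary of $\cB$. Every leaf of the resulting tree falls into one of three classes, which I would sort as follows: (i) the tree branch ``completes'' another full copy of $\cB$ (i.e. the extension agrees with $\cB$ on the next 4 depths), in which case it is reported as starting with another basic block; (ii) the branch becomes stable in the sense of Definition \ref{Fact:StabilityConstraint} at some depth before it could complete a new copy of $\cB$, in which case, by the stability constraint, every principal graph extending it must be finite and $\Gamma_\pm\setminus\Gamma_\pm(n)$ is a disjoint union of type-$A$ tails, yielding a cylinder; (iii) the branch is a terminal node of the enumeration, necessarily before depth $16$ (measured from the right end of $\cB$'s left boundary), in which case the graph is finite depth. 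A clean classification into these three cases is a straightforward post-processing step once the enumerator output is in hand.

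The main obstacle will be the purely technical one of making sure the tail enumerator faithfully implements the local combinatorial obstructions when the left boundary of $\cB$ is treated as ``free''. In particular, associativity checks between pairs of vertices close to the left boundary could accidentally exclude legitimate extensions or, worse, accept spurious ones; the safe choice is to verify associativity for a pair only when both sides of Equation \eqref{eq:associativity} depend on vertices fully present in the current enumerated fragment. Because the depth budget of $12$ is small, and because $\cB$ has bounded degree and highly constrained vertex dimensions (forced by the explicit formulas underlying Lemma \ref{lem:doubly-one-by-one}), the computation is expected to be routine and the output tree small; all three announced cases should be exhibited directly, and a {\tt Mathematica} notebook analogous to the one accompanying Theorem \ref{thm:Enumerate} can certify the finite calculation.
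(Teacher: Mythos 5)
Your approach is essentially the same as the paper's: the paper proves the lemma by running the tail enumerator on the basic block $\cB$ for 12 depths (with the computations recorded in the notebook {\tt r1TailEnumerator.nb}), which is exactly what you propose, including using the index cutoff $(q_\infty + q_\infty^{-1})^2 < 5.1683$ inherited from Lemma \ref{lem:doubly-one-by-one} and Lemma \ref{lem:q_n-monotonic}. The paper gives no additional detail beyond this; your elaboration on how the left boundary of $\cB$ is handled and how output branches are sorted into the three announced classes is a faithful unpacking of the same computation (and the remark in Section \ref{sec:Implementations} that the {\tt C} implementation permits arbitrarily many vertices at depth $0$ is exactly what makes this mode feasible, since $\cB$ does not have a single base vertex).
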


These computations are performed in the Mathematica notebook {\tt{r1TailEnumerator.nb}} bundled with the {\tt{arXiv}} source.

\begin{cor}
Any principal graph of the form $\cA_{(2t)}\cB\cC$ where $\cB$ is the basic block has the form
\begin{itemize}
\item $\cA_{(2t)}\cB^\infty$,
\item $\cA_{(2t)}\cB^k\cC$, for some $k$, where $\cC$ is a cylindrical tail, or
\item $\cA_{(2t)}\cB^k\cD$, for some $k$, where $\cD$ is a finite tail.
\end{itemize}
\end{cor}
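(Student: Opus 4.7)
The plan is a straightforward induction on the number of copies of $\cB$ already identified, using the preceding lemma as the induction step. Given any principal graph of the form $\cA_{(2t)}\cB\cC$, I set $k_0=1$ and consider the tail $\cC$, which by hypothesis is an extension of the trailing copy of the basic block $\cB$. Applying the lemma to this extension yields three mutually exclusive alternatives: either $\cC$ terminates before depth $16$ (so $\cC$ is a finite tail $\cD$, and the graph has the form $\cA_{(2t)}\cB^{k_0}\cD$), or $\cC$ is one of the finitely many cylinders (so the graph is $\cA_{(2t)}\cB^{k_0}\cC$ with $\cC$ cylindrical), or $\cC$ begins with another copy of $\cB$, in which case we write $\cC=\cB\cC'$ and pass to the principal graph $\cA_{(2t)}\cB^{k_0+1}\cC'$.

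In the first two cases the classification is already established with $k=k_0$, so the interesting possibility is the third. Here I iterate: replace $(k_0,\cC)$ by $(k_0+1,\cC')$ and re-apply the lemma to $\cC'$, noting that $\cC'$ is again an extension of the trailing $\cB$ block, which is the input format the lemma requires. At each step, the resulting graph has the form $\cA_{(2t)}\cB^k\cC_k$ with $k$ strictly increasing, and the process halts precisely when a finite or cylindrical alternative is reached.

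If the iteration halts at some finite stage $k$, then the principal graph has one of the two finite-$k$ forms asserted in the statement. If the iteration never halts, then for every $k \geq 1$ the principal graph contains $\cA_{(2t)}\cB^k$ as an initial segment. Since a principal graph is determined by its truncations to every depth, this forces the principal graph to equal $\cA_{(2t)}\cB^\infty$, the remaining case in the statement. This covers all possibilities and completes the proof.

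I expect no real obstacle here: the corollary is essentially a packaging of the lemma through iteration, and the only subtlety is the standard observation that a bipartite graph of bounded degree is determined by its initial segments, justifying the limiting case $\cA_{(2t)}\cB^\infty$.
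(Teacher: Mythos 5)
Your proof is correct and is precisely the iteration argument the paper leaves implicit; the corollary is stated immediately after the lemma with no written proof, and your induction with the limiting case resolved by the fact that a locally finite bipartite graph is determined by its truncations is exactly the intended reasoning.

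One tiny observation that doesn't affect correctness but is worth noting: in the non-terminating case you don't actually need a separate appeal to "determined by truncations" for finite graphs, since a finite graph would necessarily fall into the "terminate before depth 16" branch after finitely many iterations. The truncation argument is genuinely needed only to identify the surviving infinite graph with $\cA_{(2t)}\cB^\infty$, which you do correctly.
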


Combined with Lemma \ref{lem:no-finite-extensions} we have proved Lemma \ref{lem:Bs-forever}.
\qed

\subsection{Ruling out the infinite depth graph}
\label{sec:no-infinite-depth}

We now consider the family of infinite depth graphs $\cA_{(2t)}\cB^\infty$.
Suppose that we have a subfactor with principal graph $\cA_{(2t)}\cB^\infty$,
with index $(q+q^{-1})^2$ (which may well be greater than the square of the
graph norm of $\cA_{(2t)}\cB^\infty$ itself).

We introduce $a=q^{2t}$. Happily, the dimension of every bimodule may be
computed as a rational function in $a$ and $q$.

\begin{lem}
We have the following explicit formula for $a^2$:
\begin{align}
a^2 & = \frac{q^{20}-4 q^{18}-6 q^{16}-4 q^{14}+q^{12}-\sqrt{q^{20} \left(q^2-1\right)^2 \left(q^2+1\right)^6 \left(q^8-2 q^6-q^4-2 q^2+1\right)}}{q^{20} \left(q^{12}-2 q^{10}-q^8-2 q^6-3 q^4-4
q^2-1\right)}
\label{eq:r1a}
\end{align}
\end{lem}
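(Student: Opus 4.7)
The plan is to view Equation \eqref{eq:DoubleOneByOne} as a quadratic equation in $a^2$ with coefficients in $\mathbb{Q}(q)$. Multiplying through by $a^2$ yields
\[
a^4 X_1(q) + a^2 X_0(q) + X_1(q^{-1}) = 0,
\]
and the quadratic formula gives
\[
a^2 = \frac{-X_0(q) \pm \sqrt{X_0(q)^2 - 4 X_1(q) X_1(q^{-1})}}{2X_1(q)}.
\]
From here the proof is essentially a cosmetic rewriting: multiplying numerator and denominator by $q^{16}/2$ and using that $X_1(q) = q^4 (q^{12}-2q^{10}-q^8-2q^6-3q^4-4q^2-1)$, one gets the denominator $q^{20}(q^{12}-2q^{10}-q^8-2q^6-3q^4-4q^2-1)$ advertised in the lemma, and $-q^{16}X_0(q)/2$ directly expands to $q^{20}-4q^{18}-6q^{16}-4q^{14}+q^{12}$.

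The one substantive step is verifying the factorisation of the discriminant,
\[
\tfrac{q^{32}}{4}\bigl(X_0(q)^2 - 4 X_1(q) X_1(q^{-1})\bigr) \;=\; q^{20}(q^2-1)^2(q^2+1)^6(q^8-2q^6-q^4-2q^2+1).
\]
This is a purely symbolic polynomial identity in a single variable $q$: both sides are Laurent polynomials of explicitly bounded degree, so equality may be checked either by direct expansion or by a computer algebra verification. I would record this check in the companion {\tt Mathematica} notebook. This is also the only genuine obstacle in the proof; everything else is the quadratic formula.

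Finally, to justify the minus sign in $\pm$, I would argue as follows. We are interested in the branch on which $a^2 = q^{4t} \to \infty$ as $t\to\infty$, equivalently as $q \to q_\infty$. Since $X_1(q_\infty)=0$ while $X_0(q_\infty)>0$ (a direct numerical estimate: $q_\infty^2\in(2,3)$ and $X_0$ is positive on this range), the denominator $2X_1(q)$ tends to $0$. The branch chosen with $+\sqrt{\;\cdot\;}$ has numerator tending to $-X_0(q_\infty) + |X_0(q_\infty)| = 0$, hence the ratio is bounded and corresponds to $a^2 \to 0$; the branch chosen with $-\sqrt{\;\cdot\;}$ has numerator tending to $-2X_0(q_\infty) < 0$ divided by $2X_1(q) \to 0^-$ (since $X_1$ is negative just below $q_\infty$ by inspection of its leading behaviour), giving $a^2 \to +\infty$. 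Thus the minus sign is the correct choice, and the formula follows.
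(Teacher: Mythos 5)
Your approach is the same as the paper's at its core (apply the quadratic formula to Equation~\eqref{eq:DoubleOneByOne} viewed as a quadratic in $a^2$, verify the discriminant factorization symbolically, and select the correct branch), so the proof is essentially correct.

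The one place you diverge is in justifying the sign. The paper simply observes that the minus branch is the one with $a^2>1$ in the relevant range $1.65<q<q_\infty$ (which is the constraint forced by $a=q^{2t}$, $t\ge 0$, $q>1$), a direct check. You instead argue asymptotically as $q\to q_\infty^-$. This works, but note a small error: the plus branch does \emph{not} tend to $0$. Expanding the square root gives $\text{numerator}_+\approx -2X_1(q)X_1(q^{-1})/X_0(q)$, so the plus branch of $a^2$ tends to $-X_1(q_\infty^{-1})/X_0(q_\infty)$, which is a positive finite number (not zero) since $X_1(q_\infty^{-1})<0$ and $X_0(q_\infty)>0$. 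This doesn't hurt your conclusion — the dichotomy is ``bounded'' versus ``divergent,'' and the minus branch is still the divergent one — but the statement as written is incorrect. Also, strictly speaking your asymptotic argument identifies the branch only near $q_\infty$; to carry the identification across the full interval $(1.65, q_\infty)$ you would want to note that the discriminant does not vanish there (so the two branches are distinct continuous functions that cannot swap). The paper's ``which branch is $>1$'' test sidesteps this by being a pointwise condition that directly captures the physically required constraint $a^2=q^{4t}\ge 1$.
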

\begin{proof}
By Lemma \ref{lem:doubly-one-by-one}, we see that $a$ satisfies Equation
\eqref{eq:DoubleOneByOne}. Applying the quadratic formula, we get two possible
solutions for $a^2$. Only the one in Equation \eqref{eq:r1a} is greater than 1
in the relevant range for $q$, i.e., $1.65<q<q_\infty$, where $1.65$ is less
than the $q$ such that $q+q^{-1}$ is the graph norm of $\cA_{(0)}$, and
$q_\infty$ is as in Section \ref{sec:CG}.
\end{proof}

Let $\alpha_m$ denote the dimension of the two bimodules on the first graph at depth $6+2t+2m$. One easily sees that
these satisfy the recursion $\alpha_{m+1} = ((q+q^{-1})^2 - 3) \alpha_m - \alpha_{m-1}$, and thus
$$
\alpha_m 
= 
c_{+} \left(\frac{1{-}q^2 {+}q^4 {+} \sqrt{1{-}2q^2{-}q^4{-}2q^6{+}q^8}}{2q^2} \right)^m 
+ 
c_{-} \left(\frac{1{-}q^2 {+}q^4 {-} \sqrt{1{-}2q^2{-}q^4{-}2q^6{+}q^8}}{2q^2}\right)^m
$$
for some constants $c_+$ and $c_-$ (depending on $t$).
We can solve for these constants (cf. the notebook {\tt code/r=1.nb}), using
\begin{align*}
\alpha_0 & = \frac{a^2 q^{16}-a^2 q^{14}-a^2 q^{12}-a^2 q^{10}+q^6+q^4+q^2-1}{2 a q^6 \left(q^2-1\right) \left(q^2+1\right)} \\
\intertext{and}
\alpha_1 & = \frac{a^2 q^{20}-2 a^2 q^{18}-a^2 q^{16}-a^2 q^{14}-a^2 q^{10}+q^{10}+q^6+q^4+2 q^2-1}{2 a q^8 \left(q^2-1\right) \left(q^2+1\right)},
\end{align*}
obtaining
\begin{align*}
c_+ & = 
\frac{f_+(a,q)}{8 a q^{10} \left(q^4-1\right) z}
\text{ and }
c_-  = 
\frac{f_-(a,q)}{2 a q^6 \left(q^2-1\right) \left(q^2+1\right) z \left(-q^4+q^2+z-1\right)}
\end{align*}
where
\begin{align*}
z & = 
\sqrt{q^8-2 q^6-q^4-2 q^2+1}
\\
f_+(a,q) & =
z^2 \left(a^2 q^{10} \left(-q^{10}+2 q^8+q^6+q^4+1\right) -q^{10}-q^6-q^4-2 q^2+1\right)
\\&
\quad + z \left(2 a^2 q^{14} \left(q^6-q^4-q^2-1\right) +2 q^4 \left(q^6+q^4+q^2-1\right) \right)
\\&
\quad +a^2 q^{10} \left(q^{18}-4 q^{16}+4 q^{14}-3 q^{12}+2 q^{10}-2 q^8+3 q^6-2 q^4+2 q^2-1\right) 
\\&
\quad +q^{18}-2 q^{16}+2 q^{14}-3 q^{12}+2 q^{10}-2 q^8+3 q^6-4 q^4+4 q^2-1
\\
f_-(a,q) & =
z \left(a^2 q^{10} \left(-q^{10}+2 q^8+q^6+q^4+1\right) -q^{10}-q^6-q^4-2 q^2+1\right)
\\&
\quad + a^2 q^{10} \left(q^{14}-3 q^{12}+2 q^6+q^2-1\right) 
\\&
\quad +q^{14}-q^{12}-2 q^8+3 q^2-1
\end{align*}
Finally, substituting $a$ from Equation \eqref{eq:r1a} and simplifying, we obtain $c_+ = 0$, and note that for $q >
\frac{1}{2}(1+\sqrt{5})$, i.e. index greater than 5, we have
$$
\frac{1-q^2 +q^4 - \sqrt{1-2q^2-q^4-2q^6+q^8}}{2q^2}
<1,
$$
so $\alpha_m$ is eventually less than 1, for large enough $m$. 
Thus there is no subfactor with principal graph $\cA_{(2t)}\cB^\infty$.
This completes the proof of Lemma \ref{lem:NoInfiniteDepth}.
\qed

\section{Ruling out weeds using branch factor inequalities}
\label{sec:BranchFactorInequalities}

The results of \cite{MR2972458, MR3198588, MR3311757} give, for certain graph
pairs $\Gamma=(\Gamma_+,\Gamma_-)$, a rational function $p(a, q)$ such that
for a $t$-translated extension $\Gamma'=(\Gamma_+',\Gamma_-')$ of $\Gamma$ to
be the principal graph a subfactor with index $(q+q^{-1})^2$, we must have
$p_\Gamma(q^t, q) \leq 0$.  Now, the index of a subfactor with principal graph
$\Gamma'$ must be at least $\|\Gamma'_\pm\|^2\geq \|\Gamma_\pm\|^2$.  Thus, if
$p_\Gamma$ is positive for all $q > q_0$ where $\| \Gamma \| = q_0 +
q_0^{-1}$, then there are no possible subfactors with principal graph a
translated extension of $\Gamma$.  In this case, we say the result rules out
the weed $\Gamma$.

In each of these results, as long as the annular multiplicities match some
pattern, we obtain an inequality involving $q, a=q^{2t}$, and ratios of the
relative dimensions of certain vertices on the graph
$\Gamma=(\Gamma_+,\Gamma_-)$, called the \emph{relative branch factors}.
In fortunate circumstances, all the relative dimensions can be computed as
functions of $a$ and $q$, so we can easily write down the inequality
$p_\Gamma(a,q)\leq 0$. In less fortunate circumstances, there are undetermined
relative dimensions; sometimes, nevertheless, we can compute these relative
dimensions from a doubly one-by-one connection entry. See Sections
\ref{sec:11WithDoubleOneByOne} and \ref{sec:10WithDoubleOneByOne} for such
examples. This often gives these unknown dimensions as functions of $a,q$
which are no longer rational, and it is usually more work to apply
the inequality.

We now recall the three results on branch factors.
We assume that $\Gamma=(\Gamma_+,\Gamma_-)$ has an initial triple point at depth $n-1$:
$$
\begin{tikzpicture}[baseline=-.1cm]
	\draw[fill] (-2,0) circle (0.05);
	\node at (-2,-.3) {\scriptsize{$0$}};	
	\draw (-2.,0.) -- (-1.,0.);
	\draw[fill] (-1,0) circle (0.05);
	\node at (-1,-.3) {\scriptsize{$1$}};
	\node at (-.5,0) {$\cdots$};
	\draw[fill] (0,0) circle (0.05);
	\node at (0,-.3) {\scriptsize{$n-2$}};
	\draw (0.,0.) -- (1.,0.);
	\draw[fill] (1.,0.) circle (0.05);
	\node at (1,-.3) {\scriptsize{$n-1$}};
	\draw (1.,0.) -- (2.,-0.5);
	\draw (1.,0.) -- (2.,0.5);
	\draw[fill=white] (2.,-0.5) circle (0.05);
	\node at (2.3,.5) {$Q$};
	\draw[fill=white] (2.,0.5) circle (0.05);
	\node at (2.3,-.5) {$P$};
	\node at (2,-.8) {\scriptsize{$n$}};	
\end{tikzpicture}$$
We label the projections at depth $n$ of $\Gamma_+$ by $P$ and $Q$.
The new low-weight rotational eigenvector perpendicular to $\cT\cL\cJ_{n,+}$ is given by $A=rP-Q$, where $r=\Tr(Q)/\Tr(P)$ is the branch factor.
The rotational eigenvalue is denoted $\omega_A$.

\begin{remark}
If $(\Gamma_+,\Gamma_-)$ is $k$-supertransitive, then the $t$-translated graph has $n=t+k+1$.
\end{remark}

\begin{thm}[{\cite[Theorem 5.1.11]{MR2972458}}]
Suppose $(\Gamma_+,\Gamma_-)$ is a $(n-2)$-translated extension of 
$$
\left(\bigraph{bwd1v1p1v1x0p0x1vduals1v1x2v},\bigraph{bwd1v1p1v1x0p1x0vduals1v1x2v}\right).
$$
Then
\begin{equation}
\omega_A+\omega_A^{-1}=(r + r^{-1} -2)[n][n+2]-2.
\label{eq:QTEquation}
\end{equation}
\end{thm}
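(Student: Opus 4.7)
The plan is to follow Jones's quadratic tangles strategy: compute a suitable moment of the rotational eigenvector $A = rP - Q \in \cW_{n,+}$ in two different ways and compare. Since $\rho^2(A) = \omega_A A$ (where $\rho^2$ is the two-click rotation preserving the shading of $\cP_{n,+}$), the quantity $\langle \rho^2(A) + \rho^{-2}(A), A\rangle$ equals $(\omega_A + \omega_A^{-1})\langle A, A\rangle$. So the task reduces to evaluating both $\langle A, A\rangle$ and $\langle \rho^2(A), A\rangle$ purely in terms of $r$ and the quantum integers coming from the $A$-type tail of length $n-2$ below the triple point.

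First I would compute $\langle A, A\rangle$. Since $P$ and $Q$ are orthogonal minimal projections at depth $n$, and $\tr(Q) = r \tr(P)$ by the definition of the branch factor, we have $\langle A, A\rangle = r^2 \tr(P) + \tr(Q) = r(r+1)\tr(P)$. The trace $\tr(P)$ can be read off the principal graph $A_{n}$ initial segment and reduces (via the closed Jones--Wenzl projection $f^{(n-1)}$) to an explicit expression involving $[n]$.

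Second, and this is the main computation, I would evaluate $\langle \rho^2(A), A\rangle$ diagrammatically. Because the graph is $A$-type up to depth $n-1$, the vectors $P$ and $Q$ admit an explicit expansion in terms of $f^{(n-1)}$ together with the two branching caps. Applying $\rho^2$ corresponds to rotating this expansion and re-expanding it in the same basis; the rotation interacts with the branching at depth $n$ in a controlled way, producing a linear combination of $P$ and $Q$ together with lower-weight Temperley-Lieb contributions. Taking the inner product against $A$ kills the Temperley-Lieb part (since $A \in \cW_{n,+}$) and leaves a closed diagram whose evaluation factors as $[n][n+2]$ times an explicit polynomial in $r$. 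Expanding $A$ bilinearly into its $P, Q$ components and carefully collecting terms should yield $\langle \rho^2(A) + \rho^{-2}(A), A\rangle = \big((r+r^{-1}-2)[n][n+2] - 2\big)\langle A, A\rangle$, from which the identity follows.

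The main obstacle will be the careful tracking of the rotated triple point: one must expand $\rho^2(P)$ and $\rho^2(Q)$ in a basis adapted to $f^{(n-1)}$, keep track of the caps introduced by rotation, and evaluate the resulting closed Temperley-Lieb diagram to extract the $[n][n+2]$ factor. A useful sanity check is the case $r = 1$: then $A = P - Q$ and both sides give $-2$, consistent with $\omega_A = -1$, as noted in the text preceding Section \ref{sec:BranchFactorInequalities}.
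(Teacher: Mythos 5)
Your high-level framing (``compute a moment of $A$ in two ways and compare'') captures the spirit of a quadratic tangles argument, and your $r=1$ sanity check is correct, but there is a genuine gap at the heart of the proposal. The moment you choose, $\langle \rho^2(A)+\rho^{-2}(A), A\rangle$, cannot be evaluated independently of $\omega_A$. By definition $\rho^2(A) = \omega_A A$, so this inner product \emph{is} $(\omega_A + \omega_A^{-1})\|A\|^2$; there is no second, ``diagrammatic'' computation of it available from the principal graph alone. Your claim that $P$ and $Q$ ``admit an explicit expansion in terms of $f^{(n-1)}$ together with the two branching caps'' is not correct: the principal graph determines $\operatorname{tr}(P)$, $\operatorname{tr}(Q)$ and hence $r$, but not the projections themselves as elements of $\cP_{n,+}$ (they depend on the planar algebra, not just its graph), and $\rho^2$ is not a $*$-algebra homomorphism of $\cP_{n,+}$, so $\rho^2(P)$ is not a projection and its expansion into $\{P,Q\}\cup\text{TL}$ has coefficients that are exactly the unknowns you are trying to determine. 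Your proposed ``direct'' evaluation is therefore circular.

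The missing input is the second half of the hypothesis. The graph pair in the statement is not symmetric: the dual graph $\Gamma_-$ has a univalent vertex at depth $n$ (one of the two depth-$n$ vertices on the second graph of the pair has no edges to depth $n+1$), and your argument never uses this. Jones's proof of Theorem 5.1.11 works essentially in the $(n+1)$-box spaces, not the $n$-box space: one forms the second annular consequences of $A$ (elements of $\cP_{n+1,\pm}$ obtained by adding a cup to $A$ and rotating), whose mutual inner products are computable from the annular Temperley-Lieb relations as functions of $\delta$, $[n]$, $\|A\|^2$ and $\omega_A$; the univalent vertex on $\Gamma_-$ supplies an additional relation among these $(n+1)$-boxes that is determined by $r$ and the graph, and equating the two descriptions forces the stated equation for $\omega_A+\omega_A^{-1}$. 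Without passing to $\cP_{n+1}$ and without invoking the univalent vertex, you have only the tautology $\langle\rho^2(A),A\rangle = \omega_A\|A\|^2$ and no equation to solve. If you want to pursue a computation at the level of moments, the objects to look at are quantities like $\langle (A\cup), (A\cup)\rangle$, $\langle (A\cup), \rho(A\cup)\rangle$ in $\cP_{n+1,\pm}$, together with what the leaf vertex of $\Gamma_-$ tells you about how these project onto the Temperley-Lieb part.
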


\begin{thm}[{\cite[Theorem 3]{MR3198588}}]
\label{thm:SnyderTripleSingle}
Suppose $(\Gamma_+,\Gamma_-)$ has an initial triple point at depth $n-1$ where $n$ is even, and $\Gamma_-$ has a univalent vertex at depth $n$.
Then Equation \eqref{eq:QTEquation} holds.
\end{thm}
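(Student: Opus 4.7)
The plan is to exploit the interplay between the one-dimensional low-weight space at depth $n$ coming from the triple point on $\Gamma_+$, and the additional low-weight vector on $\Gamma_-$ coming from the univalent vertex. Since the initial triple point on $\Gamma_+$ at depth $n-1$ forces $\dim \cW_{n,+} = 1$, the vector $A = rP - Q$ is (up to scalar) the unique low-weight vector at depth $n$, and the one-click rotation $\rho : \cW_{n,+} \to \cW_{n,-} \to \cW_{n,+}$ (after two applications when $n$ is even) necessarily acts on $A$ by an eigenvalue $\omega_A$. The univalent vertex at depth $n$ on $\Gamma_-$ provides a minimal rank-one projection $u \in \cP_{n,-}$, which by its minimality is automatically annihilated by every cap and therefore lies in $\cW_{n,-}$. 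Hence, by one-dimensionality, $\rho(u) = c \cdot A$ for an explicit scalar $c$ determined by the trace of $u$, which in turn is the relative dimension of the univalent vertex.

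First I would establish the Jones-style formula
$$\omega_A + \omega_A^{-1} = \frac{\tr(\rho(A) \cdot A)}{\tr(A^2)} - \text{correction},$$
where the trace expansions of $A$ in terms of $P$ and $Q$ yield
$$\tr(A^2) = r^2 \Tr(P) + \Tr(Q) = r\,\Tr(Q) + \Tr(Q),$$
using orthogonality $PQ = 0$ of the two distinct minimal central projections at depth $n$. The numerator requires a `two-strand' rotational computation: one decomposes $\rho(P)$ and $\rho(Q)$ back into the basis at depth $n$ on the opposite shading using the fact that the sum $P+Q$ is the image of the single depth $n-1$ vertex under adding a string, and then uses the univalent vertex on $\Gamma_-$ to identify the only other rotational contribution. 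The ratios that appear naturally simplify to $r + r^{-1} - 2$, multiplied by the product of quantum integers associated to the chain of length $n$ from the basepoint to the triple point and of length $n+2$ extending through the depth of the univalent vertex.

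The hard part will be carrying out the two-strand Fourier-rotated trace computations cleanly, since one must carefully track the normalization conventions for $\rho$ between the two shadings, the phase ambiguity in $\rho(u) = cA$, and the quantum dimensions of the univalent vertex $U$, the projections $P,Q$, and the intervening depth $n-1$ vertex. The quantities $[n]$ and $[n+2]$ enter precisely because they are the ratios $\Tr(U)/\Tr(\text{depth $n-1$ vertex})$ and $(\Tr(P) + \Tr(Q))/\Tr(\text{depth $n-1$ vertex})$ (or analogous ratios after suitable normalization), and the factor $-2$ on the right-hand side arises from the `diagonal' rotational contributions from $P$ and $Q$ separately. Once all normalizations are tracked, the equation $\omega_A + \omega_A^{-1} = (r + r^{-1} - 2)[n][n+2] - 2$ follows by direct comparison, giving the same form as in the previous theorem but now obtained from the triple-single configuration rather than triple-triple.
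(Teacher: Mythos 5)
The paper states this result as a citation to \cite[Theorem~3]{MR3198588} and does not reproduce its proof, so I am comparing your argument to Snyder's.

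The central step of your proposal is the claim that the minimal projection $u$ corresponding to the univalent vertex of $\Gamma_-$ at depth $n$ ``by its minimality is automatically annihilated by every cap and therefore lies in $\cW_{n,-}$.'' This is false, and the argument collapses at this point. Capping a pair of adjacent strings of a projection $p$ computes (a positive multiple of) a conditional expectation of $p$, and in a subfactor planar algebra conditional expectations of nonzero projections are nonzero; concretely, closing off the two outermost strands of $u$ gives $\Tr(u)$ times a unit at a lower depth, which is manifestly not zero since traces of nonzero projections are strictly positive. So $u \notin \cW_{n,-}$. Minimality of a projection is unrelated to being low-weight: the low-weight space is the orthogonal complement of the annular consequences of lower boxes, and in the relevant range that complement of $\cT\cL\cJ_{n,-}$ inside $\cP_{n,-}$ is a one-dimensional subspace containing no projections. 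Consequently you cannot conclude $\rho(u)=cA$, and the subsequent trace computation resting on that identification is unfounded.

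The broader strategy --- reduce to the one-dimensionality of $\cW_{n,\pm}$, compute the rotational eigenvalue $\omega_A$ by evaluating closed diagrams involving $A$, $\rho(A)$, and projections, then express everything in $r$ and the quantum integers $[n],[n+2]$ --- is in the right spirit. What is actually used in Snyder's proof is the algebraic consequence of univalence: if $U$ has unique neighbour $W$ at depth $n-1$, then $U\otimes X^* \cong W$ (multiplicity one, nothing else), which constrains the fusion and hence the structure constants of the planar algebra at depth $n$ on the dual side. That constraint, fed into the chirality/rotation machinery, is what yields the same closed-form expression for $\omega_A+\omega_A^{-1}$ as the triple-triple case, without ever needing the dual low-weight vector to be a single projection. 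As written, your argument substitutes a false shortcut for that structural input, and you would need to replace the ``$u$ is low-weight'' step with a genuine analysis of how the univalent vertex affects the quadratic-tangle trace identities before this could be made into a proof.
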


\begin{cor}[Triple-single branch factor inequality]
\label{cor:10Inequality}
If $(\Gamma_+,\Gamma_-)$ has an initial triple point at depth $n-1$ where $n$ is even, and $\Gamma_-$ has a univalent vertex at depth $n$, then
$$
-4 \leq \omega_A+\omega_A^{-1}-2 = (r + r^{-1} -2)[n][n+2]-4 \leq 0.
$$
\end{cor}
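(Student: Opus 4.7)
The plan is very short: the corollary is essentially a packaging of Theorem \ref{thm:SnyderTripleSingle} together with the standard fact that a rotational eigenvalue lies on the unit circle.

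First I would appeal directly to Theorem \ref{thm:SnyderTripleSingle}, which applies verbatim under the hypotheses (initial triple point at depth $n-1$ with $n$ even, and a univalent vertex at depth $n$ on $\Gamma_-$), to obtain the identity
\begin{equation*}
\omega_A + \omega_A^{-1} = (r + r^{-1} - 2)[n][n+2] - 2.
\end{equation*}
Subtracting $2$ from both sides yields the middle equality in the statement.

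Next I would establish the outer inequalities. The rotation tangle acts as a unitary on each $\cP_{n,\pm}$, and $A$ is a rotational eigenvector, so $\omega_A$ lies on the unit circle (in fact it is a root of unity, since rotation has finite order on the low-weight space). Writing $\omega_A = e^{i\theta}$, we have $\omega_A + \omega_A^{-1} = 2\cos\theta \in [-2,2]$, and therefore
\begin{equation*}
\omega_A + \omega_A^{-1} - 2 \in [-4, 0].
\end{equation*}
Combined with the equality from the previous paragraph, this gives the full chain of inequalities stated in the corollary.

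There is really no obstacle here: the whole argument is ``plug in Theorem \ref{thm:SnyderTripleSingle} and use $|\omega_A|=1$.'' The only thing to be careful about is citing (or at least recalling) the unitarity of the rotation acting on the low-weight space, since this is what forces $\omega_A + \omega_A^{-1}$ into $[-2,2]$; this is standard and follows from the positivity axiom of a subfactor planar algebra together with the fact that the rotation preserves the inner product induced by $\operatorname{tr}$.
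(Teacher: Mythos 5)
Your proposal is correct and matches the paper's (implicit) argument: the paper states this as an immediate corollary of Theorem~\ref{thm:SnyderTripleSingle} without written proof, relying on exactly the observation that $\omega_A$ is a root of unity, so $\omega_A + \omega_A^{-1} = 2\cos\theta \in [-2,2]$, whence $\omega_A + \omega_A^{-1} - 2 \in [-4,0]$. Your note that unitarity of the rotation on the low-weight space (from positivity of the planar algebra) is what forces $|\omega_A|=1$ is the right justification and is exactly what the paper assumes silently.
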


The main result of \cite{MR3311757} is strictly stronger than those of \cite{MR2972458,MR3198588}, but we include only what we need for the annular multiplicity $*11$ weeds in this article.
For the following theorem, $\Gamma_\pm$ are both $(n-2)$-translated extensions of 
$$
\bigraph{gbg1v1p1v1x0p0x1p0x1v},
$$
and $P$ is chosen to be the bivalent vertex at depth $n$ of $\Gamma_+$ (regardless of $\Tr(P)$ and $\Tr(Q)$).
When $n$ is even, we choose $\check{P}$ to be the trivalent vertex at depth $n$ of $\Gamma_-$, and when $n$ is odd, we choose $\check{P}$ to be the bivalent vertex at depth $n$ of $\Gamma_-$

Again, $A=rP-Q$, and $\check{A}=\check{r}\check{P}-\check{Q}$ where
$r=\Tr(Q)/\Tr(P)$ and $\check{r}=\Tr(\check{Q})/\Tr(\check{P})$. We must have
that the one click rotation of $A$ is equal to
$\frac{\sqrt{r}}{\sqrt{\check{r}}}\sigma_A\check{A}$ where $\sigma_A$ is some
$2n$-th root of unity with $\sigma_A^2=\omega_A$.

\begin{thm}[{\cite[Theorem 3.10]{MR3311757}}]
\label{thm:Magic11Obstructions}
Suppose $\Gamma_\pm$ are both $(n-2)$-translated extensions of 
$
\bigraph{gbg1v1p1v1x0p0x1p0x1v}.
$
\begin{enumerate}[label=(\arabic*)]
\item
If $n$ is even, $(\Gamma_+,\Gamma_-)$ is a translated extension of
$
\left(\bigraph{bwd1v1p1v1x0p0x1p0x1vduals1v1x2v},\bigraph{bwd1v1p1v1x0p1x0p0x1vduals1v1x2v}\right)
$,
and
$$
\sigma_A+\sigma_A^{-1}
=
\frac{\sqrt{\check{r}}}{\sqrt{r}} [n+2]
-
\frac{\sqrt{r}}{\sqrt{\check{r}}} [n].
$$
\item
If $n$ is odd, $(\Gamma_+,\Gamma_-)$ is a translated extension of
$
\left(\bigraph{bwd1v1v1p1v1x0p0x1p0x1vduals1v1v2x1x3},\bigraph{bwd1v1v1p1v1x0p0x1p0x1vduals1v1v2x1x3}\right)
$,
and 
$$
\sigma_A+\sigma_A^{-1}
= 
\frac{[n+2]}{r}
-
r[n].
$$
\end{enumerate}
\end{thm}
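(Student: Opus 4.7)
The plan is to follow the quadratic tangles framework of \cite{MR2972458}, adapted to the annular multiplicity $*11$ setting. Since both $\Gamma_+$ and $\Gamma_-$ have annular multiplicity $*11$ at depth $n$, each graph carries a single new low-weight rotational eigenvector there, namely $A = rP - Q \in \cP_{n,+}$ and $\check{A} = \check{r}\check{P} - \check{Q} \in \cP_{n,-}$. First I would verify that these vectors are in fact orthogonal to the Temperley-Lieb-Jones subspace $\cT\cL\cJ_{n,\pm}$: since any cap applied to $P$ or $Q$ produces a multiple of the Jones-Wenzl idempotent $\jw{n-1}$ (with coefficients given by $\Tr(P), \Tr(Q)$), the particular linear combinations chosen kill every such cap, and similarly on the dual side.

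Next I would compute the self-inner-products $\Tr(A^2)$ and $\Tr(\check{A}^2)$ in terms of $r,\check r$, using $\Tr(P^2) = \Tr(P)$, $\Tr(Q^2)=\Tr(Q)$, $\Tr(PQ) = 0$ (since $P,Q$ are orthogonal minimal projections at depth $n$), and the analogous identities on $\Gamma_-$. This yields $\Tr(A^2) = (r^2 + r)\Tr(P) = r(r+1)\Tr(P)$, giving the correct normalisation constants. Combined with the rotation identity $\rho(A) = \tfrac{\sqrt r}{\sqrt{\check r}}\sigma_A \check A$ stated in the preamble of the theorem, one sees that $\rho^2(A) = \omega_A A$ is consistent with $\sigma_A^2 = \omega_A$.

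The key step is then to compute a single well-chosen quadratic tangle evaluated on $A$ and $\check A$ whose value is linear in $\sigma_A + \sigma_A^{-1}$. Concretely, I would consider the trace of a two-box expression of the form $\Tr\!\bigl(A\cdot \rho^{-1}(\check A)\bigr)$ (interpreted as a closed diagram in $\cP_{0,\pm}$), expand using $A = rP - Q$ and $\check A = \check r \check P - \check Q$, and evaluate each of the four resulting diagrams. Each such diagram simplifies via capping relations and Frobenius reciprocity to a product of $\Tr(P),\Tr(Q),\Tr(\check P),\Tr(\check Q)$ times a quantum integer coming from a Jones-Wenzl bubble. On the other side of the equation, the same expression equals $\tfrac{\sqrt r}{\sqrt{\check r}}\sigma_A \Tr(\check A^2)$ (or its conjugate), which isolates $\sigma_A + \sigma_A^{-1}$ after adding the complex conjugate equation. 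Substituting the values $\Tr(P), \Tr(Q), \Tr(\check P), \Tr(\check Q)$ extracted from the branch factor definitions and the dimensions at depth $n-1,n+1$ gives the ratio $[n+2]/[n]$ structure in the final answer.

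The main obstacle will be the case split between $n$ even and $n$ odd, which is the reason the two formulas differ by a factor of $\sqrt{r\check r}$ versus $r$ or $\check r$ alone. In the even case the vertices at depth $n$ on $\Gamma_+$ form an independent duality system from those on $\Gamma_-$, and $P,Q$ can be chosen to be self-dual (or swapped by duality) within $\Gamma_+$; in the odd case the duality pairs vertices on $\Gamma_+$ with vertices on $\Gamma_-$, forcing $\Tr(\check P) = \Tr(P)$ or $\Tr(\check P) = \Tr(Q)$ depending on the pairing. Tracking this correctly through the evaluation of the quadratic tangle is what produces $\sqrt{\check r/r}\,[n+2] - \sqrt{r/\check r}\,[n]$ in one case and $[n+2]/r - r[n]$ in the other. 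After this bookkeeping the result follows directly from the planar algebra evaluation.
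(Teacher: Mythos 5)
This is one of the places where the paper merely cites a result (Theorem 3.10 of \cite{MR3311757}) rather than proving it, so there is no in-paper proof to compare against; I will assess your reconstruction on its own merits.

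Your setup is sound: $A = rP - Q$ and $\check A = \check r\check P - \check Q$ are the new low-weight vectors, the normalization $\Tr(A^2) = r(r+1)\Tr(P)$ is correct, and you correctly identify the rotation relation $\rho(A) = \tfrac{\sqrt r}{\sqrt{\check r}}\sigma_A\check A$ as the bridge between the two graphs. The even/odd bookkeeping in your final paragraph is also roughly right (it amounts to the observation that when $n$ is odd the duality pairing forces $\check r = r$, collapsing the general formula to the second one).

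The gap is in the ``key step''. The tangle you propose, $\Tr\!\bigl(A\cdot\rho^{-1}(\check A)\bigr)$, admits only the \emph{circular} evaluation: $\rho^{-1}(\check A) = \tfrac{\sqrt r}{\sqrt{\check r}}\sigma_A A$, so $\Tr(A\cdot\rho^{-1}(\check A)) = \tfrac{\sqrt r}{\sqrt{\check r}}\sigma_A\Tr(A^2)$, which is just the rotation relation restated. The ``independent'' evaluation you sketch does not exist. Expanding into $\Tr(P\cdot\rho^{-1}(\check P))$, $\Tr(P\cdot\rho^{-1}(\check Q))$, etc., you would need to know the individual coefficients of $P$ and $Q$ in the decomposition of $\rho^{-1}(\check P)$. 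But $\rho^{-1}(\check P) = (\text{TLJ part}) + c\,A$ for an unknown scalar $c$, and $\langle P, A\rangle = r\Tr(P) \neq 0$, so $\Tr(P\cdot\rho^{-1}(\check P))$ genuinely depends on $c$, which the principal graph does not determine. Only the combination $\check r c - c'$ (the coefficient of $A$ in $\rho^{-1}(\check A)$) is pinned down by the rotation relation, and using that is the circularity. No Jones-Wenzl bubble ever appears, and the $[n]$, $[n+2]$ factors you want have no source in this computation.

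The quantum integers $[n]$ and $[n+2]$ in the target formula actually arise one depth up: the argument in the cited source computes the one-boxed element $\iota(A)\in\cP_{n+1,+}$ and its projection onto the complement of $\cT\cL\cJ_{n+1,+}\oplus(\text{annular consequences of }A)$, whose dimension is readable from the graph. Partial traces of $\jw{n-1}$ and $\jw{n+1}$ produce the quantum-integer coefficients, and equating the planar-algebraic expression (which involves $\sigma_A$ via the rotation relation) against the graph-side normalization yields the stated identity. Your proof never leaves depth $n$, so it cannot access this data; that is the missing idea.
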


\begin{remark}
The two formulas in Theorem \ref{thm:Magic11Obstructions} are really the same
using an alternate definition of the branch factor. If we let $s_\pm$ be the
trace of the trivalent vertex of $\Gamma_\pm$ divided by the trace of the
bivalent vertex of $\Gamma_\pm$, then both formulas can be written as
\begin{equation}
\sigma_A+\sigma_A^{-1}
=
\frac{[n+2]}{\sqrt{s_+s_-}}
-
\sqrt{s_+s_-} [n].
\label{eq:Magic11Formula}
\end{equation}
Indeed, we always have $s_+=r$; when $n$ is even $s_-=1/\check{r}$, and when $n$ is odd $s_-=s_+=r$.
\end{remark}

Squaring both sides of Equation \eqref{eq:Magic11Formula}, we have
\begin{equation}
\frac{[n+2]^2}{s_+s_-} - 2[n+2][n]+s_+s_-[n]^2=\omega_A+\omega_A^{-1}+2 \in [0,4],
\label{eq:Magic11FormulaNoSquareRoots}
\end{equation}
so subtracting 4, we get the following:

\begin{cor}[Non-univalent $*11$ branch factor inequality]
\label{cor:11Inequality}
$$
-4 \leq \omega_A+\omega_A^{-1}-2= \frac{[n+2]^2}{s_+s_-} - 2[n+2][n]+s_+s_-[n]^2 -4 \leq 0.
$$
\end{cor}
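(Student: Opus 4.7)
The plan is to derive the corollary directly from Theorem \ref{thm:Magic11Obstructions} by squaring Equation \eqref{eq:Magic11Formula} and then bounding the resulting quantity using the fact that the rotational eigenvalue $\omega_A$ has modulus one.

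First I would unify the two cases of Theorem \ref{thm:Magic11Obstructions} via the substitution $s_+ = \Tr(\text{trivalent})/\Tr(\text{bivalent})$ on $\Gamma_+$ and the corresponding $s_-$ on $\Gamma_-$, verifying the claim in the remark that in the even case $s_+ = r$ and $s_- = 1/\check{r}$, while in the odd case $s_+ = s_- = r$. In both cases Equation \eqref{eq:Magic11Formula} then reads
\[
\sigma_A + \sigma_A^{-1} \;=\; \frac{[n+2]}{\sqrt{s_+ s_-}} \;-\; \sqrt{s_+ s_-}\,[n].
\]

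Next I would square this identity. The right-hand side becomes
\[
\frac{[n+2]^2}{s_+ s_-} \;-\; 2[n+2][n] \;+\; s_+ s_-\,[n]^2,
\]
while the left-hand side becomes $\sigma_A^2 + 2 + \sigma_A^{-2}$. Since $\sigma_A^2 = \omega_A$ by the definition of $\sigma_A$ as a square root of the rotational eigenvalue, this equals $\omega_A + \omega_A^{-1} + 2$, giving exactly Equation \eqref{eq:Magic11FormulaNoSquareRoots}.

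The remaining step is to show $\omega_A + \omega_A^{-1} + 2 \in [0,4]$, or equivalently $\omega_A + \omega_A^{-1} \in [-2,2]$. This is where one invokes the standard fact that the one-click rotation on $\cP_{n,\pm}$ is unitary (with respect to the positive-definite inner product $\langle x,y\rangle = \tr(y^*x)$), and $A$ is by construction an eigenvector for it. Hence $|\omega_A| = 1$, so $\omega_A + \omega_A^{-1} = 2\Re(\omega_A) \in [-2,2]$. Subtracting $4$ from Equation \eqref{eq:Magic11FormulaNoSquareRoots} then yields the displayed inequality.

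There is really no hard step here; the only thing to be careful about is that squaring the identity is lossless as far as producing an \emph{inequality} goes, and that the sign choice of $\sqrt{s_+ s_-}$ in Equation \eqref{eq:Magic11Formula} drops out after squaring, so one does not need to worry about the ambiguity in the square root when applying the bound.
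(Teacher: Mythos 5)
Your argument is exactly the paper's: square Equation~\eqref{eq:Magic11Formula} to obtain Equation~\eqref{eq:Magic11FormulaNoSquareRoots}, observe that $|\omega_A|=1$ forces $\omega_A+\omega_A^{-1}+2\in[0,4]$, and subtract $4$. The only difference is that you spell out why $|\omega_A|=1$ (unitarity of the rotation, or equivalently $\omega_A$ being a root of unity), which the paper leaves implicit.
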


Corollary \ref{cor:11Inequality} was used in \cite[Theorem 3.17]{MR3311757} to
eliminate weed (2) in part \ref{case:11} of Theorem \ref{thm:Enumerate},
which has annular multiplicities $*11$ (see Section \ref{sec:11}). Corollary
\ref{cor:10Inequality} was used in \cite[Section 4]{MR2902285} to eliminate
two weeds in part \ref{case:10} of Theorem \ref{thm:Enumerate}, which have
annular multiplicities $*10$ (see Section \ref{sec:10}).

These weeds were eliminated by determining the relative dimensions of the
vertices as functions of $q$ and $a=q^{2t}$. Next, the relative branch factors
were computed, which are the expressions for $r,\check{r}$ as functions of $a$
and $q$. Finally, we use Corollaries \ref{cor:10Inequality} and
\ref{cor:11Inequality} to write $\omega_A+\omega_A^{-1}-2=p_\Gamma(a,q)$, and
we show $p_\Gamma$ is always positive for $a\geq q^{2t_0}$ and $q> q_0$ where
$q_0$ is determined by the graph norm of a $2t_0$-translate of the weed in
question. This eliminates all $2t$-translates of extensions of the weed for
$t\geq t_0$. For the $*11$ weed eliminated in \cite[Theorem 3.17]{MR3311757},
$t_0=0$, which eliminates all translated extensions. However, for the two
$*10$ weeds eliminated in \cite[Section 4]{MR2902285}, $t_0=1,2$ respectively,
and additional arguments were supplied for these small translates.

The following trick was used in \cite{MR3311757} to verify positivity for
certain polynomials $p(a,q)$.

\begin{defn}\label{defn:ObviouslyPositive}
Given a polynomial $p$ in variables $a$ and $q$ 
$$p(a,q) = \sum_{i=0}^k p_i(q) a^i$$ 
we say that $p$ is \emph{obviously positive for $q > q_0$} 
if each single variable polynomial 
$$\sum_{i=j}^k p_i(q) \text{ for $j=0,\ldots,k$}$$ 
has positive leading coefficient, and the largest root of any of them is at most $q_0$. 

We say a rational function in $a,q$ is obviously positive for $q>q_0$ if each irreducible
factor, of either the numerator or denominator, is obviously positive.
\end{defn}

This condition is sufficiently straightforward to
check that we make such claims without explicit proofs. 
The following lemma is `obvious'.

\begin{lem}\label{lem:ObviouslyPositive}
If $p$ is obviously positive for $q \geq q_0$, then $p(a,q)$ is positive for all $q > q_0$ and $a \geq 1$. 
\end{lem}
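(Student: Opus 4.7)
The plan is to rewrite $p(a,q)$ using Abel summation (summation by parts), in terms of the tail sums $S_j(q) = \sum_{i=j}^{k} p_i(q)$ that appear in the definition of ``obviously positive.'' Writing $p_i(q) = S_i(q) - S_{i+1}(q)$ (with the convention $S_{k+1} = 0$) and reindexing, I expect to obtain the identity
\begin{equation*}
p(a,q) \;=\; S_0(q) \;+\; \sum_{i=1}^{k} S_i(q)\, a^{i-1}(a-1).
\end{equation*}
This is the key algebraic manipulation, and it is purely formal.

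Next I would argue that each $S_j(q)$ is strictly positive for $q > q_0$. By hypothesis, $S_j$ has positive leading coefficient as a single-variable polynomial in $q$, so it is eventually positive for large $q$; and since its largest real root is at most $q_0$, it cannot change sign on $(q_0, \infty)$. Hence $S_j(q) > 0$ for all $q > q_0$ and all $j \in \{0,\dots,k\}$.

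Finally, for $a \geq 1$ we have $a^{i-1}(a-1) \geq 0$ for every $i \geq 1$, so every term in the Abel-summation identity is nonnegative, and the $i=0$ term $S_0(q)$ is strictly positive. Therefore $p(a,q) > 0$ for all $q > q_0$ and $a \geq 1$, as desired.

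There is no real obstacle here: the only subtlety is matching indices correctly in the summation by parts, and recognising that the hypothesis is exactly engineered so that the coefficients $S_j(q)$ appearing after this rearrangement are all positive on $(q_0, \infty)$. The extension to rational functions is then immediate, by applying the polynomial statement separately to numerator and denominator of each irreducible factor.
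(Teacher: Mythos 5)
Your proof is correct, and it is clearly the argument the authors had in mind: the paper itself does not spell out a proof (it simply declares the lemma "obvious"), but the definition of \emph{obviously positive} is engineered precisely so that, after summation by parts, the coefficients that appear are exactly the tail sums $S_j(q)$ required to be eventually positive with all real roots at most $q_0$. Your Abel-summation identity $p(a,q) = S_0(q) + \sum_{i=1}^k S_i(q)\,a^{i-1}(a-1)$ together with the sign analysis of the $S_j$ on $(q_0,\infty)$ fills in the omitted argument exactly as intended, and the extension to rational functions follows directly from the definition applying to each irreducible factor of numerator and denominator.
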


In the subsequent subsections, we suppress most of the calculations, which are
straightforward. These calculations are performed in the Mathematica notebook
{\tt Weeds.nb}, bundled with the {\tt arXiv} source.

\subsection{Ruling out a particular \texorpdfstring{$*11$}{*11} weed}
\label{sec:11WithDoubleOneByOne}

We begin by eliminating a particularly difficult $*11$ weed: number (5) of
part \ref{case:11} of Theorem \ref{thm:Enumerate}.

\begin{thm}
\label{thm:Hard11weed}
There is no subfactor whose principal graphs are a translated extension of
$$
\cD=
(\cD_+,\cD_-)=
\left(
\bigraph{bwd1v1v1v1p1v1x0p0x1p0x1v1x0x0p1x0x0p0x1x0p0x0x1v1x0x0x0p0x0x1x0p0x1x0x0p0x0x0x1vduals1v1v1x2v1x3x2x4v}
\,,\,
\bigraph{bwd1v1v1v1p1v1x0p1x0p0x1v1x0x0p0x1x0p0x0x1p0x0x1v1x0x0x0p1x0x0x0p0x0x1x0p0x1x0x0vduals1v1v1x2v1x3x2x4v}
\right).
$$
\end{thm}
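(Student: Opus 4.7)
The plan is to apply the non‑univalent $*11$ branch factor inequality (Corollary~\ref{cor:11Inequality}) to the weed $\cD$, after using a doubly one‑by‑one connection entry to pin down the one relative dimension that the combinatorial equations leave free. Write $a=q^{2t}$ for the translation parameter, and let $P,Q$ (respectively $\check P,\check Q$) denote the two depth‑$n$ vertices of $\cD_+$ (respectively $\cD_-$) past the initial triple point, where $n=t+4$ is even. Since the combinatorial dimension relation forces equalities between the depth‑$n$ vertices and certain depth‑$(n-2)$ vertices coming from the initial pentagon of $\cD_\pm$, the usual dimension equations (trivial vertex has dimension $1$, dual vertices have equal dimension, Frobenius‑Perron recursion $[2]\dim(v)=\sum\dim(w)$) determine all dimensions on $\cD_+$ and $\cD_-$ up to a single free parameter $s$, which can be taken to be the ratio $\Tr(Q)/\Tr(P)$.

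First I would compute this parameter $s$ by exhibiting a doubly one‑by‑one entry in the Ocneanu $4$‑partite graph $\cO(\cD)$, along the lines of Lemma~\ref{lem:doubly-one-by-one}. Because the two components $\cD_\pm$ each look like $\bigraph{gbg1v1p1v1x0p0x1p0x1v}$ past the triple point with the branch occurring on both sides, one can locate a loop $(P,Q,R,S)$ of length $4$ on $\cO(\cD)$ which is the unique loop through both diagonal pairs: concretely, the loop passing through $P$ (the bivalent vertex of $\cD_+$ at depth $n$), the appropriate bimodule at depth $n-1$ of the same shading as $\check P$, the trivalent vertex $\check P$ on $\cD_-$, and the dual object at depth $n-1$ of the other shading. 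Uniqueness is a direct graph check from the shape of $\cD$. The resulting identity $\dim(P)\dim(R)=\dim(Q)\dim(S)$ becomes a polynomial relation $F(s,a,q)=0$, which I would solve to express $s$ (equivalently the product $s_+s_-$ appearing in Equation~\eqref{eq:Magic11Formula}) as an algebraic function of $a$ and $q$.

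Next, substitute this expression into the $*11$ branch factor identity of Corollary~\ref{cor:11Inequality}:
\[
p_\cD(a,q) \;:=\; \frac{[n+2]^2}{s_+s_-} - 2[n+2][n] + s_+s_-[n]^2 - 4.
\]
A subfactor extension of $\cD$ would require $p_\cD(a,q)\in[-4,0]$. I would clear square roots (if necessary by substituting $s_+s_-=u^2$ and rationalising), multiply through by the positive denominator, and show that the resulting polynomial (or rational) expression is \emph{obviously positive} in the sense of Definition~\ref{defn:ObviouslyPositive}, for all $q>q_0$ with $q_0+q_0^{-1}=\|\cD\|$. By Lemma~\ref{lem:ObviouslyPositive}, this rules out all translates with $a\geq 1$ simultaneously. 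The hardest step will almost certainly be this positivity verification, since here the dimensions involve an honest algebraic irrationality coming from the doubly one‑by‑one entry rather than rational functions in $a,q$; some massaging (factoring out explicit positive Chebyshev‑type quantities such as $[n]$ and $[n+2]$, and isolating the square‑root contribution) will be needed before the obvious‑positivity criterion applies.

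Finally, if the obvious‑positivity argument leaves out finitely many small translations $t=0,\ldots,t_0$ (because the weed's graph norm is close to the desired threshold), I would handle those cases by a direct numerical evaluation of $p_\cD(q^{2t},q)$ on the finite interval $q\in(1,q_\infty]$ determined by the index bound $(q+q^{-1})^2\leq 5\tfrac14$, together with a standard check that no translate admits a further finite extension (via the stability constraint, Fact~\ref{Fact:StabilityConstraint}, since $\cD$ is already stable at the branch). Combining these two inputs rules out every translated extension of $\cD$, completing the proof.
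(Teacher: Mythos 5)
Your proposal follows essentially the same route as the paper's proof: identify the one undetermined relative dimension, pin it down via a doubly one-by-one connection entry in $\cO(\cD)$, substitute the resulting (non-rational) expression into the $*11$ branch-factor inequality of Corollary~\ref{cor:11Inequality}, and verify positivity to rule out every translated extension. The main substantive difference is in the final step: the paper's Lemma~\ref{lem:11Weed6Positivitiy} shows each factor of the numerator and denominator of Inequality~\eqref{eq:11inequality} is positive for \emph{all} $a\geq 1$ and $q\geq 1.65$ (where $1.65$ is below the graph-norm threshold of $\cD$ itself), so the entire family is killed in one stroke and there is no residual finite list of small translates to dispatch. Your fallback paragraph about treating $t=0,\ldots,t_0$ by numerical evaluation and about invoking the stability constraint is therefore unnecessary here (and the stability remark is not quite right anyway, since $\cD$ merges at the branch). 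Also, the specific loop you describe for the doubly one-by-one entry does not quite match the one the paper uses, namely $(V^p_{2t+4,2},V^p_{2t+5,2},V^d_{2t+6,3},V^d_{2t+5,3})$, which straddles depths $2t+4$ through $2t+6$; one does have to check carefully on $\cO(\cD)$ which pairs of vertices have exactly two length-two paths between them, and the natural-sounding choice at depths $n$ and $n-1$ is not the one that works. These are details rather than gaps: the overall plan is the right one and matches the paper.
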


We would like to eliminate this weed using the technique outlined in the
previous section, but when solving for the relative dimensions, we have one
unknown parameter:  $\dim(V^d_{5+2t,3})$. Luckily, as in Section
\ref{sec:DoublyOneByOne}, the loop
$(V^p_{2t+4,2},V^p_{2t+5,2},V^d_{2t+6,3},V^d_{2t+5,3})$ outlined in blue below
in $\cO(\cD)$ between depths $2t+3$ and $2t+7$ corresponds to a doubly one-by-one connection entry.
$$
\begin{tikzpicture}[baseline, scale = .95]
	\node at (-2.3,1.5) {$\cD_+\,\Bigg\{$};
	\node at (-2.3,-.5) {$\cD_-\,\Bigg\{$};
%
	\coordinate (aa1) at (1,2);
	\coordinate (aa2) at (2,2);
	\coordinate (aa3) at (6,2);
	\coordinate (aa4) at (7,2);
	\coordinate (aa5) at (8,2);
	\coordinate (aa6) at (9,2);
%
	\coordinate (ab1) at (0,1);
	\coordinate (ab2) at (3,1);
	\coordinate (ab3) at (4,1);
	\coordinate (ab4) at (5,1);
	\coordinate (ab5) at (10,1);
	\coordinate (ab6) at (11,1);
	\coordinate (ab7) at (12,1);
	\coordinate (ab8) at (13,1);
%
	\coordinate (bb1) at (1,0);
	\coordinate (bb2) at (2,0);
	\coordinate (bb3) at (6,0);
	\coordinate (bb4) at (7,0);
	\coordinate (bb5) at (8,0);
	\coordinate (bb6) at (9,0);
%
	\coordinate (ba1) at (0,-1);
	\coordinate (ba2) at (3,-1);
	\coordinate (ba3) at (4,-1);
	\coordinate (ba4) at (5,-1);
	\coordinate (ba5) at (10,-1);
	\coordinate (ba6) at (11,-1);
	\coordinate (ba7) at (12,-1);
	\coordinate (ba8) at (13,-1);
%
	\coordinate (aa1d) at (1,-2);
	\coordinate (aa2d) at (2,-2);
	\coordinate (aa3d) at (6,-2);
	\coordinate (aa4d) at (7,-2);
	\coordinate (aa5d) at (8,-2);
	\coordinate (aa6d) at (9,-2);
%
		\draw (aa1)--(ab1);
		\draw (aa2)--(ab1);
		\draw (aa1)--(ab2);
		\draw[blue] (aa2)--(ab3);
		\draw (aa2)--(ab4);
		\draw (aa3)--(ab2);
		\draw (aa4)--(ab2);
		\draw (aa5)--(ab3);
		\draw (aa6)--(ab4);
		\draw (aa3)--(ab5);
		\draw (aa4)--(ab7);
		\draw (aa5)--(ab6);
		\draw (aa6)--(ab8);
%
		\draw (ab1)--(bb1);
		\draw (ab1)--(bb2);
		\draw (ab2)--(bb1);
		\draw (ab3)--(bb1);
		\draw (ab4)--(bb2);
		\draw (ab2)--(bb3);
		\draw[blue] (ab3)--(bb5);
		\draw (ab4)--(bb4);
		\draw (ab4)--(bb6);
		\draw (ab5)--(bb3);
		\draw (ab6)--(bb3);
		\draw (ab7)--(bb4);
		\draw (ab8)--(bb5);
%
		\draw (bb1)--(ba1);
		\draw (bb2)--(ba1);
		\draw (bb1)--(ba2);
		\draw (bb1)--(ba3);
		\draw (bb2)--(ba4);
		\draw (bb3)--(ba2);
		\draw (bb4)--(ba3);
		\draw[blue] (bb5)--(ba4);
		\draw (bb6)--(ba4);
		\draw (bb3)--(ba5);
		\draw (bb3)--(ba6);
		\draw (bb4)--(ba8);
		\draw (bb5)--(ba7);
%
		\draw (ba1)--(aa1d);
		\draw (ba1)--(aa2d);
		\draw (ba2)--(aa1d);
		\draw (ba3)--(aa2d);
		\draw[blue] (ba4)--(aa2d);
		\draw (ba2)--(aa3d);
		\draw (ba3)--(aa4d);
		\draw (ba2)--(aa5d);
		\draw (ba4)--(aa6d);
		\draw (ba5)--(aa3d);
		\draw (ba6)--(aa4d);
		\draw (ba7)--(aa5d);
		\draw (ba8)--(aa6d);
%
	\node at (-1,2) {$\sb{A}{\sf{Mod}}_A$};
	\filldraw (aa1) circle (.8mm);
	\filldraw[blue] (aa2) circle (.8mm);
	\filldraw (aa3) circle (.8mm);
	\filldraw (aa4) circle (.8mm);
	\filldraw (aa5) circle (.8mm);
	\filldraw (aa6) circle (.8mm);
%
	\node at (-1,1) {$\sb{A}{\sf{Mod}}_B$};
	\filldraw (ab1) circle (.8mm); 
	\filldraw (ab2) circle (.8mm);
	\filldraw[blue] (ab3) circle (.8mm);
	\filldraw (ab4) circle (.8mm);
	\filldraw (ab5) circle (.8mm);
	\filldraw (ab6) circle (.8mm);
	\filldraw (ab7) circle (.8mm);
	\filldraw (ab8) circle (.8mm);
%
	\node at (-1,0) {$\sb{B}{\sf{Mod}}_B$};
	\filldraw (bb1) circle (.8mm);
	\filldraw (bb2) circle (.8mm);
	\filldraw (bb3) circle (.8mm);
	\filldraw (bb4) circle (.8mm);
	\filldraw[blue] (bb5) circle (.8mm);
	\filldraw (bb6) circle (.8mm);
%
	\node at (-1,-1) {$\sb{B}{\sf{Mod}}_A$};
	\filldraw (ba1) circle (.8mm);
	\filldraw (ba2) circle (.8mm);
	\filldraw (ba3) circle (.8mm);
	\filldraw[blue] (ba4) circle (.8mm);
	\filldraw (ba5) circle (.8mm);
	\filldraw (ba6) circle (.8mm);
	\filldraw (ba7) circle (.8mm);
	\filldraw (ba8) circle (.8mm);
%
	\node at (-1,-2) {$\sb{A}{\sf{Mod}}_A$};
	\filldraw (aa1d) circle (.8mm);
	\filldraw[blue] (aa2d) circle (.8mm);
	\filldraw (aa3d) circle (.8mm);
	\filldraw (aa4d) circle (.8mm);
	\filldraw (aa5d) circle (.8mm);
	\filldraw (aa6d) circle (.8mm);
\end{tikzpicture}
$$
This doubly one-by-one entry gives us the following formula for our undetermined relative dimension:
$$
\dim(V^d_{5+2t,3}) 
= 
\frac{
1+K-2 q^2+3 q^4+q^8+a^2 \left(-q^8-3 q^{12}+2 q^{14}-q^{16}\right)
}
{
2 a (-1+q) q^3 (1+q) \left(1+4 q^4+q^8\right)
}
$$
where $K>0$ such that
\begin{align*}
K^2 & = 
  a^4 \left(q^{32}{+}4 q^{30}{+}14 q^{28}{+}24 q^{26}{+}27 q^{24}{+}20 q^{22}{+}10 q^{20}{+}4 q^{18}{+}q^{16}\right)
  \\ & \quad 
  {+}a^2 \left({-}2 q^{24}{-}8 q^{22}{-}20 q^{20}{-}44 q^{18}{-}62 q^{16}{-}44 q^{14}{-}20 q^{12}{-}8 q^{10}{-}2 q^8\right)
  \\ & \quad 
  {+}q^{16}{+}4 q^{14}{+}10 q^{12}{+}20 q^{10}{+}27 q^8{+}24 q^6{+}14 q^4{+}4 q^2{+}1.
\end{align*}

\begin{remark}
\label{rem:11doublyOneByOneNegativeDimensions}
There is another solution for $\dim(V^d_{5+2t,3})$ with $-K$ instead
of $+K$; this is always negative for $a,q \geq 1$, which is impossible.
\end{remark}

Now applying the technique of the previous section, Corollary
\ref{cor:11Inequality} tells us the following inequality must be satisfied:

\begin{equation}
\label{eq:11inequality}
\frac{
4 (a q^5 + 1)^2 (a q^5 - 1)^2 (\alpha(a,q)-K \beta(a,q)) (\alpha(-a,q)-K \beta(-a,q)) }
{
	a^2 (q+1)^2 (q-1)^2 q^{10} \prod_{i=1}^4 (\gamma_i(a,q) - K \delta_i(a,q)) 
}
\leq 0
\end{equation}

where
\begin{align*}
\alpha(a,q) & =
  a^4 \left(q^{42}{+}q^{40}{+}4 q^{38}{-}7 q^{36}{-}5 q^{34}{-}13 q^{32}{+}q^{30}{+}8 q^{28}{+}17 q^{26}{+}16 q^{24}{+}8 q^{22}{+}4 q^{20}{+}q^{18}\right)
  \\ & \quad 
  {+} a^3 \left({-}2 q^{36}{-}4 q^{34}{-}14 q^{32}{-}14 q^{30}{-}16 q^{28}{+}10 q^{26}{+}38 q^{24}{+}28 q^{22}{+}26 q^{20}{+}14 q^{18}{+}4 q^{16}{+}2 q^{14}\right)
  \\ & \quad 
  {+} a^2 \left({-}q^{34}{-}2 q^{32}{-}8 q^{30}{-}17 q^{28}{-}17 q^{26}{-}14 q^{24}{-}25 q^{22}{+}25 q^{20}{+}14 q^{18}{+}17 q^{16}{+}17 q^{14}{+}8 q^{12}{+}2 q^{10}{+}q^8\right)
  \\ & \quad 
  {+} a \left({-}2 q^{28}{-}4 q^{26}{-}14 q^{24}{-}26 q^{22}{-}28 q^{20}{-}38 q^{18}{-}10 q^{16}{+}16 q^{14}{+}14 q^{12}{+}14 q^{10}{+}4 q^8{+}2 q^6\right)
  \\ & \quad 
  {-} q^{24}{-}4 q^{22}{-}8 q^{20}{-}16 q^{18}{-}17 q^{16}{-}8 q^{14}{-}q^{12}{+}13 q^{10}{+}5 q^8{+}7 q^6{-}4 q^4{-}q^2{-}1 \displaybreak[1] \\
\beta(a,q) & = 
  a^2 \left({-}q^{26}{+}q^{24}{-}q^{22}{+}6 q^{20}{-}3 q^{18}{+}6 q^{16}{+}q^{14}{+}2 q^{12}{+}q^{10}\right)
  \\ & \quad 
  {+}a \left(2 q^{20}{+}8 q^{16}{+}2 q^{14}{+}2 q^{12}{+}8 q^{10}{+}2 q^6\right)
  \\ & \quad 
  {+}q^{16}{+}2 q^{14}{+}q^{12}{+}6 q^{10}{-}3 q^8{+}6 q^6{-}q^4{+}q^2{-}1 \displaybreak[1] \\
\gamma_1(a,q) & = a^2 \left(q^{16}{+}2 q^{14}{+}5 q^{12}{+}q^8\right) {-}q^8{-}5 q^4{-}2 q^2 {-}1  \\
\gamma_2(a,q) & = a^2 \left(2 q^{20}{+}5 q^{18}{+}12 q^{16}{+}9 q^{14}{+}4 q^{12}{+}q^{10}\right) {-}q^{10}{-}4 q^8{-}9 q^6{-}12 q^4{-}5 q^2{-}2 \\
\gamma_3(a,q) & = a^2 \left(2 q^{20}{+}q^{18}{+}6 q^{16}{+}3 q^{14}{+}2 q^{12}{+}q^{10}\right) {-}q^{10}{-}2 q^8{-}3 q^6{-}6 q^4{-}q^2{-}2 \\
\gamma_4(a,q) & =
  a^2 \left(2 q^{24}{+}4 q^{22}{+}9 q^{20}{+}14 q^{18}{+}17 q^{16}{+}10 q^{14}{+}5 q^{12}{+}2 q^{10}\right)
  \\ & \quad 
  {-}2 q^{14}{-}5 q^{12}{-}10 q^{10}{-}17 q^8{-}14 q^6{-}9 q^4{-}4 q^2{-}2 \\
\delta_1(a,q) & = {-}1 \\
\delta_2(a,q) & = {-}2 q^4{-}q^2{-}2 \\
\delta_3(a,q) & = q^2 \\
\delta_4(a,q) & = 2 q^6{+}q^4{+}2 q^2
\end{align*}

\begin{lem}
\label{lem:11Weed6Positivitiy}
When $a\geq 1$ and $q\geq 1.65$, every factor in both the numerator and the
denominator of Inequality \eqref{eq:11inequality} is positive. 
Hence for $a\geq 1$ and $q\geq 1.65$, Inequality \eqref{eq:11inequality} does not hold.
\end{lem}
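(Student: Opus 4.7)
The plan is to check positivity of each factor in the numerator and denominator of \eqref{eq:11inequality} on the region $R = \{(a,q) : a \geq 1,\, q \geq 1.65\}$. Once every factor is known to be positive, the ratio is strictly positive, which contradicts the claimed $\leq 0$ bound and completes the argument.

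The elementary factors $4$, $(a q^{5} \pm 1)^{2}$, $a^{2}$, $(q \pm 1)^{2}$, and $q^{10}$ are manifestly positive on $R$ (note that $q \geq 1.65$ and $a \geq 1$ force $aq^{5} - 1 > 0$ and $q-1 > 0$). That leaves six factors of the common shape $F = f(a,q) - K\, g(a,q)$, where $f$ and $g$ are the explicit polynomials from the statement of the inequality and $K$ is the nonnegative square root fixed by the expression for $K^{2}$ given just before Lemma \ref{lem:11Weed6Positivitiy}.

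For each such $F$ I proceed in three steps. \emph{Step 1:} determine the signs of $f$ and $g$ on $R$ by applying the obviously-positive test of Definition \ref{defn:ObviouslyPositive} to $\pm f$ and $\pm g$; for the $\delta_i$ these signs are immediate by inspection ($\delta_{1}, \delta_{2} < 0$ and $\delta_{3}, \delta_{4} > 0$), and for the $\gamma_{i}$ and $\alpha(\pm a,q), \beta(\pm a,q)$ the coefficient pattern should be handled by the test directly. \emph{Step 2:} if $f$ and $g$ have opposite signs, then since $K \geq 0$ the factor $F$ has the sign of $f$ and we are done. \emph{Step 3:} if $f$ and $g$ have the same sign, replace the question by whether $f^{2} > K^{2} g^{2}$; since $K^{2}$ is the explicit polynomial from the definition of $K$, the difference $f^{2} - K^{2} g^{2}$ is polynomial in $a$ and $q$, and Lemma \ref{lem:ObviouslyPositive} applies again. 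The sign of $F$ is then read off from the signs of $f$ and of $f^{2} - K^{2} g^{2}$.

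The main obstacle will be Step 3 in the cases where $f$ and $g$ are both positive. The polynomial $f^{2} - K^{2} g^{2}$ has substantial degree (of order $80$ in $q$ and degree $4$ in $a$ for the numerator factors $\alpha(\pm a,q) - K\beta(\pm a,q)$), and the plain obviously-positive criterion may not conclude directly. My fallback will be to factor $f^{2} - K^{2} g^{2}$ into irreducible components and apply Lemma \ref{lem:ObviouslyPositive} to each piece separately; if that still fails, I would subdivide $R$ (for instance, $q \in [1.65, q_{1}]$ and $q \geq q_{1}$ for a well-chosen $q_{1}$) and run the test on each subregion. The same pipeline has been applied successfully throughout Section \ref{sec:BranchFactorInequalities} for other $*11$ and $*10$ weeds, so I expect it to succeed here as well, with all polynomial verifications packaged into the accompanying notebook \texttt{Weeds.nb}.
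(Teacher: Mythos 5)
Your proposal is sound and matches what the paper does: the paper's own proof of this lemma is the single sentence ``This calculation is straightforward, and performed in \texttt{Weeds.nb}'', so it, too, defers the verification to the accompanying notebook. You unpack the deferred methodology more explicitly---separate the elementary factors, and for the six factors of the form $f - Kg$ use the sign of $g$ to either conclude directly (when $f,g$ have opposite signs, since $K>0$) or reduce to checking $f^2 - K^2 g^2$ via the obviously-positive test of Definition~\ref{defn:ObviouslyPositive} and Lemma~\ref{lem:ObviouslyPositive}---which is precisely the pipeline Section~\ref{sec:BranchFactorInequalities} applies throughout, and is evidently what \texttt{Weeds.nb} carries out. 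Two small cautions worth noting: the obviously-positive criterion is applied to polynomials in $a$ with $a\geq 1$, so for the factor $\alpha(-a,q)-K\beta(-a,q)$ you must first rewrite $\alpha(-a,q)$ and $\beta(-a,q)$ as honest polynomials in $a$ (flipping the signs of the odd-degree coefficients) rather than evaluating at $-a$ outside the domain of the test; and since Definition~\ref{defn:ObviouslyPositive} gives only a sufficient condition, a statement like ``this should succeed'' needs to actually be confirmed by running the computations, which is exactly what the notebook does. With those caveats your route is the same as the paper's.
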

\begin{proof}
This calculation is straightforward, and performed in {\tt{Weeds.nb}}.
\end{proof}

\begin{proof}[Proof of Theorem \ref{thm:Hard11weed}]
If such a subfactor existed, then by Corollary \ref{cor:11Inequality}, Inequality \eqref{eq:11inequality} must hold.
We note that the $q$ for $(\cD_+,\cD_-)$ must be larger than $1.65$ by looking at the graph norm.
But by Lemma \ref{lem:11Weed6Positivitiy},
Inequality \eqref{eq:11inequality} is never satisfied for the relevant range of $a$ and $q$, a contradiction.
\end{proof}

\subsection{Ruling out the remaining \texorpdfstring{$*11$}{*11} weeds}
\label{sec:11}

Recall that the weed
$$
\left(\bigraph{bwd1v1v1p1v1x0p0x1p1x0v0x0x1p0x1x0p1x0x0p0x1x0v0x0x0x1p0x1x0x0p1x0x0x0p0x0x1x0p1x0x0x0v1x0x0x0x0p0x0x0x0x1p0x1x0x0x0p0x0x0x1x0p0x0x0x1x0vduals1v1v1x3x2v1x3x2x5x4v},
 \bigraph{bwd1v1v1p1v1x0p0x1p1x0v0x0x1p0x1x0p0x1x0p1x0x0v0x0x1x0p0x1x0x0p1x0x0x0p0x0x0x1p1x0x0x0v0x0x0x0x1p1x0x0x0x0p0x0x0x1x0p0x1x0x0x0p0x0x0x1x0vduals1v1v1x3x2v1x3x2x5x4v}\right)
$$
was eliminated in \cite[Theorem 3.17]{MR3311757}. The argument there is a
straightforward application of the method outlined in Section
\ref{sec:BranchFactorInequalities} by showing that each factor in the
numerator and each factor in the denominator of the relative branch factor are
obviously positive (see Lemma \ref{lem:ObviouslyPositive}) for $a\geq 1$ and
$q\geq 1.6789$, which is a lower bound for the $q$ for the above weed. Hence
no subfactor exists with principal graphs a translated extension, since
Corollary \ref{cor:11Inequality} would not hold.

The argument for each weed in the following theorem is identical to
\cite[Theorem 3.17]{MR3311757}, mutatis mutandis. The interested reader can
view the necessary calculations in {\tt Weeds.nb}.

\begin{thm}
No subfactor has principal graphs a translated extension of any of
\begin{align*}
&\left(\bigraph{bwd1v1v1p1v0x1p1x0p0x1v1x0x0p0x0x1p0x1x0p0x1x0v0x0x1x0p1x0x0x0p0x1x0x0v0x1x0p0x0x1p1x0x0vduals1v1v1x3x2v1x3x2v}, 
\bigraph{bwd1v1v1p1v0x1p1x0p0x1v0x1x0p0x0x1p1x0x0p0x1x0v1x0x0x0p0x1x0x0p0x0x1x0v0x0x1p1x0x0p0x1x0vduals1v1v1x3x2v1x3x2v}\right)
\\
&\left(\bigraph{bwd1v1v1v1p1v1x0p1x0p0x1v0x0x1p0x0x1p1x0x0vduals1v1v1x2v1x3x2}, 
\bigraph{bwd1v1v1v1p1v1x0p0x1p1x0v0x0x1p0x1x0p1x0x0vduals1v1v1x2v1x3x2}\right)
\\
&\left(\bigraph{bwd1v1v1v1p1v1x0p0x1p0x1v0x0x1p1x0x0p1x0x0p0x1x0v0x0x1x0p1x0x0x0p0x0x0x1vduals1v1v1x2v1x2x4x3v}, \bigraph{bwd1v1v1v1p1v1x0p1x0p0x1v1x0x0p0x0x1p0x1x0p0x0x1v0x0x0x1p0x0x1x0p1x0x0x0vduals1v1v1x2v1x2x4x3v}\right)
\\
&\left(\bigraph{bwd1v1v1v1p1v1x0p1x0p0x1v0x1x0p0x0x1p0x0x1p1x0x0v1x0x0x0p0x0x1x0p0x0x0x1p1x0x0x0v0x0x1x0p0x1x0x0p1x0x0x0vduals1v1v1x2v1x2x4x3v1x3x2}, 
\bigraph{bwd1v1v1v1p1v1x0p0x1p1x0v0x0x1p0x1x0p0x1x0p1x0x0v0x0x0x1p0x0x1x0p1x0x0x0p0x1x0x0v1x0x0x0p0x0x1x0p0x1x0x0p0x0x0x1p1x0x0x0vduals1v1v1x2v1x2x4x3v1x3x2x5x4}\right)
\end{align*}
\end{thm}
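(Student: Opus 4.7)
The plan is to adapt the proof of \cite[Theorem 3.17]{MR3311757}, as outlined in Section \ref{sec:BranchFactorInequalities}, to each of the four weeds in turn. For each weed $\Gamma = (\Gamma_+, \Gamma_-)$, I would first verify that both $\Gamma_+$ and $\Gamma_-$ are $(n-2)$-translated extensions of $\bigraph{gbg1v1p1v1x0p0x1p0x1v}$ and that the dual data at depth $n$ matches one of the two cases in Theorem \ref{thm:Magic11Obstructions} (even vs.\ odd depth of the branch point), so that Corollary \ref{cor:11Inequality} applies.

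Next, for a $2t$-translate, set $a = q^{2t}$ and solve the linear system coming from duality and the $[2]$-recursion to obtain the relative dimensions of all vertices up to the working depth as explicit rational functions of $a$ and $q$. In contrast to the weed $\cD$ handled in Theorem \ref{thm:Hard11weed}, none of the four weeds here carry an undetermined relative dimension, so no appeal to a doubly one-by-one connection entry is needed. From these, compute the branch factors $s_\pm$ and substitute into Corollary \ref{cor:11Inequality} to obtain a rational inequality
\[
p_\Gamma(a,q) := \frac{[n+2]^2}{s_+ s_-} - 2[n+2][n] + s_+ s_- [n]^2 - 4 \leq 0.
\]

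For each weed, I would then compute a numerical lower bound $q_0$ for $q$ by estimating the graph norm of the smallest translate (using the same heuristic as in the enumerator), and verify using the criterion of Definition \ref{defn:ObviouslyPositive} and Lemma \ref{lem:ObviouslyPositive} that every irreducible factor of the numerator and denominator of $p_\Gamma(a,q)$ is obviously positive for $q \geq q_0$. This forces $p_\Gamma(a,q) > 0$ whenever $a \geq 1$ and $q \geq q_0$, contradicting the branch factor inequality.

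The main obstacle will be purely computational: verifying obvious positivity for each of the roughly dozen irreducible factors per weed requires factoring potentially high-degree polynomials in two variables and locating their largest real roots. As in \cite[Theorem 3.17]{MR3311757} and Lemma \ref{lem:11Weed6Positivitiy}, this is routine but tedious, and is best carried out in the accompanying \texttt{Mathematica} notebook \texttt{Weeds.nb}. A secondary subtlety is choosing the orientation of $P$ versus $Q$ consistently with the convention preceding Theorem \ref{thm:Magic11Obstructions} (the bivalent vertex on $\Gamma_+$), so that the sign of $s_\pm$ and the assignment of $r$ versus $\check r$ match the statement of the inequality; getting this wrong would give a vacuously true inequality and no obstruction.
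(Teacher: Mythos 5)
Your proposal is correct and matches the paper's approach exactly: the paper's proof consists solely of the remark that the argument is ``identical to \cite[Theorem 3.17]{MR3311757}, mutatis mutandis,'' deferring all calculations to the accompanying \texttt{Mathematica} notebook \texttt{Weeds.nb}. In particular you correctly identify that (unlike case \ref{case:11}(5), which is handled separately in Theorem \ref{thm:Hard11weed} because it has an undetermined relative dimension) these four weeds have fully determined relative dimensions in terms of $a=q^{2t}$ and $q$, so one can plug straight into Corollary \ref{cor:11Inequality} and verify obvious positivity in the sense of Definition \ref{defn:ObviouslyPositive} above the relevant $q_0$.
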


Now the previous theorem, \cite[Theorem 3.17]{MR3311757}, and Theorem
\ref{thm:Hard11weed} rule out all the $*11$ weeds in part \ref{case:11} of
Theorem \ref{thm:Enumerate}.
\qed

\subsection{Ruling out two particular \texorpdfstring{$*10$}{*10} weeds with doubly one-by-ones}
\label{sec:10WithDoubleOneByOne}

\subsubsection{A truncation of \texorpdfstring{$\cF$ from \cite{MR2902285}}{F}}

In \cite[Section 4.5]{MR2902285}, the problematic weed
$$
\cF=
\left(\bigraph{bwd1v1v1v1p1v1x0p0x1v1x0p1x0p0x1p0x1v0x1x0x0p0x0x1x0p0x0x0x1v1x0x0p0x1x0p0x0x1p1x0x0v1x0x0x0p0x1x0x0p0x0x1x0vduals1v1v1x2v1x3x2x4v1x2x4x3v}, \bigraph{bwd1v1v1v1p1v1x0p1x0v1x0p0x1v0x1p1x0p0x1v1x0x0p0x1x0v0x1p1x0p0x1vduals1v1v1x2v1x2v2x1v}\right)
$$
was ruled out using both branch factor inequalities and doubly one-by-one connection entries.
 
First, the relative dimensions can be computed in terms of $q$ and $a=q^{2t}$,
where $2t$ is the translation, which gives a formula for the relative branch
factor. Then Branch Factor Inequality \eqref{eq:QTEquation} shows that $t\leq
1$. Now for each of $t=0$ and $t=1$, we know $\omega_A$ is a $(2t+4)$-th root of
unity. This allows us to solve directly for $q$ in each case.

Second, there is a doubly one-by-one connection entry, which allows us to
exactly solve for $q$ in the cases $t=0,1$. It turns out that these values of
$q$ are incompatible with the values of $q$ obtained from Branch Factor
Inequality \eqref{eq:QTEquation} for any allowed $\omega_A$.  Hence no
translated extension of $\cF$ can be the principal graphs of a subfactor.

Working a bit harder, we can use the recent advances
\cite{MR3210716,MR3335120} to rule out the truncation $\cF'$ of $\cF$ by two
depths.

\begin{thm}
\label{thm:NoFPrime}
There are no subfactors whose principal graphs are a translated extension of
$$
\cF'=
\left(\bigraph{bwd1v1v1v1p1v1x0p0x1v1x0p1x0p0x1p0x1v0x1x0x0p0x0x1x0p0x0x0x1vduals1v1v1x2v1x3x2x4v}, \bigraph{bwd1v1v1v1p1v1x0p1x0v1x0p0x1v0x1p1x0p0x1vduals1v1v1x2v1x2v}\right).
$$
(This is case \ref{case:10}(7) from Theorem \ref{thm:Enumerate}.) 
\end{thm}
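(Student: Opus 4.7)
The plan is to combine three ingredients in parallel with the treatment of the original weed $\cF$: the triple-single branch factor inequality of Corollary \ref{cor:10Inequality}, a doubly one-by-one connection entry as in Section \ref{sec:11WithDoubleOneByOne} and the original $\cF$ argument from \cite[Section 4.5]{MR2902285}, and Morrison's hexagon obstruction \cite{MR3210716} to handle the residual small-translation cases. The extra difficulty compared to $\cF$ itself is that truncating two depths leaves room for extensions whose behavior at the removed depths differs from what $\cF$ specifies, so some relative dimensions are not determined by traces alone.

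First I would compute the relative dimensions of all vertices of $\cF'$ as rational functions of $q$ and $a = q^{2t}$, using the trace and duality equations from Section \ref{sec:DoublyOneByOne}. Just as with the weed $\cD$ in Section \ref{sec:11WithDoubleOneByOne}, there will be one (or possibly a few) undetermined parameter(s) corresponding to vertices whose neighbors have not all been pinned down by the weed's structure. The next step is to locate a doubly one-by-one loop in $\cO(\cF')$, in the stretch between the initial triple point and the depth of the univalent vertex on $\cF'_-$, and apply the Renormalization Axiom to get one further polynomial relation. The expected outcome, by analogy with Lemma \ref{lem:doubly-one-by-one} and the $\cD$ calculation, is a closed-form expression for the unknown dimension as $(\text{rational in } a, q) \pm K(a,q)$ with $K$ the square root of an explicit polynomial; only one sign yields positive dimensions (cf.\ Remark \ref{rem:11doublyOneByOneNegativeDimensions}).

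Next I would substitute this expression into the branch factor $r = \Tr(Q)/\Tr(P)$ and plug into Corollary \ref{cor:10Inequality}:
\[
-4 \;\leq\; (r + r^{-1} - 2)[n][n+2] - 4 \;\leq\; 0.
\]
Following the strategy of Section \ref{sec:BranchFactorInequalities}, I would clear denominators and verify, using the obviously positive criterion of Definition \ref{defn:ObviouslyPositive} and Lemma \ref{lem:ObviouslyPositive}, that for some explicit threshold $t \geq t_0$ and for $q$ exceeding the lower bound from the graph norm of $\cF'$, the upper inequality fails. This reduces us to a small finite list of translations $t < t_0$.

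For the remaining finite list, the plan is to run the enumerator of Section \ref{sec:orderly} starting from the explicit $2t$-translate of $\cF'$ to produce all candidate extensions. Any extension that already appears as an extension of $\cF$ is handled by \cite[Section 4.5]{MR2902285}; the genuinely new extensions are the ones that differ from $\cF$ at the two truncated depths, and these are exactly where the hexagon obstruction of \cite{MR3210716} applies (as flagged in the paragraph introducing Theorem \ref{thm:Enumerate} about weeds \ref{case:10}(7) and \ref{case:10}(8) requiring the hexagon obstruction). I expect the main obstacle to be this last step: solving the doubly one-by-one equation produces algebraic $q$ that must be cross-checked against both the branch factor equation $\omega_A + \omega_A^{-1} = (r + r^{-1} - 2)[n][n+2] - 2$ for the finitely many allowed roots of unity $\omega_A$, and against the hexagon relation, which requires setting up the relevant fusion data at the low depths and computing a trace identity. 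All computational checks would be recorded in {\tt Weeds.nb} alongside the analogous verifications for $\cD$.
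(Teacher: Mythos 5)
Your toolkit is essentially the paper's: a doubly one-by-one connection entry supplies the missing relative dimension $\dim(V^p_{2t+5,2})$, the resulting branch factor feeds Corollary \ref{cor:10Inequality} to give Inequality \eqref{eq:FPrime10inequality}, and an obvious-positivity check then forces $t\le 2$ (Lemma \ref{lem:FPrimeSmallT}). Up to this point you are on track.

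Where you go astray is the handling of the residual cases $t\le 2$, and especially the role of the hexagon obstruction. The paper never re-runs the enumerator for $\cF'$, and never needs to partition extensions into ``those inherited from $\cF$'' versus ``genuinely new'' ones --- that dichotomy plays no role. Instead, Proposition \ref{prop:FPrimeTIsZero} solves the quadratic tangles relation $\omega_A+\omega_A^{-1}=(r+r^{-1}-2)[n][n+2]-2$ for each of the finitely many roots of unity $\omega_A$ allowed by \cite[Theorem 5.1.11]{MR2972458}, and finds that for $t=1,2$ the index is at most $5$ (so these cases are eliminated by the existing classification to index $5$), while for $t=0$ the index is $3+\sqrt5$. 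The hexagon obstruction of \cite{MR3210716} is then not a filter on ``new'' behavior at the two truncated depths; it is the statement that any $3$-supertransitive $*10$ graph pair in which no depth-$6$ vertex of $\Gamma_+$ connects to both depth-$5$ vertices of $\Gamma_+$ must be the Haagerup pair. Every extension of $\cF'$ at $t=0$ satisfies this hypothesis (the depth-$6$ vertices of $\cF'_+$ each see only one depth-$5$ vertex) yet is visibly not the Haagerup pair, so the obstruction rules out $t=0$ uniformly, i.e.\ it proves $t>0$. Combining $t>0$ with $t\le 2$ and the index bound for $t=1,2$ finishes the proof. Running the enumerator here (which the paper only does later, for the weed $\cG$ via Lemma \ref{lem:EnumerateDifficult10}) is unnecessary.
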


For $\cF'$, we can no longer solve for all the relative dimensions; we have one unknown parameter: $\dim(V_{2t+5,2}^{p})$.
By the proof of \cite[Proposition 4.17]{MR2902285}, we still have the same doubly-one-by-one connection entry, corresponding to the loop
$(V^p_{2t+6,2}, V^p_{2t+5,1},V^d_{2t+4,},V^d_{2t+5,2})$.
This yields the following equation for our unknown parameter:
$$
\dim(V_{2t+5,2}^{p})
=
\frac{
-2-q^2-K q^2+q^6+a^2 \left(-q^{10}+q^{14}+2 q^{16}\right)
}{
2 a (-1+q) q^5 (1+q) \left(1+q^2\right)^2
}
$$
where $K>0$ is the positive square root of
\begin{align*}
K^2 
&=
5{+}4 q^2{+}6 q^4{+}4 q^6{+}q^8{+}a^2 \left({-}2 q^8{-}8 q^{10}{-}20 q^{12}{-}8 q^{14}{-}2 q^{16}\right){+}a^4 \left(q^{16}{+}4 q^{18}{+}6 q^{20}{+}4 q^{22}{+}5 q^{24}\right).
\end{align*}

\begin{remark}
As in Remark \ref{rem:11doublyOneByOneNegativeDimensions} in Section
\ref{sec:11WithDoubleOneByOne}, we get two solutions for
$\dim(V_{2t+5,2}^{p})$, where the other solution has $+K$ instead of $-K$.
This time, both formulas give positive values for $\dim(V_{2t+5,2}^{p})$, but
this second solution with $+K$ gives a negative value for
$$
\dim(V_{2t+5,1}^{p}) 
=
\frac{a^2 q^{10}-a q^5 \dim(V_{2t+5,2}^{p})  +1}{a q^5}
$$
when $q,a\geq 1$.
\end{remark}

Applying the relative branch factor technique, Corollary \ref{cor:10Inequality} 
tells us that the following inequality must be satisfied:
\begin{equation}
\label{eq:FPrime10inequality}
\frac{
4 \left(-1+a^2 q^{10}\right)^2 
\left(\alpha(a,q)-q^2 K\right)
\left(\alpha(-a,q)-q^2 K\right)
}{
a^2 (-1+q)^2 q^{10} (1+q)^2 
\left(\gamma(a,q)+K\right)
\left(\delta(a,q) - q^2 K\right)
}
\leq 0
\end{equation}
where
\begin{align*}
\alpha(a,q)
&=
-1-q^4+a \left(q^4+2 q^6-2 q^{10}-q^{12}\right)+a^2 \left(q^{12}+q^{16}\right)
\\
\gamma(a,q)
&=
-3-2 q^2-q^4+a^2 \left(q^8+2 q^{10}+3 q^{12}\right)
\\
\delta(a,q)
&=
-2-3 q^2-4 q^4-q^6+a^2 \left(q^{10}+4 q^{12}+3 q^{14}+2 q^{16}\right)
\end{align*}

It is straightforward to calculate that every factor in both the numerator and the denominator of Inequality \eqref{eq:FPrime10inequality} is positive for $q>1.567$ and $a\geq q^6$ (see {\tt Weeds.nb} for more details).
This immediately gives us the following lemma.

\begin{lem}
\label{lem:FPrimeSmallT}
Any subfactor with principal graphs a $2t$-translate of an extension of $\cF'$ must have $t\leq 2$.
\end{lem}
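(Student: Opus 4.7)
The plan is to combine the doubly one-by-one expression for the unknown relative dimension with the triple-single branch factor inequality (Corollary \ref{cor:10Inequality}) as in the setup preceding the lemma, and then verify that every factor appearing in the numerator and denominator of Inequality \eqref{eq:FPrime10inequality} is strictly positive in the range $q > 1.567$, $a \geq q^6$ (which is precisely $t \geq 3$). Since that inequality says the left-hand side must be $\leq 0$, this positivity yields a contradiction for $t \geq 3$, forcing $t \leq 2$.

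First I would check the range hypotheses. The condition $a \geq q^6$ is equivalent to $q^{2t} \geq q^6$, i.e., $t \geq 3$. The condition $q > 1.567$ comes from a graph-norm lower bound: every $2t$-translated extension of $\cF'$ contains $\cF'$ itself as a subgraph, and a direct computation of the largest eigenvalue of the adjacency matrix of $\cF'$ (or of a suitable subgraph such as a sufficiently long linear chain with the initial branch) shows $(q + q^{-1})^2 > (1.567 + 1.567^{-1})^2$ in this regime.

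The main computational step is showing positivity of each factor of \eqref{eq:FPrime10inequality} in the region $q > 1.567$, $a \geq q^6$. The purely polynomial factors, namely $(-1 + a^2 q^{10})^2$ and $a^2(q-1)^2 q^{10}(q+1)^2$, are manifestly positive. For the four factors involving the square root $K$, that is $\alpha(a,q) - q^2 K$, $\alpha(-a,q) - q^2 K$, $\gamma(a,q) + K$, and $\delta(a,q) - q^2 K$, I would proceed as follows. For a factor of the form $X + K Y$ with $Y > 0$ and $K > 0$, it suffices to verify $X > 0$; for a factor of the form $X - q^2 K$ with $X > 0$, it suffices to verify $X^2 - q^4 K^2 > 0$, which is a genuine polynomial inequality in $a$ and $q$ after substituting the explicit formula for $K^2$. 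Each of these polynomial conditions can then be checked via Definition \ref{defn:ObviouslyPositive} and Lemma \ref{lem:ObviouslyPositive}. In particular, $\alpha(a,q)$ and its companion $\alpha(-a,q)$ are obviously positive in the range, so both $\alpha(\pm a, q)^2 - q^4 K^2$ reduce to obviously positive polynomials; similarly for $\gamma(a,q)$ (which is obviously positive directly) and for $\delta(a,q)^2 - q^4 K^2$.

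With all factors strictly positive, the left-hand side of \eqref{eq:FPrime10inequality} is strictly positive, contradicting the inequality deduced from Corollary \ref{cor:10Inequality}. Hence no subfactor exists with principal graphs a $2t$-translated extension of $\cF'$ for $t \geq 3$. The main obstacle is purely bookkeeping: managing the eight (or so) polynomial positivity checks arising after rationalizing the $K$-involving factors, all of which are routine but tedious applications of the obviously positive criterion, and are carried out in the accompanying Mathematica notebook {\tt Weeds.nb}.
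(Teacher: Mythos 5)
Your proposal follows the same path as the paper's own proof: translate $t\geq 3$ into $a\geq q^6$, pair this with a lower bound on $q$, and then show every factor of Inequality~\eqref{eq:FPrime10inequality} is positive in that region (after rationalizing the $K$-involving factors into polynomial inequalities and applying the ``obviously positive'' criterion), contradicting the requirement that the left-hand side be $\leq 0$.

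There is one small but genuine flaw in your justification for the lower bound $q>1.567$. You attribute it to the graph norm of $\cF'$ itself (``a direct computation of the largest eigenvalue of the adjacency matrix of $\cF'$\dots shows $(q+q^{-1})^2 > (1.567+1.567^{-1})^2$''). In fact, solving $\|\cF'\| = q+q^{-1}$ only gives $q > 1.558$ --- that is the bound the paper uses when it later analyzes the $t=0$ case in Proposition~\ref{prop:FPrimeTIsZero}. The stronger bound $q>1.567$ comes from the graph norm of the $6$-translate of $\cF'$, which is the smallest graph in play once you assume $t\geq 3$. Your parenthetical about ``a sufficiently long linear chain with the initial branch'' is pointing in the right direction, but the main claim as written is numerically incorrect, and the distinction matters: the positivity verifications in \eqref{eq:FPrime10inequality} are performed in the region $q>1.567,\ a\geq q^6$, and it is not established that they hold in the larger region $q>1.558$. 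Fix the appeal to the correct translate and the proof is complete and identical in substance to the paper's.
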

\begin{proof}
Suppose we have such a subfactor with $t\geq 3$. Then by Corollary
\ref{cor:10Inequality}, Inequality \eqref{eq:FPrime10inequality} must hold. We
note that the $q$ for $\cF'$ translated by 6 is larger than 1.567. Now all the
factors in Equation \eqref{eq:FPrime10inequality} are positive when $a\geq
q^6$ and $q>1.567$. This shows that Inequality \eqref{eq:FPrime10inequality}
is never satisfied for the relevant range of $a\geq q^6$ and $q> 1.567$, a
contradiction. Hence $t\leq 2$.
\end{proof}

We now deal with the cases that $t\leq 2$ by hand as in \cite[Proposition
4.16]{MR2902285}, noting that there are only finitely many possibilities for
the rotational eigenvalue $\omega_A$ in Corollary \ref{cor:10Inequality}.

\begin{prop}
\label{prop:FPrimeTIsZero}
Any subfactor with principal graphs a translated extension of $\cF'$ either has
\begin{enumerate}[label=(\arabic*)]
\item $t=0$ and index $(q+q^{-1})^2 = 3+\sqrt{5}$, or
\item $t=1,2$ and index $(q+q^{-1})^2\leq 5$.
\end{enumerate}
\end{prop}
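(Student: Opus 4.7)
The plan is to parallel the argument for $\cF$ itself given in the proof of \cite[Proposition 4.16]{MR2902285}. By Lemma \ref{lem:FPrimeSmallT}, it suffices to treat $t \in \{0,1,2\}$. For each such $t$, set $n = 2t+4$; since the new low-weight rotational eigenvector $A$ lies in the $n$-box space, its rotational eigenvalue $\omega_A$ is necessarily a $2n$-th root of unity, so only finitely many values of $\omega_A$ need be checked in each case.

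For each pair $(t,\omega_A)$, I would combine the two available constraints. The doubly one-by-one entry of the connection displayed above expresses the single undetermined relative dimension $\dim(V^{p}_{2t+5,2})$ as an explicit algebraic function of $q$ and $a$ (involving the nested square root $K$). Substituting this into the formula for the branch factor $r$ renders $r + r^{-1} - 2$ an explicit algebraic function of $q$ and $a$. Plugging this into the branch factor equation
\[\omega_A + \omega_A^{-1} = (r + r^{-1} - 2)\,[n][n+2] - 2\]
from Corollary \ref{cor:10Inequality}, and using $a = q^{2t}$, produces a one-variable algebraic equation in $q$. Its real roots in the interval $[q_0(t),\infty)$ determined by the graph norm of the $2t$-translate of $\cF'$ list all possible values of $q$ compatible with the chosen $\omega_A$.

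The main nuisance, but not a serious obstacle, is the algebraic bookkeeping: one must rationalize $K$ carefully and then discard the spurious roots it introduces, using positivity of relative dimensions (as in the remark just after the definition of $K$) to pin down the correct branch. Performing this enumeration symbolically over the finitely many $(t,\omega_A)$ pairs in the accompanying {\tt Weeds.nb} should show that for $t=0$ the only admissible $q$ gives $(q+q^{-1})^{2}=3+\sqrt{5}$, while for $t \in \{1,2\}$ every admissible $q$ satisfies $(q+q^{-1})^{2}\leq 5$, establishing the proposition.
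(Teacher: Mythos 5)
Your strategy matches the paper's: restrict $t\leq 2$ via Lemma~\ref{lem:FPrimeSmallT}, substitute the doubly one-by-one determination of $\dim(V^p_{2t+5,2})$ (equivalently $D=\dim(V^p_{2t+5,1})$) into the branch factor, plug into Equation~\eqref{eq:QTEquation} with $a=q^{2t}$, and solve for $q$ in the range bounded below by the graph norm. The one substantive difference is the constraint you place on $\omega_A$: you only use that it is a $2n$-th root of unity (the easy observation from $\rho^{2n}=\mathrm{id}$; in fact $\omega_A^n=1$ already holds since $\omega_A$ is the eigenvalue of $\rho^2$). The paper instead invokes \cite[Theorem~5.1.11]{MR2972458}, which gives the much tighter condition that if $\omega_A$ has order $k$ then $2k\mid n=2t+4$. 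This sharply prunes the enumeration: for $t=0$ only $\omega_A\in\{1,-1\}$ survive, and for $t=1$ the value $\omega_A=-1$ is \emph{excluded}, leaving only $\omega_A\in\{1,e^{\pm 2\pi i/3}\}$. Under your weaker constraint you would also have to treat, e.g., $\omega_A=\pm i,\,\zeta_8^{\pm 1},\,\zeta_8^{\pm 3}$ at $t=0$ and $\omega_A=-1,\pm i,\zeta_{12}^{\pm 1},\dots$ at $t=1,2$, and show that each produces no admissible $q$. There is no a priori guarantee these extra cases are vacuous — they are ruled out only by the cited quadratic tangles theorem, not by the rotation period alone — so your argument as written has a gap unless you either invoke that theorem or verify the extra cases computationally. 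Aside from this, the bookkeeping you describe (rationalizing $K$, discarding the wrong branch by positivity of relative dimensions, and filtering by the graph-norm lower bound on $q$) is exactly what the paper does in Equation~\eqref{eq:FPrimeQTEquation} and the accompanying {\tt Weeds.nb} computation.
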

\begin{proof}
Using the notation $D=\dim(V^p_{2t+5,1})$
Equation \eqref{eq:QTEquation} simplifies to
\begin{equation}
\label{eq:FPrimeQTEquation}
\frac{f(a,q,\omega)}
{a^2 q^{10} \left(-1+a^2 q^8+a D \left(-q^3+q^5\right)\right) \left(-1+a^2 q^{12}+a D \left(q^5-q^7\right)\right)}
=0
\end{equation}
where
\begin{align*}
f(a,q,\omega)
=&
\left(a^2 q^8-1\right) \left(a^2 q^{12}-1\right) \left(a^2 q^{10}-\omega\right) \left(a^2 q^{10}-\omega^{-1} \right)
\\&
+a D q^5 \left(-1+a q^5 \left(D-a q^5\right)\right) \left(4+4 a^4 q^{20}+a^2 q^8 \left(\left(-1+q^2\right)^2 \left(\omega+\omega^{-1} \right)-2 \left(1+q^2\right)^2\right)\right)
\end{align*}
When $t=0,1,2$, $a=1,q^2,q^4$ respectively.
We note that we must have $q>1.558$, which comes from solving $\|\cF'\|=q+q^{-1}$.
Recall that if $\omega$ is a primitive $k$-th root of unity, then $2k | (2t+4)$ by \cite[Theorem 5.1.11]{MR2972458}.
Using the fact that Equation \eqref{eq:FPrimeQTEquation} is symmetric in $\omega$ and $\omega^{-1}$, it remains to look at the cases 
$$
(t,\omega)\in \{(0,1),(0,-1),(2,1),(2,e^{2\pi i/3}),(4,1),(4,-1),(4, i)\}.
$$
Now plugging each of these in to Equation \eqref{eq:FPrimeQTEquation} using $a=q^{2t}$, we can solve for $q$, and the claim follows. 
For example, when $\omega=-1$, for each $t\leq 2$, we solve Equation \eqref{eq:FPrimeQTEquation} to see $q<1.554$, which is too small.
Thus in the case $t=0$, we must have $\omega=1$, and solving for $q$ shows that the index is exactly $3+\sqrt{5}$.
Again, calculations can be viewed in {\tt Weeds.nb}.
\end{proof}

By the classification of subfactors with index at most 5
\cite{MR2914056,MR2902285,MR2993924,MR2902286,MR3335120}, we can rule out
$t=1$ and $t=2$. However, we need a different way to deal with $t=0$.  The
recent 3-supertransitive $*10$ obstruction of \cite{MR3210716} does the trick.

\begin{thm*}[\cite{MR3210716}]
\label{thm:ScottHexagon}
Suppose a subfactor has principal graphs $(\Gamma_+,\Gamma_-)$ an extension of
$$
\left(\bigraph{bwd1v1v1v1p1v1x0p0x1vduals1v1v1x2v},\bigraph{bwd1v1v1v1p1v1x0p1x0vduals1v1v1x2v}\right).
$$
If there is no vertex of $\Gamma_+$ at depth $6$ which connects to both vertices of $\Gamma_+$ at depth $5$, then 
$$
(\Gamma_+,\Gamma_-)=
\left(\bigraph{bwd1v1v1v1p1v1x0p0x1v1x0p0x1duals1v1v1x2v2x1},\bigraph{bwd1v1v1v1p1v1x0p1x0duals1v1v1x2}\right).
$$
\end{thm*}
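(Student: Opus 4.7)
The plan is to apply the quadratic tangles machinery at the initial triple point at depth $4$, together with a new planar identity (the ``hexagon identity'') that Morrison introduces specifically to exploit the absence of a depth-$6$ vertex bridging the two branches. Because the principal graph is assumed to be $3$-supertransitive with branch at depth $4$, the depth-$4$ box space $\cP_{4,+}$ contains $\cT\cL\cJ_{4,+}$ together with exactly one new low-weight element, which we normalize as $A = rP - Q$, where $P, Q$ are the two minimal projections at depth $5$ past the branch and $r = \Tr(Q)/\Tr(P)$ is the branch factor. Let $\omega_A$ be its rotational eigenvalue and set $\delta^2 = [B:A]$.

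First I would collect the known algebraic data: the quadratic tangles identity
\begin{equation*}
\omega_A + \omega_A^{-1} = (r + r^{-1} - 2)[5][7] - 2
\end{equation*}
from \cite[Theorem 5.1.11]{MR2972458}, together with the two-strand multiplication formula for $A$, which expresses $A \cdot A$ (and its rotations) as a linear combination of $A$, its Fourier transforms, and Temperley-Lieb basis elements, with coefficients that are rational in $r$, $\omega_A$, and $\delta$. From this data I would derive the hexagon identity: the trace of a cyclic product of three rotated copies of $A$ can be expanded in two ways, once by expanding in the depth-$6$ dual basis (where the coefficients are fusion multiplicities indexed by the depth-$6$ vertices of $\Gamma_+$ together with the two branch projections $P$ and $Q$), and once by repeatedly applying the quadratic tangles relations and the $\omega_A$-eigenvector property of $A$.

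Second I would translate the hypothesis into an algebraic condition. The nonexistence of a depth-$6$ vertex $Y$ connected to both depth-$5$ vertices says exactly that there is no common summand of $P \otimes X$ and $Q \otimes X$ at depth $6$, where $X$ is the generating $1$-morphism. This forces all cross terms in the depth-$6$ expansion of the hexagonal trace to vanish, so the fusion expansion collapses to a sum of two diagonal terms, one over the neighbors of $P$ alone and one over the neighbors of $Q$ alone. Equating this collapsed hexagon expression with the form computed from quadratic tangles yields a single polynomial equation in $\delta$, $r$, and $\omega_A$. Combined with the quadratic tangles identity, Ocneanu's triple point obstruction (Fact \ref{fact:TriplePointObstruction}), and the constraint that $\omega_A$ is a root of unity with $2\,\mathrm{ord}(\omega_A) \mid 10$, the system is over-determined, and I expect only one solution up to symmetry to survive, corresponding to a specific value of $r$, a specific $\omega_A$, and a specific algebraic $\delta^2$.

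Third, with $r$ and $\delta^2$ pinned down, I would read off the dimensions and duality structure at depths $5$ and $6$: the branch factor determines $\dim P / \dim Q$, the depth-$5$ duality structure is forced by Fact \ref{fact:DualityConstraint}, and the collapsed hexagon expansion shows that each of $P$ and $Q$ has exactly one neighbor at depth $6$ with dimension $1/\delta$ times its own. Stability at this depth, combined with Popa's stability result (Fact \ref{Fact:StabilityConstraint}) applied to $\Gamma_-$, forces both graphs to terminate exactly as depicted. The principal obstacle will be the first step: the hexagon identity itself is delicate to establish because the cyclic product of three rotated copies of $A$ involves three independent phase contributions from $\omega_A$ and an asymmetric planar arrangement, so bookkeeping the coefficients in both expansions — and verifying that the resulting polynomial actually over-constrains the system once the hypothesis is imposed — is where the real work lies.
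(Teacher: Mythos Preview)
This theorem is not proved in the paper; it is quoted from \cite{MR3210716} and invoked as a black box in the proofs of Theorems \ref{thm:NoFPrime} and \ref{thm:NoG} to eliminate the $t=0$ cases. There is no in-paper argument to compare against.

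Your outline nonetheless has systematic depth-counting errors that would derail it. For the displayed graph pair the initial triple point sits at depth $3$, so in the paper's conventions $n=4$: the projections $P,Q$ are the two vertices at depth $4$ (not $5$), the quadratic tangles identity is $\omega_A+\omega_A^{-1}=(r+r^{-1}-2)[4][6]-2$ rather than with $[5][7]$, and the root-of-unity constraint on $\omega_A$ comes from $n=4$ (forcing $\omega_A\in\{\pm 1\}$, as in the $t=0$ row of the case analysis in Proposition \ref{prop:FPrimeTIsZero}), not from $10$. More seriously, with $P,Q$ at depth $4$ the summands of $P\otimes X$ and $Q\otimes X$ lie at depths $3$ and $5$, not $6$, so your reformulation of the hypothesis as ``no common summand of $P\otimes X$ and $Q\otimes X$ at depth $6$'' is not the correct translation. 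The actual hypothesis concerns edges between depths $5$ and $6$, two steps past $P$ and $Q$; encoding it requires working in $\cP_{6,+}$ (equivalently, with the depth-$5$ projections and their neighbours), which is precisely where the new content of \cite{MR3210716} enters, and your sketch never reaches that box space.
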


\begin{proof}[Proof of Theorem \ref{thm:NoFPrime}]
Suppose we had a subfactor whose principal graph is a $2t$-translate of an
extension of $\cF'$. By Lemma \ref{lem:FPrimeSmallT}, $t\leq 2$. By
Proposition \ref{prop:FPrimeTIsZero} and the classification of subfactors to
index 5, we must have that $t=0$, but by \cite{MR3210716}, we have $t>0$, a
contradiction.
\end{proof}

\subsubsection{Another \texorpdfstring{$*10$}{*10} weed with undetermined relative dimensions}

We now tackle another difficult $*10$ weed. We are not able to completely
eliminate it at this time, but we can show any potential translated extension
must occur at index $\sim 5.3234$.

\begin{thm}
\label{thm:NoG}
Any subfactor with principal graphs a $2t$-translated extension of 
$$
\cG=\left(
\bigraph{bwd1v1v1v1p1v1x0p0x1v1x0p0x1p0x1p1x0v1x0x0x0p0x1x0x0p0x0x1x0p0x0x0x1v1x0x0x0p0x0x1x0p0x0x0x1p0x0x1x0p1x0x0x0p0x0x0x1p0x1x0x0vduals1v1v1x2v1x2x4x3v1x2x3x5x4x7x6},
\bigraph{bwd1v1v1v1p1v1x0p1x0v1x0p0x1v1x0p0x1p1x0p0x1v1x0x0x0p0x0x0x1p0x0x1x0vduals1v1v1x2v1x2v1x3x2}
\right)
$$
must have $t=1$, $\omega=1$, and $q$ is the root of $1-2 x^2-2 x^4-2 x^6-2 x^8-2 x^{10}+x^{12}$ which is about $1.72882$.
In this case, the principal graph is an extension of 
$$
\left(
\bigraph{bwd1v1v1v1v1v1p1v1x0p0x1v1x0p0x1p0x1p1x0v1x0x0x0p0x1x0x0p0x0x1x0p0x0x0x1v1x0x0x0p0x0x1x0p0x0x0x1p0x0x1x0p1x0x0x0p0x0x0x1p0x1x0x0vduals1v1v1v1x2v1x2x4x3v1x2x3x5x4x7x6},
\bigraph{bwd1v1v1v1v1v1p1v1x0p1x0v1x0p0x1v1x0p0x1p1x0p0x1v1x0x0x0p0x0x0x1p0x0x1x0vduals1v1v1v1x2v1x2v1x3x2}
\right),
$$
and the index is the root of $-4+15 x-8 x^2+x^3$ which is about $5.3234$.

(This graph is case \ref{case:10}(8) from Theorem \ref{thm:Enumerate}.)
\end{thm}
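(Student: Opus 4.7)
The plan is to follow exactly the strategy that worked for Theorem~\ref{thm:NoFPrime}, combining a doubly one-by-one connection entry with the triple-single branch factor inequality of Corollary~\ref{cor:10Inequality}, and then a finite case analysis for small translations.

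First I would solve for the relative dimensions of $\cG$ as functions of $q$ and $a=q^{2t}$ using the standard equations (trivial vertex has dimension $1$, duality identifies dimensions, neighbours of a known vertex sum to $[2]$ times its dimension). Because of the trivalent and quadrivalent structure near the branch, I expect one or two dimensions to remain free, just as happened for $\cF'$. I would then locate a loop in $\cO(\cG)$ which is the unique length-$4$ loop through a chosen pair of diagonally opposite vertices on each side, giving a doubly one-by-one connection entry. The Unitarity and Renormalization axioms of Definition~\ref{defn:Connection} then pin down the undetermined dimension(s) up to a sign of a square root $K>0$; as in Remark~\ref{rem:11doublyOneByOneNegativeDimensions}, the wrong branch will be excluded by requiring the other dimensions derived from it to be positive for $a,q\geq 1$.

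Next I would substitute these dimensions into the relative branch factor $r = \Tr(Q)/\Tr(P)$ at the initial triple point and feed it into Corollary~\ref{cor:10Inequality}, yielding an inequality
\begin{equation*}
-4 \;\leq\; (r + r^{-1} - 2)[n][n+2] - 4 \;\leq\; 0.
\end{equation*}
The left-hand inequality, after clearing denominators and the square root $K$ by multiplication by a conjugate, should become a rational function in $a$ and $q$ whose numerator and denominator factor into pieces that are obviously positive (in the sense of Definition~\ref{defn:ObviouslyPositive}) for $q$ above the graph norm of a modestly translated $\cG$ and for $a\geq q^{2t_0}$ for some explicit small $t_0$. Lemma~\ref{lem:ObviouslyPositive} then forces $t\leq t_0$, reducing the problem to finitely many small translations. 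All these calculations will go in a companion {\tt Weeds.nb} notebook.

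For each small $t$, I would invoke \cite[Theorem~5.1.11]{MR2972458}: the rotational eigenvalue $\omega_A$ must be a primitive $k$-th root of unity with $2k \mid (2t+4)$, giving only a handful of $(t,\omega)$ pairs. For each such pair, the quadratic tangles equation \eqref{eq:QTEquation} specialized via $a=q^{2t}$ becomes an explicit polynomial equation in $q$ (once the $K$ is cleared). Most pairs will either have no real root above $1$, or yield a $q$ smaller than the graph norm of the translate, or force a negative dimension; the claim is that the only survivor is $(t,\omega)=(1,1)$, which gives the polynomial $1-2x^2-2x^4-2x^6-2x^8-2x^{10}+x^{12}$ for $q$ and hence index $\sim 5.3234$, the root of $-4+15x-8x^2+x^3$.

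The main obstacle will be the bookkeeping introduced by the square root $K$: unlike the situation for $\cF'$, the relative branch factor here is likely to be an algebraic rather than rational function of $(a,q)$, so the ``obviously positive'' check must be applied to factors of the rationalized numerator and denominator. One has to be careful that rationalization does not introduce spurious sign changes, which means tracking the sign of each conjugate factor separately in the range $q>q_0$, $a\geq 1$. If any small $(t,\omega)$ case resists direct polynomial solution, I anticipate needing Morrison's hexagon obstruction \cite{MR3210716} (as in the proof of Theorem~\ref{thm:NoFPrime}) to eliminate it, particularly any $t=0$ case where supertransitivity is too small.
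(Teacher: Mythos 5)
Your overall architecture matches the paper's: doubly one-by-one connection entry to pin down an undetermined relative dimension, then Corollary~\ref{cor:10Inequality} to get a rational(ized) inequality in $a,q$, then a finite sweep over small $(t,\omega)$ pairs with Morrison's hexagon obstruction \cite{MR3210716} handling $t=0$. But there is a genuine gap in your mechanism for ruling out the low-index regime, and it is the step on which the whole argument actually hinges.

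Your plan assumes that the positivity of the branch factor inequality factors for $a\geq q^{2t_0}$ and $q$ above some graph norm directly forces $t\leq t_0$, analogously to Lemma~\ref{lem:FPrimeSmallT} for $\cF'$. For $\cG$, the paper shows those factors are positive only for $q>1.69684$ and $a\geq q^4$; it does \emph{not} claim that the graph norm of the $4$-translate of $\cG$ exceeds $1.69684+1.69684^{-1}$. Instead, the paper splits: if $q>1.69684$, the inequality is a contradiction; if $q\leq 1.69684$, then the index is at most $5.22658<5\tfrac14$ and one appeals to Lemma~\ref{lem:EnumerateDifficult10} --- a purely combinatorial enumeration result stating that no subfactor of index at most $5\tfrac14$ has principal graph extending $\cG$. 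The same enumeration lemma is invoked again for $(t,\omega)=(1,e^{\pm2\pi i/3})$, where the computed index is about $5.24994$. Your proposed fallbacks (``$q$ smaller than the graph norm of the translate'' or ``forces a negative dimension'') are not what the paper uses for these cases and would need to be verified independently; you give no reason to believe they succeed. Without some replacement for Lemma~\ref{lem:EnumerateDifficult10} (or an explicit computation showing the $4$-translate's graph norm already exceeds $1.69684+1.69684^{-1}$), your reduction to small $t$ is incomplete, and the $(1,e^{\pm2\pi i/3})$ case is not eliminated. One more minor point: you suggest the square root $K$ makes the $\cG$ computation qualitatively harder than $\cF'$, but the $\cF'$ computation in the paper also produces a square root $K$ from its doubly one-by-one entry, so this is not a new feature of $\cG$; the ``obviously positive'' bookkeeping must be done directly on the $K$-containing factors in both cases.
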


\begin{cor}
There is no finite depth subfactor with principal graphs a translated extension of $\cG$.
\end{cor}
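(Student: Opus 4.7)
The plan is to combine Theorem \ref{thm:NoG} with the classical fact that the index of a finite depth subfactor must be a cyclotomic integer. By \cite{MR1266785,MR2183279}, the standard invariant of a finite depth subfactor is a fusion category, and its index coincides with the Frobenius--Perron dimension of the associated regular object, which by cyclotomicity of FP-dimensions must lie in some cyclotomic field $\bbQ(\zeta_n)$. Combined with Theorem \ref{thm:NoG}, any such subfactor would have to realise an index equal to the largest real root $\alpha \approx 5.3234$ of
$$
p(x) = x^3 - 8x^2 + 15x - 4.
$$
So it suffices to show that $\alpha$ is not a cyclotomic integer.

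First I would verify that $p(x)$ is irreducible over $\bbQ$: the rational root test leaves only the candidates $\pm 1, \pm 2, \pm 4$, and one checks directly that $p(\pm 1), p(\pm 2), p(\pm 4) \neq 0$. Hence $[\bbQ(\alpha):\bbQ] = 3$. Next, by the Kronecker--Weber theorem, $\alpha$ is a cyclotomic integer if and only if $\bbQ(\alpha)$ is an abelian extension of $\bbQ$. Since $\bbQ(\alpha)$ has prime degree $3$, being abelian is equivalent to being Galois, which for a cubic field is in turn equivalent to the discriminant of its minimal polynomial being a perfect square in $\bbQ$.

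A direct computation (depressing the cubic by $x \mapsto y + 8/3$ yields $y^3 - \tfrac{19}{3}y - \tfrac{52}{27}$, and then $-4p^3 - 27q^2 = 916$) gives $\mathrm{disc}(p) = 916 = 2^2 \cdot 229$, which is manifestly not a perfect square. Therefore the Galois group of $p$ is $S_3$, the splitting field of $p$ has degree $6$ over $\bbQ$, and $\bbQ(\alpha)$ is not contained in any cyclotomic field. Hence $\alpha$ is not a cyclotomic integer, and no finite depth subfactor can have index $\alpha$; combined with Theorem \ref{thm:NoG} this proves the corollary.

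There is no serious obstacle in this argument: Theorem \ref{thm:NoG} already does all the planar-algebraic and combinatorial work of narrowing the index down to a single cubic irrationality, and what remains is only a short discriminant calculation plus an invocation of the cyclotomicity obstruction of \cite{MR1266785,MR2183279}. The only point requiring a little care is ensuring the discriminant arithmetic is correct, since a perfect-square discriminant would instead leave open the possibility of a cyclic cubic subfield of a cyclotomic field and would require further analysis.
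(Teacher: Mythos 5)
Your proof is correct and follows the same approach as the paper, which simply asserts without further justification that the root $\alpha \approx 5.3234$ of $x^3 - 8x^2 + 15x - 4$ is not a cyclotomic integer. You have supplied the missing arithmetic: the cubic is irreducible (rational root test), its discriminant is $916 = 2^2 \cdot 229$ (I verified: $18bcd - 4b^3d + b^2c^2 - 4c^3 - 27d^2 = 8640 - 8192 + 14400 - 13500 - 432 = 916$), which is not a perfect square, so the Galois group is $S_3$, $\bbQ(\alpha)/\bbQ$ is not Galois and a fortiori not abelian, hence $\alpha$ cannot lie in any cyclotomic field. One tiny remark on exposition: only the implication ``$\alpha$ cyclotomic $\Rightarrow \bbQ(\alpha)$ abelian'' is used here, and that direction is the elementary one (subfields of abelian extensions are abelian); Kronecker--Weber is the converse and is not actually needed for this argument, though invoking it is harmless.
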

\begin{proof}
The root of $-4+15 x-8 x^2+x^3$ which is about $5.3234$ is not a cyclotomic integer.
\end{proof}

In fact, this weed can be completely ruled out up to index $5\frac{1}{4}$ only
using enumeration! Of course this lemma is sufficient for the main result of
this article, but we can get the much stronger result of Theorem \ref{thm:NoG}
with some more work.

\begin{lem}
\label{lem:EnumerateDifficult10}
A subfactor with index at most $5\frac{1}{4}$ cannot have principal graphs a translated extension of $\cG$.
\end{lem}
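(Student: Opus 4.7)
The plan is to apply the graph enumerator of Section \ref{sec:orderly} directly to the weed $\cG$ with index cutoff $5\tfrac14$, and verify that the output is empty. Because the graph norm of a translation $\cG_{(2t)}$ is strictly increasing in $t$ (adding initial type-$A$ tail vertices increases the Frobenius--Perron eigenvalue), only finitely many translations have norm squared at most $5\tfrac14$, and these provide the only starting points the enumerator needs to consider.

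First I would compute $\|\cG_{(2t)}\|^2$ for $t=0,1,2,\dots$ until it exceeds $5\tfrac14$. Each value of $t$ beyond that cutoff is immediately excluded, since any subfactor extension must have index at least the squared graph norm of its principal graph. I expect this to leave only a small handful of admissible translations (likely $t \in \{0,1,2\}$, consistent with the hypotheses of Theorem \ref{thm:NoG}, which identifies a single candidate at $t=1$ with index $\approx 5.3234 > 5\tfrac14$).

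Next, for each surviving $t$, I would invoke the enumerator on $\cG_{(2t)}$ as input, with the index bound set to $5\tfrac14$. By Theorem \ref{thm:exhaustive}, the output of this run contains a representative of every isomorphism class of PGP that is a valid extension of $\cG_{(2t)}$ with index in the allowed range and satisfies the PGP associativity, triple point, and duality constraints. The claim is then verified by observing that the enumerator produces no such PGP (equivalently, that every branch of the search tree terminates without reaching an admissible principal graph). This is a purely mechanical check, to be performed in the accompanying \texttt{Mathematica} notebook.

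The only potential obstacle is efficiency: the weed $\cG$ has several vertices at the working depth and non-trivial duality structure, so the search tree could be large. However, the combinatorial bounds from the PGP associativity and triple point obstructions (in particular, the combinatorial dimension relation applied at the non-initial triple points present in $\cG$) together with the tight index cutoff should prune it quickly. If the naive run is too slow, I would exploit the fact that $\cG$ already has depth sufficient to force associativity checks between many pairs of vertices at the penultimate depth, cutting off many branches immediately. Once the enumeration is complete and found to be empty, the lemma follows by Theorem \ref{thm:exhaustive}. \qed
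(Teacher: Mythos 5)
Your approach --- verify the claim by running the graph enumerator rooted at $\cG$ with index cutoff $5\frac14$ --- is exactly what the paper does; its ``proof'' is a bare reference to a section of the bundled {\tt Mathematica} notebook that performs this computation. Theorem \ref{thm:exhaustive} does give the exhaustivity you cite, and the pruning mechanisms you name (the PGP associativity and triple-point constraints, the depth-dependent associativity inequality) are the correct reasons to expect the search to terminate.

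However, there is one step in your write-up that is not obviously sound: computing $\|\cG_{(2t)}\|^2$ for $t=0,1,2,\dots$ ``until it exceeds $5\frac14$'' presumes this will happen. The norms $\|\cG_{(2t)}\|$ increase monotonically in $t$, but they converge (to the norm of the graph with an infinite initial $A$-chain), and there is no a priori reason the limit exceeds $\sqrt{5\frac14}$. Indeed, the proof of Theorem \ref{thm:NoG} invokes Lemma \ref{lem:EnumerateDifficult10} for \emph{all} $t\geq 2$ after first bounding the index by $5.22658<5\frac14$ via the quadratic-tangles inequality --- a step that would be superfluous if the graph-norm of $\cG_{(2t)}$ itself cut off large $t$ below $5\frac14$. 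So your step~1 cannot be relied on to terminate.

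The fix, implicit in the way the paper handles the analogous weed $\cA$ in Lemma \ref{lem:A-then-B} (where the enumerator is run only on $\cA_{(0)}$ yet the conclusion covers every $\cA_{(2t)}$), is that every PGP constraint except the index cutoff is \emph{local to the head}: associativity, the triple-point obstruction (together with the combinatorial dimension relation), and the duality constraint depend only on the neighbourhoods and duals of vertices near and beyond the branch, not on the length of the initial $A$-chain. Hence any valid extension of $\cG_{(2t)}$ with index $\leq 5\frac14$ translates down to a valid extension of $\cG_{(0)}$ with index $\leq 5\frac14$. A single run of the enumerator on $\cG_{(0)}$ with cutoff $5\frac14$ therefore suffices; observing that this run returns nothing proves the lemma for all $t$ simultaneously. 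You still need to verify, as you note, that this one run terminates --- but you should not (and need not) lean on the graph norm of $\cG_{(2t)}$ eventually exceeding $5\frac14$.
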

\begin{proof}
See the section titled \emph{The \ref{case:10}(8) weed from Theorem 3.1} in the {\tt Mathematica} notebook {\tt enumerator.nb}.
\end{proof}

Together with the quadratic tangles technique and the main result of
\cite{MR3210716} (Theorem \ref{thm:ScottHexagon}), Lemma
\ref{lem:EnumerateDifficult10} actually reduces $\cG$ to the single case left
in Theorem \ref{thm:NoG}. Again, a quadratic tangles argument shows us that
$t\leq 2$, and Theorem \ref{thm:ScottHexagon} says $t>0$. However, the upper
bounds for $q$ when $t=2$ or when $t=1$ and $\omega=e^{\pm 2\pi i/3}$ are
between $5$ and $5\frac{1}{4}$, so Lemma \ref{lem:EnumerateDifficult10} does
the remaining work. While we could just ignore $\cG$ for now due to index
considerations and Lemma \ref{lem:EnumerateDifficult10}, we will perform the
extra analysis required to reduce $\cG$ to the one remaining case for future
enumeration considerations.

As in the previous subsection, there are undetermined relative dimensions, in
particular $\dim(V^{p}_{6,4})$.  (In this case, there is actually more than
one undetermined dimension, but we need only determine one of them!) Luckily,
there is a doubly-one-by-one entry of the connection corresponding to the loop
$(V^p_{6,3},V^p_{5,2},V^d_{4,1},V^d_{5,1})$.

This gives us the following formula for one of our undetermined dimensions:
$$
\dim(V^{p}_{6,4})=
\frac{-2+4 q^2+q^4+3 q^6+q^8+K \left(-1+q^2-q^4\right)+a^2 \left(-q^8-3 q^{10}-q^{12}-4 q^{14}+2 q^{16}\right)}{2 a (-1 + q) q^4 (1 + q) (1 + q^2)^3}
$$
where $K>0$ is the positive square root of
$$
K^2 = 
4+4 q^4+4 q^6+q^8+a^2 \left(-4 q^{10}-18 q^{12}-4 q^{14}\right)+a^4 \left(q^{16}+4 q^{18}+4 q^{20}+4 q^{24}\right).
$$

\begin{remark}
As in Remark \ref{rem:11doublyOneByOneNegativeDimensions}, the other solution 
for $\dim(V^p_{6,4})$ with $-K$ instead of $K$ is impossible, since it is 
always negative.
\end{remark}

Again, applying the relative branch factor technique, Corollary 
\ref{cor:10Inequality} tells us that the following inequality must be satisfied:

\begin{equation}
\label{eq:G10inequality}
\frac{4 \left(-1+a^2 q^{10}\right)^2 
\left( 
\alpha(a,q)-q^2K
\right)
\left( 
\alpha(-a,q)-q^2K
\right)
}{
a^2 (-1+q)^2 q^{10} (1+q)^2 
\left( 
\gamma(a,q)+K
\right)
\left( 
\delta(a,q)-q^2K
\right)
}
\leq 0
\end{equation}
where 
\begin{align*}
\alpha(a,q) &=
-1+q^2-q^4+a \left(-q^4-2 q^6+2 q^{10}+q^{12}\right)+a^2 \left(q^{12}-q^{14}+q^{16}\right)
\\
\gamma(a,q) &=
-4-2 q^2-q^4+a^2 \left(q^8+2 q^{10}+4 q^{12}\right)
\\
\delta(a,q) &=
-2-2 q^2-4 q^4-q^6+a^2 \left(q^{10}+4 q^{12}+2 q^{14}+2 q^{16}\right)
\end{align*}

\begin{proof}[Proof of Theorem \ref{thm:NoG}]
Similar to the weed $\cF'$, all the factors in Inequality
\eqref{eq:G10inequality} are positive for $q>1.69684$ and $a\geq q^4$. Hence
Inequality \eqref{eq:G10inequality} is never satisfied for $q$ this large. For
$q\leq 1.69684$, we note that $(1.69684+1.69684^{-1})^2 =5.22658<
5\frac{1}{4}$. Thus Lemma \ref{lem:EnumerateDifficult10} actually eliminates
all $2t$-translated extensions of $\cG$ with $t\geq 2$!

Again, the case $t=0$ is ruled out by \cite{MR3210716}, so the only remaining
case is $t=1$. Similar to the proof of Proposition \ref{prop:FPrimeTIsZero},
we need only check $\omega=1,e^{\pm2\pi i/3}$ by \cite[Theorem
5.1.11]{MR2972458}. When $\omega=e^{\pm2\pi i/3}$, we calculate that
$(q+q^{-1})^2\approx 5.24994<5\frac{1}{4}$, so Lemma
\ref{lem:EnumerateDifficult10} eliminates this case as well. However, when
$\omega=1$, we have $(q+q^{-1})^2$ is the root of $-4+15 x-8 x^2+x^3$ which is
approximately $5.3234$. We cannot yet rule out this case, but its index is too
large for our current goal of $5\frac{1}{4}$.
\end{proof}

\subsection{Ruling out the remaining \texorpdfstring{$*10$}{*10} weeds}
\label{sec:10}

Suppose we have a weed with annular multiplicities $*10$. When the relative
branch factor $r$ can be determined in terms of $a$ and $q$, and $r(a,q)$ is not
identically 1, then we can typically use Corollary \ref{cor:10Inequality} to
bound the translation $t<t_0$. For the remaining small values of $t$, as in
Proposition \ref{prop:FPrimeTIsZero}, we can find a new smaller upper bound
for the index. Re-running the enumerator to this smaller index is often
possible, which allows us to completely rule out the weed.

This method was applied to the weed $\cC$ in  \cite[Theorem 4.10]{MR2902285}, 
which appears in part \ref{case:10} of Theorem \ref{thm:Enumerate}:
$$
\cC=
\left(
\bigraph{bwd1v1v1v1p1v1x0p1x0v1x0v1p1vduals1v1v1x2v1v}, 
\bigraph{bwd1v1v1v1p1v1x0p0x1v1x0p1x0p0x1v1x0x0p0x0x1vduals1v1v1x2v1x3x2v}
\right).
$$
A similar argument rules out 3 of the remaining $*10$ weeds.

\begin{thm}
\label{thm:OnlyS4S5}
The only subfactor principal graph which is a translated extension of 
$$
\left(
\bigraph{bwd1v1v1v1p1v1x0p0x1v1x0p1x1v1x0vduals1v1v1x2v1x2v}, 
\bigraph{bwd1v1v1v1p1v1x0p1x0v1x0p1x0p0x1v1x0x0vduals1v1v1x2v1x2x3v}
\right)
$$
is the $S_4\subset S_5$ subfactor principal graph.
\end{thm}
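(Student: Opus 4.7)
The plan is to follow the same template used in \cite[Theorem 4.10]{MR2902285} for the weed $\cC$ treated just above, pushing it through with the relative branch factor computation appropriate to this weed. First I would solve for the relative dimensions of all vertices of the weed as rational functions of $q$ and $a = q^{2t}$. Since this weed has no undetermined parameters of the kind encountered in Sections \ref{sec:11WithDoubleOneByOne} and \ref{sec:10WithDoubleOneByOne} (each branch vertex has enough equations from duality and $[2]\dim(v)=\sum \dim(w)$), all relative dimensions should be determined outright, and in particular the branch factor $r(a,q) = \Tr(Q)/\Tr(P)$ will be an explicit rational function. A direct check will show that $r(a,q)$ is not identically $1$.

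Next I would apply the triple-single branch factor inequality (Corollary \ref{cor:10Inequality}), which gives
\[
-4 \leq (r + r^{-1} - 2)[n][n+2] - 4 \leq 0.
\]
Substituting the explicit $r(a,q)$ and clearing denominators produces a rational function $p_\Gamma(a,q)$ which must be $\leq 0$, and which I would show is obviously positive (in the sense of Definition \ref{defn:ObviouslyPositive}) for all $q > q_0$ and $a \geq q^{2t_0}$ for some explicit small integer $t_0$, where $q_0 + q_0^{-1}$ is the graph norm of the $2t_0$-translate. This rules out all translated extensions with translation parameter $t \geq t_0$ in one stroke. I expect $t_0 \leq 2$, since the graph norm here is already very close to $\sqrt{5}$.

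For the finitely many remaining small translations, I would use the sharp form of Equation \eqref{eq:QTEquation}, noting that by \cite[Theorem 5.1.11]{MR2972458} the rotational eigenvalue $\omega_A$ must be a root of unity whose order divides $2(2t+4)$. For each pair $(t,\omega_A)$ in the short resulting list, Equation \eqref{eq:QTEquation} becomes a single polynomial equation in $q$, giving either an explicit value of the index or an immediate contradiction with the graph norm lower bound. I expect that every case other than $(t,\omega_A) = (0,1)$ yields either a non-allowed index or an index below $q_0^2 + q_0^{-2} + 2$, and can therefore be discarded. The case $(0,1)$ will produce index exactly $5$, and the candidate principal graph at this supertransitivity is the $S_4 \subset S_5$ principal graph.

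Finally, to conclude that there is a unique subfactor realizing this graph pair I would invoke the existing classification of subfactors of index at most $5$ \cite{MR2914056,MR2902285,MR2993924,MR2902286,MR3335120}, which already lists $S_4 \subset S_5$ as the unique subfactor planar algebra with this principal graph. The main obstacle I anticipate is purely computational: checking that $p_\Gamma(a,q)$ is obviously positive may require factoring into irreducible components and handling each factor separately, and the small-$t$ case analysis may involve several distinct $\omega_A$ which must each be eliminated by hand. Neither step is conceptually difficult, but both are delicate enough that I would perform them inside the \texttt{Weeds.nb} Mathematica notebook alongside the analogous computations for $\cF'$ and $\cG$.
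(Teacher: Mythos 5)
Your strategy matches the paper through the elimination of large translations, but the way you close out the small-translation cases is genuinely different from what the paper does, and the paper's route is both simpler and more self-contained.

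For $t \geq 1$ your plan and the paper coincide: compute the relative branch factor $r(a,q)$ explicitly, substitute into Corollary \ref{cor:10Inequality}, and observe that every irreducible factor is obviously positive in the sense of Definition \ref{defn:ObviouslyPositive}. The paper achieves this already at $t_0 = 1$ (not $t_0 = 2$ as you guess), so only $t=0$ survives. For the remaining case the paper does not return to the quadratic tangles equation at all. Instead it observes that this weed is \emph{stable} in the sense of the Stability Constraint \ref{Fact:StabilityConstraint}, so any extension must be finite and end in type-$A$ tails; then it applies \cite[Proposition 1.17]{MR3306607}, which bounds those tails by the length of the initial arm (here 3), leaving exactly two candidate extensions. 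The shorter one is killed by elementary obstructions (non-cyclotomic index, a vertex of dimension in $(1,2)$ not of the form $2\cos(\pi/k)$), and the other is the $S_4 \subset S_5$ graph. This argument is purely combinatorial and number-theoretic and never invokes the index-5 classification.

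Your alternative for $t=0$ — parametrize $\omega_A$ over the finitely many admissible roots of unity, solve Equation \eqref{eq:QTEquation} for $q$ in each case, and then appeal to the completed classification of subfactors of index at most 5 — would also work, and is exactly the template used for the weeds $\cF'$ and $\cG$ in Section \ref{sec:10WithDoubleOneByOne}. The tradeoffs are: your route imports the index-5 classification as a black box (a much heavier dependency than stability plus a tail-length bound), whereas the paper's route is logically lighter but requires recognizing the stability of the weed. One further small gap in your write-up: knowing the index is exactly 5 does not by itself identify the principal graph as $S_4 \subset S_5$ — that inference must go through either an enumeration of extensions at index 5 or the appeal to the index-5 classification, which you do eventually make, so the logic closes; but the phrase ``the candidate principal graph at this supertransitivity is the $S_4 \subset S_5$ principal graph'' overstates what the QT computation alone delivers.
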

\begin{proof}
One computes that for the 2-translate, the relative branch factor is given by
$$
r(a,q)=
\frac{q^2 \left(-2-q^2-q^4+a^2 q^{12}+a^2 q^{14}+2 a^2 q^{16}\right)}{\left(1+q^4\right) \left(-1+a^2 q^{16}\right)}
$$
so by Corollary \ref{cor:10Inequality}, we must have
$$
\frac{\left(a^2 q^{14}-1\right)^2  \left(1-2 q^2-q^6+a (-q^6-2 q^{10}+q^{12})\right) \left(-1+2 q^2+q^6+a (-q^6-2 q^{10}+q^{12})\right)}{a^2 (-1+q)^2 q^{14} (1+q)^2 \left(1+q^4\right) \left(-2-q^2-q^4+a^2 (q^{12}+ q^{14}+2 q^{16})\right)}\leq 0.
$$
However, a simple calculation shows that every factor in both the numerator
and the denominator above is obviously positive (see {\tt Weeds.nb} for more
details). This rules out all $2t$-translated extensions of the above weed when
$t\geq 1$.

We now consider the case $t=0$. Notice that this weed is actually stable, so
that any extension must end with $A_{\text{finite}}$ tails. By
\cite[Proposition 1.17]{MR3306607}, these tails must not be longer than the
initial arm of length $3$, so there are only two possible extensions. The
trivial extension is not possible for a number of reasons. For example, the
index is not a cyclotomic integer, and there is a vertex dimension between 1
and 2 which is not of the form $2\cos(\pi/k)$ for $k\geq 3$. The only other
possibility is the $S_4\subset S_5$ principal graph.
\end{proof}

\begin{thm}
\label{thm:NoOtherHexagonWeeds}
No subfactor has principal graphs a translated extension of either of
\begin{align*}
&\left(\bigraph{bwd1v1v1v1p1v1x0p0x1v1x0p1x1p0x1v1x0x0vduals1v1v1x2v1x2x3v}, 
\bigraph{bwd1v1v1v1p1v1x0p1x0v1x0p1x0p0x1p0x1v1x0x0x0vduals1v1v1x2v1x2x4x3v}
\right)
\\
&\left(\bigraph{bwd1v1v1v1p1v1x0p0x1v1x0p1x1p0x1v1x0x0vduals1v1v1x2v1x2x3v}, 
\bigraph{bwd1v1v1v1p1v1x0p1x0v1x0p1x0p0x1p0x1v1x0x0x0vduals1v1v1x2v1x2x3x4v}\right)
\end{align*}
\end{thm}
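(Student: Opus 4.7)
The plan is to mirror the strategy used in Theorem \ref{thm:OnlyS4S5}. For each of the two weeds, I would first compute the relative branch factor $r(a,q)$ as a rational function of $q$ and $a = q^{2t}$ by solving the linear system of dimension equations (trivial vertex normalization, duality, and the Frobenius--Perron eigenvector equation $[2]\dim(v) = \sum_{w \sim v} \dim(w)$ at each interior vertex). Since both weeds share the same underlying graph pair and differ only in the dual involution on $\Gamma_-$, the relative dimensions on $\Gamma_+$ will coincide between the two cases, while the two relative branch factors may differ slightly; in practice a single analysis will dispatch both.

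Substituting $r(a,q)$ into the triple-single inequality of Corollary \ref{cor:10Inequality} and clearing denominators produces a rational function $p(a,q)$ which must be nonpositive. I would verify by direct calculation (in the accompanying \texttt{Weeds.nb} notebook) that every irreducible factor of both numerator and denominator is \emph{obviously positive} in the sense of Definition \ref{defn:ObviouslyPositive} for $q$ above the Frobenius--Perron eigenvalue of the weed and for $a \geq q^2$. By Lemma \ref{lem:ObviouslyPositive} this rules out all $2t$-translates with $t \geq 1$.

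For the remaining case $t = 0$, both weeds are 3-supertransitive extensions of the initial hexagon pair to which Theorem \ref{thm:ScottHexagon} (Morrison's hexagon obstruction of \cite{MR3210716}) applies. One inspects that the unique depth-6 vertex of $\Gamma_+$ attaches to only one of the three depth-5 vertices, so in particular it does not connect to both neighbours of any depth-4 vertex; Theorem \ref{thm:ScottHexagon} then forces $(\Gamma_+, \Gamma_-)$ to equal the tiny graph pair $\left(\bigraph{bwd1v1v1v1p1v1x0p0x1v1x0p0x1duals1v1v1x2v2x1},\bigraph{bwd1v1v1v1p1v1x0p1x0duals1v1v1x2}\right)$, which is not an extension of either weed in question, a contradiction. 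If a subtle mismatch with the hypothesis of Theorem \ref{thm:ScottHexagon} were to prevent its direct use, the remaining single candidate graph pair could instead be eliminated via the formal codegree obstruction of Section \ref{sec:FormalCodegrees}, consistent with the summary given earlier in the paper for weeds \ref{case:10}(3)--(4).

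The main obstacle is the explicit obvious-positivity check: the Laurent polynomials obtained from $p(a,q)$ after factoring are substantial, and each factor must be certified individually. A subsidiary technical point is confirming that the pair of weeds really collapses to a single candidate at $t=0$ under matching of dual data, so that a single hexagon (or formal codegree) calculation suffices to close out both cases simultaneously.
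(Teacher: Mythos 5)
Your proposal has several genuine gaps.

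\textbf{The hexagon obstruction does not apply.} Both weeds in Theorem \ref{thm:NoOtherHexagonWeeds} contain an \emph{initial hexagon}: they are built on the pair \ref{case:10}(3)/(4), which has a depth-$6$ vertex on $\Gamma_+$ connected to both depth-$5$ vertices (this is the ``$1x1$'' in the adjacency). Theorem \ref{thm:ScottHexagon} is conditional: it requires that there be \emph{no} depth-$6$ vertex attached to both depth-$5$ vertices, and only under that hypothesis does it force $(\Gamma_+,\Gamma_-)$ to be the small pair. These weeds fail the hypothesis, so the theorem gives no contradiction. It is the ``hexagon-free'' weeds $\cF'$ (\ref{case:10}(7)) and $\cG$ (\ref{case:10}(8)) that are handled by the hexagon obstruction; the weeds here are the opposite case. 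Your inspection of ``the unique depth-$6$ vertex of $\Gamma_+$'' and ``the three depth-$5$ vertices'' does not match the actual depth structure (depth $5$ has two vertices, depth $6$ has three).

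\textbf{The branch factor inequality does not reach $t\geq 1$.} You assert, by analogy with Theorem \ref{thm:OnlyS4S5}, that the obvious-positivity check succeeds for $a\geq q^2$. In fact the paper's computation for these weeds only rules out $t\geq 2$, so the translate $t=1$ survives the inequality. Your proposal then silently drops the $t=1$ case. This is precisely the case that produces the interesting candidate: the paper shows both weeds are \emph{stable}, so by the stability constraint any extension must end with $A_{\text{finite}}$ tails bounded in length by the initial arm (using \cite[Proposition 1.17]{MR3306607}); a cyclotomicity check then isolates the single surviving graph $\cX$, arising at $t=1$ with stable extension by $3$. That graph is killed by the formal codegree obstruction of Theorem \ref{thm:NoH}. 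Your fallback invocation of the formal codegree obstruction at $t=0$ therefore targets the wrong translate.

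In short: the structure of the argument should be (i) branch factor inequality to get $t\leq 1$ (not $t=0$), (ii) stability plus the arm-length bound plus a cyclotomicity computation to reduce $t\in\{0,1\}$ to the single graph $\cX$, and (iii) the formal codegree obstruction for $\cX$. The hexagon obstruction plays no role here.
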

\begin{proof}
In fact, these two weeds can be ruled out simultaneously. Each one has the
same relative branch factor. Similar to the proof of Theorem
\ref{thm:OnlyS4S5}, we can rule out all $2t$-translated extensions for $t\geq
2$ by showing that Corollary \ref{cor:10Inequality} cannot hold. The
interested reader can view this calculation in {\tt Weeds.nb}.

We must now consider the case that $t\leq 1$. Again, these weeds are stable,
so any extension must end with $A_{\text{finite}}$ tails, and by
\cite[Proposition 1.17]{MR3306607}, the tail cannot be longer than the initial
arm. A simple calculation shows that the norm squared is only a cyclotomic
integer when $t=1$ and we extend stably by 3. At this point, we look at the
common graph for both graph pairs:
$$
\cX=\bigraph{bwd1v1v1v1v1v1p1v1x0p0x1v1x0p1x1p0x1v1x0x0v1v1v1duals1v1v1v1x2v1x2x3v1v1}\,.
$$
This graph cannot be the principal graph of a subfactor by Theorem \ref{thm:NoH} in the next section.
\end{proof}

Now Theorems \ref{thm:NoB}, \ref{thm:NoAMP} (with \cite{1502.00035}),
\ref{thm:r=1}, \ref{thm:NoFPrime}, \ref{thm:NoG}, \ref{thm:OnlyS4S5},
\ref{thm:NoOtherHexagonWeeds}, and \cite[Theorem 4.10]{MR2902285} together
rule out all the $*10$ weeds in part \ref{case:10} of Theorem
\ref{thm:Enumerate}.

\subsection{Ruling out a graph with formal codegrees}
\label{sec:FormalCodegrees}

In this section, we use formal codegrees to rule out the graph
$$
\cX=
\begin{tikzpicture}[baseline=-.1cm]
\draw[fill] (0,0) circle (0.05) node [below] {\scriptsize{$1$}};
\draw (0.,0.) -- (1.,0.);
\draw[fill] (1.,0.) circle (0.05);
\draw (1.,0.) -- (2.,0.);
\draw[fill] (2.,0.) circle (0.05) node [below] {\scriptsize{$\jw{2}$}};
\draw (2.,0.) -- (3.,0.);
\draw[fill] (3.,0.) circle (0.05);
\draw (3.,0.) -- (4.,0.);
\draw[fill] (4.,0.) circle (0.05) node [below] {\scriptsize{$\jw{4}$}};
\draw (4.,0.) -- (5.,0.);
\draw[fill] (5.,0.) circle (0.05);
\draw (5.,0.) -- (6.,-0.25);
\draw (5.,0.) -- (6.,0.25);
\draw[fill] (6.,-0.25) circle (0.05) node [below] {\scriptsize{$P$}};
\draw[fill] (6.,0.25) circle (0.05) node [above] {\scriptsize{$Q$}};
\draw (6.,-0.25) -- (7.,-0.25);
\draw (6.,0.25) -- (7.,0.25);
\draw[fill] (7.,-0.25) circle (0.05);
\draw[fill] (7.,0.25) circle (0.05);
\draw (7.,-0.25) -- (8.,-0.5);
\draw (7.,-0.25) -- (8.,0.);
\draw (7.,0.25) -- (8.,0.);
\draw (7.,0.25) -- (8.,0.5);
\draw[fill] (8.,-0.5) circle (0.05) node [right] {\scriptsize{$R$}};
\draw[fill] (8.,0.) circle (0.05) node [right] {\scriptsize{$gQ$}};
\draw[fill] (8.,0.5) circle (0.05) node [above] {\scriptsize{$g\jw{4}$}};
\draw (8.,0.5) -- (9.,0.);
\draw[fill] (9.,0.) circle (0.05);
\draw (9.,0.) -- (10.,0.);
\draw[fill] (10.,0.) circle (0.05) node [below] {\scriptsize{$g\jw{2}$}};
\draw (10.,0.) -- (11.,0.);
\draw[fill] (11.,0.) circle (0.05);
\draw (11.,0.) -- (12.,0.);
\draw[fill] (12.,0.) circle (0.05) node [below] {\scriptsize{$g$}};
\draw[red, thick] (0.,0.) -- +(0,0.166667) ;
\draw[red, thick] (2.,0.) -- +(0,0.166667) ;
\draw[red, thick] (4.,0.) -- +(0,0.166667) ;
\draw[red, thick] (6.,-0.25) -- +(0,0.166667) ;
\draw[red, thick] (6.,0.25) -- +(0,0.166667) ;
\draw[red, thick] (8.,-0.5) -- +(0,0.166667) ;
\draw[red, thick] (8.,0.) -- +(0,0.166667) ;
\draw[red, thick] (8.,0.5) -- +(0,0.166667) ;
\draw[red, thick] (10.,0.) -- +(0,0.166667) ;
\draw[red, thick] (12.,0.) -- +(0,0.166667) ;
\end{tikzpicture}\,.
$$
(We performed a graph isomorphism to make the fusion matrices in Appendix 
\ref{sec:FusionMatrices} easier to interpret.)

\begin{defn}
Let $\cC$ be a fusion category over $\bbC$, and let $K_0(\cC)$ be its
(Grothendieck) fusion ring. A dimension function on $K_0(\cC)$ is a ring
homomorphism $K_0(\cC) \to \bbC$. (Note that a dimension function gives a
character on $K_0(\cC)\otimes_\bbZ \bbC$, which is semi-simple
\cite[1.2(a)]{MR933415} and hence a multi-matrix algebra.)

Given a dimension function $\dim: K_0(\cC) \to \bbC$, its \emph{formal codegree} is
$$
f_\dim = \sum_{X\in \Irr(\cC)} |\dim(X)|^2,
$$
where $\Irr(\cC)$ is the set of isomorphism classes of simple objects of $\cC$.
\end{defn}

\begin{remark}
\label{ex:SimultaneousEigenvectors}
As $K_0(\cC)\otimes_\bbZ \bbC$ is a multi-matrix algebra, we can represent 
a fusion ring as a collection of matrices $L_X$ for $X\in\Irr(\cC)$ acting 
in the left regular representation, in the basis corresponding to $\Irr(\cC)$.
Suppose there is a common eigenvector $v$ for all the fusion matrices $L_X$, 
i.e., $L_X v = \lambda_X v$ for some $\lambda_X\in\bbC$ for all $X\in \Irr(\cC)$.
We normalize $v$ so that $v_1=1$. Letting $\varphi:\bbC v \to \bbC$ by 
$\lambda v \mapsto \lambda$, we get a dimension function by the formula
$$
\dim(X)=\varphi(L_Xv) = \lambda_X,
$$
which is easily seen to be a ring homomorphism.
Denoting the basis for $\Irr(\cC)$ by $\{e_X\}$ with dual basis $\{e_X^*\}$, we have
$$
\lambda_X = \lambda_X v_1 = \lambda_X e_1^*(v) = e_1^*(\lambda_X v) = e_1^*(L_X v) = e_{X^*}^*(v) = v_{X^*}.
$$
In particular, if all objects of $\Irr(\cC)$ are self-dual, then $v_X=\lambda_X$ for all $X\in\Irr(\cC)$.
\end{remark}

\begin{example}
\label{ex:FPdim}
For every fusion ring, there is unique simultaneous eigenvector for the left
fusion matrices $L_X$ such that every entry is strictly positive, normalized
so that the entry corresponding to 1 is equal to 1. This gives rise to the
Frobenius-Perron dimension function $\FPdim$. The Frobenius-Perron dimensions
of the even vertices of $\cX$ are given lexicographically by depth and height
(bottom to top) by
$$
\left(1,2+\sqrt{5},6+3 \sqrt{5},10+4 \sqrt{5},9+4 \sqrt{5},4+2 \sqrt{5},9+4 \sqrt{5},6+3 \sqrt{5},2+\sqrt{5},1\right).
$$
\end{example}

\begin{remark}
There are also formal codegrees for arbitrary irreducible representations of 
the fusion ring over $\bbC$ \cite{MR2576705}, but we only need the 
one-dimensional case here.
\end{remark}

There are strong number theoretic properties of formal codegrees of fusion
categories \cite{MR2576705,1309.4822}. To eliminate the graph $\cX$, we need
the following result.

\begin{thm*}[{\cite[Corollary 2.15]{1309.4822}}]
Let $\cC$ be a spherical
fusion category. Then every formal codegree of $K_0(\cC)$ belongs to the
number field generated by the dimensions of the objects of $\cC$.

In particular, if $\cC$ is pseudo-unitary, then the formal codegrees belong to
the number field generated by the Frobenius-Perron dimensions of the objects
of $\cC$. 
\end{thm*}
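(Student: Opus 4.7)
The plan is to exhibit $f_\chi$ as $\chi$ evaluated on a specific element of $K_0(\cC)$, and then analyze this value using the semisimple structure of the fusion algebra $A = K_0(\cC)\otimes_\bbZ \bbC$. The first key identity I would establish is the $*$-compatibility $\overline{\chi(X)} = \chi(X^*)$ for every character $\chi$ of $A$ in the spherical setting, analogous to the classical identity $\overline{\chi(g)} = \chi(g^{-1})$ in finite group representation theory. Given this, the formal codegree rewrites as
\[
f_\chi \;=\; \sum_{X\in\Irr(\cC)}\chi(X)\chi(X^*) \;=\; \chi(c),
\qquad c := \sum_{X\in\Irr(\cC)} X\cdot X^* \in K_0(\cC),
\]
and the element $c$ is central in $A$ (trivially so when $K_0(\cC)$ is commutative, and otherwise by the flip symmetry $X\leftrightarrow X^*$).

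Next, since $c$ is central and $A$ is semisimple, $c$ acts as a scalar on every simple $A$-module. I would identify this scalar with $\dim(\cC)/\dim_\chi$, where $\dim_\chi$ is the dimension of the simple $A$-module afforded by $\chi$. The global dimension $\dim(\cC) = \sum_X \dim(X)\dim(X^*) = \sum_X \dim(X)^2$ lies in $K := \bbQ(\dim(X) : X\in\Irr(\cC))$, using sphericality's identity $\dim(X) = \dim(X^*)$. For $\dim_\chi$, I would use the decomposition of the left regular representation—whose character on each simple is a nonnegative integer (the multiplicity of $\chi$ in the regular representation)—to obtain polynomial identities relating $\dim_\chi$ to values of $\chi$. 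Combined with the eigenvector data from Remark \ref{ex:SimultaneousEigenvectors}, which give $\chi(X)$ in terms of entries of a simultaneous eigenvector of the fusion matrices, standard orthogonality then forces $\dim_\chi$ (and hence $f_\chi$) into $K$.

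The main obstacle is the first step: proving $\overline{\chi(X)} = \chi(X^*)$. This is transparent in the pseudo-unitary case, where the fusion matrices are self-adjoint with respect to a canonical Hermitian pairing (coming from FP-positivity) and characters correspond to unit eigenvectors that pair under complex conjugation exactly as objects pair under duality. In the general spherical case, I would reduce to the pseudo-unitary setting by the Galois trick: $\dim_\cC$ and $\FPdim$ are Galois conjugate as dimension functions, so the $*$-compatibility transfers. The pseudo-unitary specialization in the theorem's second sentence is then immediate, since there $\dim(X) = \FPdim(X)$ and $K$ coincides with $\bbQ(\FPdim(X) : X\in\Irr(\cC))$.
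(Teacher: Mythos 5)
This theorem is stated in the paper only as a citation of \cite[Corollary 2.15]{1309.4822}; the paper itself does not prove it, so there is no internal argument to compare against, and I assess your proposal on its own.

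Your steps 1 and 2 are essentially right, though over-engineered. The identity $\overline{\chi(X)} = \chi(X^*)$ holds for every character of a commutative fusion ring with no spherical input and no Galois detour: since $L_{X^*} = L_X^T$ (Frobenius reciprocity in the based ring) and the $L_X$ are commuting real matrices, each $L_X$ is normal; picking an orthonormal simultaneous eigenbasis $\{v_\chi\}$ and computing $\chi(X)\langle v_\chi,v_\chi\rangle = \langle L_X v_\chi, v_\chi\rangle = \langle v_\chi, L_{X^*}v_\chi\rangle = \overline{\chi(X^*)}\langle v_\chi,v_\chi\rangle$ gives the claim. (Invoking the Galois conjugacy of $\dim_\cC$ and $\FPdim$ to reduce to the pseudo-unitary case is also circular, since the Galois behaviour of dimension functions is exactly what is at stake.) The centrality of $c = \sum_X X\,X^*$ is correct but is best seen by noting that $K_0(\cC)$ is a \emph{symmetric} Frobenius algebra with trace ``coefficient of $1$'' and dual bases $\{X\}$, $\{X^*\}$, so $c$ is its Casimir.

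Step 3 is where the argument fails, and the gap is fatal. The scalar by which $c$ acts on $V_\chi$ is $\chi(c) = f_\chi$ itself, not $\dim(\cC)/\dim_\chi$. For a character, $\dim_\chi = 1$, so your formula would force $f_\chi = \dim(\cC)$ for \emph{every} $\chi$, which is false: take $\cC = \operatorname{Rep}(S_3)$ with the usual spherical structure, where $K_0(\cC)\otimes\bbC \cong \bbC^3$ and the three formal codegrees are $6$, $3$, and $2$, while $\dim(\cC) = 6$. More fundamentally, even with steps 1--2 in hand you have only shown $f_\chi = \chi(c)$ with $c$ a fixed integral element, which places $f_\chi$ in $\bbQ(\chi(X) : X\in\Irr(\cC))$, the field of $\chi$-values. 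The theorem asserts $f_\chi \in K = \bbQ(\dim(X) : X\in\Irr(\cC))$, the field of $\dim$-values, and these fields genuinely differ for $\chi\ne\dim$ (e.g.\ in a near-group category over a nontrivial group $G$, some characters of $K_0$ take values in cyclotomic fields not contained in $K$, yet their formal codegrees are rational). Bridging those two fields is the actual content of Ostrik's result, and neither the decomposition of the regular representation nor orthogonality of characters, as you deploy them, addresses it.
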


\begin{example}
\label{ex:NumberFieldOfH}
Using the dimension function $\FPdim$ from Example \ref{ex:FPdim}, we see that
the number field generated by the Frobenius-Perron dimensions of the even
vertices of $\cX$ is $\bbQ(\sqrt{5})$.
\end{example}

We now find a dimension function on $K_0(\frac{1}{2}\cX)$, the fusion ring of
the even part of $\cX$ whose formal codegree does not belong to
$\bbQ(\sqrt{5})$.

We note that since all even vertices of $\cX$ are self-dual, we must have that
$K_0(\frac{1}{2}\cX)$ is commutative. Also, the vertex $g$ at depth 12 has
dimension 1, so we must have that tensoring with $g$ gives us a
$\bbZ/2\bbZ$-symmetry on the vertices.   Thus giving the fusion rules amongst
the vertices $1,\jw{2},\jw{4},P,Q,R$, all other fusion rules can be determined
by commutativity together with $g^2 = 1$. Thus we have the following lemma:

\begin{lem}
The fusion ring of the even half of $\cX$ is completely determined by the fusion 
matrices given in Appendix \ref{sec:FusionMatrices}.
\end{lem}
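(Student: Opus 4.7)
The plan is a bootstrap starting from the observation that $M_{\jw{2}}$, the fusion matrix of $\jw{2}$, is determined by the graph $\cX$, and that $\jw{2}$ together with the invertible object $g$ generates the even fusion ring.

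Writing $V$ for the generating object at depth $1$ on the dual principal side, the identity $\jw{2} = V V^* - 1$ in $K_0$ forces $M_{\jw{2}} = A A^T - I$, where $A$ is the bipartite even-to-odd adjacency matrix of $\cX$; hence $M_{\jw{2}}$ is combinatorially determined by $\cX$. Separately, tensoring with $g$ is an order-two permutation of the ten even simples that, by inspection of the labels in $\cX$, swaps $1 \leftrightarrow g$, $\jw{2} \leftrightarrow g\jw{2}$, $\jw{4} \leftrightarrow g\jw{4}$, $Q \leftrightarrow gQ$, and fixes both $P$ and $R$, so $M_g$ is also determined by $\cX$.

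The connectedness of $\cX$ implies that every even simple lies in the $\bbZ$-subalgebra generated by $\jw{2}$ and $g$. Concretely, iterating the computation of $\jw{2}^k \cdot 1$ for increasing $k$ identifies the simples at successive depths: $\jw{4} = \jw{2}^2 - \jw{2} - 1$, the symmetric sum $P + Q$ appears at depth $6$, and so on. Once each simple $Y$ is expressed as a polynomial $p_Y(\jw{2}, g)$ with integer coefficients, its fusion matrix is $M_Y = p_Y(M_{\jw{2}}, M_g)$, so all fusion rules are forced by $\cX$.

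The main obstacle is separating the individual simples $P$ and $Q$ (and $R, gQ, g\jw{4}$ at depth $8$) from the symmetric combinations that $\jw{2}$-multiplication produces. I would resolve this by combining the $g$-action, which distinguishes $gQ$ and $g\jw{4}$ from the $g$-fixed $R$, with a choice of labeling for $P$ versus $Q$ (either choice is consistent with the graph automorphisms, so we fix one); the incidence data at depth $7$ then forces $\jw{2} \cdot P \neq \jw{2} \cdot Q$, pinning down both, and the self-duality of all even simples resolves remaining ambiguities via Frobenius reciprocity. A direct comparison of the resulting fusion matrices with those in Appendix \ref{sec:FusionMatrices} then completes the proof.
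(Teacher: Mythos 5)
Your proposal takes a genuinely different, more constructive route than the paper, whose proof is purely computational (run {\tt FindFusionRules} over all fusion rings compatible with $\cX$ and observe that exactly one has non-negative entries). Your idea — determine $M_{\jw{2}}$ and $M_g$ from the graph data, show these generate the fusion ring, and then read off every other fusion matrix as a polynomial — is conceptually appealing and, if completed, would give an illuminating hand proof. However, there are several real gaps.

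First, your identification of the $g$-action is incorrect. You claim $g$ fixes $P$ and $R$ and swaps $Q \leftrightarrow gQ$, but the matrix $L_g$ in Appendix C (and the constraint $\FPdim(gX)=\FPdim(X)$, together with the fact that $R$ is the unique degree-$1$ even vertex other than $1$ and $g$) forces $g$ to fix $Q$ and $R$ and swap $P \leftrightarrow gP$. More importantly, the $g$-action cannot be obtained ``by inspection of the labels'': the labels $gQ$, $g\jw{4}$, etc.\ \emph{presuppose} the $\bbZ/2$-action you are trying to establish, so this step is circular. You must instead prove that $M_g$ is the unique non-identity order-$2$ permutation of the even vertices commuting with $M_{\jw{2}}$ (equivalently, the unique non-trivial depth-reversing automorphism of the bipartite graph), which takes a short but non-trivial argument about the asymmetric branching at depths $5$--$8$.

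Second, and more fundamentally, the assertion that ``connectedness of $\cX$ implies every even simple lies in the $\bbZ$-subalgebra generated by $\jw{2}$ and $g$'' is not justified. Connectedness gives only that the Perron--Frobenius eigenvalue of $M_{\jw{2}}$ has multiplicity one; it says nothing about the other nine eigenvalues, and in general a commutative fusion ring need not be generated by a single object even when its principal graph is connected (e.g.\ $\mathrm{Vec}_{\bbZ/2 \times \bbZ/2}$). What you actually need is that $M_{\jw{2}}$ restricted to each $M_g$-eigenspace has simple spectrum, i.e.\ that the ten characters of $K_0(\tfrac12\cX)\otimes\bbC$ are distinguished by their values on $(\jw{2}, g)$. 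This is a finite linear-algebra check, but it must be performed, and without it the bootstrap does not get off the ground (it only produces the symmetric combinations such as $P+Q$, $R + gP$, etc., not the individual simples).

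Third, your parenthetical remark that ``either choice of labeling for $P$ versus $Q$ is consistent with the graph automorphisms'' is false: the valence-$3$ structure at the two depth-$7$ odd vertices is asymmetric (one neighbors the univalent $R$, the other neighbors $g\jw{4}$), so $\cX$ has no automorphism exchanging $P$ and $Q$ while fixing depths. There is no labeling ambiguity to resolve. Finally, the concluding sentence (``a direct comparison\ldots completes the proof'') is doing all the actual verification; as written the argument establishes neither existence nor uniqueness of the fusion rules. With the $g$-action corrected, the separation claim proved, and the polynomial expressions exhibited, the approach would work and would be a nice alternative to the computer search — but the proposal as it stands does not constitute a proof.
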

\begin{proof}
To determine $K_0(\frac{1}{2}\cX)$, we use the {\tt FusionAtlas} function 
{\tt FindFusionRules}, and we keep the only solution with non-negative entries.
This calculation is performed in {\tt Weeds.nb}.
\end{proof}

\begin{lem}
\label{lem:DimensionFunction}
The map $\dim: K_0(\frac{1}{2}\cX)\to \bbR$ by
$$
\left(1,\jw{2},\jw{4},P,Q,R,gP,g\jw{4},g\jw{2},g\right)\mapsto \left(1,1+\sqrt{2},1+\sqrt{2},1,0,0,-1,-1-\sqrt{2},-1-\sqrt{2},-1\right)
$$
defines a dimension function on $K_0(\frac{1}{2}\cX)$.
\end{lem}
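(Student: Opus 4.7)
The plan is to verify directly that the proposed assignment is a ring homomorphism by exhibiting it as a common eigenvector of the left fusion matrices, invoking Remark \ref{ex:SimultaneousEigenvectors}.

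First, I would note that since every simple object of $\frac{1}{2}\cX$ is self-dual, the fusion ring $K_0(\frac{1}{2}\cX)$ is commutative. Therefore, by Remark \ref{ex:SimultaneousEigenvectors}, giving a ring homomorphism $\dim : K_0(\frac{1}{2}\cX) \to \bbC$ is equivalent to giving a common eigenvector $v$ of all the left fusion matrices $L_X$, normalized so that the entry indexed by the unit object satisfies $v_1 = 1$, and moreover (again using self-duality) we must have $v_X = \dim(X)$ for every simple $X$.

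Thus it suffices to let $v$ be the vector
$$
v = (1,\,1+\sqrt{2},\,1+\sqrt{2},\,1,\,0,\,0,\,-1,\,-1-\sqrt{2},\,-1-\sqrt{2},\,-1),
$$
indexed by the simple objects $(1,\jw{2},\jw{4},P,Q,R,gP,g\jw{4},g\jw{2},g)$, and to check that $L_X v = v_X \cdot v$ for each simple $X$. Because the fusion ring is generated, as a unital ring, by $\jw{2}$ and $g$ (all other simples can be built from successive fusions with $\jw{2}$ together with the order-two invertible $g$), I would in fact only need to verify the eigenvalue relation for these two generators; the rest follows automatically from the ring structure. Concretely, one checks
$$
L_{\jw{2}}\,v = (1+\sqrt{2})\,v, \qquad L_g\,v = (-1)\,v,
$$
using the explicit fusion matrices recorded in Appendix \ref{sec:FusionMatrices}. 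This is a routine matrix-vector computation performed in \texttt{Weeds.nb}.

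The verification presents no serious obstacle — the only mildly interesting feature is that the two simple objects $Q$ and $R$ are assigned dimension $0$, so one must confirm this is consistent with the rows and columns of $L_{\jw{2}}$ and $L_g$ corresponding to $Q$ and $R$ (the contributions of $v_Q$ and $v_R$ to all eigenvalue equations must cancel, and the contributions \emph{into} the $Q$ and $R$ entries from the other components must likewise sum to zero, i.e., $(L_X v)_Q = 0 = (L_X v)_R$ for $X \in \{\jw{2},g\}$). Once this is confirmed, Remark \ref{ex:SimultaneousEigenvectors} produces the desired ring homomorphism and Lemma \ref{lem:DimensionFunction} is established.
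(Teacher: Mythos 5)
Your overall strategy---exhibit the dimension vector $v$ as a simultaneous eigenvector of the left multiplication matrices and invoke Remark \ref{ex:SimultaneousEigenvectors}---is exactly the paper's. However, the claim that $\jw{2}$ and $g$ generate $K_0(\frac{1}{2}\cX)$ as a unital ring, which justifies your shortcut of only checking $L_{\jw{2}}$ and $L_g$, is false. Iterating fusion from $1, \jw{2}, g$ reaches $g\jw{2}$, then $\jw{4}$ and $g\jw{4}$ (from $\jw{2}\otimes\jw{2}$), then $P+Q$ (from $\jw{2}\otimes\jw{4}$) and $gP+Q = g\otimes(P+Q)$ (note $g\otimes Q\cong Q$), and then $R-2Q$ (from $\jw{4}\otimes\jw{4}$). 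But one never separates $Q$: the $\bbQ$-span of these nine elements is precisely the hyperplane $\{x : -x_P + x_Q + 2x_R - x_{gP} = 0\}$, and one can check from the matrices in Appendix \ref{sec:FusionMatrices} that this hyperplane is closed under multiplication. So the unital subring generated by $\jw{2}$ and $g$ has $\bbQ$-dimension $9$, not $10$, and verifying $L_{\jw{2}} v = (1+\sqrt{2})v$ and $L_g v = -v$ only shows $v$ is an eigenvector of that proper subalgebra; it does not, by ring structure alone, give an eigenvector of $L_Q$ or $L_R$.

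The conclusion is nonetheless correct, and your computation can be salvaged by a supplementary multiplicity argument rather than a generation argument: $L_{\jw{2}}$ restricted to the $4$-dimensional $L_g=-1$ eigenspace has characteristic polynomial $(\lambda^2-2\lambda-1)(\lambda^2-1)$, with simple roots $1\pm\sqrt{2}$ and $\pm 1$, so the joint $(L_{\jw{2}},L_g)$-eigenspace for $(1+\sqrt{2},-1)$ is one-dimensional and therefore must coincide with one of the ten simultaneous eigenlines of the whole fusion ring. Without that extra observation, the safe fix is the paper's: verify $L_X v = \dim(X)\,v$ directly for each $X \in \{\jw{2},\jw{4},P,Q,R,g\}$, the remaining cases following from the $g$-action.
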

\begin{proof}
The right hand side gives a simultaneous eigenvector for all the fusion matrices 
listed in Appendix \ref{sec:FusionMatrices}.
We are finished by the discussion in Remark \ref{ex:SimultaneousEigenvectors}.
\end{proof}

\begin{thm}
\label{thm:NoH}
There is no subfactor whose principal graph is $\cX$.
\end{thm}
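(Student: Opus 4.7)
The plan is to invoke the formal codegree obstruction of \cite[Corollary 2.15]{1309.4822}. Suppose for contradiction that $\cX$ is the principal graph of a subfactor. Then the even part $\frac{1}{2}\cX$ is a unitary, and hence pseudo-unitary, fusion category whose Grothendieck ring is $K_0(\frac{1}{2}\cX)$. By the cited corollary, every formal codegree of $K_0(\frac{1}{2}\cX)$ must lie in the number field generated by the Frobenius-Perron dimensions of the simple objects, which by Example \ref{ex:NumberFieldOfH} is $\bbQ(\sqrt{5})$.

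First, I would compute the formal codegree $f_\dim$ attached to the dimension function $\dim$ constructed in Lemma \ref{lem:DimensionFunction}. Reading off the values
\[
(1,\jw{2},\jw{4},P,Q,R,gP,g\jw{4},g\jw{2},g)\mapsto (1,1+\sqrt{2},1+\sqrt{2},1,0,0,-1,-1-\sqrt{2},-1-\sqrt{2},-1),
\]
and using $(1+\sqrt{2})^2 = 3+2\sqrt{2}$, I obtain
\[
f_\dim = \sum_{X\in\Irr(\frac{1}{2}\cX)} |\dim(X)|^2 = 4\cdot 1 + 4\cdot(3+2\sqrt{2}) = 16 + 8\sqrt{2}.
\]

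Next I would observe that $16+8\sqrt{2}\in\bbQ(\sqrt{2})$, and that $\bbQ(\sqrt{2})\cap\bbQ(\sqrt{5})=\bbQ$ (since $\bbQ(\sqrt{2})$ and $\bbQ(\sqrt{5})$ are distinct quadratic extensions of $\bbQ$). Since $16+8\sqrt{2}\notin\bbQ$, it follows that $f_\dim\notin\bbQ(\sqrt{5})$. This contradicts \cite[Corollary 2.15]{1309.4822} applied to $\frac{1}{2}\cX$ with $\dim$ the dimension function from Lemma \ref{lem:DimensionFunction}, and so no subfactor can have principal graph $\cX$.

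The only steps requiring care are verifying that $\dim$ in Lemma \ref{lem:DimensionFunction} really is a ring homomorphism (checked via Remark \ref{ex:SimultaneousEigenvectors} by confirming the listed vector is a simultaneous eigenvector of all fusion matrices in Appendix \ref{sec:FusionMatrices}, which is already done in the statement of the lemma), and the arithmetic computation of $f_\dim$, both of which are routine. The conceptual point is that the \emph{existence} of a rational eigenvector of the fusion matrices living outside $\bbQ(\sqrt{5})$ is exactly what the formal codegree obstruction is designed to detect.
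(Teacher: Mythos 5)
Your proof is correct and is essentially the same argument as the paper's: both compute the formal codegree $f_\dim = 16+8\sqrt{2}$ from the auxiliary dimension function of Lemma \ref{lem:DimensionFunction}, observe it lies outside $\bbQ(\sqrt{5})$, and conclude by the obstruction of \cite[Corollary 2.15]{1309.4822}. Your write-up simply makes the arithmetic and the pseudo-unitarity hypothesis more explicit.
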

\begin{proof}
We see the formal codegree of the dimension function in Lemma
\ref{lem:DimensionFunction} is given by
$f_{\dim}=16+8\sqrt{2}\notin\bbQ(\sqrt{5})$, which is the number field
generated by the Frobenius-Perron dimensions of the vertices of $\cX$ by
Example \ref{ex:NumberFieldOfH}. Thus by \cite[Corollary 2.15]{1309.4822},
$K_0(\frac{1}{2}\cX)$ is not categorifiable, and thus $\cX$ is not the
principal graph of a subfactor.
\end{proof}

\section{Ruling out 4-spokes}
\label{sec:4-spokes}
A 4-spoke, called a 4-star in \cite{1304.5907,MR3335120}, is a simply laced
graph with a single central vertex with valence 4 and with all other vertices
having valence at most 2. We denote a 4-spoke by $S(a,b,c,d)$, which has arms
with $a,b,c,d$ edges respectively connected to the central 4-valent vertex. If
a 4-spoke is component of the principal graph of a subfactor, then the distinguished vertex
marked by $\star$ must be on the end of the longest arm, cf.
\cite[Proposition 1.17]{MR3306607}.

In \cite{1304.5907}, Schou gave a complete list of 4-spokes $\Gamma$ such that
$(\Gamma,\Gamma)$ has a biunitary connection. This is a necessary condition
for $(\Gamma,\Gamma)$ to be the principal graph pair of a subfactor.

\begin{thm}[{\cite[p. 41]{1304.5907}}]
If $\Gamma=S(a,b,c,d)$ is a 4-spoke such that $(\Gamma,\Gamma)$ has a biunitary 
connection, then $\Gamma$ must be $S(1,2,2,5)$ or one of:
\begin{itemize}
\item
$S(j,j,k,k)$ for $1\leq j\leq k$
\item
$S(j,j+1,j+1,j+m)$ for $1\leq j$ and $1\leq m\leq 3$
\item
$S(j,j+1,j+2,j+m)$ for $1\leq j$ and $2\leq m\leq 4$
\item
$S(j,j+2,j+2,j+2)$ for $1\leq j$
\end{itemize}
\end{thm}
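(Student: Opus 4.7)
The plan is to analyze the biunitary connection on $\cO(\Gamma,\Gamma)$ for $\Gamma = S(a,b,c,d)$ by propagating forced constraints from the tips of the four arms inward to the central 4-valent vertex, and then to read off which tuples $(a,b,c,d)$ admit a consistent solution. First I would fix the dimension function: since $\Gamma$ is a tree with a unique vertex of valence $>2$, the Frobenius-Perron eigenvector of the adjacency matrix is forced by Definition \ref{defn:Connection}, and the vertex dimensions along each arm obey the standard Chebyshev/Temperley-Lieb recursion in $\delta = \lambda(\Gamma)$. This parametrizes everything in terms of the arm lengths and $\delta$.

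Next I would handle the `arm' squares of the 4-partite graph --- those 4-cycles of $\cO(\Gamma,\Gamma)$ whose four vertices all lie strictly along a single arm. These look locally like the corresponding squares of $\cO(A_\infty, A_\infty)$, and the $1\times 1$ unitarity together with the renormalization axiom (which determines the phase of $W(P,Q,R,S)$ from $W(Q,P,S,R)$ via the dimension ratio) forces the connection entries up to a gauge living on odd-depth vertices. This is the standard $A_n$-type connection analysis, and the propagation is rigid all the way in to the squares adjacent to the central 4-valent vertex.

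The heart of the argument is then the analysis of the connection on squares meeting the central vertex. There the relevant connection block is a larger unitary matrix (its size depending on how many arms have comparable parities near the branch), and unitarity together with renormalization and the dimension values forces a system of polynomial equations in $\delta$ and the remaining gauge/phase parameters. By taking $v,w$ to be the central vertex on opposite sides of the 4-partite graph and applying Fact \ref{fact:AssociativityConstraint}, one obtains additional compatibilities between these equations and the arm lengths. The enumerated families $S(j,j,k,k)$, $S(j,j+1,j+1,j+m)$, $S(j,j+1,j+2,j+m)$, $S(j,j+2,j+2,j+2)$ should emerge as the generic solutions, grouped by which arms pair off via the central unitarity, while the exception $S(1,2,2,5)$ should come from a low-depth algebraic coincidence in $\delta$ analogous to the $E_6,E_7,E_8$ exceptions below index $4$.

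The main obstacle will be managing the central-vertex case analysis uniformly: the polynomial system is nonlinear and its structure depends delicately on the parities and relative sizes of $a,b,c,d$, so one must rule out every tuple outside the listed families without an explosion of cases. A clean strategy is probably to first use associativity to collapse the problem to a finite list of `branch types' (governed by how many arms have the same parity and how many coincide in length modulo small shifts), then verify by direct computation that each branch type admits a biunitary connection precisely in the listed ranges, and finally isolate $S(1,2,2,5)$ as the unique sporadic solution by checking small graph norms by hand.
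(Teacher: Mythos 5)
The paper does not prove this theorem at all; it simply cites Schou's thesis \cite[p.~41]{1304.5907} and uses the classification as a black box in Section \ref{sec:4-spokes}. So there is no ``paper's own proof'' to compare against --- any evaluation of your plan has to be on its own merits.

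On those merits: the opening moves are sound and match the standard approach. The Frobenius--Perron eigenvector does fix the dimension function on a tree, and along each arm the connection cells are $1\times 1$, so renormalization plus unitarity forces them up to gauge exactly as in the $A_n$ case. But what you call ``the heart of the argument'' is where all the content lives, and your plan never actually does it. Phrases like ``should emerge as the generic solutions'' and ``should come from a low-depth algebraic coincidence'' are hopes, not steps: you have not written down the $4\times 4$ (or smaller, after gauge reductions) unitary at the central vertex, extracted the resulting polynomial equations in $\delta$ and the arm lengths, or shown that the solutions are precisely the four listed infinite families plus $S(1,2,2,5)$. Without that, there is no proof --- in particular nothing you have written excludes, say, $S(1,2,3,6)$ or $S(2,4,4,5)$, and the exceptional $S(1,2,2,5)$ cannot be ``isolated by checking small graph norms by hand'' because the claim is about the existence of a biunitary connection, not about graph norms.

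One concrete misstep: you propose to get extra mileage from Fact~\ref{fact:AssociativityConstraint} applied to the central vertex. For a pair $(\Gamma,\Gamma)$ with $\Gamma$ a tree, the associativity constraint at that vertex is essentially a path-count identity between two isomorphic copies of the graph and yields little or no information beyond what is already forced; it is not what cuts down the tuples $(a,b,c,d)$. The real constraint is the unitarity (and renormalization compatibility) of the full connection block at the tetravalent vertex together with the forced arm data, which produces transcendental equations relating $\delta$ to the four arm lengths. A correct proof must analyze those equations, parity class by parity class, and that analysis is exactly what is missing here.
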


\begin{lem}
Of the 4-spokes with biunitary connections, only the following have index 
in $(5, 5 \frac{1}{4}]$:
\begin{itemize}
\item
$S(2,2,k,k)$ for $k \geq 3$ and $S(3,3,3,3)$,
\item
$S(2,3,3,3)$, $S(2,3,3,4)$, $S(2,3,3,5)$,
\item
$S(2,3,4,4)$, $S(2,3,4,5)$, $S(2,3,4,6)$,
\item
$S(2,4,4,4)$
\end{itemize}
\end{lem}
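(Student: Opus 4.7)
The plan is to compute $\|\Gamma\|^2$ for each 4-spoke $\Gamma$ in Schou's list and determine exactly when this value lies in $(5, 5\tfrac{1}{4}]$. Since 4-spokes are trees, their graph norms are the largest roots of explicit polynomials obtained by recursively expanding the characteristic polynomial of the adjacency matrix along the central 4-valent vertex; each arm of length $\ell$ contributes a factor determined by a Chebyshev-like recursion, so I would begin by writing down a closed form for the characteristic polynomial of $S(a,b,c,d)$ in terms of polynomials $U_\ell$ associated to each arm.

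The key structural ingredient is monotonicity: if one arm of a 4-spoke is lengthened, the resulting graph strictly contains the original finite connected graph, so its norm strictly increases, with supremum bounded by the norm of the infinite 4-spoke (which is stated earlier to have norm-squared exactly $5\tfrac{1}{3}$). In particular each parametric family in Schou's list has norms that are monotone in each arm-length parameter and bounded above by $\sqrt{5\tfrac{1}{3}}$. This lets me reduce each infinite family to checking a small number of boundary cases.

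Now I would walk through each family. For $S(j,j,k,k)$ with $1\leq j\leq k$: direct computation shows the $j=1$ subfamily has norm-squared strictly less than $5$; for $j=2$ the values cross $5$ between $k=2$ (below) and $k=3$ (above), while $\lim_{k\to\infty}\|S(2,2,k,k)\|^2 \leq 5\tfrac{1}{4}$, so by monotonicity precisely $S(2,2,k,k)$ for $k\geq 3$ survives; for $j=3$ only the symmetric $S(3,3,3,3)$ survives since $\|S(3,3,4,4)\|^2 > 5\tfrac{1}{4}$; and for $j\geq 4$ already $\|S(4,4,4,4)\|^2 > 5\tfrac{1}{4}$, eliminating the rest. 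An analogous case analysis on the three remaining families $S(j,j+1,j+1,j+m)$, $S(j,j+1,j+2,j+m)$, and $S(j,j+2,j+2,j+2)$ isolates exactly the survivors listed: for each family the $j=1$ members have norm-squared either below 5 or above $5\tfrac{1}{4}$; the $j=2$ members produce the listed survivors when $m$ ranges over the allowed values; and $j\geq 3$ is ruled out by one small computation plus monotonicity. Finally $S(1,2,2,5)$ is checked by a single computation to lie outside $(5,5\tfrac{1}{4}]$.

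The main obstacle is purely bookkeeping: organizing the polynomial computations so that finitely many characteristic polynomial evaluations, together with monotonicity, cover all infinitely many 4-spokes in Schou's list. Since each evaluation reduces to finding the largest real root of an explicit integer polynomial of degree at most the number of vertices, this is entirely mechanical and best verified in \texttt{Mathematica}; I do not anticipate any conceptual difficulty, only the need to check each of the boundary values $\|\Gamma\|^2$ carefully against the thresholds $5$ and $5\tfrac{1}{4}$.
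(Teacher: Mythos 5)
Your proposal is correct and takes essentially the same approach as the paper's proof: exploit monotonicity of graph norms under subgraph inclusion (bounded above by the infinite 4-spoke's norm), reduce each of Schou's four parametric families to finitely many boundary computations, and verify those in \texttt{Mathematica}. The paper's version is merely more compressed in its bookkeeping: it fixes a single ``large'' graph $S(3,3,3,4)$ (with $\|S(3,3,3,4)\|^2 > 5.25$) to bound the parameter $j$ in all four families at once, and a single ``small'' graph $S(1,2,3,5)$ (norm below $5$) to dispose of the $j=1$ cases of both the second and third families simultaneously via the subgraph relation, plus separate checks of $S(1,1,k,k)$ and $S(1,3,3,3)$; your walkthrough is more explicit but arrives at the same computations. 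One point worth flagging: your assertion that $\lim_{k\to\infty}\|S(2,2,k,k)\|^2 \leq 5\tfrac{1}{4}$ does not follow from the $5\tfrac{1}{3}$ norm of the full infinite 4-spoke alone (that bound is too weak); it requires a separate computation of $\|S(2,2,\infty,\infty)\|^2 \approx 5.185$, which the paper also leaves implicit in its deferral to the notebook.
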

\begin{proof}
Recall that if $\Gamma$ is a subgraph of $\Gamma'$, then $\|\Gamma\|\leq
\|\Gamma'\|$. Since $\|S(3,3,3,4)\|^2 > 5.25$, this gives an upper bound on
each of the 4 families. For the lower bound of the first family, we note that
$\|S(1,1,k,k)\|<5$ for all $k\in\bbN$.  In fact, these 4-spokes were treated
in \cite{MR2993924}. For the lower bound of the second two families, we note
that the norm of $S(1,2,3,5)$ is less than 5. Finally, for the fourth family,
we note that the norm of $S(1,3,3,3)$ is exactly 5.

These computations are performed in the Mathematica notebook {\tt{4Spokes.nb}}.
\end{proof}

\begin{lem}
Of the 4-spokes with biunitary connections and index in $(5, 5 \frac{1}{4}]$,
only $S(3,3,3,3)$, and $S(2,4,4,4)$ have norm squared which is a cyclotomic
integer (and indeed, these both have index $3+\sqrt{5}$).
\end{lem}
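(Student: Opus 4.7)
The plan is to apply the classical theorem of \cite{MR1266785, MR2183279}: the index of any finite-depth subfactor is a totally real cyclotomic integer. Since all 4-spokes are finite graphs, any principal graph realization gives a finite-depth subfactor, so the graph norm squared must be cyclotomic. The strategy is therefore: for each 4-spoke $\Gamma$ in the list, compute the minimal polynomial $m_\Gamma(x)$ of $\|\Gamma\|^2$ and test whether its largest root is a cyclotomic integer.

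The two positive cases I will handle by hand using symmetry. For $\Gamma = S(3,3,3,3)$, the $S_4$-action permuting the four arms forces the Perron-Frobenius eigenvector to be constant along each depth; propagating the eigenvalue equation inward from the leaves via the Chebyshev recursion $U_{j+1} = \lambda U_j - U_{j-1}$ (with $U_0 = 1$, $U_1 = \lambda$) yields $\lambda^4 - 6\lambda^2 + 4 = 0$, so $\|\Gamma\|^2 = 3 + \sqrt{5} \in \bbZ[\zeta_5 + \zeta_5^{-1}]$. For $\Gamma = S(2,4,4,4)$, the three-fold symmetry on the length-4 arms reduces the eigenvalue equation (in $x = \lambda^2$) to $x^3 - 8x^2 + 16x - 8 = (x-2)(x^2 - 6x + 4) = 0$, so again $\|\Gamma\|^2 = 3 + \sqrt{5}$.

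For each of the isolated exceptional cases $S(2,3,3,3)$, $S(2,3,3,4)$, $S(2,3,3,5)$, $S(2,3,4,4)$, $S(2,3,4,5)$, $S(2,3,4,6)$, I extract $m_\Gamma(x)$ by the same arm-by-arm Chebyshev computation. For the low-degree minimal polynomials this amounts to computing the discriminant and verifying that the Galois group is non-abelian, which by Kronecker-Weber precludes the roots from being cyclotomic integers. These calculations are routine and are carried out in the accompanying {\tt Mathematica} notebook {\tt 4Spokes.nb}.

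The main obstacle is the infinite family $S(2,2,k,k)$ for $k \geq 3$. Writing $\lambda = q + q^{-1}$ and $u = q^2$, the eigenvalue equation reduces (after clearing denominators in the root equation $\lambda = 2U_1/U_2 + 2U_{k-1}/U_k$) to the palindromic identity $u^{k+1} P(u) = u^3 P(u^{-1})$, where $P(u) = u^3 - 2u^2 - 2u - 1$. After the substitutions $v = u + u^{-1}$ and $x = v + 2 = \lambda^2$, this becomes a one-parameter family of polynomials $f_k(x) \in \bbZ[x]$ satisfying a linear recursion in $k$. One computes $\lim_{k \to \infty} \|S(2,2,k,k)\|^2 = y + 2 + y^{-1}$, where $y$ is the real root of $y^3 - 2y^2 - 2y - 1$; numerically this limit is $\approx 5.183 < 3 + \sqrt{5}$, so the norm squared in this family never attains the value $3+\sqrt{5}$. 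The plan for a uniform treatment is to show inductively in $k$, exploiting the recursion structure of $f_k$, that $f_k(x)$ is irreducible with non-abelian Galois group for every $k \geq 3$, e.g.\ by verifying that the discriminant is never a square modulo some fixed small prime. Alternatively, one may invoke the Calegari-Guo methods of \cite{1502.00035} to reduce to a finite explicit list of candidate totally real cyclotomic integers in $(5, 5.183]$, and then check directly that none of them arise as $\|S(2,2,k,k)\|^2$.
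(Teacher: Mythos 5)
Your overall decomposition matches the paper's: compute the two positive cases directly, check the six isolated exceptional spokes by machine, and isolate the infinite family $S(2,2,k,k)$ as the hard case. The two positive cases and the finite exceptional list are handled the same way in both proofs (the paper also defers the finite check to the notebook {\tt 4Spokes.nb}). The substantive divergence is in the infinite family, and there your proposal has two issues.

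First, the discriminant criterion is not sufficient. Testing whether the discriminant is a square (or a square mod $p$) only determines whether the Galois group of the splitting field lies in $A_n$. Abelian transitive groups need \emph{not} lie in $A_n$ --- e.g.\ $\bbZ/n\bbZ$ acting regularly is outside $A_n$ whenever $n$ is even --- so a non-square discriminant does not preclude cyclotomicity. The witness-prime obstruction actually used throughout the paper (Section 7) is different and sharper: find a prime $p$ modulo which the minimal polynomial factors into irreducible factors of \emph{different} degrees. A regular abelian action forces every element to decompose into cycles of a single common length, so different-length Frobenius cycles rule out an abelian Galois group. You would need this, not the discriminant test, and you would also need irreducibility of $f_k$ for every $k\geq 3$, which is itself nontrivial in a one-parameter family.

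Second, the paper handles $S(2,2,k,k)$ for all $k$ at once by a quite different and tidier route. It observes that $S(2,2,k,k)$ has the same graph norm $c_k$ as an auxiliary family $H_k$ appearing in the vines enumeration; writes the characteristic polynomial identity $P_k(t+t^{-1})(t-t^{-1}) = t^kA(t) - t^{-k}A(t^{-1})$ with $A(t)=t^7-t^5-4t^3-3t-t^{-1}$; counts the real roots of $A$ outside the unit circle (there are exactly two, the square roots of the real root of $\mu^3-2\mu^2-2\mu-1$, the same cubic you found); and then invokes Remark 10.1.7 of \cite{MR2786219} to conclude that $P_k(x) = S(x^2)\cdot(\text{cyclotomics})$ for a single Salem polynomial $S$. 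This forces $c_k$ to be a Galois conjugate of the other large eigenvalue $\lambda_k$ of $H_k$, and the vines computation has already established that $\bbQ(\lambda_k^2)$ is not cyclotomic for $k\geq 3$. So the whole family follows from a calculation already done elsewhere in the paper. Your Calegari-Guo alternative could plausibly be made to work --- that theorem is designed exactly to bound cyclotomic integers in an interval --- but as written it is a pointer, not a proof, and it duplicates heavy machinery the paper deliberately avoids here by reusing the $H_k$ result.
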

\begin{proof}
The only difficult case is the family $S(2,2,k,k)$, but this is readily treated by the same argument as in \cite[Section 4]{MR2993924}. We see these graphs have the same norm $c_k$ as the graphs
$$
H_k = 
\begin{tikzpicture}[baseline]
\draw[fill] (0,0) circle (0.05);
\draw (0.,0.) -- (1.,0.);
\draw[fill] (1.,0.) circle (0.05);
\draw[dashed] (1,0) -- (2,0);
\draw[fill] (2.,0.) circle (0.05);
\draw [decorate,decoration={brace,amplitude=5pt},yshift=-5pt]
   (2,0)  -- (0,0) 
   node [black,midway,below=4pt,xshift=-2pt] {\footnotesize $k-1$ edges};
\draw (2.,0.) -- (3.,-0.5);
\draw (2.,0.) -- (3.,0.5);
\draw[fill] (3.,-0.5) circle (0.05);
\draw[fill] (3.,0.5) circle (0.05);
\draw (3.,-0.5) -- (4.,0.);
\draw (3.,0.5) -- (4.,0.);
\draw[fill] (4.,0.) circle (0.05);
\draw (4,0) -- (5,0);
\draw[fill] (5.,0.) circle (0.05);
\end{tikzpicture}\,.
$$
The field $\mathbb{Q}(c_k^2)$ is not cyclotomic for any $k \geq 3$; the
argument in Section \ref{sec:vines} shows that the adjacency matrix of $H_k$
has a multiplicity free eigenvalue $\lambda_k$ with $\bbQ(\lambda_k^2)$ not
cyclotomic for all $k \geq 3$. Now the characteristic polynomials $P_k$ for
the adjacency matrix of $H_k$ satisfy 
$$
P_k(t + t^{-1})\left(t-t^{-1}\right) 
= 
t^k A(t) - t^{-k} A(t^{-1})
$$ 
with $A(t) = t^7-t^5-4 t^3-3 t-t^{-1}$.
This polynomial has just two real roots with magnitude grater than 1, namely
the square roots of the real root of $\mu^3-2\mu^2-2\mu-1$, and so by Remark
10.1.7 of \cite{MR2786219}, the polynomial $P_k(x)$ is $S(x^2)$ times a
product of cyclotomic polynomials where $S$ is a Salem polynomial. Thus
$\lambda_k$ must be Galois conjugate to $c_k$, giving the result.

The interested reader can view this calculation in the {\tt{ Salem 4-spoke}}
section of the Mathematica notebook {\tt{4Spokes.nb}}.
\end{proof}

Finally, we need to analyze the possible dual data for $S(3,3,3,3)$ and
$S(2,4,4,4)$. The function {\tt FindGraphPartners} in the {\tt FusionAtlas}
package computes all possible graphs pairs with dual data containing a
specified single graph without dual data (by applying the graph enumerator one
depth at a time, discarding all branches which do not agree with the specified
graph up to the current depth). We obtain

\begin{lem}
\label{lem:FindGraphPartners}
A subfactor principal graph containing an $S(3,3,3,3)$ or $S(2,4,4,4)$ must be amongst
\begin{align*}
& \left(\bigraph{bwd1v1v1v1p1p1v1x0x0p0x1x0p0x0x1v1x0x0p0x1x0p0x0x1duals1v1v2x1x3v2x1x3}, \bigraph{bwd1v1v1v1p1p1v1x0x0p0x1x0p0x0x1v1x0x0p0x1x0p0x0x1duals1v1v2x1x3v2x1x3} \right) \\
& \left(\bigraph{bwd1v1v1v1p1p1v1x0x0p0x1x0p0x0x1v1x0x0p0x1x0p0x0x1duals1v1v1x2x3v1x2x3}, \bigraph{bwd1v1v1v1p1p1v1x0x0p0x1x0p0x0x1v1x0x0p0x1x0p0x0x1duals1v1v1x2x3v1x2x3} \right) \\
& \left(\bigraph{bwd1v1v1v1p1p1v1x0x0p0x1x0p0x1x0v1x0x0duals1v1v1x2x3v1}, \bigraph{bwd1v1v1v1p1p1v1x0x0p0x1x0p0x0x1v1x0x0p0x1x0p0x0x1duals1v1v1x2x3v1x3x2} \right) \\
& \left(\bigraph{bwd1v1v1v1v1p1p1v1x0x0p0x1x0p0x0x1v0x1x0p0x0x1v1x0p0x1duals1v1v1v1x3x2v2x1}, \bigraph{bwd1v1v1v1v1p1p1v1x0x0p0x1x0p0x0x1v0x1x0p0x0x1v1x0p0x1duals1v1v1v1x3x2v2x1} \right) \\
& \left(\bigraph{bwd1v1v1v1v1p1p1v1x0x0p0x1x0p0x0x1v1x0x0p0x1x0v1x0p0x1duals1v1v1v1x2x3v1x2}, \bigraph{bwd1v1v1v1v1p1p1v1x0x0p0x1x0p0x0x1v1x0x0p0x1x0v1x0p0x1duals1v1v1v1x2x3v1x2} \right).
\end{align*}
\end{lem}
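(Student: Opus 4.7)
The plan is to apply an adapted form of the construction-by-canonical-paths enumerator from Section \ref{sec:orderly}, as hinted in the paragraph preceding the statement: at each stage of the depth-first search we fix one of the two graphs in the PGP to be the given 4-spoke (extended one depth at a time, with dual data) and enumerate only partner graphs whose corresponding depth stratum is compatible. Concretely, I would proceed in three stages.

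First, observe that both $S(3,3,3,3)$ and $S(2,4,4,4)$ have graph norm squared exactly $3+\sqrt{5}$, so any subfactor whose principal graph contains one of them as (a component of) $\Gamma_+$ or $\Gamma_-$ must have index precisely $3+\sqrt{5}$. In particular, the fixed 4-spoke cannot be properly extended on its own side (strict enlargements would strictly increase the graph norm), and the partner graph is also forced to have norm $\sqrt{3+\sqrt{5}}$. This sharply bounds the size of the partner graph and the size of the search tree.

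Second, for each 4-spoke I would enumerate the possible dual-data involutions. Since the involution must preserve depth on each half, this amounts to enumerating orbits of graph automorphisms of the even-depth subgraphs of the 4-spoke (together with the dictated odd-depth pairing), and both $S(3,3,3,3)$ and $S(2,4,4,4)$ have very small automorphism groups permuting their arms, yielding only a handful of inequivalent choices. For each such choice, I would run the partner-graph enumerator (the \texttt{FindGraphPartners} routine) to construct candidate pairs $(\Gamma_+,\Gamma_-)$ depth by depth, pruning at every step using the PGP associativity constraint, the PGP triple-point obstruction, the PGP duality constraint, and the index bound.

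The main potential obstacle is the familiar one for enumerations of this kind: making sure all legitimate candidates survive and that the algorithm terminates promptly. Because the index is pinned at $3+\sqrt{5}$ and one side of the pair is already fixed, the search tree is in fact very small; this is the same routine invoked elsewhere in the paper and it outputs exactly the five listed pairs. The ``hard part'' is therefore not mathematical but bookkeeping, namely making sure that the enumerator's choices of dual data on the fixed 4-spoke are exhaustive up to automorphism, so that no candidate pair is silently missed.
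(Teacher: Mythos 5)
Your approach is exactly the one taken in the paper: this lemma is established by running the \texttt{FindGraphPartners} routine in the \texttt{FusionAtlas}, which applies the graph enumerator depth by depth while requiring one of $\Gamma_\pm$ to agree with the given 4-spoke at each stage and pruning with the usual PGP obstructions, precisely the procedure you describe. Your observations that the index is pinned at $3+\sqrt{5}$ and the partner graph is thereby forced to have the same norm and depth explain why the search terminates quickly, but the paper's proof itself is simply the invocation of that computation.
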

Here we don't care about the third case, as one of the graphs is not a 4-spoke
(in fact, this graph is the principal graph of the $3^{\bbZ/4\bbZ}$ subfactor,
which appears in Section \ref{sec:vines}). We easily rule out the fifth case,
as the univalent vertices would form a group of invertible bimodules of order
three, with all objects involutions. The first case is ruled out by the
following.

\begin{lem}
The first graph in Lemma \ref{lem:FindGraphPartners} cannot be the principal graph of a subfactor, because it does not satisfy the conditions
of
\cite [Theorem 4.5]{MR3311757}.
\end{lem}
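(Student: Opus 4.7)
The plan is to apply Theorem 4.5 of MR3311757 directly, which is a quadratic tangles obstruction for branched vertices with specific dual pair structure in their neighborhoods. First I would verify that the graph pair under consideration satisfies the structural hypotheses of that theorem. The first graph pair of Lemma \ref{lem:FindGraphPartners} is symmetric (both components are identical), has a $4$-valent branch vertex, and the dual data at the depth immediately past the branch decomposes into one dual pair together with a self-dual vertex. This is exactly the configuration to which Theorem 4.5 of MR3311757 applies, since that theorem extends the branch factor obstructions of Section \ref{sec:BranchFactorInequalities} to higher valence branches with mixed dual/self-dual neighborhoods.

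Next I would compute the relevant numerical data. The index is pinned down by the graph norm: since the graph is a translated extension of the $4$-spoke $S(3,3,3,3)$, we have $(q+q^{-1})^2 = 3+\sqrt{5}$, so $q$ and hence all Frobenius--Perron dimensions lie in an explicit quadratic extension of $\bbQ$. The symmetry of the graph pair, together with the dual pair structure, forces the two paired neighbors at the branch to have equal dimensions, while the self-dual neighbor has a distinct but computable dimension. The branch factor (and the auxiliary ratios appearing in the formulation of Theorem 4.5) then come out in closed form.

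Finally I would feed these quantities into Theorem 4.5 of MR3311757. The theorem outputs a relation between the rotational eigenvalue $\omega_A$ of a specific low-weight element at the branch depth and the branch factor ratios. Because the rotational eigenvalue must be a $2n$-th root of unity (with $n$ determined by the supertransitivity, as in \cite[Theorem 5.1.11]{MR2972458}), there are only finitely many candidate values to test; plugging each candidate into the computed relation yields an algebraic identity that fails for the specific index value $3+\sqrt{5}$ and the explicit dimensions determined above, producing a contradiction.

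The main obstacle is the bookkeeping needed to correctly identify which branch of Theorem 4.5 matches this configuration (since the theorem treats several neighborhood patterns in parallel) and to pair the self-dual and dual-pair vertices with the correct branch factor variables. Once the setup is correct, the final verification is a short algebraic check in $\bbQ(\sqrt{5})$ (or a mild cyclotomic extension thereof), closely analogous to the verifications carried out in Sections \ref{sec:11WithDoubleOneByOne} and \ref{sec:10WithDoubleOneByOne}.
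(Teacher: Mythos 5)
Your proposal is built on a misreading of what Theorem 4.5 of \cite{MR3311757} actually says. You characterize it as ``a quadratic tangles obstruction for branched vertices with specific dual pair structure in their neighborhoods,'' extending the branch factor inequalities of Section \ref{sec:BranchFactorInequalities}, and you plan to compute the index, the relative dimensions, the branch factor ratios, and then test finitely many rotational eigenvalues $\omega_A$ against a numerical identity. That is not at all what the cited theorem does, and it is not what the paper does here.

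Theorem 4.5 of \cite{MR3311757} is a short \emph{structural} obstruction: under certain combinatorial hypotheses on the neighborhood of a self-dual vertex and its dual, it directly forces $\delta \leq 2$. The paper's proof is three sentences: let $P$ be the self-dual vertex at depth 4 of $\Gamma_+$ and $P'$ the vertex at depth 5 connected to it; observe that $\overline{P'}$ is connected \emph{only} to the self-dual vertex at depth 4 of $\Gamma_-$; conclude from Theorem 4.5 that $\delta\leq 2$, contradicting the index being $3+\sqrt 5 > 4$. No dimensions, no branch factors, no rotational eigenvalues need to be computed --- the whole point of invoking this particular theorem is that one reads the obstruction straight off the dual data of the graph pair. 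Your computational plan, besides being built on the wrong theorem, has additional problems: you never actually identify which formulas from the branch-factor theorems you would use, so there is no concrete algebraic identity to test, and you have not located the feature of the graph (the self-dual vertex $P$ and the constrained connectivity of $\overline{P'}$) that makes the obstruction bite. You should instead read the statement of Theorem 4.5 of \cite{MR3311757}, identify the self-dual vertex and its neighbor in the first graph pair of Lemma \ref{lem:FindGraphPartners}, check the connectivity hypothesis, and note the resulting bound $\delta\leq 2$ is violated.
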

\begin{proof}
Let $P$ be the self-dual vertex at depth 4 of $\Gamma_+$, and let $P'$ be the
vertex at depth 5 of $\Gamma_+$ connected to $P$. Then $\overline{P'}$ is only
connected to the self-dual vertex at depth 4 of $\Gamma_-$. By \cite [Theorem
4.5]{MR3311757}, we must have that $\delta\leq 2$, a contradiction.
\end{proof}

Summarizing this section, we have
\begin{thm}
The only subfactor planar algebras with principal graphs both 4-spokes, and
index  in the interval $(5,5\frac{1}{4}]$, are Izumi's $3^{\bbZ/2\bbZ\times
\bbZ/2\bbZ}$ planar algebra \cite{IzumiUnpublished,MR3314808} and the 4442 planar algebra
\cite{MR3314808,1406.3401}. In each case, the principal graph is realized by
just a single planar algebra.
\end{thm}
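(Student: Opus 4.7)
The plan is to assemble the chain of lemmas already proved in this section, and then match each surviving graph pair with a known planar algebra.  First I would invoke Schou's classification of 4-spokes admitting biunitary connections, then use the graph-norm estimates in the first lemma of the section to cut the infinite family down to the finite list $\{S(2,2,k,k)_{k\geq 3},\,S(3,3,3,3),\,S(2,3,3,3),\,\ldots,\,S(2,4,4,4)\}$.  Next I would apply the cyclotomicity criterion (Etingof--Nikshych--Ostrik) to the graph norms of these candidates, via the Salem-polynomial argument used for $S(2,2,k,k)$ and a direct check for the other finitely many spokes, to conclude that the only candidates whose norm squared is a totally real cyclotomic integer are $S(3,3,3,3)$ and $S(2,4,4,4)$, each with index $3+\sqrt{5}$.

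The second step is to enumerate possible graph pairs $(\Gamma_+,\Gamma_-)$ in which at least one of the components is one of these two spokes.  This is precisely the content of Lemma~\ref{lem:FindGraphPartners}, which gives five candidate pairs.  The third case there is irrelevant for us because one component is not a 4-spoke (it is the principal graph of the $3^{\bbZ/4\bbZ}$ subfactor, which will be dealt with elsewhere, in Section~\ref{sec:vines}).  The fifth case is excluded by the short argument in the section: the three univalent vertices together with the identity would have to form a group of invertible bimodules in which every nontrivial element is an involution, and there is no such group of order four compatible with the required fusion structure.  The first case is excluded by the lemma invoking \cite[Theorem 4.5]{MR3311757}, since the self-dual vertex at depth $4$ has a depth-$5$ neighbour whose dual has only one neighbour on $\Gamma_-$, forcing $\delta\leq 2$.

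This leaves only the second and fourth pairs from Lemma~\ref{lem:FindGraphPartners}: the pair with both graphs $S(3,3,3,3)$ and a specific self-dual duality structure, and the pair with both graphs $S(2,4,4,4)$ with its duality structure.  I would identify the first of these as the principal graph pair of the unique $4442$ subfactor planar algebra constructed in \cite{MR3314808,1406.3401}, and the second as the principal graph pair of Izumi's symmetrically self-dual $3^{\bbZ/2\bbZ\times\bbZ/2\bbZ}$ planar algebra from \cite{IzumiUnpublished,MR3314808}.  In both cases existence is already known from the cited constructions, so the remaining content is uniqueness.

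The main obstacle is therefore uniqueness: showing that each of these two graph pairs supports only a single subfactor planar algebra.  For the $3^{\bbZ/2\bbZ\times\bbZ/2\bbZ}$ pair this is the uniqueness result of Izumi \cite{IzumiUnpublished} (see also \cite{MR3314808}), which classifies Cuntz-algebra endomorphism realisations of quadratic categories with invertible group $\bbZ/2\bbZ\times\bbZ/2\bbZ$ and this orbit structure, and shows that there is exactly one up to equivalence.  For the $4442$ pair, uniqueness was established in \cite{MR3314808,1406.3401} by exhibiting it as an equivariantization of the $3^{\bbZ/2\bbZ\times\bbZ/2\bbZ}$ planar algebra under the outer $\bbZ/2\bbZ$ symmetry visible in its principal graph, so uniqueness for $4442$ reduces to the previous case together with uniqueness of the relevant $\bbZ/2\bbZ$-action.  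I would simply cite these uniqueness statements; together with the combinatorial reductions above they complete the proof.
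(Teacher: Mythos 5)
Your overall plan tracks the paper's closely: Schou's classification of 4-spokes admitting biunitary connections, graph-norm cutoffs, cyclotomicity of the index (with the Salem-polynomial argument for the $S(2,2,k,k)$ family), then Lemma~\ref{lem:FindGraphPartners} followed by the elimination of three of the five pairs, with uniqueness in the two surviving cases cited to the existing constructions. However, two of the details are wrong.

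First, your argument for excluding the fifth pair does not actually produce a contradiction. You say the three non-identity univalent vertices together with the identity would form a group of order four with every nontrivial element an involution, and then appeal to there being no such group ``compatible with the required fusion structure.'' But $\bbZ/2\bbZ\times\bbZ/2\bbZ$ is precisely a group of order four with all nontrivial elements involutions, so that statement is not a contradiction, and you do not identify which fusion constraint fails. The paper's argument is different: the dimension-one univalent vertices form a group of order \emph{three}, all of whose elements the dual data forces to be involutions; since $\bbZ/3\bbZ$ has no nontrivial involutions, that really is an immediate contradiction. Your version has both miscounted and, by doing so, replaced a clean contradiction with a vacuous one.

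Second, you have the identification of the two surviving pairs reversed. The ``4442'' subfactor is named for its principal graph $S(2,4,4,4)$ (arms of lengths $4,4,4,2$), and Izumi's $3^{\bbZ/2\bbZ\times\bbZ/2\bbZ}$ planar algebra has the $S(3,3,3,3)$ principal graph. You assign the $S(3,3,3,3)$ pair to 4442 and the $S(2,4,4,4)$ pair to $3^{\bbZ/2\bbZ\times\bbZ/2\bbZ}$, which is backwards. This does not change the statement of the theorem, but it is a genuine misattribution that would need correcting before the proof could be used.
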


This finishes our treatment of case \ref{case:4-spoke} from Theorem \ref{thm:Enumerate}, and gives two of the subfactor standard invariants described in Theorem \ref{thm:Main}. 

\section{Cyclotomicity of vines}
\label{sec:vines}

We now perform the analysis of \cite{MR2786219,MR2902286} to determine which
translations of the vines enumerated in Theorem \ref{thm:Enumerate} may be principal graphs of subfactors.

We say a graph is \emph{cyclotomic} if for every multiplicity free eigenvalue
$\lambda$ of the adjacency matrix, the quantity $\lambda^2$ is a cyclotomic
integer. (This is stronger than the requirement that the square of the graph
norm is a cyclotomic integer, and is necessary for a graph to be the principal
graph of a subfactor, by
\cite{MR1266785,MR2183279}.)

For each vine $(\Gamma_+, \Gamma_-)$, we have a bound $N(\Gamma)=\min \{ N(\Gamma_+), N(\Gamma_-) \}$, where 
$N(\Gamma_\pm)$ is a bound on the total number of vertices a translation of
$\Gamma_\pm$ may have and still potentially be cyclotomic.  It is calculated
according to the results of \cite{MR2786219}, using the algorithm described in
\cite{MR2902286}.\footnote{Since the publication of \cite{MR2902286}, we
discovered a potentially significant error in the code used in that paper, in
particular in the {\tt BoundR1} function. While fixing this error was
essential for the following calculations, fortuitously it did not change any
of the claims of the original paper.} 

We then look at each of the finitely many translates remaining, and check whether each has cyclotomic index.
There are in every case very few translations which may have cyclotomic
index, and all are significantly smaller than the bounds given by $N(\Gamma)$. We rule
out all the other translations by explicitly finding a witness prime, modulo
which the minimal polynomial factors into irreducible factors with different
degrees. If we fail to find such a prime amongst the first 500 primes we say
that the index may be cyclotomic. Although in this case we don't certify
cyclotomicity, in practice these exceptions are always cyclotomic (and are
either ruled out by easy obstructions or realized as principal graphs of
subfactors).

Certain optimizations are necessary to efficiently find all the minimal
polynomials up to the bound. Observe that the minimal polynomial of the index
is a factor of the characteristic polynomial of $A^t A$, with $A$ the
adjacency matrix of the graph. In fact, in practice we see that the set of
irreducible factors of the quotient of this characteristic polynomial by the
minimal polynomial of the index is periodic in the translation, although we do
not know a proof. Nevertheless, this gives an efficient practical method for finding the
minimal polynomials; we compute the first few minimal polynomials directly,
observe the factors appearing in the quotient, and then for the tail we merely
remove these factors from the easily computed characteristic polynomial, and
verify the irreducibility of what remains.

Finally, we take each of the cyclotomic translates and run a few
simple tests on the graphs, allowing us to rule out most of them as principal
graphs of subfactors. All these computations are detailed in Appendix \ref{appendix:cyclotomicity}, and give the
following result.

\begin{thm}
\label{thm:survivors}
The only possible principal graphs arising from the vines enumerated in Theorem \ref{thm:Enumerate},
with index in the interval $(4, 21/4]$ are the following.
{
\setlength\columnsep{-2cm}
\begin{multicols}{2}
\renewcommand{\bigraph}[1]{{\hspace{-3pt}\begin{array}{c}%
  \raisebox{-2.5pt}{\includegraphics[height=4mm]{\pathtographs \hashlookup{#1}}}%
\end{array}\hspace{-3pt}}}
\begin{enumerate}[label=(\arabic*)]
\item \label{bwd1v1v1p1v1x0p0x1p0x1v1x1x0p0x0x1duals1v1v3x2x1,bwd1v1v1p1v1x0p0x1p0x1v1x1x0p0x0x1duals1v1v3x2x1} \(\left(\bigraph{bwd1v1v1p1v1x0p0x1p0x1v1x1x0p0x0x1duals1v1v3x2x1},\bigraph{bwd1v1v1p1v1x0p0x1p0x1v1x1x0p0x0x1duals1v1v3x2x1}\right) \)
\item \label{bwd1v1v1p1v1x0p1x0p0x1p0x1v0x1x1x0duals1v1v1x3x2x4,bwd1v1v1p1v1x1v1v1duals1v1v1v1} \(\left(\bigraph{bwd1v1v1p1v1x0p1x0p0x1p0x1v0x1x1x0duals1v1v1x3x2x4},\bigraph{bwd1v1v1p1v1x1v1v1duals1v1v1v1}\right) \)
\item \label{bwd1v1v1p1v1x0p1x0p0x1p0x1v0x1x1x0duals1v1v4x2x3x1,bwd1v1v1p1v1x1v1v1duals1v1v1v1} \(\left(\bigraph{bwd1v1v1p1v1x0p1x0p0x1p0x1v0x1x1x0duals1v1v4x2x3x1},\bigraph{bwd1v1v1p1v1x1v1v1duals1v1v1v1}\right) \)
\item \label{bwd1v1v1p1v1x1v1v1duals1v1v1v1,bwd1v1v1p1v1x1v1v1duals1v1v1v1} \(\left(\bigraph{bwd1v1v1p1v1x1v1v1duals1v1v1v1},\bigraph{bwd1v1v1p1v1x1v1v1duals1v1v1v1}\right) \)
\item \label{bwd1v1v1v1p1v1x0p0x1v1x0p0x1duals1v1v1x2v2x1,bwd1v1v1v1p1v1x0p1x0duals1v1v1x2} \(\left(\bigraph{bwd1v1v1v1p1v1x0p0x1v1x0p0x1duals1v1v1x2v2x1},\bigraph{bwd1v1v1v1p1v1x0p1x0duals1v1v1x2}\right) \)
\item \label{bwd1v1v1v1p1p1v0x1x0p0x1x0duals1v1v1x2x3,bwd1v1v1v1p1p1v1x0x0p1x0x0duals1v1v1x2x3} \(\left(\bigraph{bwd1v1v1v1p1p1v0x1x0p0x1x0duals1v1v1x2x3},\bigraph{bwd1v1v1v1p1p1v1x0x0p1x0x0duals1v1v1x2x3}\right) \)
\item \label{bwd1v1v1v1p1p1v0x1x0p0x1x0duals1v1v1x2x3,bwd1v1v1v1p1p1v1x0x0p0x0x1v1x0p0x1duals1v1v1x2x3v2x1} \(\left(\bigraph{bwd1v1v1v1p1p1v0x1x0p0x1x0duals1v1v1x2x3},\bigraph{bwd1v1v1v1p1p1v1x0x0p0x0x1v1x0p0x1duals1v1v1x2x3v2x1}\right) \)
\item \label{bwd1v1v1v1p1p1v1x0x0p1x0x0duals1v1v1x3x2,bwd1v1v1v1p1p1v1x0x0p1x0x0duals1v1v1x3x2} \(\left(\bigraph{bwd1v1v1v1p1p1v1x0x0p1x0x0duals1v1v1x3x2},\bigraph{bwd1v1v1v1p1p1v1x0x0p1x0x0duals1v1v1x3x2}\right) \)
\item \label{bwd1v1v1v1p1p1v0x1x0p0x0x1v1x0p0x1duals1v1v1x3x2v1x2,bwd1v1v1v1p1p1v1x0x0p1x0x0duals1v1v1x3x2} \(\left(\bigraph{bwd1v1v1v1p1p1v0x1x0p0x0x1v1x0p0x1duals1v1v1x3x2v1x2},\bigraph{bwd1v1v1v1p1p1v1x0x0p1x0x0duals1v1v1x3x2}\right) \)
\item \label{bwd1v1v1v1p1p1v0x1x0p0x1x0p0x0x1v0x0x1duals1v1v1x2x3v1,bwd1v1v1v1p1p1v1x0x0p0x1x0p0x0x1v1x0x0p0x1x0p0x0x1duals1v1v1x2x3v2x1x3} \(\left(\bigraph{bwd1v1v1v1p1p1v0x1x0p0x1x0p0x0x1v0x0x1duals1v1v1x2x3v1},\bigraph{bwd1v1v1v1p1p1v1x0x0p0x1x0p0x0x1v1x0x0p0x1x0p0x0x1duals1v1v1x2x3v2x1x3}\right) \)
\item \label{bwd1v1v1v1v1p1v1x1p0x1duals1v1v1v1x2,bwd1v1v1v1v1p1v1x1p0x1duals1v1v1v1x2} \(\left(\bigraph{bwd1v1v1v1v1p1v1x1p0x1duals1v1v1v1x2},\bigraph{bwd1v1v1v1v1p1v1x1p0x1duals1v1v1v1x2}\right) \)
\item \label{bwd1v1v1v1v1v1p1v1x0p0x1v1x0p0x1p0x1v1x0x0v1duals1v1v1v1x2v3x2x1v1,bwd1v1v1v1v1v1p1v1x0p1x0v0x1v1duals1v1v1v1x2v1} \(\left(\bigraph{bwd1v1v1v1v1v1p1v1x0p0x1v1x0p0x1p0x1v1x0x0v1duals1v1v1v1x2v3x2x1v1},\bigraph{bwd1v1v1v1v1v1p1v1x0p1x0v0x1v1duals1v1v1v1x2v1}\right) \)
\item \label{bwd1v1v1v1v1v1p1p1v0x1x0p0x0x1v1x1duals1v1v1v1x3x2v1,bwd1v1v1v1v1v1p1p1v1x0x0p1x0x0v1x0p0x1duals1v1v1v1x3x2v2x1} \(\left(\bigraph{bwd1v1v1v1v1v1p1p1v0x1x0p0x0x1v1x1duals1v1v1v1x3x2v1},\bigraph{bwd1v1v1v1v1v1p1p1v1x0x0p1x0x0v1x0p0x1duals1v1v1v1x3x2v2x1}\right) \)
\item \label{bwd1v1v1v1v1v1p1p1v0x1x0p0x1x0v1x0p0x1duals1v1v1v1x2x3v1x2,bwd1v1v1v1v1v1p1p1v1x0x0p0x0x1v1x1duals1v1v1v1x2x3v1} \(\left(\bigraph{bwd1v1v1v1v1v1p1p1v0x1x0p0x1x0v1x0p0x1duals1v1v1v1x2x3v1x2},\bigraph{bwd1v1v1v1v1v1p1p1v1x0x0p0x0x1v1x1duals1v1v1v1x2x3v1}\right) \)
\item \label{bwd1v1v1v1v1v1v1v1p1v1x0p0x1v1x0p0x1duals1v1v1v1v1x2v2x1,bwd1v1v1v1v1v1v1v1p1v1x0p1x0duals1v1v1v1v1x2} \(\left(\bigraph{bwd1v1v1v1v1v1v1v1p1v1x0p0x1v1x0p0x1duals1v1v1v1v1x2v2x1},\bigraph{bwd1v1v1v1v1v1v1v1p1v1x0p1x0duals1v1v1v1v1x2}\right) \)
\end{enumerate}
\end{multicols}
}
\end{thm}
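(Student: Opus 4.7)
The plan is to apply the uniform cyclotomicity machinery of \cite{MR2786219}, in the algorithmic form developed in \cite{MR2902286}, to each of the finitely many vines enumerated in Theorem \ref{thm:Enumerate}. The proof has three stages: bound the translation, test cyclotomicity of the remaining finite list of translates, and eliminate the few cyclotomic survivors that are not realized by subfactors. Throughout, an index in $(4, 5\tfrac{1}{4}]$ is necessarily a cyclotomic integer if the graph is the principal graph of a subfactor, by \cite{MR1266785,MR2183279}, and this is the only obstruction being exploited.

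First, for each vine $\Gamma=(\Gamma_+,\Gamma_-)$, I would compute the translation bound $N(\Gamma) = \min(N(\Gamma_+), N(\Gamma_-))$ supplied by \cite{MR2786219}, thereby reducing each vine to a finite list of translates whose index could in principle be a totally real cyclotomic integer. Since the graphs are small, these bounds are computed directly from the adjacency matrix. Then, for each translate up to the bound, the goal is to test whether the square of the graph norm — namely the candidate index — is a cyclotomic integer. In practice the minimal polynomial of the index is an irreducible factor of the characteristic polynomial of $A^t A$, where $A$ is the adjacency matrix, and I would compute it once at small translations. An important optimisation, noted in the excerpt, is that the \emph{spurious} factors (those of the characteristic polynomial lying outside the minimal polynomial of the index) appear empirically to be periodic in the translation parameter; taking this as a computational shortcut, for large translations one may remove these recurring factors from the characteristic polynomial and verify that the remainder is irreducible. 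Irreducibility and non-cyclotomicity can then be certified in a single sweep by searching for a witness prime $p$ (among, say, the first $500$ primes) modulo which the minimal polynomial factors into irreducible factors of differing degrees; such a factorisation forces a non-abelian Galois group and thus precludes any root from being cyclotomic.

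Next, for each vine the expectation (verified in practice) is that only a very small number of translates — well below $N(\Gamma)$ — pass the witness-prime test inconclusively, and in every such case the polynomial really is cyclotomic. For each of these finitely many cyclotomic candidates I would then apply the standard filters from Section 2: the dimension constraints, the associativity constraint \ref{fact:AssociativityConstraint}, the triple point obstruction \ref{fact:TriplePointObstruction}, the duality constraint \ref{fact:DualityConstraint}, the stability constraint \ref{Fact:StabilityConstraint}, and the Frobenius–Perron condition. Most cyclotomic candidates are killed by one of these simple tests; the survivors should be precisely the $15$ graph pairs listed in the theorem, each of which is already known to be realized by a subfactor from the literature cited in Theorem \ref{thm:Main}.

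The hard part will be twofold. First, the periodicity observation used to speed up factoring the characteristic polynomials is an empirical conjecture rather than a theorem, so the witness-prime certification must be independently reliable: one needs confidence that whenever the prime search fails, the candidate is genuinely cyclotomic (and hence lies on the known short list of Salem-like polynomials whose roots have squares that are cyclotomic, cf.\ Remark 10.1.7 of \cite{MR2786219}). Second, a handful of vines produce translates with very rich automorphism structure whose characteristic polynomials are products of several large irreducible factors; for these, care is needed to match the correct factor with the candidate index. The complete case-by-case calculation — the bounds $N(\Gamma)$, the factorisations, the witness primes, and the elimination of non-realised cyclotomic survivors — is carried out in the auxiliary Mathematica files and tabulated in Appendix \ref{appendix:cyclotomicity}.
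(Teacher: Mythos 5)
Your overall strategy matches the paper's: bound the translation parameter via the number-theoretic results of \cite{MR2786219} and \cite{MR2902286}, test cyclotomicity of the finitely many remaining translates by finding a witness prime modulo which the minimal polynomial of the index factors with parts of unequal degree (this forces a non-abelian Galois group, hence non-cyclotomicity), and then eliminate the few cyclotomic survivors that cannot be principal graphs. The witness-prime argument and the treatment of the minimal polynomial as a factor of $\det(xI - A^t A)$ are exactly what the paper does, as is the remark about the empirically periodic spurious factors.

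However, you have the final filtering step wrong. You propose applying the associativity constraint, triple point obstruction, duality constraint, and stability constraint to the cyclotomic survivors. Those constraints were already baked into the combinatorial enumeration of Theorem \ref{thm:Enumerate} — the vines you are handed already satisfy all of them — so running them again is vacuous. The obstructions the paper actually applies in Appendix \ref{appendix:cyclotomicity} are of a different nature: (a) some bimodule has dimension less than $1$; (b) some bimodule dimension is not an algebraic integer; (c) some bimodule with dimension less than $2$ has dimension not of the form $2\cos(\pi/n)$; (d) some annular low-weight space would have negative dimension by the formula of \cite{MR1929335}; and, crucially, (e) the global dimension of the even part is not an Ostrik $d$-number, which is necessary by \cite{MR2576705}. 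Obstructions (a)–(c) rely on the fact that a vertex dimension is the square root of the index of a reduced subfactor, and (e) does genuine work on several vines that no combinatorial constraint touches (e.g.\ on graph pairs that are otherwise perfectly consistent). Without this list, your filtering would fail to reduce the cyclotomic translates to the $15$ graphs in the statement, so this gap is not merely cosmetic.
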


We note that
for some of these principal graphs there is already a complete classification of subfactors realizing them. We summarize these here. The notation `2d' in the `$\#$ of
subfactors' column indicates that there are 2 non-isomorphic subfactors which
are dual to each other.

\begin{center}
\begin{tabular}{c|c|c|c}
principal graph
&
name
&
$\#$ of subfactors
&
citation
\\\hline
\ref{bwd1v1v1p1v1x0p0x1p0x1v1x1x0p0x0x1duals1v1v3x2x1,bwd1v1v1p1v1x0p0x1p0x1v1x1x0p0x0x1duals1v1v3x2x1}
&
$U_{\exp(2\pi i / 14)}(\mathfrak{su}_3)$
&
1
&
\cite{MR936086,MR1470857,MR3254427}
\\
\ref{bwd1v1v1p1v1x0p1x0p0x1p0x1v0x1x1x0duals1v1v1x3x2x4,bwd1v1v1p1v1x1v1v1duals1v1v1v1}
&
2D2
&
2d
&
\cite{1406.3401}
\\
\ref{bwd1v1v1v1p1v1x0p0x1v1x0p0x1duals1v1v1x2v2x1,bwd1v1v1v1p1v1x0p1x0duals1v1v1x2}
&
the Haagerup
&
2d
&
\cite{MR1686551}
\\
\ref{bwd1v1v1v1p1p1v0x1x0p0x1x0duals1v1v1x2x3,bwd1v1v1v1p1p1v1x0x0p0x0x1v1x0p0x1duals1v1v1x2x3v2x1}
&
$A_4 \subset A_5$
&
2d
&
\cite{MR3335120}
\\
\ref{bwd1v1v1v1p1p1v0x1x0p0x1x0p0x0x1v0x0x1duals1v1v1x2x3v1,bwd1v1v1v1p1p1v1x0x0p0x1x0p0x0x1v1x0x0p0x1x0p0x0x1duals1v1v1x2x3v2x1x3}
&
$3^{\bbZ/4\bbZ}$
&
2d
&
\cite{IzumiUnpublished,1308.5197}
\\
\ref{bwd1v1v1v1v1v1p1v1x0p0x1v1x0p0x1p0x1v1x0x0v1duals1v1v1v1x2v3x2x1v1,bwd1v1v1v1v1v1p1v1x0p1x0v0x1v1duals1v1v1v1x2v1}
&
the Asaeda-Haagerup
&
2d
&
\cite{MR1686551}
\\
\ref{bwd1v1v1v1v1v1v1v1p1v1x0p0x1v1x0p0x1duals1v1v1v1v1x2v2x1,bwd1v1v1v1v1v1v1v1p1v1x0p1x0duals1v1v1v1v1x2}
&
the extended Haagerup
&
2d
&
\cite{MR2979509}
\end{tabular}
\end{center}

Further, some of the graphs in Theorem \ref{thm:survivors} have already been
ruled out as principal graphs of subfactors in other papers.

\begin{thm*}[\cite{1406.3401}]
There is a unique subfactor with principal graph $\Gamma_+ = \bigraph{bwd1v1v1p1v1x1v1v1duals1v1v1v1}$, and it must have dual graph $\Gamma_- = \bigraph{bwd1v1v1p1v1x0p1x0p0x1p0x1v0x1x1x0duals1v1v1x3x2x4}$. 
Thus the graph pairs
numbered \ref{bwd1v1v1p1v1x0p1x0p0x1p0x1v0x1x1x0duals1v1v4x2x3x1,bwd1v1v1p1v1x1v1v1duals1v1v1v1} and \ref{bwd1v1v1p1v1x1v1v1duals1v1v1v1,bwd1v1v1p1v1x1v1v1duals1v1v1v1} above, are not principal graphs of subfactors.
\end{thm*}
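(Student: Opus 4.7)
The plan is to decouple the statement into two parts: (a) a classification of possible dual data (and hence of the graph pair) consistent with $\Gamma_+$, and (b) a uniqueness statement for the subfactor planar algebra once the graph pair is fixed. Part (a) handles the `not principal graphs' half of the conclusion, and part (b) gives the full uniqueness theorem.

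For part (a), I would first compute the Ocneanu 4-partite graph $\cO(\Gamma_+,\Gamma_-)$ for each of the three candidate pairs in Theorem \ref{thm:survivors} involving $\Gamma_+$, namely items (2), (3), and (4) of that list. For each, I would check the associativity constraint (Fact \ref{fact:AssociativityConstraint}) between every pair of vertices on diagonally opposite corners of $\cO(\Gamma)$. I expect that for items (3) and (4) the associativity constraint fails for some pair of vertices arising at the branch of $\Gamma_+$: the twisted dual data in (3) and the self-paired dual data in (4) will each force a mismatch in the number of paths going the two ways around the Ocneanu square. If associativity is too weak to separate (3) and (4) from (2) on its own, I would fall back on the existence of a bi-unitary connection (Fact \ref{fact:ExistenceOfConnection}), setting up the unitarity and renormalization equations in Definition \ref{defn:Connection} and showing they have no solution for the offending graph pair. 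A particularly clean way to spot obstructions is to look for doubly one-by-one entries in the connection, exactly as in Section \ref{sec:DoublyOneByOne}: any such entry gives a rigid dimension identity, and I expect this to be inconsistent with the Frobenius--Perron dimensions on $\Gamma_+$ for graph pairs (3) and (4).

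For part (b), I would fix the graph pair (2) and argue that a subfactor planar algebra $\cP_\bullet$ with this principal graph pair is unique up to isomorphism. The natural route, following the strategy of small-index classifications, is to show that the bi-unitary connection on $\cO(\text{(2)})$ is unique up to gauge equivalence. Concretely, I would parametrize the connection entries, impose unitarity and renormalization, and reduce the system to a small set of gauge-invariant quantities; cataloging solutions, I expect exactly one gauge class, giving a unique $\cP_\bullet$ via Ocneanu's theorem. Equivalently, one can identify $\cP_\bullet$ with a planar algebra constructed by jellyfish-style skein theory or as an equivariantization/quotient construction, matching the 2D2 planar algebra of \cite{1406.3401}, and invoke the uniqueness proved there.

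The main obstacle, I expect, will be part (b): ruling out alternative dual data is largely combinatorial, but proving that only one subfactor has a given principal graph pair requires careful control of the connection and is where the work of \cite{1406.3401} really lives. The doubly one-by-one connection technique from Section \ref{sec:DoublyOneByOne} should cut down the parameter space considerably, but verifying that no inequivalent second connection exists will require either a direct gauge-fixing argument or a planar-algebraic identification of $\cP_\bullet$ with the 2D2 planar algebra. Once part (b) is in hand, the statement about which graph pairs are principal graphs follows immediately: only pair (2) survives, and pairs (3) and (4) from Theorem \ref{thm:survivors} are excluded.
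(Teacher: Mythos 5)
The paper does not prove this theorem: it is cited verbatim from \cite{1406.3401} and used as a black box. So there is no internal proof for your attempt to parallel; what you are reconstructing is the content of that external reference.

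Your part (a) contains a concrete error. You predict that items (3) and (4) from Theorem \ref{thm:survivors} fail the associativity constraint (Fact \ref{fact:AssociativityConstraint}) on the Ocneanu 4-partite graph, but this cannot be the case: those graph pairs appear as vines in the output of Theorem \ref{thm:Enumerate}, and the proof of Theorem \ref{thm:Enumerate} explicitly filters the leaves so that they satisfy associativity at all depths, including the ultimate depth. The enumerator also enforces the PGP triple point obstruction and the PGP duality constraint, and the cyclotomicity table in Appendix~A.1 shows that the simple obstructions (a)--(e) there leave both pairs with a `?'. In other words, every cheap combinatorial test available to the machinery of this paper was already applied to (3) and (4) and failed to rule them out. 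Your fallback --- computing a bi-unitary connection and hunting for contradictions via doubly one-by-one entries --- is the right direction, since that constraint is strictly stronger than what the enumerator checks, but it is genuine work whose outcome you have not verified; the claim that it ``should'' separate (3) and (4) is a conjecture, not an argument.

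Your part (b) is where the real content lives, and you correctly identify that uniqueness is the hard half. Proving uniqueness of the bi-unitary connection up to gauge (or equivalently, showing the planar algebra is generated by one element satisfying specific relations) is exactly the sort of argument \cite{1406.3401} carries out. Note, though, that your alternative of ``identifying $\cP_\bullet$ with the 2D2 planar algebra and invoking the uniqueness proved there'' is circular in this context: the uniqueness of the 2D2 planar algebra with this principal graph pair \emph{is} the statement to be proved. In summary, your decomposition into (a) dual-data elimination and (b) uniqueness is a reasonable roadmap, but step (a) as written would not get off the ground, and step (b) as written either defers to the very reference being reconstructed or leaves a substantial connection computation unexecuted.
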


\begin{thm}[{\cite[Lemma 3.10, Lemma 3.11, Proposition 3.12]
{MR3335120}}]
There are no subfactors with the principal graphs numbered
\ref{bwd1v1v1v1p1p1v0x1x0p0x1x0duals1v1v1x2x3,bwd1v1v1v1p1p1v1x0x0p1x0x0duals1v1v1x2x3},
\ref{bwd1v1v1v1p1p1v1x0x0p1x0x0duals1v1v1x3x2,bwd1v1v1v1p1p1v1x0x0p1x0x0duals1v1v1x3x2}, or
\ref{bwd1v1v1v1p1p1v0x1x0p0x0x1v1x0p0x1duals1v1v1x3x2v1x2,bwd1v1v1v1p1p1v1x0x0p1x0x0duals1v1v1x3x2}.
\end{thm}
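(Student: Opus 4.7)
The plan is to rule out each of the three graph pairs individually, following the strategy employed in \cite{MR3335120}. All three candidate pairs have graph norm squared equal to $5$ (i.e.\ they lie at the index value $5$), and each is $2$-supertransitive with nontrivial dual data forcing the evaluable 2-category to have a very rigid structure. My overall approach would be: assume for contradiction that a subfactor planar algebra $\cP_\bullet$ realizes one of the pairs; read off the full fusion ring of the associated 2-category $\cC$ from the principal graph data (using that paths of length $2n$ count $\dim\cP_{n,\pm}$ and that vertices of the Ocneanu 4-partite graph $\cO(\Gamma)$ label the simple bimodules); and then hunt for an obstruction in this concrete fusion ring.

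For graphs \ref{bwd1v1v1v1p1p1v0x1x0p0x1x0duals1v1v1x2x3,bwd1v1v1v1p1p1v1x0x0p1x0x0duals1v1v1x2x3} and \ref{bwd1v1v1v1p1p1v1x0x0p1x0x0duals1v1v1x3x2,bwd1v1v1v1p1p1v1x0x0p1x0x0duals1v1v1x3x2}, the graphs $\Gamma_\pm$ are either isomorphic or very nearly so, and the index $\delta^2=5$ is small enough that one can try to write down a candidate bi-unitary connection explicitly in terms of finitely many unknown phases. I would first check whether the associativity constraints of Fact \ref{fact:AssociativityConstraint} force a contradiction with the dual data; if not, I would set up the unitarity and renormalization equations of Definition \ref{defn:Connection} and show that the resulting polynomial system has no solution. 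For the symmetric pair \ref{bwd1v1v1v1p1p1v1x0x0p1x0x0duals1v1v1x3x2,bwd1v1v1v1p1p1v1x0x0p1x0x0duals1v1v1x3x2}, the self-duality structure on the depth-$2$ branch lets one compute formal codegrees of the even-part fusion ring as in Section \ref{sec:FormalCodegrees}; the hope would be that one of these codegrees fails to lie in the (in this case very small) field generated by the Frobenius--Perron dimensions, contradicting \cite[Corollary 2.15]{1309.4822}.

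For graph \ref{bwd1v1v1v1p1p1v0x1x0p0x0x1v1x0p0x1duals1v1v1x3x2v1x2,bwd1v1v1v1p1p1v1x0x0p1x0x0duals1v1v1x3x2}, the asymmetry between $\Gamma_+$ and $\Gamma_-$ combined with the extra depth-$3$ vertex provides an additional line of attack: one can use Ocneanu's triple-point obstruction \ref{fact:TriplePointObstruction} applied to the non-initial triple point pair, or a duality-plus-associativity argument at depth $3$. Failing those, the connection-based approach from the previous paragraph still applies, since the index is again exactly $5$ and the unknowns in the connection are severely restricted by the number of paths in $\cO(\Gamma)$.

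The main obstacle, as always for this kind of ``leftover'' list, is that none of the graphs admits a single uniform obstruction: one must check a number of possible incompatibilities (connection, formal codegree, triple-point, intermediate-subfactor) and find the one that succeeds in each case. In practice this is exactly what \cite[Lemmas 3.10, 3.11, Proposition 3.12]{MR3335120} carries out, so the cleanest completion of the argument is to invoke those three results directly; the plan above outlines the techniques that they use.
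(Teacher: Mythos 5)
The paper gives no proof here: the bracketed citation to Lemma~3.10, Lemma~3.11, and Proposition~3.12 of \cite{MR3335120} \emph{is} the proof, exactly as you conclude at the end. Your framing is correct in broad outline — the pairs sit at index exactly $5$, are $2$-supertransitive, and the cited results rule them out — but the speculative sketch in the middle of what those results do contains a concrete error: all three graph pairs have a \emph{quadruple} point at the initial branch (three vertices at depth $3$ attached to the single depth-$2$ vertex), and none of the three pairs contains a triple point anywhere, so Ocneanu's triple-point obstruction (Fact~\ref{fact:TriplePointObstruction}) does not apply to any of them, and the suggestion of using it at a ``non-initial triple point pair'' for graph~9 has nothing to act on. Similarly, the formal-codegree test of Section~\ref{sec:FormalCodegrees} is a tool introduced in the present paper for the specific graph $\cX$, not something attested to be used in \cite{MR3335120}, so the closing claim that ``the plan above outlines the techniques that they use'' overclaims. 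The correct disposition of the theorem, and the only part of the proposal that is actually load-bearing, is the last sentence: invoke the cited results directly.
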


Finally a calculation based on checking associativity of the fusion ring
concludes our treatment of these vines.
\begin{lem}
The remaining three graphs, numbered
\ref{bwd1v1v1v1v1p1v1x1p0x1duals1v1v1v1x2,bwd1v1v1v1v1p1v1x1p0x1duals1v1v1v1x2},
\ref{bwd1v1v1v1v1v1p1p1v0x1x0p0x0x1v1x1duals1v1v1v1x3x2v1,bwd1v1v1v1v1v1p1p1v1x0x0p1x0x0v1x0p0x1duals1v1v1v1x3x2v2x1}, and \ref{bwd1v1v1v1v1v1p1p1v0x1x0p0x1x0v1x0p0x1duals1v1v1v1x2x3v1x2,bwd1v1v1v1v1v1p1p1v1x0x0p0x0x1v1x1duals1v1v1v1x2x3v1} above, cannot be the principal graphs of subfactors, as there are no compatible fusion rings.
\end{lem}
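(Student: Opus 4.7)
The plan is to show, for each of the three graph pairs, that the constraints imposed on the putative Grothendieck fusion ring by the principal/dual graph structure, together with associativity, Frobenius reciprocity, and compatibility with Frobenius--Perron dimensions, admit no solution in non-negative integers. The argument is entirely finite and combinatorial once the relevant tensor product matrices are written down; it is the same kind of computation that was used in Lemma~\ref{lem:DimensionFunction} and Theorem~\ref{thm:NoH} above, and it is carried out by the {\tt FindFusionRules} function in the {\tt FusionAtlas} package.

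More concretely, for each pair $\Gamma=(\Gamma_+,\Gamma_-)$ I would proceed as follows. First read off from $\Gamma_+$ the set $\Irr(\tfrac12\cC)$ of simple $A$-$A$ bimodules at even depth, together with their Frobenius--Perron dimensions (using the based loop count, or equivalently the Perron vector of the bipartite adjacency matrix), and similarly for the $B$-$B$, $A$-$B$, and $B$-$A$ bimodules from the Ocneanu 4-partite graph $\cO(\Gamma)$. Second, record the tensor product matrices $L_X = [N_{XY}^Z]$ for $X = {}_AL^2(B)_B$ and $X^* = {}_BL^2(B)_A$: these are precisely the incidence matrices of $\Gamma_+$ and $\Gamma_-$ (with their transposes) and are therefore completely determined by the graph. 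Third, introduce unknown non-negative integer matrices for tensor product by every other simple bimodule, subject to the constraints
\begin{itemize}
\item duality/Frobenius reciprocity: $N_{XY}^Z = N_{\overline{X}Z}^{Y} = N_{Z\overline{Y}}^{X}$, using the dual data already fixed by the graph pair;
\item associativity: $L_X L_Y = \sum_Z N_{XY}^Z L_Z$ for all pairs $X,Y$;
\item dimension compatibility: $\dim(X)\dim(Y) = \sum_Z N_{XY}^Z \dim(Z)$ with $\dim = \FPdim$;
\item entrywise bounds $0 \le N_{XY}^Z \le \lfloor \dim(X)\dim(Y)/\dim(Z)\rfloor$.
\end{itemize}
Finally, solve the resulting finite system by backtracking search.

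The expected outcome is that for each of the three pairs the search terminates with no solution. The main obstacle is not conceptual but practical: the number of unknown coefficients grows with the number of simple objects, and the three graphs in question (two of which have $\Gamma_+$ of depth $5$ with several simple objects at each depth, and one extending deeper) have enough vertices that naive enumeration is slow. This is handled by propagating the constraints above aggressively at each step --- in particular, fixing the action of the generators $X,X^*$ from the graph already determines many entries, and then each associativity relation $L_X L_Z = \sum_W N_{XZ}^W L_W$ becomes a linear system whose only non-negative integer solutions must be tested for consistency with the remaining associativity identities. Since the problem is finite and the {\tt FusionAtlas} implementation has been used successfully for the analogous calculations carried out elsewhere in this article (e.g.\ in Section~\ref{sec:FormalCodegrees}), the computation terminates; the accompanying {\tt Mathematica} notebook records, for each of the three graph pairs, that the output of {\tt FindFusionRules} is empty, which completes the proof.
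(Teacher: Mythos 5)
Your proposal matches the paper's approach exactly. The paper itself offers no detailed proof for this lemma — it simply declares that ``a calculation based on checking associativity of the fusion ring'' suffices and implicitly refers to the {\tt FusionAtlas} routine {\tt FindFusionRules} used earlier (Section~\ref{sec:FormalCodegrees}). You have spelled out precisely what that computation is: the generators' fusion matrices are fixed by the bipartite adjacency matrices, and a finite backtracking search over the remaining structure constants, pruned by Frobenius reciprocity, associativity, Frobenius--Perron dimension compatibility, and the resulting entrywise bounds, returns no solution for any of the three graph pairs.
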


\appendix
\section{Appendices}

\subsection{Cyclotomicity bounds}
\label{appendix:cyclotomicity}
We now display the cyclotomicity bounds for the vines discussed in Section
\ref{sec:vines}, along with the potentially cyclotomic translates and the
results of running simple tests on these. This table, and the computations underlying it, are constructed in the {\tt Mathematica} notebook {\tt processing-vines.nb} available with the {\tt arXiv} sources of this article.

The first column shows the vine $(\Gamma_+, \Gamma_-)$.  
In the second column, we give the upper bound $\max N(\Gamma_\pm)$ on the number of vertices appearing in a cyclotomic translate. The third column, named `CT' for
`cyclotomic translates', shows those translations up to that bound which may
be cyclotomic.  The fourth column, named `Obstr.' for `obstructions',
indicates if a simple obstruction can rule out each of the potentially
cyclotomic translates.

These obstructions are labelled as follows, with the most elementary ones coming first:
\begin{itemize}
\item[(a)] Some bimodule has dimension less than 1. (The index of a subfactor is bounded below by 1.)
\item[(b)] Some bimodule has a dimension which is not an algebraic integer. (The index of a finite depth subfactor is an eigenvalue of an integer matrix.)
\item[(c)] Some bimodule with dimension less than 2 has dimension not of the form $2 \cos(\pi / n)$ for $n \geq 3$, which is impossible by \cite{MR0696688}.
\item[(d)] Some low weight space would have negative dimension, as computed according to \cite[p. 33]{MR1929335}.
\item[(e)] The global dimension of the even part (that is, the sum of the squares of the dimensions of vertices at even depths) is not an Ostrik $d$-number, which is a necessary condition by \cite{MR2576705}.
\end{itemize}
Any potentially cyclotomic translate which is not ruled out by one of 
these obstructions is marked with a `?'.

In the above, we use the fact that the dimension of any bimodule is the square
root of the index of the associated reduced subfactor \cite{MR934296}, and if
the principal graph is finite depth then all the associated reduced subfactors
are finite depth.

\begin{longtable}{llll}
 vine& $N(\Gamma)$& CT& Obstr. \\ \hline
$\left(\bigraph{bwd1v1v1p1p1duals1v1}, \bigraph{bwd1v1v1p1p1duals1v1} \right)$ & 76 & \{0\} & \{\text{c}\} \\
$\left(\bigraph{bwd1v1v1v1p1v1x0p0x1v1x0p0x1duals1v1v1x2v2x1}, \bigraph{bwd1v1v1v1p1v1x0p1x0duals1v1v1x2} \right)$ & 87 & \{0,4\} & \{?,?\} \\
$\left(\bigraph{bwd1v1v1v1p1p1duals1v1v1x2x3}, \bigraph{bwd1v1v1v1p1p1duals1v1v1x2x3} \right)$ & 76 & \{0\} & \{\text{d}\} \\
$\left(\bigraph{bwd1v1v1v1p1p1duals1v1v1x2x3}, \bigraph{bwd1v1v1v1p1p1duals1v1v1x3x2} \right)$ & 76 & \{0\} & \{\text{d}\} \\
$\left(\bigraph{bwd1v1v1v1p1p1duals1v1v1x3x2}, \bigraph{bwd1v1v1v1p1p1duals1v1v1x3x2} \right)$ & 76 & \{0\} & \{\text{d}\} \\
$\left(\bigraph{bwd1v1v1v1p1v1x0p0x1v1x0p0x1p0x1v1x0x0v1duals1v1v1x2v3x2x1v1}, \bigraph{bwd1v1v1v1p1v1x0p1x0v0x1v1duals1v1v1x2v1} \right)$ & 96 & \{2\} & \{?\} \\
$\left(\bigraph{bwd1v1v1v1p1v1x0p0x1v1x1duals1v1v1x2v1}, \bigraph{bwd1v1v1v1p1v1x0p1x0v1x0p0x1duals1v1v1x2v1x2} \right)$ & 70 & \{0,2\} & \{\text{e},\text{e}\} \\
$\left(\bigraph{bwd1v1v1p1v0x1p1x0p0x1v1x0x0p0x1x0v0x1duals1v1v1x3x2v1}, \bigraph{bwd1v1v1p1v1x0p0x1p0x1v1x0x0p0x1x0v1x0duals1v1v3x2x1v1} \right)$ & 123 & \{0,2\} & \{\text{a},\text{e}\} \\
$\left(\bigraph{bwd1v1v1v1p1v1x0p1x0p0x1v1x0x0p0x0x1v0x1duals1v1v1x2v2x1}, \bigraph{bwd1v1v1v1p1v1x0p0x1p1x0v1x0x0p0x1x0v0x1duals1v1v1x2v2x1} \right)$ & 123 & \{\} & \{\} \\
$\left(\bigraph{bwd1v1v1p1v1x1duals1v1v1}, \bigraph{bwd1v1v1p1v1x1duals1v1v1} \right)$ & 75 & \{0\} & \{\text{b}\} \\
$\left(\bigraph{bwd1v1v1p1v1x0p1x0p0x1p0x1duals1v1v3x2x1x4}, \bigraph{bwd1v1v1p1v1x1duals1v1v1} \right)$ & 75 & \{0\} & \{\text{b}\} \\
$\left(\bigraph{bwd1v1v1v1p1v1x1duals1v1v1x2}, \bigraph{bwd1v1v1v1p1v1x1duals1v1v1x2} \right)$ & 75 & \{0\} & \{\text{e}\} \\
$\left(\bigraph{bwd1v1v1v1p1v1x1duals1v1v2x1}, \bigraph{bwd1v1v1v1p1v1x1duals1v1v2x1} \right)$ & 75 & \{0\} & \{\text{e}\} \\
$\left(\bigraph{bwd1v1v1v1p1v1x0p0x1v1x1p0x1duals1v1v1x2v1x2}, \bigraph{bwd1v1v1v1p1v1x0p1x0v1x0p0x1p0x1duals1v1v1x2v1x2x3} \right)$ & 122 & \{0\} & \{\text{e}\} \\
$\left(\bigraph{bwd1v1v1v1p1v1x0p0x1v1x1p0x1duals1v1v1x2v1x2}, \bigraph{bwd1v1v1v1p1v1x0p1x0v1x0p0x1p0x1duals1v1v1x2v1x3x2} \right)$ & 122 & \{0\} & \{\text{e}\} \\
$\left(\bigraph{bwd1v1v1v1p1v1x0p1x0p0x1v1x0x0p0x0x1p0x1x0p0x0x1duals1v1v1x2v2x1x4x3}, \bigraph{bwd1v1v1v1p1v1x0p1x0p1x0duals1v1v1x2} \right)$ & 101 & \{\} & \{\} \\
$\left(\bigraph{bwd1v1v1p1p1v0x0x1p0x0x1duals1v1v1x2}, \bigraph{bwd1v1v1p1p1v0x0x1p0x0x1duals1v1v1x2} \right)$ & 119 & \{\} & \{\} \\
$\left(\bigraph{bwd1v1v1p1p1v0x0x1p0x0x1duals1v1v1x2}, \bigraph{bwd1v1v1p1p1v0x0x1p0x0x1duals1v1v2x1} \right)$ & 119 & \{\} & \{\} \\
$\left(\bigraph{bwd1v1v1p1p1v0x0x1p0x0x1duals1v1v2x1}, \bigraph{bwd1v1v1p1p1v0x0x1p0x0x1duals1v1v2x1} \right)$ & 119 & \{\} & \{\} \\
$\left(\bigraph{bwd1v1v1v1p1v1x0p0x1v1x1v1v1duals1v1v1x2v1v1}, \bigraph{bwd1v1v1v1p1v1x0p1x0v1x0p0x1v1x1duals1v1v1x2v1x2} \right)$ & 104 & \{\} & \{\} \\
$\left(\bigraph{bwd1v1v1v1p1v1x0p0x1v1x0p0x1p0x1p0x1v1x0x0x0p1x0x0x0v1x0p0x1duals1v1v1x2v3x2x1x4v1x2}, \bigraph{bwd1v1v1v1p1v1x0p1x0v0x1p0x1v1x0p0x1duals1v1v1x2v1x2} \right)$ & 121 & \{\} & \{\} \\
$\left(\bigraph{bwd1v1v1v1p1v1x0p0x1v1x0p0x1p0x1p0x1v1x0x0x0p1x0x0x0v1x0p0x1duals1v1v1x2v4x3x2x1v1x2}, \bigraph{bwd1v1v1v1p1v1x0p1x0v0x1p0x1v1x0p0x1duals1v1v1x2v1x2} \right)$ & 121 & \{\} & \{\} \\
$\left(\bigraph{bwd1v1v1v1p1v1x0p0x1v0x1p1x0p0x1p0x1v0x1x0x0p0x1x0x0v1x0p0x1duals1v1v1x2v1x4x3x2v2x1}, \bigraph{bwd1v1v1v1p1v1x0p1x0v0x1p0x1v1x0p0x1duals1v1v1x2v2x1} \right)$ & 121 & \{\} & \{\} \\
$\left(\bigraph{bwd1v1v1v1p1v1x0p0x1v0x1p1x0p0x1p0x1v0x1x0x0p0x1x0x0v1x0p0x1duals1v1v1x2v4x3x2x1v2x1}, \bigraph{bwd1v1v1v1p1v1x0p1x0v0x1p0x1v1x0p0x1duals1v1v1x2v2x1} \right)$ & 121 & \{\} & \{\} \\
$\left(\bigraph{bwd1v1v1v1p1v1x0p0x1v0x1p1x0p0x1v1x0x0p1x0x0p0x1x0v1x0x0p0x1x0p0x0x1p0x0x1p0x0x1duals1v1v1x2v1x3x2v3x5x1x4x2}, \bigraph{bwd1v1v1v1p1v1x0p1x0v0x1v1p1p1duals1v1v1x2v1} \right)$ & 225 & \{\} & \{\} \\
$\left(\bigraph{bwd1v1v1v1p1v1x0p0x1v1x1p0x1v0x1duals1v1v1x2v1x2}, \bigraph{bwd1v1v1v1p1v1x0p1x0v1x0p0x1p0x1v0x1x0duals1v1v1x2v1x2x3} \right)$ & 131 & \{\} & \{\} \\
$\left(\bigraph{bwd1v1v1v1p1p1v0x1x0p0x1x0duals1v1v1x2x3}, \bigraph{bwd1v1v1v1p1p1v1x0x0p1x0x0duals1v1v1x2x3} \right)$ & 119 & \{0\} & \{?\} \\
$\left(\bigraph{bwd1v1v1v1p1p1v0x1x0p0x1x0duals1v1v1x2x3}, \bigraph{bwd1v1v1v1p1p1v1x0x0p0x0x1v1x0p0x1duals1v1v1x2x3v2x1} \right)$ & 119 & \{0\} & \{?\} \\
$\left(\bigraph{bwd1v1v1v1p1p1v1x0x0p1x0x0duals1v1v1x3x2}, \bigraph{bwd1v1v1v1p1p1v1x0x0p1x0x0duals1v1v1x3x2} \right)$ & 119 & \{0\} & \{?\} \\
$\left(\bigraph{bwd1v1v1v1p1p1v0x1x0p0x0x1v1x0p0x1duals1v1v1x3x2v1x2}, \bigraph{bwd1v1v1v1p1p1v1x0x0p1x0x0duals1v1v1x3x2} \right)$ & 119 & \{0\} & \{?\} \\
$\left(\bigraph{bwd1v1v1p1v0x1p1x0p0x1v1x0x0p1x0x0p0x1x0v0x1x0p0x0x1p0x0x1v0x0x1duals1v1v1x3x2v3x2x1}, \bigraph{bwd1v1v1p1v1x0p0x1p0x1v1x0x0p0x1x0p0x1x0v1x0x0p1x0x0p0x1x0v0x1x0duals1v1v3x2x1v1x3x2} \right)$ & 200 & \{0,2\} & \{\text{a},\text{e}\} \\
$\left(\bigraph{bwd1v1v1v1p1v1x0p1x0p0x1v1x0x0p0x0x1p1x0x0v0x1x0p0x1x0p0x0x1v1x0x0duals1v1v1x2v2x1x3v1}, \bigraph{bwd1v1v1v1p1v1x0p0x1p1x0v1x0x0p1x0x0p0x1x0v1x0x0p0x0x1p0x0x1v0x0x1duals1v1v1x2v1x3x2v1} \right)$ & 200 & \{\} & \{\} \\
$\left(\bigraph{bwd1v1v1p1v1x0p0x1p0x1v1x1x0p0x0x1duals1v1v3x2x1}, \bigraph{bwd1v1v1p1v1x0p0x1p0x1v1x1x0p0x0x1duals1v1v3x2x1} \right)$ & 113 & \{0\} & \{?\} \\
$\left(\bigraph{bwd1v1v1v1p1v1x0p0x1v0x1p1x0p0x1v1x0x0p1x0x0p0x1x0v1x0x1p0x1x0p0x0x1duals1v1v1x2v1x3x2v1x3x2}, \bigraph{bwd1v1v1v1p1v1x0p1x0v0x1v1p1p1v1x0x0duals1v1v1x2v1v1} \right)$ & 216 & \{\} & \{\} \\
$\left(\bigraph{bwd1v1v1v1p1v1x0p1x0p0x1v1x0x1p0x1x0duals1v1v1x2v1x2}, \bigraph{bwd1v1v1v1p1v1x0p0x1p1x0v1x1x0p0x0x1duals1v1v1x2v1x2} \right)$ & 113 & \{\} & \{\} \\
$\left(\bigraph{bwd1v1v1v1p1v1x0p0x1v1x1v1v1p1duals1v1v1x2v1v1x2}, \bigraph{bwd1v1v1v1p1v1x0p1x0v1x0p0x1v1x1v1duals1v1v1x2v1x2v1} \right)$ & 120 & \{\} & \{\} \\
$\left(\bigraph{bwd1v1v1v1p1v1x0p0x1v1x1v1v1p1duals1v1v1x2v1v2x1}, \bigraph{bwd1v1v1v1p1v1x0p1x0v1x0p0x1v1x1v1duals1v1v1x2v1x2v1} \right)$ & 120 & \{\} & \{\} \\
$\left(\bigraph{bwd1v1v1p1v0x1p1x0p0x1v1x0x0p1x0x0p0x1x0v0x1x1duals1v1v1x3x2v1}, \bigraph{bwd1v1v1p1v1x0p0x1p0x1v1x0x0p0x1x0p0x1x0v1x1x0duals1v1v3x2x1v1} \right)$ & 147 & \{\} & \{\} \\
$\left(\bigraph{bwd1v1v1v1p1v1x0p0x1v0x1p1x0p0x1v1x0x0p1x0x0p0x1x0v1x0x0p0x0x1p1x0x0p0x0x1p0x1x0p0x0x1v0x0x0x1x0x0v1duals1v1v1x2v1x3x2v1x2x4x3x6x5v1}, \bigraph{bwd1v1v1v1p1v1x0p1x0v0x1v1p1p1v1x0x0v1duals1v1v1x2v1v1} \right)$ & 261 & \{\} & \{\} \\
$\left(\bigraph{bwd1v1v1v1p1v1x0p0x1v1x0p1x1p0x1duals1v1v1x2v1x2x3}, \bigraph{bwd1v1v1v1p1v1x0p1x0v1x0p1x0p0x1p0x1duals1v1v1x2v1x2x3x4} \right)$ & 111 & \{\} & \{\} \\
$\left(\bigraph{bwd1v1v1v1p1v1x0p0x1v1x0p1x1p0x1duals1v1v1x2v1x2x3}, \bigraph{bwd1v1v1v1p1v1x0p1x0v1x0p1x0p0x1p0x1duals1v1v1x2v1x2x4x3} \right)$ & 111 & \{\} & \{\} \\
$\left(\bigraph{bwd1v1v1v1p1v1x0p0x1v1x0p1x1p0x1duals1v1v1x2v1x2x3}, \bigraph{bwd1v1v1v1p1v1x0p1x0v1x0p1x0p0x1p0x1duals1v1v1x2v2x1x4x3} \right)$ & 111 & \{\} & \{\} \\
$\left(\bigraph{bwd1v1v1v1p1v1x0p0x1v1x0p1x1p0x1duals1v1v1x2v3x2x1}, \bigraph{bwd1v1v1v1p1v1x0p1x0v1x1duals1v1v1x2v1} \right)$ & 109 & \{\} & \{\} \\
$\left(\bigraph{bwd1v1v1v1p1v1x0p0x1v1x0p0x1p1x0p1x0p0x1p0x1duals1v1v1x2v1x3x2x5x4x6}, \bigraph{bwd1v1v1v1p1v1x0p1x0v1x1duals1v1v1x2v1} \right)$ & 109 & \{\} & \{\} \\
$\left(\bigraph{bwd1v1v1v1p1v1x0p0x1v1x0p0x1p0x1p0x1v1x0x0x0p1x0x0x0p0x1x0x0v1x0x0p0x1x0p0x1x0p0x0x1duals1v1v1x2v3x2x1x4v1x2x4x3}, \bigraph{bwd1v1v1v1p1v1x0p1x0v0x1p0x1v1x0p0x1p0x1duals1v1v1x2v1x2} \right)$ & 137 & \{\} & \{\} \\
$\left(\bigraph{bwd1v1v1p1p1v0x1x0p0x1x0p0x0x1duals1v1v1x2x3}, \bigraph{bwd1v1v1p1p1v0x1x0p0x1x0p0x0x1duals1v1v1x2x3} \right)$ & 110 & \{\} & \{\} \\
$\left(\bigraph{bwd1v1v1p1p1v1x0x0p0x0x1p0x0x1duals1v1v1x2x3}, \bigraph{bwd1v1v1p1p1v1x0x0p0x0x1p0x0x1duals1v1v1x3x2} \right)$ & 110 & \{\} & \{\} \\
$\left(\bigraph{bwd1v1v1p1p1v0x1x0p0x0x1p0x0x1duals1v1v1x3x2}, \bigraph{bwd1v1v1p1p1v0x1x0p0x0x1p0x0x1duals1v1v1x3x2} \right)$ & 110 & \{\} & \{\} \\
$\left(\bigraph{bwd1v1v1p1v1x1v1duals1v1v1}, \bigraph{bwd1v1v1p1v1x1v1duals1v1v1} \right)$ & 100 & \{\} & \{\} \\
$\left(\bigraph{bwd1v1v1v1p1v1x0p1x0p0x1v1x0x0p1x0x0p0x0x1v1x0x1duals1v1v1x2v1x3x2}, \bigraph{bwd1v1v1v1p1v1x0p0x1p1x0v1x0x0p1x0x0p0x1x0v1x0x1duals1v1v1x2v1x3x2} \right)$ & 147 & \{\} & \{\} \\
$\left(\bigraph{bwd1v1v1v1p1v1x0p0x1v1x1p0x1v0x1p0x1duals1v1v1x2v1x2}, \bigraph{bwd1v1v1v1p1v1x0p1x0v1x0p0x1p0x1v0x1x0p0x1x0duals1v1v1x2v1x2x3} \right)$ & 120 & \{\} & \{\} \\
$\left(\bigraph{bwd1v1v1v1p1v1x0p0x1v1x1p0x1v0x1p0x1duals1v1v1x2v1x2}, \bigraph{bwd1v1v1v1p1v1x0p1x0v1x0p0x1p0x1v0x1x0p0x0x1v1x0p0x1duals1v1v1x2v1x2x3v2x1} \right)$ & 120 & \{\} & \{\} \\
$\left(\bigraph{bwd1v1v1v1p1v1x0p0x1v1x1p0x1v0x1p0x1duals1v1v1x2v1x2}, \bigraph{bwd1v1v1v1p1v1x0p1x0v1x0p0x1p0x1v0x1x0p0x0x1v1x0p0x1duals1v1v1x2v1x3x2v1x2} \right)$ & 120 & \{\} & \{\} \\
$\left(\bigraph{bwd1v1v1v1p1p1v0x1x0p0x1x0v1x0v1duals1v1v1x2x3v1}, \bigraph{bwd1v1v1v1p1p1v1x0x0p0x0x1v1x0p1x0p0x1v0x0x1v1duals1v1v1x2x3v1x3x2v1} \right)$ & 119 & \{\} & \{\} \\
$\left(\bigraph{bwd1v1v1v1p1v1x0p1x0p0x1v1x0x0p1x0x0p0x0x1p0x1x0p0x0x1v0x0x1x0x0v1duals1v1v1x2v1x3x2x5x4v1}, \bigraph{bwd1v1v1v1p1v1x0p1x0p1x0v1x0x0v1duals1v1v1x2v1} \right)$ & 155 & \{\} & \{\} \\
$\left(\bigraph{bwd1v1v1v1p1v1x0p0x1v1x1v1v1p1v1x0duals1v1v1x2v1v1x2}, \bigraph{bwd1v1v1v1p1v1x0p1x0v1x0p0x1v1x1v1v1duals1v1v1x2v1x2v1} \right)$ & 192 & \{\} & \{\} \\
$\left(\bigraph{bwd1v1v1v1p1v1x0p0x1v1x0p0x1p1x0p0x1v0x1x0x0p1x0x0x0p0x0x1x0p0x0x0x1v0x1x0x0p1x0x0x0p1x0x0x0p0x0x1x0p0x0x0x1p0x0x1x0v0x0x0x0x1x0p0x0x0x0x0x1p1x0x0x0x0x0p0x1x0x0x0x0v0x1x0x0p1x1x0x0p0x0x1x1p0x0x0x1duals1v1v1x2v1x3x2x4v3x2x1x5x4x6v4x3x2x1}, \bigraph{bwd1v1v1v1p1v1x0p1x0v1x0p0x1v1x0p1x0p0x1p0x1v1x0x0x0p0x0x1x0v1x0p1x0p0x1p0x1v0x1x0x1duals1v1v1x2v1x2v2x1v1} \right)$ & 348 & \{\} & \{\} \\
$\left(\bigraph{bwd1v1v1v1p1v1x1v1duals1v1v1x2v1}, \bigraph{bwd1v1v1v1p1v1x1v1duals1v1v1x2v1} \right)$ & 100 & \{\} & \{\} \\
$\left(\bigraph{bwd1v1v1v1p1v1x1v1duals1v1v2x1v1}, \bigraph{bwd1v1v1v1p1v1x1v1duals1v1v2x1v1} \right)$ & 100 & \{\} & \{\} \\
$\left(\bigraph{bwd1v1v1v1p1v1x0p1x0p0x1v1x0x0p0x1x0p1x0x0p0x0x1v0x1x0x1duals1v1v1x2v1x2x4x3}, \bigraph{bwd1v1v1v1p1v1x0p0x1p1x0v1x1x0v1duals1v1v1x2v1} \right)$ & 111 & \{\} & \{\} \\
$\left(\bigraph{bwd1v1v1p1v1x0p0x1p0x1v1x0x0p1x0x0p0x1x0p0x0x1v0x1x0x0p0x1x0x0p0x0x1x1duals1v1v3x2x1v1x2x3}, \bigraph{bwd1v1v1p1v1x0p0x1p0x1v1x0x0p0x1x0p1x0x0p0x0x1v0x1x0x1p0x0x1x0p0x0x1x0duals1v1v3x2x1v1x2x3} \right)$ & 244 & \{\} & \{\} \\
$\left(\bigraph{bwd1v1v1p1v1x0p0x1p0x1v1x0x0p1x0x0p0x1x0p0x0x1v0x1x0x0p0x1x0x0p0x0x1x1duals1v1v3x2x1v1x2x3}, \bigraph{bwd1v1v1p1v1x0p0x1p0x1v1x0x0p0x1x0p1x0x0p0x0x1v0x1x0x1p0x0x1x0p0x0x1x0duals1v1v3x2x1v1x3x2} \right)$ & 244 & \{\} & \{\} \\
$\left(\bigraph{bwd1v1v1p1v1x0p0x1p0x1v1x0x0p1x0x0p0x1x0p0x0x1v0x1x0x0p0x1x0x0p0x0x1x1duals1v1v3x2x1v2x1x3}, \bigraph{bwd1v1v1p1v1x0p0x1p0x1v1x0x0p0x1x0p1x0x0p0x0x1v0x1x0x1p0x0x1x0p0x0x1x0duals1v1v3x2x1v1x3x2} \right)$ & 244 & \{\} & \{\} \\
$\left(\bigraph{bwd1v1v1p1v1x1p0x1duals1v1v1x2}, \bigraph{bwd1v1v1p1v1x1p0x1duals1v1v1x2} \right)$ & 109 & \{2\} & \{?\} \\
$\left(\bigraph{bwd1v1v1v1p1p1v0x1x0p0x1x0p0x0x1duals1v1v1x2x3}, \bigraph{bwd1v1v1v1p1p1v1x0x0p1x0x0p0x0x1duals1v1v1x2x3} \right)$ & 110 & \{\} & \{\} \\
$\left(\bigraph{bwd1v1v1v1p1p1v0x1x0p0x1x0p0x0x1duals1v1v1x2x3}, \bigraph{bwd1v1v1v1p1p1v1x0x0p0x1x0p0x0x1v1x0x0p0x1x0duals1v1v1x2x3v2x1} \right)$ & 110 & \{\} & \{\} \\
$\left(\bigraph{bwd1v1v1v1p1v1x0p0x1v1x0p0x1p0x1p0x1v1x0x0x0p1x0x0x0p0x0x1x0v1x0x0p0x1x1duals1v1v1x2v4x2x3x1v1x2}, \bigraph{bwd1v1v1v1p1v1x0p1x0v0x1p0x1v1x0p0x1p0x1v0x0x1duals1v1v1x2v1x2v1} \right)$ & 218 & \{\} & \{\} \\
$\left(\bigraph{bwd1v1v1v1p1v1x0p0x1v0x1p1x0p0x1v1x0x0p1x0x0p0x1x0v1x0x0p0x0x1p1x0x0p0x0x1p0x1x0p0x0x1v0x0x0x1x0x0p1x0x0x0x0x0v1x0p1x0p0x1duals1v1v1x2v1x3x2v1x2x4x3x6x5v1x3x2}, \bigraph{bwd1v1v1v1p1v1x0p1x0v0x1v1p1p1v1x0x0v1p1duals1v1v1x2v1v1} \right)$ & 196 & \{\} & \{\} \\
$\left(\bigraph{bwd1v1v1v1p1v1x0p0x1v1x0p1x1p0x1v1x0x0duals1v1v1x2v1x2x3}, \bigraph{bwd1v1v1v1p1v1x0p1x0v1x0p1x0p0x1p0x1v1x0x0x0duals1v1v1x2v1x2x4x3} \right)$ & 120 & \{\} & \{\} \\
$\left(\bigraph{bwd1v1v1v1p1v1x0p0x1v1x0p1x1p0x1v0x0x1duals1v1v1x2v1x2x3}, \bigraph{bwd1v1v1v1p1v1x0p1x0v1x0p1x0p0x1p0x1v0x0x1x0duals1v1v1x2v1x2x3x4} \right)$ & 120 & \{\} & \{\} \\
$\left(\bigraph{bwd1v1v1v1p1v1x0p0x1v1x0p1x0p0x1p0x1v0x0x1x0p1x0x0x0p0x1x0x0p0x0x0x1v1x0x0x0p0x0x1x0p0x1x0x0p0x0x0x1p1x0x0x0p0x0x1x0v0x0x0x0x0x1p0x0x0x1x0x0p0x0x0x0x1x0p0x0x1x0x0x0v0x1x0x0p0x0x0x1p0x1x0x0p1x0x0x0p0x0x1x0p1x0x1x0p0x0x0x1v0x1x0x0x0x0x0p0x1x0x0x0x0x0p0x0x1x0x0x0x0p0x0x1x0x0x0x0v1x0x0x0p0x0x1x0p0x1x0x0p0x0x0x1duals1v1v1x2v1x3x2x4v3x4x1x2x5x6v7x4x5x2x3x6x1v1x2x3x4}, \bigraph{bwd1v1v1v1p1v1x0p1x0v1x0p0x1v1x0p1x0p0x1p0x1v1x0x0x0p0x0x1x0v1x0p1x0p0x1p0x1v1x0x0x0p1x0x0x0p0x0x1x0p0x0x1x0v1x0x0x0p0x1x0x0p0x0x1x0p0x0x0x1duals1v1v1x2v1x2v2x1v1x2x3x4} \right)$ & 345 & \{\} & \{\} \\
$\left(\bigraph{bwd1v1v1v1p1v1x0p0x1v1x0p1x0p0x1p0x1v0x0x1x0p1x0x0x0p0x1x0x0p0x0x0x1v1x0x0x0p1x0x0x0p0x1x0x0p0x0x1x0p0x0x1x0p0x0x0x1v0x0x0x0x1x0p0x1x0x0x0x0p0x0x0x0x0x1p0x0x1x0x0x0v1x1x0x0p0x0x1x0p0x1x0x0p1x0x0x0p0x0x0x1p0x0x1x0p0x0x0x1v0x0x0x0x1x0x0p0x0x0x0x1x0x0p0x1x0x0x0x0x0p0x1x0x0x0x0x0v1x0x0x0p0x1x0x0p0x0x1x0p0x0x0x1duals1v1v1x2v1x3x2x4v3x2x1x6x5x4v1x3x2x5x4x7x6v1x2x4x3}, \bigraph{bwd1v1v1v1p1v1x0p1x0v1x0p0x1v1x0p1x0p0x1p0x1v1x0x0x0p0x0x1x0v1x0p0x1p1x0p0x1v1x0x0x0p1x0x0x0p0x1x0x0p0x1x0x0v1x0x0x0p0x1x0x0p0x0x1x0p0x0x0x1duals1v1v1x2v1x2v2x1v1x2x4x3} \right)$ & 345 & \{\} & \{\} \\
$\left(\bigraph{bwd1v1v1v1p1v1x0p0x1v1x0p0x1p1x0p0x1v0x1x0x0p1x0x0x0p0x0x1x0p0x0x0x1v0x1x0x0p1x0x0x0p0x1x0x0p0x0x0x1p0x0x1x0p1x0x0x0v0x0x1x0x0x0p0x0x0x1x0x0p0x0x0x0x0x1p0x0x0x0x1x0v0x0x0x1p1x0x0x0p0x1x0x0p1x0x1x0p0x0x0x1p0x1x0x0p0x0x1x0v0x0x0x0x1x0x0p0x0x0x0x1x0x0p0x0x0x0x0x1x0p0x0x0x0x0x1x0v1x0x0x0p0x1x0x0p0x0x1x0p0x0x0x1duals1v1v1x2v2x1x3x4v4x5x3x1x2x6v3x5x1x4x2x7x6v2x1x4x3}, \bigraph{bwd1v1v1v1p1v1x0p1x0v1x0p0x1v1x0p0x1p1x0p0x1v1x0x0x0p0x1x0x0v1x0p1x0p0x1p0x1v1x0x0x0p1x0x0x0p0x0x1x0p0x0x1x0v0x1x0x0p1x0x0x0p0x0x1x0p0x0x0x1duals1v1v1x2v1x2v2x1v2x1x4x3} \right)$ & 345 & \{\} & \{\} \\
$\left(\bigraph{bwd1v1v1v1p1v1x0p0x1v0x1p1x0p1x0p0x1v1x0x0x0p0x1x0x0p0x0x1x0p0x0x0x1v0x1x0x0p1x0x0x0p1x0x0x0p0x1x0x0p0x0x1x0p0x0x0x1v0x0x0x1x0x0p0x1x0x0x0x0p0x0x0x0x0x1p0x0x0x0x1x0v1x0x0x0p0x1x0x0p1x0x0x0p0x1x0x0p0x0x1x0p0x1x0x0p1x0x0x0p0x0x0x1p0x0x1x0p0x0x0x1v0x0x0x0x1x0x0x1x0x0v1duals1v1v1x2v2x1x3x4v6x2x5x4x3x1v2x1x4x3x6x5x8x7x10x9v1}, \bigraph{bwd1v1v1v1p1v1x0p1x0v1x0p0x1v1x0p0x1p1x0p0x1v1x0x0x0p0x1x0x0v1x0p0x1p1x0p0x1v1x1x0x0v1duals1v1v1x2v1x2v2x1v1} \right)$ & 337 & \{\} & \{\} \\
$\left(\bigraph{bwd1v1v1v1p1v1x0p0x1v1x1p0x1v0x1p0x1v1x0v1duals1v1v1x2v1x2v1}, \bigraph{bwd1v1v1v1p1v1x0p1x0v1x0p0x1p0x1v0x1x0p0x0x1v1x0p1x0p0x1v0x0x1v1duals1v1v1x2v1x2x3v1x3x2v1} \right)$ & 174 & \{\} & \{\} \\
$\left(\bigraph{bwd1v1v1v1p1v1x0p0x1v1x0p0x1p0x1p0x1v1x0x0x0p1x0x0x0p0x1x0x0v1x0x0p0x1x0p0x0x1p0x1x0p0x0x1v0x0x0x1x0v1duals1v1v1x2v4x2x3x1v1x2x3x5x4v1}, \bigraph{bwd1v1v1v1p1v1x0p1x0v0x1p0x1v1x0p0x1p0x1v0x0x1v1duals1v1v1x2v1x2v1} \right)$ & 137 & \{\} & \{\} \\
$\left(\bigraph{bwd1v1v1v1p1p1v0x1x0p0x1x0v1x0v1p1duals1v1v1x2x3v1}, \bigraph{bwd1v1v1v1p1p1v1x0x0p0x0x1v1x0p1x0p0x1v1x0x0p0x0x1v1x0p0x1p0x1duals1v1v1x2x3v1x3x2v3x2x1} \right)$ & 180 & \{\} & \{\} \\
$\left(\bigraph{bwd1v1v1v1p1v1x0p1x0p0x1v1x0x0p0x0x1p0x1x0p0x0x1v1x0x1x0p0x0x0x1p0x0x0x1duals1v1v1x2v4x2x3x1}, \bigraph{bwd1v1v1v1p1v1x0p0x1p1x0v1x0x0p0x1x0p0x0x1p0x1x0v1x0x1x0p0x0x0x1p0x0x0x1duals1v1v1x2v4x2x3x1} \right)$ & 244 & \{\} & \{\} \\
$\left(\bigraph{bwd1v1v1p1v0x1p1x0p0x1v1x0x0p1x0x0p0x1x0v0x1x0p0x0x1p0x1x0p0x0x1v0x1x0x0p0x1x0x0p0x0x1x0v0x1x0duals1v1v1x3x2v2x1x3x4v1}, \bigraph{bwd1v1v1p1v1x0p0x1p0x1v1x0x0p0x1x0p0x1x0v1x0x0p1x0x0p0x1x0p0x1x0v0x1x0x0p0x0x1x0p0x1x0x0v0x0x1duals1v1v3x2x1v1x4x3x2v1} \right)$ & 205 & \{\} & \{\} \\
$\left(\bigraph{bwd1v1v1v1p1v1x0p0x1v0x1p1x0p0x1v1x0x0p1x0x0p0x1x0v1x0x0p0x0x1p1x0x0p0x0x1p0x1x0p0x0x1v1x0x0x0x0x0p0x0x0x1x0x0v1x1duals1v1v1x2v1x3x2v1x2x4x3x6x5v1}, \bigraph{bwd1v1v1v1p1v1x0p1x0v0x1v1p1p1v1x0x0v1p1v1x0duals1v1v1x2v1v1v1} \right)$ & 434 & \{\} & \{\} \\
$\left(\bigraph{bwd1v1v1v1p1p1v0x1x0p0x0x1v1x1duals1v1v1x3x2v1}, \bigraph{bwd1v1v1v1p1p1v1x0x0p1x0x0v1x0p0x1duals1v1v1x3x2v2x1} \right)$ & 120 & \{2\} & \{?\} \\
$\left(\bigraph{bwd1v1v1v1p1p1v0x1x0p0x1x0v1x0p0x1duals1v1v1x2x3v1x2}, \bigraph{bwd1v1v1v1p1p1v1x0x0p0x0x1v1x1duals1v1v1x2x3v1} \right)$ & 120 & \{2\} & \{?\} \\
$\left(\bigraph{bwd1v1v1v1p1v1x0p0x1v1x0p1x0p0x1p0x1v0x0x1x0p1x0x0x0p0x1x0x0p0x0x0x1v1x0x0x0p0x0x1x0p0x1x0x0p0x0x1x0p0x0x0x1p1x0x0x0v0x0x0x0x1x0p0x1x0x0x0x0p0x0x1x0x0x0p0x0x0x0x0x1v0x1x0x0p1x1x0x0p0x0x1x1p0x0x0x1v1x0x0x1duals1v1v1x2v1x3x2x4v3x2x1x5x4x6v4x3x2x1}, \bigraph{bwd1v1v1v1p1v1x0p1x0v1x0p0x1v1x0p1x0p0x1p0x1v1x0x0x0p0x0x1x0v1x0p1x0p0x1p0x1v0x1x0x1v1v1duals1v1v1x2v1x2v2x1v1v1} \right)$ & 337 & \{\} & \{\} \\
$\left(\bigraph{bwd1v1v1v1p1v1x0p0x1v0x1p1x0p0x1v1x0x0p1x0x0p0x1x0v1x0x0p1x0x0p0x0x1p0x1x0p0x0x1p0x0x1v1x0x0x0x0x0p0x0x0x0x1x0v1x0p1x0p0x1p0x1v0x0x0x1v1duals1v1v1x2v1x3x2v1x5x4x3x2x6v4x2x3x1v1}, \bigraph{bwd1v1v1v1p1v1x0p1x0v0x1v1p1p1v1x0x0v1p1v1x0v1duals1v1v1x2v1v1v1} \right)$ & 214 & \{\} & \{\} \\
$\left(\bigraph{bwd1v1v1v1p1v1x1p0x1duals1v1v1x2}, \bigraph{bwd1v1v1v1p1v1x1p0x1duals1v1v1x2} \right)$ & 109 & \{\} & \{\} \\
$\left(\bigraph{bwd1v1v1v1p1p1v0x1x0p0x1x0v1x0v1p1v1x0duals1v1v1x2x3v1v1}, \bigraph{bwd1v1v1v1p1p1v1x0x0p0x0x1v1x0p1x0p0x1v1x0x0p0x0x1v1x1duals1v1v1x2x3v1x3x2v1} \right)$ & 162 & \{\} & \{\} \\
$\left(\bigraph{bwd1v1v1v1p1v1x0p0x1v1x1p0x1v1x0v1duals1v1v1x2v1x2v1}, \bigraph{bwd1v1v1v1p1v1x0p1x0v1x0p0x1p0x1v1x1x0duals1v1v1x2v1x2x3} \right)$ & 147 & \{0\} & \{\text{e}\} \\
$\left(\bigraph{bwd1v1v1v1p1v1x0p1x0p0x1v1x0x0p0x0x1p1x0x0v0x1x0p0x1x0p0x0x1p0x0x1v1x0x0x0p1x0x0x0p0x0x1x0v0x1x0duals1v1v1x2v2x1x3v1x3x2}, \bigraph{bwd1v1v1v1p1v1x0p0x1p1x0v1x0x0p1x0x0p0x1x0v1x0x0p0x0x1p1x0x0p0x0x1v0x0x1x0p0x0x0x1p0x0x0x1v0x0x1duals1v1v1x2v1x3x2v3x2x1} \right)$ & 205 & \{\} & \{\} \\
$\left(\bigraph{bwd1v1v1p1p1p1duals1v1}, \bigraph{bwd1v1v1p1p1p1duals1v1} \right)$ & 124 & \{0\} & \{\text{c}\} \\
$\left(\bigraph{bwd1v1v1p1v1x0p1x0p0x1p0x1v0x1x1x0duals1v1v1x3x2x4}, \bigraph{bwd1v1v1p1v1x1v1v1duals1v1v1v1} \right)$ & 109 & \{0\} & \{?\} \\
$\left(\bigraph{bwd1v1v1p1v1x0p1x0p0x1p0x1v0x1x1x0duals1v1v4x2x3x1}, \bigraph{bwd1v1v1p1v1x1v1v1duals1v1v1v1} \right)$ & 109 & \{0\} & \{?\} \\
$\left(\bigraph{bwd1v1v1v1p1p1v0x1x0p0x1x0p0x0x1v0x0x1duals1v1v1x2x3v1}, \bigraph{bwd1v1v1v1p1p1v1x0x0p0x1x0p0x0x1v1x0x0p0x1x0p0x0x1duals1v1v1x2x3v2x1x3} \right)$ & 137 & \{0\} & \{?\} \\
$\left(\bigraph{bwd1v1v1p1v1x1v1v1duals1v1v1v1}, \bigraph{bwd1v1v1p1v1x1v1v1duals1v1v1v1} \right)$ & 109 & \{0\} & \{?\} \\
$\left(\bigraph{bwd1v1v1p1p1v0x0x1p0x0x1v1x0p0x1v1x0p0x1duals1v1v1x2v2x1}, \bigraph{bwd1v1v1p1p1v0x0x1p0x0x1v1x0p1x0duals1v1v1x2} \right)$ & 171 & \{0\} & \{\text{a}\} \\
$\left(\bigraph{bwd1v1v1p1p1v0x0x1p0x0x1v1x0p0x1v1x0p0x1duals1v1v2x1v1x2}, \bigraph{bwd1v1v1p1p1v0x0x1p0x0x1v1x0p1x0duals1v1v1x2} \right)$ & 171 & \{0\} & \{\text{a}\} \\
$\left(\bigraph{bwd1v1v1v1p1v1x0p0x1v1x0p0x1p1x0p0x1v0x1x0x0p1x0x0x0p0x0x1x0p0x0x0x1v1x0x0x0p1x0x1x0p0x1x0x0p0x1x0x1duals1v1v1x2v2x1x3x4v1x2x3x4}, \bigraph{bwd1v1v1v1p1v1x0p1x0v1x0p0x1v1x0p0x1p1x0p0x1v1x1x0x0p0x0x1x0p0x0x0x1duals1v1v1x2v1x2v1x2x3} \right)$ & 298 & \{0\} & \{\text{b}\} \\
$\left(\bigraph{bwd1v1v1v1p1v1x0p1x0p0x1v1x0x0p0x0x1p0x0x1p0x1x0p0x0x1v1x0x0x0x0p0x0x0x1x0v1x1duals1v1v1x2v3x2x1x5x4v1}, \bigraph{bwd1v1v1v1p1v1x0p1x0p1x0v0x0x1v1p1duals1v1v1x2v1} \right)$ & 144 & \{0\} & \{\text{c}\} \\
$\left(\bigraph{bwd1v1v1v1p1v1x0p1x0p0x1v0x0x1p1x0x0p0x1x0p0x0x1p0x0x1v0x1x0x0x0p0x0x1x0x0v1x0p1x0p0x1p0x1duals1v1v1x2v3x5x1x4x2v4x2x3x1}, \bigraph{bwd1v1v1v1p1v1x0p1x0p1x0v0x0x1v1p1duals1v1v1x2v1} \right)$ & 144 & \{0\} & \{\text{a}\} \\
$\left(\bigraph{bwd1v1v1v1p1v1x0p1x0p0x1v1x0x0p1x0x0p0x0x1p0x1x0p0x0x1v0x0x1x0x0p1x0x0x0x0v1x0p1x0p0x1duals1v1v1x2v1x3x2x5x4v1x3x2}, \bigraph{bwd1v1v1v1p1v1x0p1x0p1x0v1x0x0v1p1duals1v1v1x2v1} \right)$ & 144 & \{0\} & \{\text{a}\} \\
$\left(\bigraph{bwd1v1v1v1p1p1v0x1x0p0x1x0v1x0v1p1v1x0v1duals1v1v1x2x3v1v1}, \bigraph{bwd1v1v1v1p1p1v1x0x0p0x0x1v1x0p1x0p0x1v1x0x0p0x0x1v1x0p1x0p0x1p0x1v0x0x1x0v1duals1v1v1x2x3v1x3x2v3x2x1x4v1} \right)$ & 243 & \{0\} & \{\text{a}\} \\
$\left(\bigraph{bwd1v1v1v1p1v1x0p0x1v0x1p1x0p0x1v1x0x0p1x0x0p0x1x0v1x0x1p0x1x0p0x1x0p0x0x1v0x0x0x1v1duals1v1v1x2v1x3x2v1x2x4x3v1}, \bigraph{bwd1v1v1v1p1v1x0p1x0v0x1v1p1p1v1x0x0p0x1x0v0x1duals1v1v1x2v1v1x2} \right)$ & 234 & \{0\} & \{\text{a}\} \\
$\left(\bigraph{bwd1v1v1v1p1v1x0p0x1v1x1p0x1v0x1p0x1v1x0v1p1duals1v1v1x2v1x2v1}, \bigraph{bwd1v1v1v1p1v1x0p1x0v1x0p0x1p0x1v0x1x0p0x0x1v1x0p1x0p0x1v1x0x0p0x0x1v1x0p0x1p0x1duals1v1v1x2v1x2x3v1x3x2v3x2x1} \right)$ & 267 & \{\} & \{\} \\
$\left(\bigraph{bwd1v1v1v1p1v1x0p0x1v0x1p1x0p0x1v1x0x0p1x0x0p0x1x0v1x0x0p0x1x0p1x0x0p0x0x1p0x0x1p0x0x1v0x0x1x0x0x0p0x0x0x0x0x1v1x0p1x0p0x1p0x1v0x1x0x0p0x0x0x1v1x0p0x1p0x1duals1v1v1x2v1x3x2v6x4x3x2x5x1v4x2x3x1v3x2x1}, \bigraph{bwd1v1v1v1p1v1x0p1x0v0x1v1p1p1v1x0x0v1p1v1x0v1p1duals1v1v1x2v1v1v1} \right)$ & 275 & \{\} & \{\} \\
$\left(\bigraph{bwd1v1v1v1p1v1x0p0x1v1x0p0x1p0x1p0x1v1x0x0x0p1x0x0x0p0x0x0x1v1x0x0p0x1x0p0x0x1p0x1x0p0x0x1v0x0x1x0x0p0x0x0x1x0v1x0p0x1p0x1duals1v1v1x2v2x1x3x4v1x2x3x5x4v3x2x1}, \bigraph{bwd1v1v1v1p1v1x0p1x0v0x1p0x1v1x0p0x1p0x1v0x0x1v1p1duals1v1v1x2v1x2v1} \right)$ & 307 & \{\} & \{\} \\
$\left(\bigraph{bwd1v1v1v1p1v1x0p0x1v1x1p0x1v0x1p0x1v1x0p0x1duals1v1v1x2v1x2v1x2}, \bigraph{bwd1v1v1v1p1v1x0p1x0v1x0p0x1p0x1v0x1x0p0x0x1v1x1duals1v1v1x2v1x2x3v1} \right)$ & 165 & \{0\} & \{\text{e}\} \\
$\left(\bigraph{bwd1v1v1v1p1v1x0p0x1v1x1p0x1v0x1p0x1v1x0p0x1duals1v1v1x2v1x2v2x1}, \bigraph{bwd1v1v1v1p1v1x0p1x0v1x0p0x1p0x1v0x1x0p0x0x1v1x1duals1v1v1x2v1x3x2v1} \right)$ & 165 & \{0\} & \{\text{e}\} \\
\end{longtable}

\subsection{The fusion ring of \texorpdfstring{$\cX$}{X}}
\label{sec:FusionMatrices}

Below, we give the fusion matrices $L_X$ for tensoring on the left with $X\in
\{\jw {2},\jw{4},P,Q,R,g\}$ for the fusion ring $K_0(\frac{1}{2}\cX)$ from Section \ref{sec:FormalCodegrees} in the ordered basis
$$
B=\left(1,\jw{2},\jw{4},P,Q,R,gP,g\jw{4},g\jw{2},g\right).
$$
This means the $(i,j)$-th entry of $L_X$ is the coefficient of $X_i$ in $X\otimes X_j$, where $X_k$ denotes the $k$-th element of $B$.
For each object $X$, the fusion matrix for $gX$ is $L_{gX}=L_gL_X=L_XL_g$, which is obtained from $L_X$ by permuting the columns with the permutation $\left(10, 9, 8,7,5,6,4,3,2,1\right)$.
\begin{align*}
L_{\jw{2}}
&=
\left(
\begin{smallmatrix}
 0 & 1 & 0 & 0 & 0 & 0 & 0 & 0 & 0 & 0 \\
 1 & 1 & 1 & 0 & 0 & 0 & 0 & 0 & 0 & 0 \\
 0 & 1 & 1 & 1 & 1 & 0 & 0 & 0 & 0 & 0 \\
 0 & 0 & 1 & 1 & 1 & 1 & 1 & 0 & 0 & 0 \\
 0 & 0 & 1 & 1 & 1 & 0 & 1 & 1 & 0 & 0 \\
 0 & 0 & 0 & 1 & 0 & 0 & 1 & 0 & 0 & 0 \\
 0 & 0 & 0 & 1 & 1 & 1 & 1 & 1 & 0 & 0 \\
 0 & 0 & 0 & 0 & 1 & 0 & 1 & 1 & 1 & 0 \\
 0 & 0 & 0 & 0 & 0 & 0 & 0 & 1 & 1 & 1 \\
 0 & 0 & 0 & 0 & 0 & 0 & 0 & 0 & 1 & 0
\end{smallmatrix}
\right)
&
L_{\jw{4}}
&=
\left(
\begin{smallmatrix}
 0 & 0 & 1 & 0 & 0 & 0 & 0 & 0 & 0 & 0 \\
 0 & 1 & 1 & 1 & 1 & 0 & 0 & 0 & 0 & 0 \\
 1 & 1 & 2 & 2 & 2 & 1 & 2 & 1 & 0 & 0 \\
 0 & 1 & 2 & 3 & 3 & 1 & 3 & 2 & 0 & 0 \\
 0 & 1 & 2 & 3 & 3 & 2 & 3 & 2 & 1 & 0 \\
 0 & 0 & 1 & 1 & 2 & 1 & 1 & 1 & 0 & 0 \\
 0 & 0 & 2 & 3 & 3 & 1 & 3 & 2 & 1 & 0 \\
 0 & 0 & 1 & 2 & 2 & 1 & 2 & 2 & 1 & 1 \\
 0 & 0 & 0 & 0 & 1 & 0 & 1 & 1 & 1 & 0 \\
 0 & 0 & 0 & 0 & 0 & 0 & 0 & 1 & 0 & 0
\end{smallmatrix}
\right)
&
L_P
&=
\left(
\begin{smallmatrix}
 0 & 0 & 0 & 1 & 0 & 0 & 0 & 0 & 0 & 0 \\
 0 & 0 & 1 & 1 & 1 & 1 & 1 & 0 & 0 & 0 \\
 0 & 1 & 2 & 3 & 3 & 1 & 3 & 2 & 0 & 0 \\
 1 & 1 & 3 & 4 & 4 & 2 & 4 & 3 & 1 & 0 \\
 0 & 1 & 3 & 4 & 5 & 2 & 4 & 3 & 1 & 0 \\
 0 & 1 & 1 & 2 & 2 & 1 & 2 & 1 & 1 & 0 \\
 0 & 1 & 3 & 4 & 4 & 2 & 4 & 3 & 1 & 1 \\
 0 & 0 & 2 & 3 & 3 & 1 & 3 & 2 & 1 & 0 \\
 0 & 0 & 0 & 1 & 1 & 1 & 1 & 1 & 0 & 0 \\
 0 & 0 & 0 & 0 & 0 & 0 & 1 & 0 & 0 & 0
\end{smallmatrix}
\right)
\displaybreak[1]
\\
L_Q
&=
\left(
\begin{smallmatrix}
 0 & 0 & 0 & 0 & 1 & 0 & 0 & 0 & 0 & 0 \\
 0 & 0 & 1 & 1 & 1 & 0 & 1 & 1 & 0 & 0 \\
 0 & 1 & 2 & 3 & 3 & 2 & 3 & 2 & 1 & 0 \\
 0 & 1 & 3 & 4 & 5 & 2 & 4 & 3 & 1 & 0 \\
 1 & 1 & 3 & 5 & 4 & 2 & 5 & 3 & 1 & 1 \\
 0 & 0 & 2 & 2 & 2 & 0 & 2 & 2 & 0 & 0 \\
 0 & 1 & 3 & 4 & 5 & 2 & 4 & 3 & 1 & 0 \\
 0 & 1 & 2 & 3 & 3 & 2 & 3 & 2 & 1 & 0 \\
 0 & 0 & 1 & 1 & 1 & 0 & 1 & 1 & 0 & 0 \\
 0 & 0 & 0 & 0 & 1 & 0 & 0 & 0 & 0 & 0
\end{smallmatrix}
\right)
&
L_R 
&=
\left(
\begin{smallmatrix}
 0 & 0 & 0 & 0 & 0 & 1 & 0 & 0 & 0 & 0 \\
 0 & 0 & 0 & 1 & 0 & 0 & 1 & 0 & 0 & 0 \\
 0 & 0 & 1 & 1 & 2 & 1 & 1 & 1 & 0 & 0 \\
 0 & 1 & 1 & 2 & 2 & 1 & 2 & 1 & 1 & 0 \\
 0 & 0 & 2 & 2 & 2 & 0 & 2 & 2 & 0 & 0 \\
 1 & 0 & 1 & 1 & 0 & 1 & 1 & 1 & 0 & 1 \\
 0 & 1 & 1 & 2 & 2 & 1 & 2 & 1 & 1 & 0 \\
 0 & 0 & 1 & 1 & 2 & 1 & 1 & 1 & 0 & 0 \\
 0 & 0 & 0 & 1 & 0 & 0 & 1 & 0 & 0 & 0 \\
 0 & 0 & 0 & 0 & 0 & 1 & 0 & 0 & 0 & 0
\end{smallmatrix}
\right)
&
L_g
&=
\left(
\begin{smallmatrix}
 0 & 0 & 0 & 0 & 0 & 0 & 0 & 0 & 0 & 1 \\
 0 & 0 & 0 & 0 & 0 & 0 & 0 & 0 & 1 & 0 \\
 0 & 0 & 0 & 0 & 0 & 0 & 0 & 1 & 0 & 0 \\
 0 & 0 & 0 & 0 & 0 & 0 & 1 & 0 & 0 & 0 \\
 0 & 0 & 0 & 0 & 1 & 0 & 0 & 0 & 0 & 0 \\
 0 & 0 & 0 & 0 & 0 & 1 & 0 & 0 & 0 & 0 \\
 0 & 0 & 0 & 1 & 0 & 0 & 0 & 0 & 0 & 0 \\
 0 & 0 & 1 & 0 & 0 & 0 & 0 & 0 & 0 & 0 \\
 0 & 1 & 0 & 0 & 0 & 0 & 0 & 0 & 0 & 0 \\
 1 & 0 & 0 & 0 & 0 & 0 & 0 & 0 & 0 & 0
\end{smallmatrix}
\right)
\end{align*}

\begin{landscape}
\subsection{Subfactors with index in \texorpdfstring{$(4, 5\frac{1}{4}]$}{(4,5.25])}}
\label{appendix:SubfactorLandscape}

Combining the results of this paper with the previous results on the classification of small index subfactors, we obtain the following complete list.
The non Temperley-Lieb-Jones irreducible subfactor planar algebras with index in $(4, 5 \frac{1}{4}]$ are:

\newcommand{\dittotikz}{%
    \tikz{
        \draw [line width=0.12ex] (-0.2ex,0) -- +(0,0.8ex)
            (0.2ex,0) -- +(0,0.8ex);
        \draw [line width=0.08ex] (-0.6ex,0.4ex) -- +(-1.5em,0)
            (0.6ex,0.4ex) -- +(1.5em,0);
    }%
}

\begin{center}
\hspace*{-1cm}
\begin{tabular}{c|c|c|c|c}
index
&
principal graph
&
name
&
$\#$ of subfactors
&
citations
\\\hline
$\frac{1}{2}(5+\sqrt{13})$
&
$\smallbigraph{bwd1v1v1v1p1v1x0p0x1v1x0p0x1duals1v1v1x2v2x1}$
&
Haagerup
&
2d
&
\cite{MR1686551}
\\
$\sim 4.37720$
&
$\smallbigraph{bwd1v1v1v1v1v1v1v1p1v1x0p0x1v1x0p0x1duals1v1v1v1v1x2v2x1}$
&
extended Haagerup
&
2d
&
\cite{MR2979509}
\\
$\frac{1}{2}(5+\sqrt{17})$
&
$\smallbigraph{bwd1v1v1v1v1v1p1v1x0p0x1v1x0p0x1p0x1v1x0x0v1duals1v1v1v1x2v2x1x3v1}$
&
Asaeda-Haagerup
&
2d
&
\cite{MR1686551}
\\
$3+\sqrt{3}$
&
$\smallbigraph{bwd1v1v1v1p1p1v1x0x0v1duals1v1v1x2x3v1}$
&
3311
&
2d
&
existence \cite{MR999799}, 
classification \cite{MR1355948,MR2993924}
\\
$\frac{1}{2}(5+\sqrt{21})$
&
$\smallbigraph{bwd1v1v1p1p1v1x0x0p0x1x0duals1v1v2x1}$
&
2221
&
2c
&
existence \cite{MR1832764}, classification \cite{1102.2052} 
\\
\hline
5
&
$\smallbigraph{bwd1v1p1p1p1duals1v2x1x4x3}$
&
$\bbZ/5\bbZ$
&
1
&
classification \cite{MR1491121}
\\
5
&
$\smallbigraph{bwd1v1p1v1x1v1duals1v1x2v1}$
&
$\bbZ/2\bbZ\subset D_{10}$
&
1
&
\dittotikz
\\
5
&
$\smallbigraph{bwd1v1v1p1p1v1x0x0p0x1x0p0x0x1duals1v1v1x3x2}$
&
$\Integer/4\Integer \subset \Integer/5\Integer \rtimes \Integer/4\Integer$
&
1
&
\dittotikz
\\
5
&
$\smallbigraph{bwd1v1v1v1p1v1x0p0x1v1x1p0x1v0x1v1duals1v1v1x2v1x2v1}$
&
$S_4 \subset S_5$
&
2d
&
classification \cite{MR3335120}
\\
5
&
$\smallbigraph{bwd1v1v1v1p1p1v1x0x0p0x1x0v1x0p0x1duals1v1v1x2x3v2x1}$
&
$A_4 \subset A_5$
&
2d
&
\dittotikz
\\
\hline
$\sim 5.04892$
&
$\smallbigraph{bwd1v1p1v1x0p1x1duals1v1x2}$
&
$\mathfrak{su}(2)_5$
&
1
&
existence \cite{MR936086}, classification \cite{MR3254427}
\\
$\sim 5.04892$
&
$\smallbigraph{bwd1v1v1p1v1x0p0x1p0x1v1x1x0p0x0x1duals1v1v3x2x1}$
&
$\mathfrak{su}(3)_4$
&
1
&
\dittotikz
\\
\hline
$3+\sqrt{5}$
&
$\smallbigraph{bwd1v1p1p1v1x1x0duals1v1x2x3}$
&
$A_3\otimes A_4 = (A_3* A_4)/\sim_1$
&
1
&
classification \cite{MR3345186,1308.5723}
\\
$3+\sqrt{5}$
&
$\smallbigraph{bwd1v1p1v1x0p1x0v1x0p1x0v1x1duals1v1x2v1x2}$
&
$(A_3* A_4)/\sim_2$
&
2d
&
existence \cite{BischHaagerup}, classification \cite{MR3345186,1308.5723}
\\
$3+\sqrt{5}$
&
$\smallbigraph{bwd1v1p1v1x0p1x0v1x0v1p1v1x0p1x0v1x1duals1v1x2v1v1x2}$
&
$(A_3* A_4)/\sim_3$
&
2d
&
existence \cite{1308.5723} (due to Izumi), classification \cite{MR3345186,1308.5723}
\\
$3+\sqrt{5}$
&
$\smallbigraph{bwd1v1p1v1x0p1x0v1x0v1p1v1x0v1p1duals1v1x2v1v1}\cdots$
&
$A_3 * A_4$
&
2d
&
\cite{MR1437496}
\\
$3+\sqrt{5}$
&
$\smallbigraph{bwd1v1v1p1v1x1v1v1duals1v1v1v1}$
&
2D2
&
2d
&
existence \cite{IzumiUnpublished,1406.3401}, classification \cite{1406.3401}
\\
$3+\sqrt{5}$
&
$\smallbigraph{bwd1v1v1v1p1p1v1x0x0p0x1x0p0x0x1v1x0x0p0x1x0p0x0x1duals1v1v1x2x3v1x3x2}$
&
$3^{\bbZ/4\bbZ}$
&
2d
&
existence \cite{IzumiUnpublished,1308.5197}, classification \cite{IzumiUnpublished}
\\
$3+\sqrt{5}$
&
$\smallbigraph{bwd1v1v1v1p1p1v1x0x0p0x1x0p0x0x1v1x0x0p0x1x0p0x0x1duals1v1v1x2x3v1x2x3}$
&
$3^{\bbZ/2\bbZ\times \bbZ/2\bbZ}$
&
1
&
existence \cite{IzumiUnpublished,MR3314808}, classification \cite{IzumiUnpublished}
\\
$3+\sqrt{5}$
&
$\smallbigraph{bwd1v1v1v1v1p1p1v1x0x0p0x1x0p0x0x1v1x0x0p0x1x0v1x0p0x1duals1v1v1v2x1x3v2x1}$
&
4442
&
1
&
existence \cite{MR3314808,IzumiUnpublished}, classification \cite{1406.3401}
\\
\end{tabular}
\end{center}
\thispagestyle{empty}
\newpage
\thispagestyle{empty}
\subsection{The map of subfactors}
\label{appendix:MapOfSubfactors}
\hspace*{-2cm}
\includegraphics[scale=1]{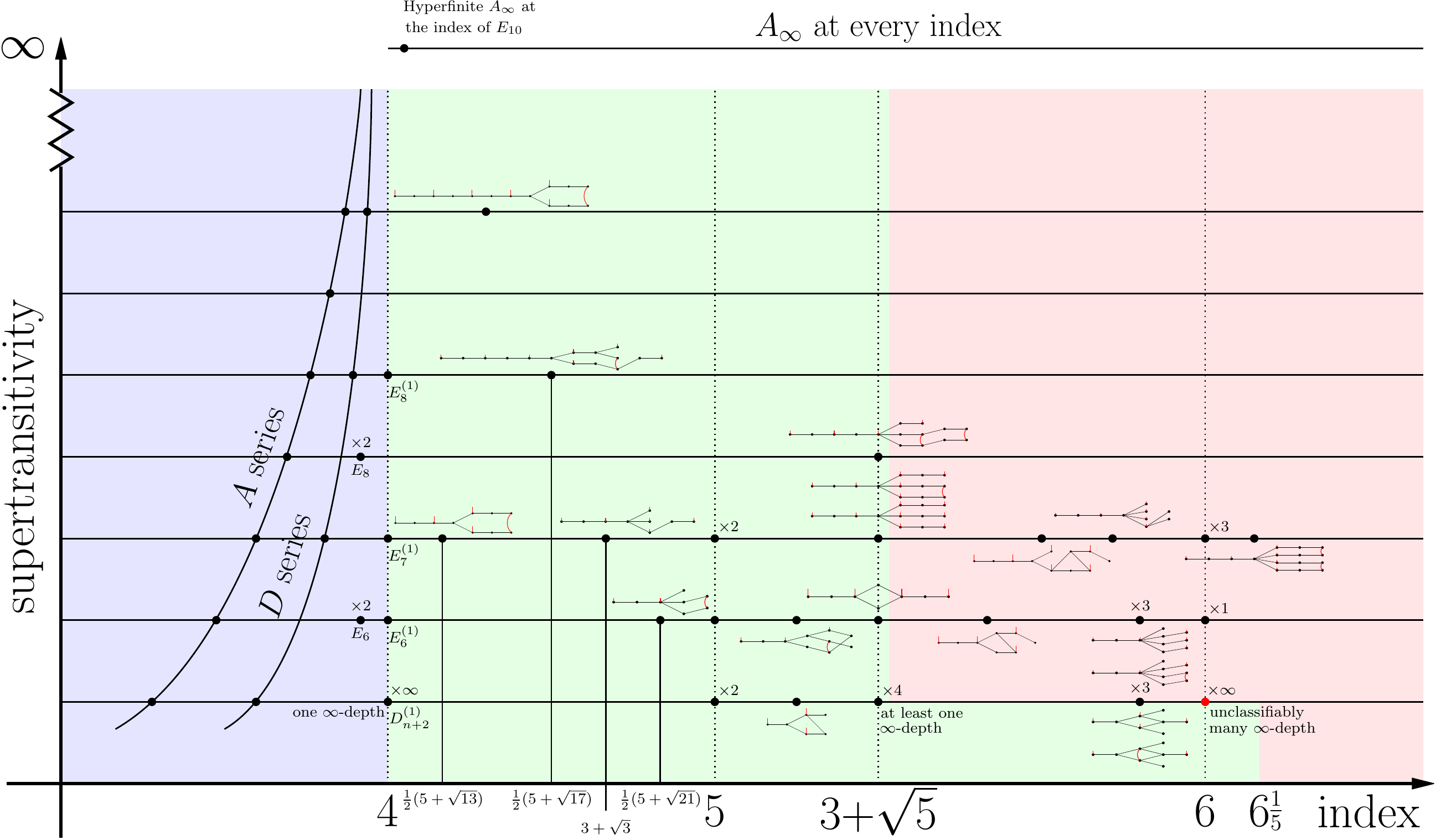}
\end{landscape}
 
\subsection*{Acknowledgements}
 
The authors would like to thank Frank Calegari, Vaughan F.R. Jones, Brendan
McKay, and Noah Snyder for helpful conversations.
Narjess Afzaly was supported by ARC Discovery Project DP0986827, `Structure 
enumeration, applications and analysis'. She would like to thank her supervisor, 
Brendan McKay, for his helpful guidance and suggestions on developing an 
efficient program that generates principal graph pairs.
Scott Morrison was supported by a Discovery Early Career Research Award
DE120100232 and a Discovery Project `Subfactors and symmetries' DP140100732
from the Australian Research Council. David Penneys was partially supported by
the Natural Sciences and Engineering Research Council of Canada, an AMS
Simons travel grant, and NSF DMS grant 1500387. Scott Morrison and David
Penneys were supported by DOD-DARPA grant HR0011-12-1-0009. Scott Morrison and
David Penneys would like to thank the Banff International Research Station for
hosting the 2014 workshop on Subfactors and Fusion Categories. Scott Morrison
would like to thank the Erwin Schr\"odinger Institute and its 2014 programme
on ``Modern Trends in Topological Quantum Field Theory'' for their
hospitality. 

\renewcommand*{\bibfont}{\small}
\setlength{\bibitemsep}{0pt}
\raggedright
\printbibliography

\end{document}